\documentclass[11pt,oneside,leqno]{amsart}
\usepackage{amsxtra}
\usepackage{amsopn}
\usepackage{color}
\usepackage{amsmath,amsthm,amssymb}
\usepackage{amscd}
\usepackage{amsfonts}
\usepackage{latexsym}
\usepackage{verbatim}
\usepackage{graphicx}
\usepackage{graphics}
\usepackage{multirow}
\usepackage{lscape}
\usepackage{nicefrac}
\usepackage{mathabx}
\usepackage[all]{xy}
\usepackage{float}
\usepackage[table]{xcolor}
\usepackage{colortbl}
\usepackage{hhline}

\usepackage[hypertexnames=false,
 backref=UseNone,
    pdftex,
    pdfpagemode=UseNone,
    breaklinks=true,
    extension=pdf,
    colorlinks=true,
    linkcolor=blue,
    citecolor=blue,
    urlcolor=blue,
]
{hyperref}

\theoremstyle{plain}
\newtheorem{theorem}{Theorem}[section]
\newtheorem{definition}[theorem]{Definition}
\newtheorem{lemma}[theorem]{Lemma}
\newtheorem{proposition}[theorem]{Proposition}
\newtheorem{corollary}[theorem]{Corollary}
\newtheorem{remark}[theorem]{Remark}

\newtheorem{example}[theorem]{Example}
\newtheorem{question}[theorem]{Question}
\newtheorem{remark-question}[theorem]{Remark-Question}

\newcommand\C{{\mathbb C}}

\newcommand\Q{{\mathbb Q}}
\newcommand\R{{\mathbb R}}

\newcommand\fra{{\mathfrak a}} 
\newcommand\frb{{\mathfrak b}}
\newcommand\frg{{\mathfrak g}}
\newcommand\frh{{\mathfrak h}}
\newcommand\frk{{\mathfrak K}}

\newcommand\frt{{\mathfrak t}}

\newcommand\Real{{\mathfrak R}{\frak e}\,} 
\newcommand\Imag{{\mathfrak I}{\frak m}\,}
\newcommand\nilm{\Gamma\backslash G}

\newcommand\db{{\bar{\partial}}}

\begin{document}

\title[]{\small{On astheno-K\"ahler nilmanifolds with balanced metrics}}

\keywords{Complex manifold; astheno-K\"ahler metric; balanced metric.}
\subjclass[2010]{53C55; 32J27, 53C15}

\author{Adela Latorre}
\address[A. Latorre]{Departamento de Matem\'atica Aplicada,
Universidad Polit\'ecnica de Madrid,
Avda. Juan de Herrera 4,
28040 Madrid, Spain}
\email{adela.latorre@upm.es}

\author{Luis Ugarte}
\address[L. Ugarte]{Departamento de Matem\'aticas\,-\,I.U.M.A.\\
Universidad de Zaragoza\\
Campus Plaza San Francisco\\
50009 Zaragoza, Spain}
\email{ugarte@unizar.es}


\begin{abstract}
In this paper we study the structure of complex nilmanifolds $X$ admitting some 
special classes of Hermitian metrics, namely, astheno-K\"ahler, strongly Gauduchon and balanced metrics.
We prove that, in complex dimension 4, the existence of a (non necessarily invariant) astheno-K\"ahler metric on $X$ 
implies that the nilmanifold is at most $2$-step and it has first Betti number $\geq 6$. Moreover, the complex structure has a very specific form, sometimes called ``\emph{of special type}'' in the literature.
We also study the interplay between the existence of astheno-K\"ahler metrics and 
that of strongly Gauduchon or balanced metrics. 
A key result is the use of 
some obstructions that are preserved by what we call $\mathfrak{b}$-extensions. This allows us to study the existence of these metrics on important classes of complex nilmanifolds, such as almost abelian, those having maximal nilpotent complex structures, and 8-dimensional nilmanifolds with non-nilpotent complex structures.
We also construct, in every complex dimension $n\geq 4$, complex nilmanifolds admitting both an astheno-K\"ahler metric (possibly also being strongly Gauduchon) and another metric that is balanced. 
As an application, astheno-K\"ahler nilmanifolds with balanced metrics and 
with Fr\"olicher spectral sequence not degenerating at the second or third pages
are found. 
To our knowledge, these are the first compact astheno-K\"ahler manifolds with such properties. 
\end{abstract}

\maketitle

\setcounter{tocdepth}{2} \tableofcontents

\section{Introduction}\label{intro}

\noindent
Compact complex non-K\"ahler manifolds admitting certain types of Hermitian metrics play an important role in geometry and physics. In fact, some results and theories in these areas require that different types of Hermitian metrics coexist on a given compact complex manifold $X$. 
Our main goal in this paper is to study the existence and the interplay between the classes of astheno-K\"ahler and balanced (more generally, strongly Gauduchon) metrics on 
complex nilmanifolds $X=(M,J)$, where the nilmanifold $M$ is endowed with an invariant complex structure $J$. 

Recall that a Hermitian metric $F$ on a complex manifold $X$ is said to be \emph{astheno-Kähler} if $\partial\db F^{n-2}=0$, where $n$ is the complex dimension of $X$. 
In \cite{JY}, Jost and Yau used Hermitian harmonic maps to extend Siu’s rigidity theorem to the case where the domain manifold is astheno-Kähler. 
For $n = 3$, the notion of astheno-Kähler metric and the notion of \emph{SKT} (or \emph{pluriclosed}) metric, defined by the condition $\partial\db F=0$, coincide. In \cite{FT}, Fino and Tomassini proved that 
these classes of Hermitian metrics are no longer the same
in higher dimensions.

The Hermitian metrics satisfying that $F^{n-1}$ is $\partial \db$-closed are 
called \emph{Gauduchon} (or \emph{standard}). By~\cite{Gau}, there exists a Gauduchon metric in the conformal class of
any given Hermitian metric on a compact complex manifold $X$.
An interesting class of Gauduchon metrics is 
that of \emph{balanced} 
Hermitian metrics,
defined by the condition $dF^{n-1}=0$. Important properties of these metrics were first investigated by Michelshon in~\cite{Mi}, where an obstruction to their existence by using currents is given. 

In~\cite{Popovici-invent}, Popovici introduced and studied the class of \emph{strongly Gauduchon} metrics, defined by the condition
$\partial F^{n-1}=\db\alpha$, for some complex form $\alpha$ of bidegree $(n,n-2)$ on $X$. Popovici proved an intrinsic
characterization  
in terms of the non-existence of certain currents, and used such metrics to study the holomorphic deformation limits of projective manifolds (see ~\cite{Popovici-invent} for more details).  
It is clear  from the definitions that 
any balanced metric is strongly Gauduchon, and that any strongly Gauduchon metric is standard.

\vskip.1cm

We recall below some results where the coexistence of an astheno-K\"ahler metric and another balanced, or strongly Gauduchon, Hermitian metric on the manifold $X$ becomes relevant. 

\vskip.1cm

On the one hand, 
Tosatti and Weinkove proved in \cite{TW-Crelle} Calabi-Yau theorems for Gauduchon and strongly Gauduchon metrics on the class of compact astheno-K\"ahler manifolds.
In \cite{STW}, Sz\'ekelyhidi, Tosatti and Weinkove 
proved 
that the Calabi-Yau type equation introduced in \cite{FWW2010} 
can be solved  
on astheno-K\"ahler manifolds with a balanced metric. 
Moreover, 
they observed that in this case 
the conjectures in 
\cite[Conjectures 4.1 and 4.2]{Tosatti} would hold,  
providing in particular
a Calabi-Yau theorem for balanced metrics in the Chern-Ricci flat setting on the class of compact astheno-K\"ahler manifolds. 
This led them to pose the question
about the existence of examples of non-K\"ahler compact complex manifolds admitting both balanced and astheno-K\"ahler metrics.

 A conjecture in \cite{Fei}, formulated as part of a folklore conjecture, stated that 
a compact balanced manifold cannot admit any astheno-K\"ahler metric, unless it is K\"ahler. However, 
in \cite[Proposition 4.8 and Corollary 4.10]{FU} a counterexample in real dimension $8$ can be found. 
Other counterexamples are constructed in \cite{FGV} in the simply connected $22$-dimensional case, and in \cite{LU-cr} on nilmanifolds 
of every real dimension $2n\geq 8$.  
Note that for $n=3$ this is an open question; indeed, the Fino-Vezzoni conjecture \cite{FV1,FV2} states that any compact complex manifold $X$ that admits both an SKT metric and a balanced metric must be a K\"ahler manifold.

\vskip.1cm

On the other hand, Matsuo and Takahashi proved in \cite{MT} that if a given Hermitian metric $F$ satisfies the astheno-K\"ahler condition and the balanced condition, then  
$dF=0$, i.e. it is a K\"ahler metric. However, there are compact complex non-K\"ahler manifolds of complex dimension 3 with Hermitian metrics that are both SKT (astheno-K\"ahler) and strongly Gauduchon. 
In fact, the invariant complex structures on 6-dimensional nilmanifolds 
that admit metrics of this type are classified in \cite[Proposition 2.1]{OUV}. To our knowledge, no general existence or classification results on nilmanifolds with strongly Gauduchon astheno-K\"ahler metrics 
have been obtained in higher dimensions.

Astheno-K\"ahler and strongly Gauduchon 
metrics play an important role in \cite{PZZ}, where 
C. Pan, C. Zhang and X. Zhang study 
the non-abelian Hodge correspondence on certain non-K\"ahler manifolds. More concretely, in their extension of the classical Corlette-Simpson correspondence to a non-K\"ahler setting, they consider compact complex manifolds $X$ with $\dim_{\C} X=n$ endowed with a Hermitian metric $F$ 
satisfying 
the following three conditions: $F$ is Gauduchon, $F$ is 
astheno-K\"ahler (i.e. $\partial\db F^{n-1}=0=\partial\db F^{n-2}$), and 
$\displaystyle\int_X \partial[\eta]\wedge F^{n-1} =0$,  
for any Dolbeault cohomology class $[\eta]\in H^{0,1}_{\db}(X)$ (see \cite[Theorem 1.5]{PZZ}). Note that any left-invariant Hermitian metric on a complex nilmanifold which is at the same time astheno-Kähler and strongly Gauduchon satisfies the three conditions above.

\vskip.15cm

As we already mentioned, the aim of this paper is to study 
the existence and the interplay between the classes of astheno-K\"ahler and balanced, or more generally strongly Gauduchon, metrics on 
complex nilmanifolds. 
By a \emph{complex nilmanifold} $X=(M,J)$ we mean a nilmanifold $M=\nilm$ endowed with an \emph{invariant} 
complex structure $J$, i.e. a complex structure coming from a left-invariant one on the Lie group~$G$. 
In what follows, AK stands for ``astheno-K\"ahler'' and sG for ``strongly Gauduchon''.

\vskip.12cm

For our study of AK nilmanifolds, we first consider an obstruction to the existence of AK metrics given by Sferruzza and Tomassini 
in \cite[Lemma 3.5]{ST-MathZ}, which is expressed in terms of  the $(2,2)$-component of the $dd^c$ operator acting on certain real 2-forms on the manifold. We shall refer to it as the \emph{AK obstruction}. After revisiting in Proposition~\ref{classif-SKT} the complex 3-dimensional case, 
in Section~\ref{sec:estructurageneral} we first prove 
that the AK obstruction is preserved by $\frb$-extensions. Here $\frb$ is any proper $J$-invariant ideal in the Lie algebra $\frg$ underlying the complex nilmanifold $X=(M,J)$. 
This allows us 
to describe the structure of AK nilmanifolds of complex dimension~4 in Theorem~\ref{general-structure-of-aK-dim8}. 
In particular, the existence of a generic (i.e. non necessarily left-invariant) AK metric on a complex  nilmanifold $X=(M,J)$ with $\dim_{\C} X=4$, implies that $M$ has first Betti number $b_1(M)\geq 6$ and its underlying Lie algebra $\frg$ is at most $2$-step. 
Furthermore, the complex structure $J$ must be \emph{of special type}, in the terminology of \cite{ST-MathZ}. 
As an application, we show in 
Propositions~\ref{Extension-ST} and~\ref{Extension-ST-bis} that the AK condition plays a key role in the geometric-Bott-Chern-formality \cite{CT-arxiv,ST-JGA}  of complex nilmanifolds of complex dimension~4.

In Section~\ref{sec:no-sG}, we focus on the interplay between the existence of AK metrics and that of sG or balanced metrics, and we 
study it on several important classes of complex nilmanifolds.
In Proposition~\ref{condition-for-non-sG} we consider 
an obstruction to the existence of sG metrics in terms of 
certain complex $(0,1)$-forms 
on the manifold that 
we call the \emph{sG obstruction}. It turns out that the sG obstruction is also preserved by $\frb$-extensions. This allows us to find 
complex $n$-dimensional nilmanifolds, for every $n\geq 3$, that are AK but not sG (see Proposition~\ref{abel-AK-noSG-dim-n}), as well as 
complex nilmanifolds that are neither AK nor sG (see Proposition~\ref{max-nilpotent}). In particular, we prove that the nilmanifolds endowed with \emph{maximal nilpotent} complex structures
introduced by Gao, Zhao and Zheng 
in \cite{GaoZZ} 
do not admit any AK or sG (or balanced) metric. 

As another application of the preservation by $\frb$-extensions of the AK and sG obstructions, we study in Proposition~\ref{almost-ab-prop} the class of 
\emph{almost abelian} complex nilmanifolds $X$. 
The complex structure equations of any such $X$ are characterized by 
Andrada, Arroyo, Barberis, Rollenske and Wehle 
in~\cite{AABRW} in terms of a tuple 
$(k_i)_{i=0}^r$ of integer numbers. We prove that 
$X$ has an sG (or balanced) metric if and only if $k_0=0$, whereas 
$X$ is AK if and only if $k_i=1$ for every $0\leq i\leq r$ (i.e. $X$ is the product of the Kodaira-Thurston manifold and a torus).
On the other hand, in Subsection~\ref{subsec-non-nilp} we identify the complex structures of \emph{non-nilpotent type} in real dimension 8 that do not admit any sG metric. 
As a consequence, we provide a restriction on the first Betti number of sG nilmanifolds in
Corollary~\ref{Betti-for-sG-dim8}.

In Section~\ref{sec:Finvariante}, 
we study the properties of those Hermitian metrics on complex nilmanifolds $X=(M,J)$ 
that come from left-invariant ones on the corresponding Lie group. 
This allows us to restrict our attention to the Lie algebra level $(\frg,J)$. 
In Proposition~\ref{reducc-cond-met-prop}, we characterize the AK and the balanced conditions on central extensions, thus covering the class of quasi-nilpotent complex structures $J$ on $\frg$. 
Combining this result with others, 
we provide in Theorem~\ref{main-theorem-2} a complete study of the existence of balanced and sG metrics on nilmanifolds of real dimension 8 endowed with non-nilpotent complex structures. 
In addition, several structure results are obtained in Theorems~\ref{classif-aK-dim8-invariante-theorem}\! -- \!\ref{classif-aK-dim8-invariante-balanced} 
for 4-dimensional complex nilmanifolds admitting \emph{invariant} AK metrics and 
a balanced or sG metric. 
We then extend the previous results to the construction, in every complex dimension $n\geq 4$, of complex nilmanifolds admitting both an AK metric (possibly sG) and another metric that is balanced
(see Proposition~\ref{existenciaAK-balanced-n-dim}).

In Section~\ref{FSS} we study the Fr\"olicher spectral sequence (FSS for short) $E_r(X)$ of some 
compact complex manifolds $X$ with AK and balanced metrics. 
It is well known that compact K\"ahler manifolds (or $\partial\db$-manifolds) have FSS degenerating at the first page, whereas the Iwasawa manifold satisfies $E_1\ne E_2=E_\infty$. The first examples of compact complex manifolds with $E_2\ne E_\infty$ were constructed by Cordero, Fern\'andez and Gray in \cite{CFG-CR} by means of complex nilmanifolds.  As far as we know, all the examples in the literature of complex nilmanifolds with $E_2\ne E_\infty$ are those given in 
\cite{BR,CFG-CR,CFG-illinois,CFGU97} 
and the more recent examples in \cite{LUV-Fro} and \cite{KS}. 
We prove in Propositions~\ref{Prop-CFG}\! --\! \ref{Prop-FSS-Kasuya-Stelzig} that all these examples do not support any AK metric. 

In Theorem~\ref{FSS-AK-balanced-5dim} we construct 5-dimensional AK nilmanifolds with balanced metrics  and with $E_3\ne E_\infty$. To our knowledge, these are the first compact complex manifolds with such properties (see Remark~\ref{sec6-first-examples}). 
Other examples of balanced AK nilmanifolds in complex dimension 4 with $E_2\ne E_\infty$ are given in Proposition~\ref{Prop-FSS-dim4}. 

\section{An obstruction to the existence of astheno-K\"ahler metrics}\label{AKobstruction-section}

\noindent
We begin this section recalling some essential notions about complex manifolds that will serve us to fix the notation used throughout this paper. We pay special attention to Hermitian metrics and the relations among different
classes of them. Later, we focus on nilmanifolds endowed with invariant complex structures and describe
some key facts about them, both concerning their structure and the existence of Hermitian metrics. 

\vskip.1cm

Let $M$ be a $2n$-dimensional differentiable manifold.
An \emph{almost complex structure} $J$ on $M$ is an endomorphism
of the space $\mathfrak X(M)$ of smooth vector fields on $M$ satisfying $J^2=-Id$. Then, $(M,J)$ 
is called an almost complex manifold.
The almost complex structure $J$ induces a natural bigraduation on the space of complex $k$-forms 
$$\Omega^k_{\mathbb C}(M)=\bigoplus_{p+q=k}\Omega^{p,q}(M).$$
We will denote the projection of each $\alpha\in\Omega^k_{\mathbb C}(M)$
onto its $(p,q)$ part by $\alpha^{p,q}$, namely, $\alpha^{p,q}:=\pi^{p,q}(\alpha)$ with 
$\pi^{p,q}:\Omega^{k}_{\mathbb C}(M)\to\Omega^{p,q}(M)$ and $k=p+q$.

An almost complex structure $J$ on $M$ is \emph{integrable} when its Nijenhuis tensor
\begin{equation}\label{nijenhuis}
N_J(X,Y):=[X,Y]+J[JX,Y]+J[X,JY]-[JX,JY],\quad X,Y\in\mathfrak X(M),
\end{equation}
vanishes identically. Then, $J$ is said to be a \emph{complex structure} on $M$ as, 
by the celebrated Newlander-Nirenberg
Theorem~\cite{NN}, the almost complex manifold $(M,J)$ turns to be a complex manifold.
Indeed, the aforementioned result characterizes every $n$-dimensional complex manifold $X$ as a pair $(M,J)$ 
where $M$ is a $2n$-dimensional differentiable manifold and $J$ is a complex structure on it. Moreover, the
integrability of the almost complex structure $J$ is equivalent to the condition
$$d\alpha\in\Omega^{2,0}(M)\oplus\Omega^{1,1}(M),$$
for every $\alpha\in\Omega^1_{\mathbb C}(M)$, where $d$ denotes the exterior differential. In particular,
on every complex manifold $(M,J)$ one can decompose $d=\partial+\bar{\partial}$, with
$$\partial:\Omega^{p,q}(M)\to\Omega^{p+1,q}(M)
\quad\text{and}\quad
\bar{\partial}:\Omega^{p,q}(M)\to\Omega^{p,q+1}(M).$$

A \emph{Hermitian metric} $g$ on an $n$-dimensional complex manifold $X=(M,J)$ is a Riemannian metric on $M$
that satisfies $g(J\cdot,J\cdot)=g(\cdot,\cdot)$. 
Every Hermitian metric $g$ on $X$ has an associated $2$-form given
by $F(\cdot,\cdot):=g(J\cdot,\cdot)$, which is a real and positive (1,1)-form on $X$. 
Since $g$ and $F$ are in one-to-one correspondence, we will indistinctly refer to the Hermitian metric 
as $g$ or $F$.

Probably, one of the best known classes of Hermitian metrics is that of \emph{K\"ahler} metrics. These are 
characterized by the condition $dF=0$, which makes the fundamental form $F$ be a symplectic form. However,
there are complex manifolds that are not K\"ahler. In contrast, it was proved in~\cite{Gau} that every compact complex
manifold admits a \emph{Gauduchon metric}, namely, a Hermitian metric that
satisfies $\partial\bar{\partial}F^{n-1}=0$. We next present other
special types of Hermitian metrics that will appear in this paper.

\begin{definition}\label{def-metricas}
Let $X$ be a complex manifold with $\dim_{\C}X=n$. A Hermitian metric $F$ is said to be
\begin{itemize}
\item[\textit{(i)}] \emph{balanced}, if $dF^{n-1}=0$;
\item[\textit{(ii)}] \emph{strongly Gauduchon} \emph{(sG)}, if there exists an $(n,n-2)$-form $\alpha$ such that $\partial F^{n-1}=\bar{\partial}\alpha$;
\item[\textit{(iii)}] \emph{pluriclosed} or \emph{strong K\"ahler with torsion} \emph{(SKT)}, if $\partial\bar{\partial}F=0$;
\item[\textit{(iv)}] \emph{astheno-K\"ahler} \emph{(AK)}, if $\partial\bar{\partial}F^{n-2}=0$.
\end{itemize}
\end{definition}

It is worth to note that there exist some interesting relations among the aforementioned types of Hermitian metrics.
For instance, one has
$$\big\{ \text{K\"ahler metrics} \big\}
 \ \ \subsetneq \ \ 
\big\{ \text{balanced metrics} \big\}
 \ \ \subsetneq \ \ 
\big\{ \text{sG metrics} \big\}
 \ \ \subsetneq \ \ 
\big\{ \text{Gauduchon metrics} \big\},$$
therefore, both balanced and sG metrics can be seen as generalizations of K\"ahler metrics. Moreover, it is proved in \cite{Popovici-Pisa} that the above relations also hold for compact complex manifolds. For instance,  the Iwasawa manifold is balanced and does not admit any Kähler metric. In \cite[Propositions 5.3 and 5.5]{COUV} one can find sG nilmanifolds that do not support any balanced metric as well as complex nilmanifolds not admitting any sG metric.

\vskip.15cm

A natural class where studying the special Hermitian metrics presented in Definition~\ref{def-metricas} is that of 
nilmanifolds endowed with complex structures.
A \emph{nilmanifold} $M=\nilm$ is a compact quotient of a connected and simply connected real nilpotent Lie 
group and a lattice. 
By~\cite{BG, Has}, it is well-known that the only K\"ahler nilmanifolds are complex tori.
In this work we focus on nilmanifolds endowed with \emph{invariant complex structures}, namely, 
those defined on the Lie algebra $\frg$ of $G$. For the seek of simplicity, we will refer
to a nilmanifold with invariant complex structure as, simply, a \emph{complex nilmanifold}. 
Due to~\cite{Malcev},
the existence of a basis of $\frg$ where the structure constants are rational numbers characterizes the
existence of a lattice $\Gamma$ that makes $M=\nilm$ a compact quotient. Hence, the classification of
nilmanifolds with invariant complex structures is directly related to the classification of 
rational nilpotent Lie algebras with complex structures.
We also recall that, by Nomizu's Theorem \cite{Nomizu}, the de Rham cohomology $H^*_{\mathrm{dR}}(M;\mathbb R)$ of the nilmanifold $M$ is isomorphic to the cohomology of its underlying Lie algebra $\frg$. So, the Betti numbers of $M$ can be directly obtained from $\frg$, in particular, $b_1(M)=\dim \{\ker d\colon \frg^* \longrightarrow \bigwedge^2\frg^*\}$.

In this work, we use the standard abbreviated notation to describe a real Lie algebra $\frg$. 
In particular, each $m$-dimensional $\frg$ is given by an $m$-tuple whose $k$-th component
contains the differential of the $k$-th element of its basis. 
For instance, the tuple $(0,0,12,13)$ denotes the $4$-dimensional real Lie algebra $\frg$ with a basis 
$\{e^k\}_{k=1}^4$ for $\frg^*$ satisfying
$de^1=de^2=0$, $de^3=e^1\wedge e^2$, $de^4=e^1\wedge e^3$. For the seek of simplicity, 
in the following we will
simply write $e^{ij}$ instead of $e^i\wedge e^j$ (similarly when the wedge product of complex forms arises).

Let $\frg$ be a Lie algebra admitting a complex structure $J$, i.e. an endomorphism of $\frg$ satisfying $J^2=-Id$ 
and $N_J\equiv 0$, where $N_J$ denotes the Nijenhuis tensor~\eqref{nijenhuis}. 
Given any basis $\{\omega^k\}_{k=1}^n$ for $\frg^{1,0}$, 
the differentials of the forms $\omega^k$, $k=1,\ldots,n$, 
define a set of 
\emph{complex structure equations} for $(\frg,J)$. 
Conversely, a set of complex structure equations completely determines 
both a Lie algebra $\frg$ and a complex structure $J$ on $\frg$.
A characterization of those $(\frg,J)$, with $\frg$ \emph{nilpotent} and $J$ complex structure on $\frg$, in terms of a certain (1,0)-basis $\{\omega^k\}_{k=1}^n$ is given by Salamon in \cite{Salamon}. 

\vskip.1cm

Let $(\frg,J)$ be a nilpotent Lie algebra with complex structure. The \emph{ascending $J$-compatible series} $\{\fra_k(J)\}$
of $\frg$ introduced in~\cite{CFGU-dolbeault} is given by
\begin{equation}\label{sucesion-a}
\fra_0(J)=\{0\},\quad\  
\fra_k(J)=\{X\in\frg \mid [X,\frg]\subseteq \fra_{k-1}(J)\ {\rm and\ } [JX,\frg]\subseteq \fra_{k-1}(J)\}, \text{ for } k\geq 1.
\end{equation}
Each term $\fra_k(J)\neq \{0\}$ is a $J$-invariant ideal of $\frg$ and so it has even dimension. Moreover, 
$\fra_1(J)$ is the largest $J$-invariant subspace contained in the center $\mathcal Z(\frg)$ of $\frg$, i.e. $\fra_1(J)=\mathcal Z(\frg)\cap J(\mathcal Z(\frg))$. The interest in this series
comes from the fact that it allows us to distinguish different types of complex structures on nilpotent Lie algebras:

\begin{definition}\cite{CFGU-dolbeault, LUV1}\label{def-tipos-Js}
A complex structure $J$ on a nilpotent Lie algebra $\frg$ is called
\begin{itemize}
\item[\textit{(i)}] \emph{nilpotent}, if there exists an integer $t>0$ such that $\fra_t(J)=\frg$;
\item[\textit{(ii)}] \emph{non-nilpotent}, if for every integer $t>0$ one has $\fra_t(J)\neq\frg$; moreover, $J$ is called 
	\begin{itemize}
	\item[\textit{(a)}] \emph{strongly non-nilpotent (}\emph{SnN} for short), if $\fra_1(J)=\{0\}$,
	\item[\textit{(b)}] \emph{weakly non-nilpotent (}\emph{WnN} for short), if $\fra_1(J)\neq\{0\}$;
	\end{itemize}
\item[\textit{(iii)}] \emph{quasi-nilpotent}, if $\fra_1(J)\neq\{0\}$. 
\end{itemize}
\end{definition}

We observe that every nilpotent complex structure is quasi-nilpotent. 
Moreover, in dimensions greater than or equal to 8, there exist quasi-nilpotent complex structures that are non-nilpotent: they are of WnN type 
(see \cite{LUV1} for more details).

We also recall 
that a special class of nilpotent complex structures is the one constituted by the \emph{abelian} complex structures $J$, which satisfy $[Jx,Jy]=[x,y]$, for every $x,y\in \frg$ \cite{ABD}.

\smallskip

Concerning the existence of (non-K\"ahler) Hermitian metrics on complex nilmanifolds, it is worth to
observe that, for some of them, it can be directly detected at the Lie algebra level. More precisely, let $X=(\nilm,J)$
be a complex nilmanifold and denote $\frg$ the Lie algebra of $G$. Given
a Hermitian metric $F$ on $X$, the symmetrization
process introduced in~\cite{Belgun} allows to find an \emph{invariant} Hermitian metric $\tilde F$ on $X$, namely,
a metric defined on $(\frg,J)$.
Indeed, if $F$ is a balanced metric then $\tilde F$ is also balanced \cite{FG}, and similar results hold for
sG metrics \cite{COUV} and for SKT metrics \cite{Ug}. Unfortunately, the symmetrization process does not apply, a priori, to AK metrics. For this
reason, obstruction results such as the one given below can be of special interest: 

\begin{lemma}\label{lemaST}\cite[Lemma 3.5.]{ST-MathZ}
Let $X$ be a compact complex manifold of complex dimension~$n$. Let $\alpha$ be a
$(2n - 2p - 2)$-form which is not $dd^c$-closed and such that
\begin{equation}\label{ST-condition}
(dd^c\alpha)^{n-p,n-p}= \sum_{k} c_k\,\psi^k\wedge \bar{\psi}^k,
\end{equation}
with $\psi^k$
simple $(n - p, 0)$-covectors and $c_k \neq 0$ constants having the same sign. Then $X$ does
not admit any $p$-pluriclosed form.
In particular,

$\bullet$ for $p = 1$, $X$ does not admit SKT metrics;

$\bullet$ for $p = n - 2$, $X$ does not admit astheno-K\"ahler metrics.

\end{lemma}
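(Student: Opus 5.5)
The argument is by contradiction, pairing a hypothetical $p$-pluriclosed form with $dd^c\alpha$ and integrating by parts. A $p$-pluriclosed form is a real, strictly positive $(p,p)$-form $\Omega$ with $\partial\db\Omega=0$, equivalently $dd^c\Omega=0$. For $p=1$ these are the fundamental forms of SKT metrics, and for $p=n-2$ the forms $F^{n-2}$ of astheno-K\"ahler metrics. So the two bullet points follow at once from the general statement, and the task reduces to showing that no such $\Omega$ can exist.

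Suppose $\Omega$ is such a form and consider $I=\int_X dd^c\alpha\wedge\Omega$. Since $\alpha$ has degree $2n-2p-2$, the product $dd^c\alpha\wedge\Omega$ is a top-degree form. Because $\Omega$ has pure bidegree $(p,p)$, only the $(n-p,n-p)$-component of $dd^c\alpha$ contributes. On the one hand, Stokes' theorem on the compact manifold $X$ lets us move $dd^c$ across the wedge: $\int_X dd^c\alpha\wedge\Omega=\int_X\alpha\wedge dd^c\Omega$, with signs handled by the usual graded identities. Since $dd^c\Omega=0$, this gives $I=0$.

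On the other hand, by \eqref{ST-condition},
$$I=\sum_k c_k\int_X\psi^k\wedge\bar\psi^k\wedge\Omega.$$
Each $\psi^k$ is a simple $(n-p,0)$-covector, so at each point $i^{(n-p)^2}\psi^k\wedge\bar\psi^k$ is a weakly (indeed decomposably) positive $(n-p,n-p)$-form. Its wedge with the strictly positive $(p,p)$-form $\Omega$ is therefore a non-negative multiple of the volume form, and it is strictly positive wherever $\psi^k\neq0$. Absorbing the constant $i^{(n-p)^2}$ into the $c_k$, all the $c_k$ share one sign, so $I$ is a sum of integrals of non-negative functions, each multiplied by constants of the same sign. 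Hence $I=0$ forces $\psi^k\wedge\bar\psi^k\wedge\Omega\equiv0$ for every $k$. Strict positivity of $\Omega$ then gives $\psi^k\equiv0$, so $(dd^c\alpha)^{n-p,n-p}=0$.

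The remaining step, which I expect to be the main obstacle, is to conclude from this that $dd^c\alpha=0$, contradicting the hypothesis. One issue is that $dd^c\alpha$ has components of bidegrees other than $(n-p,n-p)$ when $\alpha$ is not pure. I would handle this by replacing $\alpha$ with its $(n-p-1,n-p-1)$-component from the start, since only that component affects the $(n-p,n-p)$-part of $dd^c\alpha=2i\partial\db\alpha$. However, I suspect the intended reading of the lemma is the weaker one: the contradiction arises as soon as some $\psi^k\neq0$, i.e.\ when the sum in \eqref{ST-condition} is nonzero. Under that reading the hypothesis ``not $dd^c$-closed'' is used precisely to guarantee that this sum is nonzero. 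I would state that interpretation explicitly, since with it the argument closes: $I\neq0$, contradicting $I=0$.
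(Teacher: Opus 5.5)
The paper gives no proof of this lemma; it quotes it from Sferruzza--Tomassini. Your argument is the standard proof and is correct: pair a hypothetical $p$-pluriclosed $\Omega$ with $dd^c\alpha$, integrate by parts to get $0$, then use that $\Omega\wedge i^{(n-p)^2}\psi\wedge\bar\psi$ is a positive multiple of the volume form for every nonzero simple $\psi$. That positivity is exactly the definition of $\Omega$ being transverse, i.e.\ strictly weakly positive. What you need about $i^{(n-p)^2}\psi^k\wedge\bar\psi^k$ is that it is \emph{strongly} positive, not merely weakly positive, and this is where simplicity of the $\psi^k$ enters.

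On the step you flagged, your second reading is the right one, and your first fix does not work. Replacing $\alpha$ by $\alpha^{n-p-1,n-p-1}$ gives a form with $dd^c\alpha^{n-p-1,n-p-1}=(dd^c\alpha)^{n-p,n-p}$. But the hypothesis that $\alpha$ is not $dd^c$-closed says nothing about this component.

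In fact the statement is false if the right-hand side of \eqref{ST-condition} is allowed to vanish. Take the K\"ahler torus $\C^3/\mathbb{Z}[i]^3$, so $n=3$, $p=1$, and let $\alpha=\beta+\bar\beta$ with $\beta=\cos(2\pi x_3)\,dz_1\wedge dz_2$, where $z_3=x_3+iy_3$. Then $dd^c\alpha=2i(\partial\db\beta+\partial\db\bar\beta)\neq 0$. This form has bidegrees $(3,1)+(1,3)$, so $(dd^c\alpha)^{2,2}=0$, yet the torus carries SKT metrics.

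So the lemma must be read with $(dd^c\alpha)^{n-p,n-p}\neq 0$, i.e.\ some $\psi^k\not\equiv 0$, and then the clause ``not $dd^c$-closed'' is redundant. This is how the paper actually uses it: Definition~\ref{condition-no-AK} drops that clause, and every application exhibits a nonzero right-hand side.
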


We recall that a \emph{$p$-pluriclosed form} on $X$ is a $\partial\bar{\partial}$-closed, transverse $(p,p)$-form on $X$ (see \cite{ST-MathZ} for more details).
We also remind that for complex manifolds one has $dd^c=2\,i\,\partial\bar{\partial}$, and  simple $(n - p, 0)$-covectors are just decomposable $(n - p, 0)$-forms. Motivated by the previous result, we introduce the following:

\begin{definition}\label{condition-no-AK}
We say that a compact complex manifold $X$ satisfies the \textbf{AK obstruction} if there exists a real $2$-form 
$\alpha$ on $X$ satisfying 
\begin{equation}\label{2-condition}
(dd^c\alpha)^{2,2}= \sum_{k} c_k\,\psi^k\wedge \bar{\psi}^k,
\end{equation}
with $\psi^k$ decomposable $(2, 0)$-forms and 
$c_k \in \R^*$ having the same sign. 
\end{definition}

It is worth to remark that Chiose and R\u{a}sdeaconu provided in \cite{Chiose-Rasdeaconu} 
an obstruction to the existence of AK metrics in terms of the Bott-Chern and Aeppli cohomologies of $X$. Due to the fact that complex nilmanifolds are determined by explicit complex structure equations, the obstruction in Definition~\ref{condition-no-AK} appears to be more suitable for the class of nilmanifolds.

\vskip.1cm

Let $X=(M,J)$ be a complex nilmanifold with Lie algebra $\frg$. The following implications hold:

\vskip.3cm

\hskip.15cm  AK obstruction on $(\frg,J)$  $\Longrightarrow$ 
 AK obstruction on $X$ 
$\Longrightarrow$  
$X$ does not admit any AK metric.

\vskip.3cm

In fact, the first implication is a direct consequence of the natural injection of the Lie algebra complex $(\Lambda^*\frg^*,d)$ into the De Rham complex $(\Omega^*(X),d)$ of the nilmanifold. Note that this holds for every quotient manifold $\nilm$ where $G$ is not necessarily nilpotent. The second implication is given in Lemma~\ref{lemaST} for $p=n-2$.

We do not know if the converses of these implications hold in general.  
Nevertheless, our first result is that for complex nilmanifolds of
complex dimension 3 the above conditions are equivalent. 
Note that in this dimension the AK metrics are the same as the SKT metrics. 

Let $\frg$ and $\frg'$ be Lie algebras endowed with respective complex structures $J$ and $J'$. We recall that the pairs $(\frg,J)$ and $(\frg',J')$ are said to be 
\emph{equivalent} (or \emph{isomorphic}) if there exists an isomorphism of Lie algebras $f:\frg\to\frg'$ satisfying $f\circ J=J'\circ f$.

\begin{proposition}\label{classif-SKT}
Let $X=(M,J)$ be a complex nilmanifold of complex dimension $3$, 
and let $\frg$ be the underlying Lie algebra. Then, 
the AK obstruction on $(\frg,J)$ is equivalent to the non-existence of SKT metrics on $X$.
In particular, the AK obstruction is not satisfied
if and only if 
the pair $(\frg,J)$ is isomorphic to one (and only one)
in the following list: 
\begin{equation*}
\begin{split}
(\frh_{1},J) \ \ & d\omega^1 =d\omega^2 =d\omega^3=0 \text{\bf \ \ (unique)},\\
(\frh_{2},J) \ \ & d\omega^1 =d\omega^2 =0,\ 
			d\omega^3=\omega^{1\bar1}+i\,\omega^{2\bar2} \text{\bf \ \ (unique)},\\
(\frh_{2},J'_{y}) \ \ & d\omega^1 =d\omega^2 =0\\[-2pt]
	& d\omega^3=\omega^{12} + \omega^{1\bar1}+ \omega^{1\bar2}+ (1+iy)\,\omega^{2\bar2},
		\ y\in(0,+\infty) \text{\bf \ \ (1-parameter family)},\\
(\frh_{4},J) \ \ & d\omega^1 =d\omega^2 =0,\ 
			d\omega^3=\omega^{12} + \omega^{1\bar1}+ \omega^{1\bar2}+ \omega^{2\bar2} \text{\bf \ \ (unique)},\\
(\frh_{5},J_{y}) \ \ & d\omega^1 =d\omega^2 =0,\ 
			d\omega^3=\omega^{12} +\omega^{1\bar1}+(\nicefrac12+iy)\,\omega^{2\bar2},
			\ y\!\in\!  [0,\nicefrac{\sqrt{3}}{2}\big) \text{\bf \ \ (1-parameter family)},\\
(\frh_{8},J) \ \ & d\omega^1 =d\omega^2 =0,\ d\omega^3=\omega^{1\bar1} \text{\bf \ \ (unique)}.
\end{split}
\end{equation*} 
Here $\frh_1=(0,\ldots,0)$ is the Abelian Lie algebra, and  $\frh_k$ are given by 
$\frh_2=(0,0,0,0,12,34)$,   $\frh_4=(0,0,0,0,12,14+23)$,   $\frh_5=(0,0,0,0,13-24,14+23)$ and  $\frh_8=(0,0,0,0,0,12)$.
\end{proposition}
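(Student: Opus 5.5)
We will use the classification of complex structures on 6-dimensional nilpotent Lie algebras, together with the known classification of SKT complex nilmanifolds of complex dimension 3. By the chain of implications stated before the proposition, the AK obstruction on $(\frg,J)$ implies that $X$ has no AK (equivalently, SKT) metric. So it suffices to prove the converse: whenever $X$ carries no SKT metric, the AK obstruction holds on $(\frg,J)$. We also have to identify the list of pairs for which the obstruction fails.

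\textbf{Non-nilpotent case.} By Salamon and the classification of Ugarte and Villacampa, every complex structure on a 6-dimensional nilpotent Lie algebra admits a $(1,0)$-basis with $d\omega^1=0$, $d\omega^2\in\Lambda^2\langle\omega^1,\bar\omega^1\rangle$ and $d\omega^3\in\Lambda^2\langle\omega^1,\omega^2,\bar\omega^1,\bar\omega^2\rangle$. The structure is nilpotent unless it belongs to the non-nilpotent family on $\frh_{19}^-$ or $\frh_{26}^+$. We treat these two families first. For them, take $\alpha=i\,\omega^{3\bar3}$, which is real. Then $2i\,\partial\db\alpha$ has a $(2,2)$-part equal to $c\,\omega^{12}\wedge\overline{\omega^{12}}$ plus terms that must be checked. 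Since $d\omega^3$ has a nonzero $\omega^{12}$-coefficient (or $\omega^{1\bar2}$-coefficient), we expect to be able to choose $\alpha$, possibly $i\,\omega^{2\bar2}$ or a combination, so that only a single term $c\,\psi\wedge\bar\psi$ with $c\neq0$ survives.

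\textbf{Nilpotent case.} For nilpotent $J$ we may assume $d\omega^1=d\omega^2=0$ and
\[
d\omega^3=\rho\,\omega^{12}+A\,\omega^{1\bar1}+B\,\omega^{1\bar2}+C\,\omega^{2\bar1}+D\,\omega^{2\bar2},
\]
with $\rho\in\{0,1\}$. The natural test form is $\alpha=i\,\omega^{3\bar3}$. A direct computation gives
\[
(dd^c\alpha)^{2,2}=-2\,d\omega^3\wedge\overline{d\omega^3}\ \text{(up to a constant)}=\lambda\,\omega^{12\bar1\bar2},
\]
where $\lambda$ is a real multiple of
\[
|\rho|^2+|B|^2+|C|^2-2\,\Real(A\bar D).
\]
Since $\omega^{12}$ is decomposable, the AK obstruction holds whenever $\lambda\neq0$. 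When $\lambda=0$, we recover exactly the known SKT condition $|\rho|^2+|B|^2+|C|^2=2\Real(A\bar D)$ of Fino, Parton and Salamon. In that case $\partial\db\big(\sum i\,\omega^{j\bar j}\big)=0$, so $X$ carries an invariant SKT metric and, a fortiori, the obstruction cannot hold.

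\textbf{Extra case.} One case needs separate attention: $d\omega^3=0$ but $d\omega^2\neq0$, which happens in the non-nilpotent setting or after reordering. There we would apply the same computation with $\alpha=i\,\omega^{2\bar2}$, or with a positive combination of $i\,\omega^{2\bar2}$ and $i\,\omega^{3\bar3}$.

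\textbf{Reading off the list.} Finally we impose the SKT equation on the normal forms of the Ceballos–Otal–Ugarte–Villacampa classification of complex structures up to equivalence. This yields exactly $\frh_1$, $\frh_2$ (with the two families $J$ and $J'_y$), $\frh_4$, $\frh_5$ (with $y\in[0,\sqrt3/2)$) and $\frh_8$. The parameter ranges come from the inequality $|\rho|^2+|B|^2+|C|^2=2\Real(A\bar D)$ intersected with the equivalence-class representatives. For example, for $\frh_5$ the constraint reads $1=2\cdot\frac12$, and it is compatible only when $\Imag D<\sqrt3/2$ after accounting for the normal-form region.

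The main obstacle is this last bookkeeping: matching the SKT locus precisely with canonical representatives and verifying uniqueness, together with handling the non-nilpotent families, where the single test form must be chosen carefully so that no mixed terms remain.
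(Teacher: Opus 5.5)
Your proposal has two genuine gaps: one nilpotent family is missing entirely, and the non-nilpotent case is not actually carried out.

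\textbf{Nilpotent case.} It rests on the assertion that every nilpotent $J$ admits a coframe with $d\omega^1=d\omega^2=0$, and this is false. In complex dimension $3$ there is a whole family of nilpotent structures
$d\omega^1=0$, $d\omega^2=\omega^{1\bar1}$, $d\omega^3=\rho\,\omega^{12}+B\,\omega^{1\bar2}+c\,\omega^{2\bar1}$ with $(\rho,B,c)\neq(0,0,0)$. This is family (Eq-iii) in the paper's proof. For these structures the closed $(1,0)$-forms are just the multiples of $\omega^1$, so no change of coframe reaches your normal form. They are all non-SKT, so they are exactly cases where the obstruction must be exhibited. Your ``extra case'' ($d\omega^3=0\neq d\omega^2$) is only a relabelling of the case you already treated and does not cover them.

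The fix is to keep $d\omega^2=\varepsilon\,\omega^{1\bar1}$ in your normal form. For $\varepsilon=1$, the Jacobi identity kills the $\omega^{2\bar2}$-term, and the $\omega^{1\bar1}$-term can be absorbed into $\omega^3$. One still has $\partial\db\omega^3=0$, and $\alpha=i\,\omega^{3\bar3}$ gives $(dd^c\alpha)^{2,2}=2(\rho+|B|^2+c^2)\,\omega^{12}\wedge\omega^{\bar1\bar2}\neq0$.

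Separately, your intermediate formula is off: $(dd^c\alpha)^{2,2}$ is not a multiple of $d\omega^3\wedge\overline{d\omega^3}$. When $\partial\db\omega^3=0$ one has $\partial\db(\omega^{3\bar3})=\db\omega^3\wedge\overline{\db\omega^3}-\partial\omega^3\wedge\overline{\partial\omega^3}$, with the two pieces entering with opposite signs. The coefficient $|\rho|^2+|B|^2+|C|^2-2\Real(A\bar D)$ you end up with is nonetheless correct.

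\textbf{Non-nilpotent case.} Your starting structural claim is wrong here too. A coframe with $d\omega^2\in\Lambda^2\langle\omega^1,\bar\omega^1\rangle$ and $d\omega^3\in\Lambda^2\langle\omega^1,\omega^2,\bar\omega^1,\bar\omega^2\rangle$ exists precisely when $J$ is nilpotent. So it is unavailable on $\frh_{19}^-$ and $\frh_{26}^+$, where the equations are $d\omega^1=0$, $d\omega^2=\omega^{13}+\omega^{1\bar3}$, $d\omega^3=i\varepsilon\,\omega^{1\bar1}+i\delta(\omega^{1\bar2}-\omega^{2\bar1})$. Since you stop at ``we expect'', this case is not proved.

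With the correct equations it is a short check. Since $\partial\omega^3=0$, one gets $\partial\db(\omega^{3\bar3})=\db\omega^3\wedge\overline{\db\omega^3}=-2\,\omega^{12}\wedge\omega^{\bar1\bar2}$. Hence $\alpha=i\,\omega^{3\bar3}$ satisfies the obstruction with $4\,\omega^{12}\wedge\omega^{\bar1\bar2}$. The paper instead uses $\alpha=i\,\omega^{2\bar2}$; there the contributions from $\partial\db\omega^2\neq0$ cancel, leaving $4\,\omega^{13}\wedge\omega^{\bar1\bar3}$.

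The rest of your plan agrees with the paper. That is: the obstruction holds exactly when the Fino--Parton--Salamon SKT condition fails, and the list is read off a classification up to equivalence, which the paper also does by citation.
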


\begin{proof}
We recall that, in complex dimension 3, 
complex structures on nilpotent Lie algebras
can be described in terms of four families, with structure equations given as (see \cite{COUV} for more details):
\begin{equation*}
\begin{split}
\text{(Eq-i)}\quad & d\omega^1=d\omega^2=0, \  
			d\omega^3=\rho\,\omega^{12}, \quad \rho\in\{0,1\};\\
\text{(Eq-ii)}\quad & d\omega^1=d\omega^2=0,\  
			d\omega^3=\rho\,\omega^{12} + \omega^{1\bar1} + \lambda\,\omega^{1\bar2} + D\,\omega^{2\bar2},\\[-2pt]
	& \text{where }\rho\in\{0,1\}, \lambda\in\mathbb R^{\geq 0}, \text{ and }D\in\mathbb C \text{ with }\Imag\!(D)\geq 0;\\
\text{(Eq-iii)}\quad & d\omega^1=0,\  d\omega^2=\omega^{1\bar1},\  
			d\omega^3=\rho\,\omega^{12} + B\,\omega^{1\bar2} + c\,\omega^{2\bar1},\\[-2pt]
	& \text{where }\rho\in\{0,1\}, B\in\mathbb C, c\in\mathbb R^{\geq 0}, \text{ with } (\rho,B,c)\neq(0,0,0);\\
\text{(Eq-iv)}\quad & d\omega^1=0, \  d\omega^2=\omega^{13} + \omega^{1\bar3}, \  
			d\omega^3= i\,\varepsilon\,\omega^{1\bar 1} + i\, \delta (\omega^{1\bar2} - \omega^{2\bar1}),\\[-2pt]
	& \text{where }\varepsilon\in\{0,1\}, \text{ and }\delta=\pm 1.
\end{split}
\end{equation*}

We also remember that, by the well-known symmetrization process \cite{Belgun}, the existence of an SKT metric on $X$ is equivalent to its existence on $(\frg,J)$.

Now, for complex structures in (Eq-i) we first observe that $dd^c(\Lambda^2\frg^*)=\langle dd^c(i\,\omega^{3\bar{3}}) \rangle$. 
If we denote $\alpha=i\,\omega^{3\bar{3}}$, then 
$(dd^c\alpha)^{2,2}=2i\partial\db(i\omega^{3\bar{3}})
	=-2\partial\db(\omega^{3\bar{3}})
	=2\rho\,\omega^{12}\!\wedge \omega^{\bar{1}\bar{2}}$.
Hence, the AK obstruction \eqref{2-condition} holds for complex structures in (Eq-i) if and only if $\rho=1$. 
It is well known \cite{FPS} that the existence of an SKT metric for (Eq-i) is equivalent to the condition $\rho=0$. 
The latter corresponds to $(\frh_1,J)$. 

For complex structures in (Eq-ii) one has $dd^c(\Lambda^2\frg^*)=\langle dd^c(i\,\omega^{3\bar{3}}) \rangle$, 
where
$
\big(dd^c (i\,\omega^{3\bar{3}})\big)^{2,2}=2i\partial\db(i\omega^{3\bar{3}})=-2\partial\db(\omega^{3\bar{3}})=
2(\rho+\lambda^2-2\,\Real\!(D))\,\omega^{12}\!\wedge \omega^{\bar{1}\bar{2}}$.
Hence, the condition \eqref{2-condition} is satisfied for (Eq-ii) if and only if $\rho+\lambda^2-2\,\Real\!(D)\neq0$. 
Moreover, the equality $\rho+\lambda^2=2\,\Real\!(D)$ is equivalent to the existence of an SKT metric
 for (Eq-ii), by \cite{FPS} . 
Now, it suffices to apply \cite[Proposition 2.10]{LUV-DGA} (see also \cite{COUV}) for the classification
of complex structures~$J$ in (Eq-ii) that admit SKT metrics. This gives the cases $(\frh_{2},J)$, $(\frh_{2},J'_{y})$, $(\frh_{4},J)$, $(\frh_{5},J_{y})$, and $(\frh_{8},J)$ listed in the proposition. 

To finish the proof, we will show that the AK obstruction \eqref{2-condition} is always satisfied for complex structures in the remaining cases (Eq-iii) and (Eq-iv). It is known that those complex structures do not admit SKT metrics \cite{FPS}. 
Indeed, for (Eq-iii) 
we get that $dd^c(\Lambda^2\frg^*)=\langle dd^c(i\,\omega^{3\bar{3}}) \rangle$, where 
$\big(dd^c (i\,\omega^{3\bar{3}})\big)^{2,2}=2i\partial\db(i\omega^{3\bar{3}})=-2\partial\db(\omega^{3\bar{3}})=
2(\rho+|B|^2+c^2)\,\omega^{12}\!\wedge \omega^{\bar{1}\bar{2}}$. Recall that 
$(\rho,B,c)\neq(0,0,0)$. 
Finally, for (Eq-iv) we have $dd^c(\Lambda^2\frg^*)=\langle dd^c(i\,\omega^{2\bar{2}}), dd^c(i\,\omega^{3\bar{3}}) \rangle$.  In particular, one has
$
\big(dd^c(i\omega^{2\bar{2}})\big)^{2,2}=2i\partial\db(i\omega^{2\bar{2}})=-2\partial\db(\omega^{2\bar{2}})=
4\,\omega^{13}\!\wedge \omega^{\bar{1}\bar{3}}$.
\end{proof}

\section{The structure of AK nilmanifolds up to complex dimension 4}\label{sec:estructurageneral}

\noindent
To study the existence of AK metrics in complex dimensions higher than 3, the notion of $\frb$-extension that we next introduce will be especially useful.

Let $\frg$ be a Lie algebra endowed with a complex structure $J$, and let $\frb$ be a proper $J$-invariant ideal, i.e. $J(\frb)=\frb$ and $[\frb,\frg]\subseteq \frb$, with $\{0\}\neq \frb\neq \frg$. Note that such an ideal $\frb$ always exists when $\frg$ is nilpotent (see Remark~\ref{existe-b} below). 
Then, the quotient Lie algebra $\frg_{\frb}:=\frg\slash\frb$
has an induced complex structure
\begin{equation}\label{induced-Jb}
J_{\frb}(\tilde x):=\widetilde{Jx}, \quad \text{ for all }\tilde x\in\frg_{\frb},
\end{equation}
where $\tilde x$ and $\widetilde{Jx}$ denote, respectively, the classes of $x$ and $Jx$ in $\frg_{\frb}$. That is, the complex structures $J$ and $J_{\frb}$ commute with the natural projection $\pi_\frb \colon\frg\longrightarrow \frg_{\frb}$, i.e. $\pi_\frb \circ J=J_{\frb}\circ \pi_\frb$.

\begin{definition}\label{b-extension}
{\rm
Let $(\frg,J)$ and $(\tilde{\frg},\tilde{J})$ be Lie algebras with complex structures.
Given  
a proper $J$-invariant ideal $\frb$ of $\frg$, 
we say that $(\frg,J)$ is
a} 
\textbf{$\frb$-extension} 
{\rm
of $(\tilde{\frg},\tilde{J})$
if the pairs $(\frg_{\frb},J_{\frb})$ and $(\tilde{\frg},\tilde{J})$ are equivalent. 
In addition, if $\frb$ can be chosen in the center of the Lie algebra $\frg$, then $(\frg,J)$ is called 
a}
\textbf{central $\frb$-extension} 
{\rm 
of $(\tilde{\frg},\tilde{J})$.
Moreover, 
we simply say that $(\frg,J)$ is a}
\textbf{(central) extension} 
{\rm 
of $(\tilde{\frg},\tilde{J})$
if there exists a proper (central) $J$-invariant ideal $\frb$ in $\frg$ such that $(\frg,J)$ is a (central) 
$\frb$-extension of $(\tilde{\frg},\tilde{J})$.  
}
\end{definition}

The following result shows that the AK obstruction is preserved on extensions. More precisely: 

\begin{proposition}\label{extensions-of-non-AK} 
Let $X=(M,J)$ be a complex nilmanifold of complex dimension $n\geq 4$ with
underlying Lie algebra $\frg$. Suppose that
$(\frg,J)$ is an extension of a pair $(\tilde{\frg},\tilde{J})$ that satisfies the AK obstruction.  
Then, $(\frg,J)$ also satisfies the AK obstruction, and so the nilmanifold $X=(M,J)$ does not admit any AK metric.
\end{proposition}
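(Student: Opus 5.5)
Pull back the obstruction from the quotient along $\pi_\frb$. Let $\frb$ be a proper $J$-invariant ideal with $(\frg_\frb,J_\frb)\cong(\tilde\frg,\tilde J)$, and fix this isomorphism. Then $\pi_\frb\colon\frg\to\frg_\frb$ is a surjective Lie algebra morphism intertwining $J$ and $J_\frb$. Its dual $\pi_\frb^*\colon\Lambda^*\frg_\frb^*\to\Lambda^*\frg^*$ is injective, commutes with $d$ (since $\pi_\frb$ preserves brackets), and maps $(p,q)$-forms to $(p,q)$-forms (since $\pi_\frb\circ J=J_\frb\circ\pi_\frb$). Consequently $\pi_\frb^*$ commutes with $\partial$, $\db$, $d^c$ and with the projections $\pi^{p,q}$. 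Concretely, take a $(1,0)$-basis $\{\omega^1,\dots,\omega^m\}$ of $\frg_\frb^{1,0}$ and complete its pullback to a $(1,0)$-basis $\{\omega^1,\dots,\omega^n\}$ of $\frg^{1,0}$, where the added forms span the annihilator complement dual to $\frb$. In this basis the structure equations of $\frg_\frb$ are literally the first $m$ equations of those of $\frg$.

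Now let $\tilde\alpha$ be a real $2$-form on $(\tilde\frg,\tilde J)$ realizing the AK obstruction, so $(dd^c\tilde\alpha)^{2,2}=\sum_k c_k\psi^k\wedge\bar\psi^k$ with the $\psi^k$ decomposable $(2,0)$-forms and all $c_k\neq0$ of the same sign. Set $\alpha:=\pi_\frb^*\tilde\alpha$, which is a real $2$-form on $\frg$. By the compatibilities above,
\[
(dd^c\alpha)^{2,2}=\pi_\frb^*\big((dd^c\tilde\alpha)^{2,2}\big)=\sum_k c_k\,\pi_\frb^*\psi^k\wedge\overline{\pi_\frb^*\psi^k}.
\]
Each $\pi_\frb^*\psi^k$ is a decomposable $(2,0)$-form on $\frg$, because the pullback of a wedge of two $(1,0)$-forms is again such a wedge. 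The constants $c_k$ are unchanged, so they remain nonzero and of the same sign. Hence \eqref{2-condition} holds on $(\frg,J)$.

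Nondegeneracy also needs checking: the obstruction is meaningful only if $dd^c\alpha\neq0$, as in Lemma~\ref{lemaST}. This follows from injectivity of $\pi_\frb^*$. Indeed $(dd^c\tilde\alpha)^{2,2}\neq0$, since its expression is a nontrivial same-sign combination: pairing with a positive form shows it is nonzero, or one may argue via the explicit basis. Therefore $(dd^c\alpha)^{2,2}\neq0$.

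Finally, the implication chain stated before Proposition~\ref{classif-SKT} gives the conclusion. The AK obstruction on $(\frg,J)$ passes to the nilmanifold $X$ through the injection of the Lie algebra complex into the de Rham complex. Lemma~\ref{lemaST} with $p=n-2$ then excludes AK metrics on $X$; the hypothesis $n\ge4$ is what makes a $2$-form the right degree $2n-2p-2$ there.

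The only delicate point is bookkeeping: checking that $\pi_\frb^*$ respects the bigrading and preserves decomposability. Both are immediate in the adapted basis. No real obstacle is expected, and no compactness or lattice compatibility of the quotient is needed, since everything happens at the Lie algebra level.
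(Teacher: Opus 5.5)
Your proposal is correct and is essentially the paper's proof: you pull back the obstructing real $2$-form along $\pi=f\circ\pi_{\frb}$, using that $\pi^*$ commutes with $d$, preserves bidegree and decomposability, and keeps the same constants $c_k$; your nondegeneracy check via injectivity of $\pi^*$ is a detail the paper leaves implicit. One small slip in your last paragraph: for $p=n-2$ one has $2n-2p-2=2$ for every $n$, so $n\geq 4$ is not what fixes the degree of $\alpha$. It is in fact automatic here, since on a nilpotent $\tilde{\frg}$ of complex dimension $2$ the form $dd^c\tilde{\alpha}$ is an exact top-degree form and hence zero, which forces $\tilde{n}\geq 3$ and so $n\geq \tilde{n}+1\geq 4$.
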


\begin{proof}
Let $\frb$ be a proper $J$-invariant ideal in $\frg$ such that $(\frg,J)$ is a 
$\frb$-extension of $(\tilde{\frg},\tilde{J})$, that is, there exists an isomorphism of Lie algebras $f:\frg_\frb\to\tilde\frg$ satisfying $f\circ J_\frb=\tilde J\circ f$, as the pairs $(\tilde{\frg},\tilde{J})$  and 
$(\frg_\frb,J_\frb)$ are equivalent. Define $\pi=f\circ\pi_\frb\colon \frg \longrightarrow \tilde{\frg}$. Then, the complex structures $J$ and $\tilde J$ conmute with $\pi$, i.e. $\pi\circ J=\tilde J\circ\pi$. 
Next, we denote by $d$, resp. $\tilde{d}$, the Lie algebra differential on $\frg$, resp. $\tilde{\frg}$.

Let $\tilde{\alpha}$ be a 
$2$-form on $\tilde{\frg}$ such that 
$(\tilde{d}\tilde{d}^c\tilde{\alpha})^{2,2}= \sum_{k} c_k\,\widetilde{\psi}^k\wedge \overline{\widetilde{\psi}^k}$,
with $\widetilde{\psi}^k$ 
simple $(2, 0)$-covectors and $c_k \neq 0$ constants having the  same sign. Consider $\alpha=\pi^* \tilde{\alpha}$. 
Since $\pi^*$ conmutes with the differentials, the wedge product, and the complex structures, it preserves the bidegree and the simpleness condition. Thus, we get 
$(dd^c\alpha)^{2,2}= \sum_{k} c_k\,\psi^k\wedge \bar{\psi}^k$,
with $\psi^k=\pi^* \widetilde{\psi}^k$
simple $(2, 0)$-covectors and $c_k \neq 0$ the same constants as above. 
\end{proof}

\begin{remark}\label{N-P-obstruction}
One can define, similarly to Definition~\ref{condition-no-AK}, the notion of {\rm $p$-pluriclosed obstruction} by using \eqref{ST-condition} for $(2n - 2p - 2)$-forms. 
Then, if $(\frg,J)$ is an extension of a pair $(\tilde{\frg},\tilde{J})$ that satisfies the $\tilde{p}$-pluriclosed obstruction, then $(\frg,J)$ satisfies the $(\tilde{p}+b)$-pluriclosed obstruction, 
where $b=n-\tilde{n}$ and $2\tilde{n}=\dim \tilde{\frg}$. 
In particular, the nilmanifold $X=(M,J)$ does not admit any $(\tilde{p}+b)$-pluriclosed metric.

Indeed, let $\tilde{\alpha}$ be a 
$(2\tilde{n} - 2\tilde{p} - 2)$-form on $\tilde{\frg}$ such that 
$(\tilde{d}\tilde{d}^c\tilde{\alpha})^{\tilde{n}-\tilde{p},\tilde{n}-\tilde{p}}= \sum_{k} c_k\,\widetilde{\psi}^k\wedge \overline{\widetilde{\psi}^k}$,
with $\widetilde{\psi}^k$ 
simple $(\tilde{n}-\tilde{p}, 0)$-covectors and $c_k \neq 0$ constants having the  same sign. 
As in the proof of Proposition~\ref{extensions-of-non-AK}, consider $\alpha=\pi^* \tilde{\alpha}$ and let $p=\tilde{p}+b$, i.e. $2b=\dim\frb$. Then, $n=\tilde{n}+b$ and $\alpha$ is a $(2n - 2p - 2)$-form on the Lie algebra $\frg$ satisfying  
$(dd^c\alpha)^{n-p,n-p}= \sum_{k} c_k\,\psi^k\wedge \bar{\psi}^k$,
with $\psi^k=\pi^* \widetilde{\psi}^k$ 
simple $(n - p, 0)$-covectors and $c_k \neq 0$ the same constants as above. 

Notice that 
the result for AK metrics given in Proposition~\ref{extensions-of-non-AK} is recovered when $\tilde{n}-\tilde{p}=2\,(=n-p)$. 
\end{remark}

The following result gives characterizations of quasi-nilpotent complex structures (see Definition~\ref{def-tipos-Js}). 

\begin{proposition}\label{no} 
Let $J$ be a complex structure on a nilpotent Lie algebra $\frg$. The following conditions are equivalent:
\begin{itemize}
\item[\textit{(i)}] 
$J$ is quasi-nilpotent;
\item[\textit{(ii)}] 
$(\frg,J)$ is a central extension of some pair $(\tilde{\frg},\tilde{J})$;
\item[\textit{(iii)}]  
There exist positive integer numbers $\tilde{n}, b$ and an ordered basis $\{\omega^{1},\ldots,\omega^{\tilde{n}},\omega^{\tilde{n}+1},\ldots,\omega^{\tilde{n}+b} \}$ of $(1,0)$-forms on $(\frg,J)$ such that 
\begin{equation}\label{caracterizacion}
d\omega^j\in \bigwedge\hskip-0.6cm{\phantom{\bigwedge}}^{(2,0)+(1,1)}_{\C} \langle \omega^{1},\ldots,\omega^{\tilde{n}},\overline{\omega}^{1},\ldots,\overline{\omega}^{\tilde{n}} \rangle, 
\end{equation}
for any $1\leq j\leq \tilde{n}+b$.
\end{itemize}
\end{proposition}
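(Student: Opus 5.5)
I will prove the cycle of implications (i)$\Rightarrow$(ii)$\Rightarrow$(iii)$\Rightarrow$(i). Throughout I use the standard duality between $J$-invariant subspaces of $\frg$ and subspaces of $\frg^{1,0}$: a $J$-invariant subspace $\frb$ of real dimension $2b$ has annihilator $\frb^\perp\cap\frg^{1,0}$ of complex dimension $n-b$.

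\emph{(i)$\Rightarrow$(ii).} If $J$ is quasi-nilpotent, then $\frb:=\fra_1(J)=\mathcal Z(\frg)\cap J(\mathcal Z(\frg))$ is nonzero. It is $J$-invariant and central, so in particular it is an ideal. It is proper, since otherwise $\frg$ would be abelian and we may exclude this. (If $\frg$ is abelian, then $\frb$ can be any proper $J$-invariant subspace, which is automatically central, and this works as long as $\dim_{\C}\geq 2$.) Then $(\frg,J)$ is a central $\frb$-extension of $(\frg_\frb,J_\frb)$ via the induced structure \eqref{induced-Jb}.

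\emph{(ii)$\Rightarrow$(iii).} Let $\frb$ be the proper central $J$-invariant ideal and set $2b=\dim\frb$, $\tilde n=n-b$; both are positive because $\frb$ is proper and nonzero. Choose a $(1,0)$-basis $\omega^1,\dots,\omega^{\tilde n}$ of the annihilator of $\frb$ in $\frg^{1,0}$, which is exactly $\pi_\frb^*(\frg_\frb^{1,0})$. Complete it to a $(1,0)$-basis of $\frg$ by adding $\omega^{\tilde n+1},\dots,\omega^{\tilde n+b}$. Recall that for any 1-form $\theta$ we have $d\theta(x,y)=-\theta([x,y])$. If $x\in\frb$, then $[x,\cdot]=0$ because $\frb$ is central, so $\iota_x d\theta=0$. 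Hence every $d\omega^j$ lies in $\Lambda^2(\frb^\perp)_{\C}$, and $\frb^\perp\otimes\C=\langle\omega^1,\dots,\omega^{\tilde n},\bar\omega^1,\dots,\bar\omega^{\tilde n}\rangle$. By integrability $d\omega^j$ has no $(0,2)$ part, which gives \eqref{caracterizacion}.

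\emph{(iii)$\Rightarrow$(i).} Let $\frb\subset\frg$ be the real subspace annihilated by $\omega^1,\dots,\omega^{\tilde n}$ and their conjugates. It is $J$-invariant and has real dimension $2b>0$. I claim $\frb\subseteq\mathcal Z(\frg)$. For $x\in\frb$ and $y\in\frg$, every $d\omega^j$ is a sum of wedge products of forms from $\{\omega^k,\bar\omega^k\}_{k\le\tilde n}$, all of which vanish on $x$. Hence $\omega^j([x,y])=-d\omega^j(x,y)=0$ for all $j$, and also $\bar\omega^j([x,y])=0$ by conjugation. Since these forms span $\frg^*_{\C}$, we get $[x,y]=0$. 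Thus $\frb\subseteq\mathcal Z(\frg)\cap J\mathcal Z(\frg)=\fra_1(J)$, so $\fra_1(J)\ne\{0\}$ and $J$ is quasi-nilpotent.

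The main obstacle is only bookkeeping: proving the identification of $\frb^\perp$ with the span of $\omega^k,\bar\omega^k$ for $k\le\tilde n$, and noting the degenerate abelian case in the first step. Nilpotency of $\frg$ is not really needed except to guarantee that the definitions make sense.
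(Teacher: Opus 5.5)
Your proof is correct and follows essentially the same route as the paper. The paper proves (iii)$\Rightarrow$(i) by showing that the real and imaginary parts of the vectors dual to $\omega^{\tilde n+1},\dots,\omega^{\tilde n+b}$ span a $J$-invariant central subspace, and (ii)$\Rightarrow$(iii) by pulling back a $(1,0)$-basis of the quotient and using that $\frb$ is central; it treats (i)$\Leftrightarrow$(ii) as immediate from the definitions.

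Your annihilator/contraction phrasing is a coordinate-free version of that dual-basis argument. Your remark about the abelian case is a small point the paper passes over: there $\fra_1(J)=\frg$ is not proper, and complex dimension $1$ must be excluded.
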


\begin{proof}
The equivalence of (i) and (ii) follows directly from the definitions. We next see that (iii) implies (i).
Suppose that there exists a basis $\{\omega^{1},\ldots,\omega^{\tilde{n}},\omega^{\tilde{n}+1},\ldots,\omega^{\tilde{n}+b} \}$ of $(1,0)$-forms on $(\frg,J)$ satisfying \eqref{caracterizacion}. Let $\{z_{1},\ldots,z_{\tilde{n}},z_{\tilde{n}+1},\ldots,z_{\tilde{n}+b} \}$ be its dual basis. From \eqref{caracterizacion} we get that $[w,z_{\tilde{n}+k}]=0$ for any $w\in\frg_{\C}$ and any $1\leq k\leq b$.
Let $z_{\tilde{n}+k}=x_{\tilde{n}+k}+i\,y_{\tilde{n}+k}$. 
Then, 
$\langle x_{\tilde{n}+k},y_{\tilde{n}+k}\mid 1\leq k\leq b \rangle$ is a $J$-invariant subspace contained in the center $\mathcal Z(\frg)$ of $\frg$, so $J$ is quasi-nilpotent.

We finally prove that (ii) implies (iii). 
Let $\frb$ be a $J$-invariant subspace of real dimension $2b$ in the center $\mathcal Z(\frg)$ of $\frg$. Consider the quotient Lie algebra $\tilde{\frg}\cong \frg/\frb$ and denote by $2\tilde{n}$ its real dimension. 
Let $\{\tilde{\omega}^{1},\ldots,\tilde{\omega}^{\tilde{n}}\}$ be a basis of $(1,0)$-forms on $(\tilde{\frg},\tilde{J})$, where $\tilde{J}$ is the complex structure induced by $J$.  The differential of $\tilde{\omega}^j$ satisfies $\tilde{d} \tilde{\omega}^j\in \bigwedge^{(2,0)+(1,1)}_{\C} \langle \tilde{\omega}^{1},\ldots,\tilde{\omega}^{\tilde{n}},\overline{\tilde{\omega}^{1}},\ldots,
\overline{\tilde{\omega}^{\tilde{n}}} \rangle$, 
for any $1\leq j\leq \tilde{n}$. Recall that $\pi^*\circ \tilde{J}=J\circ\pi^*$, where $\pi\colon \frg \longrightarrow \tilde{\frg}$ denotes the natural projection. Now, we define $\omega^j=\pi^* \tilde{\omega}^j$, for $1\leq j\leq \tilde{n}$, and then complete $\{\omega^{j}\}_{j=1}^{\tilde{n}}$ to a basis $\{\omega^{1},\ldots,\omega^{\tilde{n}},\omega^{\tilde{n}+1},\ldots,\omega^{\tilde{n}+b} \}$ of $(1,0)$-forms on $(\frg,J)$.  If $\{z_{j}=x_{j}+i\,y_{j} \}_{j=1}^{{\tilde{n}+b}}$ denotes its dual, it is easy to check that $\pi(x_{\tilde{n}+k})=0=\pi(y_{\tilde{n}+k})$ for $1\leq k\leq b$. This means that $\frb=\langle x_{\tilde{n}+k},y_{\tilde{n}+k}\mid 1\leq k\leq b \rangle$ and, since $\frb$ is contained in the center of $\frg$, we have that $[w,z_{\tilde{n}+k}]=0$ for any $w\in\frg_{\C}$ and any $1\leq k\leq b$.
Hence, the condition \eqref{caracterizacion} is satisfied.
\end{proof}

\begin{remark}\label{ecus-extensiones}
As a consequence of the previous proof, if $(\frg,J)$ is a central extension of $(\tilde{\frg},\tilde{J})$ then there
is a basis $\{\omega^j\}_{j=1}^n$ of $(1,0)$-forms on $(\frg,J)$ such that the first $\tilde n$ differentials 
$d\omega^{1},\ldots,d\omega^{\tilde{n}}$ coincide with the complex structure equations of $(\tilde{\frg},\tilde{J})$ 
and $d\omega^{\tilde{n}+1},\ldots,d\omega^{\tilde{n}+b}$ follow~\eqref{caracterizacion}.
\end{remark}

\begin{remark}\label{existe-b}
Suppose $(\frg,J)$ has a closed $(1,0)$-form $\tau$, i.e. $d\tau=0$ (this property is always satisfied when $\frg$ is nilpotent~\cite[Theorem 1.3]{Salamon}). 
Take $\omega^1=\tau$ and complete it to a basis of $(1,0)$-forms
 $\{\omega^k\}_{k=1}^n$ for $(\frg,J)$. 
Similarly to the proof of Proposition~\ref{no}, we consider its dual basis $\{z_{j}=x_{j}+i\,y_{j} \}_{j=1}^{n}$. 
The condition $d\omega^1=0$ implies that $\frb=\langle x_{k},y_{k}\mid 2\leq k\leq n \rangle$ is 
a proper $J$-invariant ideal of dimension $2n-2$. 
The most interesting cases arise when such an ideal $\frb$ exists and has dimension $\leq 2n-4$. 
\end{remark}

In the case of complex structures of SnN type, it is worth noting that their structure equations are much 
more twisted. This might be the reason why they have only been classified up to complex dimension 4. In this paper,
we will need the following result.

\begin{proposition}\label{clasifJ-SnN} \cite[Theorem 3.3]{LUV2}
Let $J$ be an SnN complex structure in real dimension 8. There exists a basis of $(1,0)$-forms $\{\omega^k\}_{k=1}^4$ in terms of which the complex structure equations are one (and only one) of the following:

\vskip.3cm

\hskip.6cm
$
\text{Family I:} \quad
\begin{cases}
d\omega^1 = 0,\\
d\omega^2 = \varepsilon\,\omega^{1\bar 1},\\
d\omega^3 = \omega^{14}+\omega^{1\bar 4}+a\,\omega^{2\bar 1}+ i\,\delta\,\varepsilon\,b\,\omega^{1\bar 2},\\
d\omega^4 = i\,\nu\,\omega^{1\bar 1} +b\,\omega^{2\bar 2}+ i\,\delta\,(\omega^{1\bar 3}-\omega^{3\bar 1}),
\end{cases}
$

\medskip\noindent
where $\delta=\pm 1$, $(a,b)\in \mathbb R^2-\{(0,0)\}$ with $a\geq 0$,
and the tuple 
$(\varepsilon, \nu, a, b)$ takes the following values:

\smallskip
\centerline{
$(0,0,0,1),\, (0,0, 1, 0),\, (0,0, 1, 1),\, (0,1, 0, \nicefrac{b}{|b|}),\, (0,1, 1,b),\,
(1,0, 0,1),\,  (1,0, 1,|b|) \text{ or }(1,1, a, b)$;}

\vskip.3cm

\hskip.6cm
$
\text{Family II:} \quad
\begin{cases}
d\omega^1=0,\\
d\omega^2=\omega^{14}+\omega^{1\bar 4},\\
d\omega^3=a\,\omega^{1\bar 1}
                 +\varepsilon\,(\omega^{12}+\omega^{1\bar 2}-\omega^{2\bar 1})
                 +i\,\mu\,(\omega^{24}+\omega^{2\bar 4}),\\
d\omega^4=i\,\nu\,\omega^{1\bar 1}-\mu\,\omega^{2\bar 2}+i\,b\,(\omega^{1\bar 2}-\omega^{2\bar 1})+i\,(\omega^{1\bar 3}-\omega^{3\bar 1}),
\end{cases}
$

\medskip\noindent
where $a, b\in\mathbb R$, and the tuple $(\varepsilon, \mu, \nu, a, b)$ takes the following values:

\smallskip\centerline{
$(1, 1, 0, a, b),\, (1, 0, 1, a, b),\, (1, 0, 0, 0, b), (1, 0, 0, 1, b),\, (0, 1, 0, 0, 0) \text{ or } (0, 1, 0, 1, 0)$.}

\end{proposition}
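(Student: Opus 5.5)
Since this statement is quoted from \cite[Theorem 3.3]{LUV2}, the plan is to reconstruct that classification directly from the structure equations. First I will fix a good $(1,0)$-basis adapted to the SnN condition. By Salamon \cite{Salamon} there is a basis $\{\omega^k\}_{k=1}^4$ with $d\omega^k\in\bigwedge^2\langle\omega^1,\dots,\omega^{k-1},\bar\omega^1,\dots,\bar\omega^{k-1}\rangle$ up to reordering. I will then use that $J$ is non-nilpotent with $\fra_1(J)=\{0\}$ to force the last form to appear nontrivially in earlier differentials. Concretely, no $J$-invariant central subspace exists, so the dual vector $z_4$ (and likewise $z_3$) cannot be central together with its conjugate. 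In real dimension $8$ this should pin down the shape of the equations:
\begin{itemize}
\item $d\omega^1=0$;
\item $d\omega^2\in\langle\omega^{1\bar1}\rangle$ or $d\omega^2\in\langle\omega^{14}+\omega^{1\bar4}\rangle$-type terms;
\item $d\omega^4$ contains the term $\omega^{1\bar3}-\omega^{3\bar1}$ (possibly after rescaling), which creates the ``cycle'' $3\leftrightarrow4$ responsible for non-nilpotency.
\end{itemize}
I expect to use the known description of the $8$-dimensional nilpotent Lie algebras carrying SnN structures, namely a single $1$-dimensional center, as a guide.

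The second step is to split into two cases according to whether $\omega^4$ enters $d\omega^2$ (Family II) or only $d\omega^3$ (Family I). In each case I will write the most general integrable ansatz with complex coefficients. I will then impose $d^2=0$ on $\omega^2,\omega^3,\omega^4$, which yields a system of polynomial equations in the coefficients. Solving it should kill most coefficients and leave relations such as the coupling $i\,\delta\,\varepsilon\, b$ in Family I or the $\mu,\nu$ terms in Family II.

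Third, I will normalize using the residual group of basis changes preserving the flag: rescalings $\omega^k\mapsto\lambda_k\omega^k$ and triangular changes such as $\omega^3\mapsto\omega^3+c\,\omega^2+\dots$ and $\omega^4\mapsto\omega^4+c'\omega^1$. These achieve $\varepsilon,\nu,\mu\in\{0,1\}$, $a\ge0$, $\delta=\pm1$, and the listed tuples. Finally, to prove the ``one and only one'' part, I will compare invariants for distinct tuples, such as the dimensions of the terms of the series $\fra_k(J)$, the underlying real Lie algebra (via $b_1$ and the descending central series), and the sign $\delta$ detected by a cubic invariant.

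The main obstacle is this normalization and non-equivalence step. Showing that the continuous parameters $(a,b)$ are genuine moduli requires analyzing the full stabilizer of the canonical form, not just diagonal rescalings. I would first try to show that any equivalence must preserve the flag $\langle\omega^1\rangle\subset\langle\omega^1,\omega^2\rangle\subset\dots$, since it is intrinsically defined via $\ker d$ and the ideal it generates. That would reduce the stabilizer to triangular matrices, on which the invariance of $a,b$ becomes a direct computation.
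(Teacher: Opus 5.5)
The paper gives no proof of this statement; it is quoted from \cite[Theorem 3.3]{LUV2}. Your plan therefore has to stand on its own, and as written it breaks at both ends.

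\textbf{The starting basis and the normal form.} The starting basis is wrong. A $(1,0)$-basis with $d\omega^k\in\bigwedge^2\langle\omega^1,\dots,\omega^{k-1},\bar\omega^1,\dots,\bar\omega^{k-1}\rangle$ characterizes \emph{nilpotent} $J$ \cite[Theorem 12]{CFGU-dolbeault}. With such a basis, $\omega^n$ and $\bar\omega^n$ appear in no differential, so the real and imaginary parts of $z_n$ span a nonzero $J$-invariant central subspace. Reordering does not change this, so for an SnN structure it contradicts $\fra_1(J)=\{0\}$ outright. Your own bullets ($\omega^{1\bar4}$ in $d\omega^2$, $\omega^{3\bar1}$ in $d\omega^4$) already violate it. What \cite{Salamon} gives is only $d\omega^k\in\mathcal I(\omega^1,\dots,\omega^{k-1})$, the ideal generated by the earlier $(1,0)$-forms, and this leaves far more free coefficients.

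More seriously, the step that actually produces the normal form is missing. ``This should pin down the shape'' and the case split by whether $\omega^4$ enters $d\omega^2$ are read off from the answer. The condition $\fra_1(J)=\{0\}$ is an open condition on the structure constants. It yields only non-vanishing requirements and cannot by itself produce $d\omega^2\in\{\varepsilon\,\omega^{1\bar1},\ \omega^{14}+\omega^{1\bar4}\}$ or the coupling $i\delta(\omega^{1\bar3}-\omega^{3\bar1})$ in $d\omega^4$. In the source, this shape comes from a separate structure theorem for the ascending central series of $\frg$ under an SnN structure in dimension $8$, including $\dim\mathcal Z(\frg)=1$ \cite{LUV1}. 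Only after that are the normalizations of \cite{LUV2} carried out. Your proposal has no substitute for that input, so ``write the most general integrable ansatz'' is not yet a well-defined finite computation.

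\textbf{The uniqueness step.} This also fails as proposed.
\begin{itemize}
\item The series $\fra_k(J)$ is useless here. Since $\fra_1(J)=\{0\}$ forces $\fra_k(J)=\fra_1(J)=\{0\}$ for every $k$, its dimensions carry no information for any SnN structure.
\item Real invariants do settle the dichotomy between the families. From the equations, $b_1\in\{4,5\}$ in Family I and $b_1\in\{2,3\}$ in Family II.
\item Real invariants cannot separate tuples living on the same Lie algebra. By Proposition~\ref{prop-snn}, $(\varepsilon,\mu,\nu,a,b)=(1,0,0,0,0)$ and $(1,0,0,1,0)$ both live on $\frg_{10}^0$. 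They are genuinely inequivalent, since by Proposition~\ref{clasif-SnN-sG} only the first carries balanced metrics. Likewise the whole line $(1,1,0,a,0)$, $a\in\R^*$, sits on $\frg_{11}^{1,0}$. Separating such tuples needs $J$-dependent invariants or an explicit determination of all equivalences between normal forms.
\item Your shortcut, that every equivalence preserves the flag $\langle\omega^1\rangle\subset\langle\omega^1,\omega^2\rangle\subset\cdots$ because it is defined via $\ker d$, is unjustified. In Family I with $\varepsilon=0$, the closed $(1,0)$-forms are all of $\langle\omega^1,\omega^2\rangle$, so $\ker d$ does not single out $\langle\omega^1\rangle$.
\item The ``cubic invariant'' detecting $\delta$ is never specified.
\end{itemize}
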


The theorem below provides the general structure of AK nilmanifolds of complex dimension~4. 
In particular, we observe that the existence of an AK metric makes the complex structure to be of a very specific nilpotent type. 
This suggests that AK metrics might not exist on highly twisted complex nilmanifolds.

\begin{theorem}\label{general-structure-of-aK-dim8}
Let $X=(M,J)$ be an $8$-dimensional nilmanifold endowed with an invariant complex structure $J$.
If there exists an (invariant or not) AK metric on $X=(M,J)$, then $M$ has first Betti number $b_1(M)\geq 6$ and its underlying Lie algebra is at most $2$-step.
More precisely, $X$ 
admits a $(1,0)$-coframe $\{\omega^k\}_{k=1}^4$ satisfying 
\begin{equation}\label{caso-i}
d\omega^1 =  d\omega^2  =  d\omega^3  =  0,\quad	\quad 
d\omega^4 = \sum_{1\leq j < k\leq 3} A_{jk}\,\omega^{jk} + \sum_{1\leq j, k\leq 3} B_{jk}\,\omega^{j\bar{k}},  
\end{equation}
for some complex coefficients $A$'s and $B$'s. 
\end{theorem}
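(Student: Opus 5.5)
We argue by contraposition: we show that if $(\frg,J)$ is not of the form \eqref{caso-i}, then $(\frg,J)$ satisfies the AK obstruction of Definition~\ref{condition-no-AK}. By the implications after Definition~\ref{condition-no-AK} (Lemma~\ref{lemaST} with $p=n-2=2$), $X$ then admits no AK metric, invariant or not. The main tool is Proposition~\ref{extensions-of-non-AK}. Whenever $(\frg,J)$ is a $\frb$-extension of a $6$-dimensional pair $(\tilde\frg,\tilde J)$ that satisfies the AK obstruction, $(\frg,J)$ satisfies it too. By Proposition~\ref{classif-SKT}, a $6$-dimensional pair satisfies the obstruction exactly when it is not in the SKT list. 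In particular this holds for every quotient of type (Eq-iii), (Eq-iv), or (Eq-i)/(Eq-ii) with nonzero $\partial\db(\omega^{3\bar3})$.

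\emph{Step 1 (reduce to a $2$-dimensional kernel).} Nilpotency gives a closed $(1,0)$-form $\tau=\omega^1$ (Remark~\ref{existe-b}). More usefully, for nilpotent $J$, i.e. when $\fra_1(J)\neq 0$, we take the central $J$-invariant ideal $\frb=\fra_1(J)$ and quotient by a $2$-dimensional $J$-invariant central subspace. This realizes $(\frg,J)$ as a central extension of a $6$-dimensional $(\tilde\frg,\tilde J)$ with basis $\omega^1,\omega^2,\omega^3$ as in Remark~\ref{ecus-extensiones}. By Proposition~\ref{extensions-of-non-AK}, if $X$ is AK then $(\tilde\frg,\tilde J)$ must appear in the list of Proposition~\ref{classif-SKT}.

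\emph{Step 2 (case analysis on the quotient).} If the quotient is $(\frh_1,J)$, then $d\omega^1=d\omega^2=d\omega^3=0$ and we are already in \eqref{caso-i}. If the quotient is one of the nonabelian SKT cases, $(\frh_2,\frh_4,\frh_5,\frh_8)$, then $d\omega^3\neq0$ lies in $\langle\omega^{12},\omega^{i\bar j}\rangle_{i,j\le2}$ and $d\omega^4$ may involve $\omega^3,\bar\omega^3$. We compute $(dd^c(i\omega^{4\bar4}))^{2,2}$ and, if needed, $(dd^c(i\omega^{3\bar3}))^{2,2}$, or suitable combinations $i(\omega^{3\bar4}-\omega^{4\bar3})$. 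The aim is to show that unless the coefficients of $\omega^3,\bar\omega^3$ in $d\omega^4$ vanish, we obtain a sum $\sum c_k\psi^k\wedge\bar\psi^k$ of same sign. Typically the term $\omega^{13}\wedge\omega^{\bar1\bar3}$-type contributions appear with a definite sign, as in case (Eq-iv). Once $d\omega^4$ does not involve $\omega^3$, we pick a different central quotient and reorder. Both $\omega^3$ and $\omega^4$ then have differentials in $\Lambda^2\langle\omega^1,\omega^2,\bar\omega^1,\bar\omega^2\rangle$. A linear change in $\langle\omega^3,\omega^4\rangle$ combined with the SKT-type constraint on $\omega^3$ must yield one closed combination. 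Otherwise $(dd^c)$ of a combination of $i\omega^{3\bar3},i\omega^{4\bar4}$ is a nonzero multiple of $\omega^{12\bar1\bar2}$, which is automatically of the required form. Relabeling gives \eqref{caso-i} with $\omega^1,\omega^2,\omega^3$ closed. Then $b_1\ge6$ and the $2$-step property follow at once from \eqref{caso-i}.

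\emph{Step 3 (non-nilpotent $J$).} If $J$ is WnN, then $\fra_1(J)\ne0$, and the quotient $\frg/\fra_1(J)$ carries an SnN structure in dimension $6$. No such structure exists in dimension $6$ (all there are of types (Eq-i)–(Eq-iv), and the non-nilpotent one is (Eq-iv)), so the quotient must be of type (Eq-iv), which satisfies the obstruction. If $J$ is SnN, we use Proposition~\ref{clasifJ-SnN}. For Families I and II we exhibit a $J$-invariant ideal, e.g. $\frb$ dual to $\omega^2$ where possible, with quotient of type (Eq-iv) or (Eq-iii). Where that fails, we compute directly $(dd^c(i\omega^{3\bar3}))^{2,2}$, which from the $\omega^{14}+\omega^{1\bar4}$ terms produces $c\,\omega^{14}\wedge\omega^{\bar1\bar4}$ plus further terms. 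The main obstacle is this SnN case and the Step 2 bookkeeping. In both, we must choose the right real $2$-form, possibly a combination of several $i\omega^{k\bar k}$ terms with cross terms, so that all off-diagonal $(2,2)$ contributions cancel and the remainder is a same-sign sum of decomposable terms. I would first try $\alpha=i\omega^{k\bar k}$ for the top index $k$ and add correction terms only if mixed terms survive.
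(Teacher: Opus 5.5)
Your skeleton is the paper's: SnN structures via Proposition~\ref{clasifJ-SnN}; otherwise a $2$-dimensional central $J$-invariant quotient, which Proposition~\ref{extensions-of-non-AK} forces into the SKT list of Proposition~\ref{classif-SKT}; then an analysis of the extensions. But the two steps that carry the theorem's content are only stated as aims, and the tools you name for them do not work.

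\emph{SnN case.} The subspace dual to $\omega^2$ is never an ideal. In Family~I, $d\omega^3$ contains $a\,\omega^{2\bar1}$ and $d\omega^4$ contains $b\,\omega^{2\bar2}$, with $(a,b)\neq(0,0)$. In Family~II, $d\omega^3$ contains $\varepsilon(\omega^{12}+\omega^{1\bar2}-\omega^{2\bar1})+i\mu(\omega^{24}+\omega^{2\bar4})$, with $(\varepsilon,\mu)\neq(0,0)$. So everything falls on a direct computation you do not carry out, and your default choice fails. In Family~I with $b\neq0$ one has $\partial\db\omega^3=b\,\omega^{12\bar2}-i\delta\,\omega^{13\bar1}\neq0$. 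Hence $\partial\db(\omega^{3\bar3})$ contains $b\,\omega^{12}\wedge\omega^{\bar2\bar3}$ but no $\omega^{23}\wedge\omega^{\bar2\bar3}$ term. A same-sign sum $\sum_k c_k\psi^k\wedge\bar\psi^k$ has a semidefinite coefficient matrix on $\Lambda^{2,0}$, and a zero diagonal entry forces its whole row to vanish. So this form is not of the type \eqref{2-condition}. The paper's choices are: in Family~I, $\alpha=i(\omega^{1\bar3}-\omega^{\bar13})$, giving $4b\,\omega^{12}\wedge\omega^{\bar1\bar2}$, when $b\neq0$, and $i\omega^{3\bar3}$ when $b=0$. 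In Family~II, $i(\omega^{1\bar2}-\omega^{\bar12})$, giving $-4\mu\,\omega^{12}\wedge\omega^{\bar1\bar2}$, when $\mu\neq0$, and $i\omega^{3\bar3}$ when $\mu=0$.

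\emph{Extensions of the nonabelian SKT quotients.} Step~2 has the same problem. Of the forms you list:
\begin{itemize}
\item $i\omega^{3\bar3}$ is $dd^c$-closed, because the quotient is SKT.
\item $i\omega^{4\bar4}$ cannot work while $\partial\db\omega^4=-\db\partial\omega^4\neq0$ (for example when $A_{23}\neq0$). Then $\partial\db(\omega^{4\bar4})$ has terms with exactly one of $\omega^4,\bar\omega^4$ and no $\omega^{j4}\wedge\omega^{\bar k\bar4}$ term, which the same semidefiniteness argument excludes.
\end{itemize}
What is missing is the actual elimination in \eqref{ecus-extension}:
\begin{enumerate}
\item Normalise $B_{11}=0$ via $\omega^4\mapsto\omega^4-B_{11}\omega^3$.
\item Impose the Jacobi identity \eqref{Jacobi}.
\item Use the real and imaginary parts of $\omega^{2\bar4}$ and $\omega^{1\bar4}$ as test forms to get $A_{23}=0$ and $DA_{13}=0$.
\item Use $\Delta=\rho(2\,\Real\!(D)-\lambda^2+1)+|D|^2\neq0$ for $\frh_2,\frh_4,\frh_5$ to kill $B_{13},B_{23},B_{31},B_{32}$, and $A_{13}$ since $D\neq0$.
\end{enumerate}
For $\frh_8$ one has $\Delta=0$. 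There you need $\partial\db(\omega^{3\bar4})$ to get $B_{22}=0$. Then write $\partial\db(\omega^{4\bar4})$ as minus a sum of three terms $\psi\wedge\bar\psi$ to kill the remaining coefficients.

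Your final sub-step is correct but unjustified: two forms $\omega^3,\omega^4$ whose differentials lie in $\Lambda^2\langle\omega^1,\omega^2,\bar\omega^1,\bar\omega^2\rangle$ must have a closed combination. Here is a fix. For $\sigma=a\,\omega^{12}+\sum_{p,q\leq2}b_{pq}\,\omega^{p\bar q}$, set $h(\sigma)=|a|^2+|b_{12}|^2+|b_{21}|^2-2\,\Real\!(b_{11}\bar b_{22})$. One checks $\partial\db(\omega^{j\bar k})=-h(d\omega^j,d\omega^k)\,\omega^{12}\wedge\omega^{\bar1\bar2}$. Since $h$ has signature $(4,1)$, its totally isotropic subspaces have dimension at most $1$, so $d\omega^3$ and $d\omega^4$ are linearly dependent.

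Minor: $\fra_1(J)\neq0$ means quasi-nilpotent, not nilpotent. Your WnN argument is fine; it is already covered by the quasi-nilpotent case.
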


\begin{proof}
We first prove that if an AK metric exists on $X$ then $J$ is quasi-nilpotent. More specifically, 
we use Proposition~\ref{clasifJ-SnN} to show that every SnN complex structure $J$ satisfies the AK obstruction.

On the one hand, for an SnN complex structure $J$ in Family I we have:

$\bullet$ if $b\neq0$, the real $(1,1)$-form $\alpha=i(\omega^{1\bar{3}}-\omega^{\bar{1}3})$ satisfies
$$
dd^c\alpha=2i\partial\db\big(i(\omega^{1\bar{3}}-\omega^{\bar{1}3})\big)=
-2
\partial\db(\omega^{1\bar{3}}-\omega^{\bar{1}3})=4b\,\omega^{12}\wedge \omega^{\bar{1}\bar{2}};
$$

$\bullet$ if $b=0$, then $a\neq 0$ and the real form $\alpha=i\omega^{3\bar{3}}$ satisfies
$$
dd^c\alpha=2i\partial\db(i\omega^{3\bar{3}})=-2\partial\db(\omega^{3\bar{3}})=
2a^2\,\omega^{12}\wedge \omega^{\bar{1}\bar{2}}+4\,\omega^{14}\wedge \omega^{\bar{1}\bar{4}}.
$$

On the other hand, for those complex structures in Family II we get:

$\bullet$ if $\mu\neq0$, then the real $(1,1)$-form $\alpha=i(\omega^{1\bar{2}}-\omega^{\bar{1}2})$ satisfies
$$
dd^c\alpha=2i\partial\db\big(i(\omega^{1\bar{2}}-\omega^{\bar{1}2})\big)=
-2
\partial\db(\omega^{1\bar{2}}-\omega^{\bar{1}2})=
-4
\mu\,\omega^{12}\wedge \omega^{\bar{1}\bar{2}};
$$

$\bullet$ if $\mu=0$, then $\varepsilon=1$ and for the real form $\alpha=i\omega^{3\bar{3}}$ we have 
$$
dd^c\alpha=2i\partial\db(i\omega^{3\bar{3}})=-2\partial\db(\omega^{3\bar{3}})=
6\,\omega^{12}\wedge \omega^{\bar{1}\bar{2}}.
$$

 So, if $X$ admits an AK metric, then $J$ must be quasi-nilpotent. Let $\frg$ be the Lie algebra underlying $M$.
By Proposition~\ref{no}  the pair $(\frg,J)$ is a central extension of some pair $(\tilde\frg,\tilde J)$. 
In fact, one can always choose a proper central $J$-invariant ideal $\mathfrak b$ in $\frg$
of real dimension $2$, in such a way that $(\frg_{\frb},J_{\frb})$ is equivalent to a $6$-dimensional
nilpotent Lie algebra with complex structure $(\tilde{\frg},\tilde{J})$.
Hence, it follows from Proposition~\ref{extensions-of-non-AK} that 
the existence of an AK metric on $(\frg,J)$ requires that 
$(\frg,J)$ is  
a central extension of some of the cases listed in Proposition~\ref{classif-SKT}.
Therefore, from now on, we will focus on those
quasi-nilpotent complex structures that are extensions of $(\frh_{1},J)$, $(\frh_{2},J)$, $(\frh_{2},J'_{y})$, $(\frh_{4},J)$, $(\frh_{5},J_{y})$ or $(\frh_{8},J)$, as listed in Proposition~\ref{classif-SKT}. 

Let us first study the extensions of $(\frh_{2},J)$, $(\frh_{2},J'_{y})$, $(\frh_{4},J)$ and $(\frh_{5},J_{y})$. 
Due to Remark~\ref{ecus-extensiones},
any such extension can be described by complex structure equations of the form 
\begin{equation}\label{ecus-extension}
 \left\{
 \begin{array}{rcl}
  d\omega^1 &\!\!\!=\!\!\!& d\omega^2 =  0,\\[2pt]
  d\omega^3 &\!\!\!=\!\!\!& \rho\,\omega^{12} + \omega^{1\bar1} + \lambda\,\omega^{1\bar2} + D\,\omega^{2\bar2},\\[2pt]  
  d\omega^4 &\!\!\!=\!\!\!& A_{12}\,\omega^{12}+A_{13}\,\omega^{13}+A_{23}\,\omega^{23}+
         B_{11}\,\omega^{1\bar{1}}+B_{12}\,\omega^{1\bar{2}}+B_{13}\,\omega^{1\bar{3}}\\[2pt]
   && + B_{21}\,\omega^{2\bar{1}}+ B_{22}\,\omega^{2\bar{2}}+ B_{23}\,\omega^{2\bar{3}}+
          B_{31}\,\omega^{3\bar{1}}+ B_{32}\,\omega^{3\bar{2}}+ B_{33}\,\omega^{3\bar{3}}.
 \end{array}\right.
\end{equation}
Here $(\rho,\lambda,D)$ takes the concrete values $(0,0,i)$ for $(\frh_{2},J)$, $(1,1,1+iy)$ with $y>0$ for $(\frh_{2},J'_{y})$, $(1,1,1)$ for $(\frh_{4},J)$, and $(1,0,\nicefrac{1}{2}+iy)$ with $y\in[0,\nicefrac{\sqrt{3}}{2})$ for $(\frh_{5},J_{y})$. 
Furthermore, the coefficients $A$'s and $B$'s belong to $\mathbb C$ and satisfy the condition $d(d\omega^4)=0$ imposed by the Jacobi identity. 

Let us first observe that one can always take $B_{11}=0$.
In fact, if $B_{11}\neq0$ we can consider the $(1,0)$-form $\omega'^{\,4}=\omega^4-B_{11}\,\omega^3$ so that the coefficient of $\omega^{1\bar{1}}$ in $d\omega'^{\,4}$ vanishes; then, simply denote again by $A_{12},B_{12},B_{22}$ the new complex coefficients $A'_{12},B'_{12},B'_{22}$.

Now, the condition $d(d\omega^4)=0$ is equivalent to 
the following equations
\begin{equation}\label{Jacobi}
\left\{\begin{array}{l}
B_{33}=0,\\
A_{23}+\lambda B_{13}-B_{23}+\rho B_{31}=0,\\
D A_{13}-\lambda A_{23}-\bar{D} B_{13}-\rho B_{32}=0,\\
\rho B_{13}+\lambda B_{31} - B_{32}=0,\\
\rho B_{23}+D B_{31}=0.
\end{array}\right.
\end{equation}

A direct calculation from \eqref{ecus-extension} 
shows that 
$\partial\db(\omega^{2\bar{4}})=-\bar A_{23}\,\omega^{12}\!\wedge \omega^{\bar{1}\bar{2}}$. Then, 
\begin{equation}\label{A23}
dd^c(\omega^{2\bar{4}}+\omega^{\bar{2}4})=-4\,\Imag\!(A_{23})\,\omega^{12}\!\wedge \omega^{\bar{1}\bar{2}}, \quad
dd^c \big(i(\omega^{2\bar{4}}-\omega^{\bar{2}4})\big)=4\,\Real\!(A_{23})\,\omega^{12}\!\wedge \omega^{\bar{1}\bar{2}}, 
\end{equation}
which implies that $A_{23}=0$ is a necessary condition for the existence of AK metrics.

Similarly, from \eqref{ecus-extension} 
and using that $A_{23}=0$, we get 
$\partial\db(\omega^{1\bar{4}})=-\bar D \bar A_{13} \,\omega^{12}\!\wedge \omega^{\bar{1}\bar{2}}$. Consequently, 
$D A_{13}=0$ is another necessary condition for the existence of AK metrics.

Therefore, the equations~\eqref{Jacobi} reduce to $D A_{13}=A_{23}=B_{33}=0$ and the following system
$$\begin{pmatrix}
\rho & 0 & \lambda & -1\\
0 & \rho & D & 0\\
\lambda & -1 & \rho & 0\\
\bar D & 0 & 0 & \rho
\end{pmatrix}
\begin{pmatrix}
B_{13} \\ B_{23} \\ B_{31} \\ B_{32}
\end{pmatrix} =
\begin{pmatrix}
0 \\ 0 \\ 0 \\ 0
\end{pmatrix}.$$
Since $\rho\in\{0,1\}$, the determinant of the previous matrix is
$$\Delta = \rho\,(2\,\Real\!(D)-\lambda^2+1)+|D|^2.$$
Using the concrete values $(\rho,\lambda,D)$ for 
$\frh_{2}$, $\frh_{4}$ and $\frh_{5}$ given above we easily get that $\Delta\neq0$ and, since $D\neq 0$ for all these cases, we
conclude that the coefficients $A_{13},B_{13},B_{23},B_{31},B_{32}$ vanish. 
To sum up, at this point we have 
\begin{equation}\label{coefs-nulos}
A_{13}=A_{23}=B_{11}=B_{13}=B_{23}=B_{31}=B_{32}=B_{33}=0.
\end{equation}
Therefore, the last differential in \eqref{ecus-extension} reduces to 
$$
d\omega^4 = A_{12}\,\omega^{12}+B_{12}\,\omega^{1\bar{2}}+ B_{21}\,\omega^{2\bar{1}}+ B_{22}\,\omega^{2\bar{2}},
$$
and then one has 
$$
\begin{array}{rl}
&\partial\db(\omega^{3\bar{4}})=-(\rho\, \bar A_{12}+\lambda \bar B_{12} - \bar B_{22} )\,\omega^{12}\!\wedge \omega^{\bar{1}\bar{2}},\\[4pt]
&\partial\db(\omega^{4\bar{4}})=-\big( |A_{12}|^2 +|B_{12}|^2 +|B_{21}|^2 \big)\,\omega^{12}\!\wedge \omega^{\bar{1}\bar{2}}.
\end{array}
$$
If some of the two coefficients above is not zero then the AK obstruction holds. 
Hence, the existence of AK metrics requires
$$
 B_{22} = \rho\, A_{12} +\lambda B_{12},\quad
 |A_{12}|^2 +|B_{12}|^2 +|B_{21}|^2=0.
$$
Note that these conditions imply $d\omega^4=0$. In other words, 
the only AK extensions of the SKT cases $(\frh_{2},J)$, $(\frh_{2},J'_{y})$, $(\frh_{4},J)$, and $(\frh_{5},J_{y})$ are the trivial ones. Thus, the corresponding nilmanifolds are products of an SKT 6-nilmanifold and a complex torus, which can be expressed by equations of the form~\eqref{caso-i}.

\vskip.1cm

It remains to study the cases $\frh_1$ and $\frh_8$. For the first one, it is clear that any (AK or not) extension is given by~\eqref{caso-i}. 
For the second one, we observe that any extension of $(\frh_{8},J)$ satisfies  
\eqref{ecus-extension}-\eqref{Jacobi} with $\rho=\lambda=D=0$. Thus, 
$A_{23}=B_{23}$ and $B_{32}=0$, in addition to $B_{11}=B_{33}=0$. 
Moreover, from \eqref{A23} we get $A_{23}=B_{23}=0$ to avoid the AK obstruction. Observe that $\Delta=0$ in this case. Moreover, from $\partial\db(\omega^{3\bar{4}})= \bar B_{22} \,\omega^{12}\!\wedge \omega^{\bar{1}\bar{2}}$ one has 
that $B_{22}=0$, so we arrive at complex equations of the form
\begin{equation}\label{caso-ii}
 \left\{
 \begin{array}{rcl}
  d\omega^1 &\!\!\!=\!\!\!& d\omega^2  =  0, \\ 
  d\omega^3 &\!\!\!=\!\!\!& \omega^{1\bar1},\\
  d\omega^4 &\!\!\!=\!\!\!& A_{12}\,\omega^{12}+A_{13}\,\omega^{13}
  +B_{12}\,\omega^{1\bar{2}}+B_{13}\,\omega^{1\bar{3}}+ B_{21}\,\omega^{2\bar{1}}+ B_{31}\,\omega^{3\bar{1}}. 
 \end{array}
 \right.
\end{equation}
Now, a direct calculation gives 
$$
\begin{array}{rl}
\partial\db(\omega^{4\bar{4}})= \!\!&\!\! -( |A_{12}|^2 +|B_{12}|^2 +|B_{21}|^2 )\,\omega^{12}\!\wedge \omega^{\bar{1}\bar{2}}  -(A_{12}\bar A_{13}+\bar B_{12} B_{13}+B_{21}\bar B_{31})\, \omega^{12}\!\wedge \omega^{\bar{1}\bar{3}}\\[4pt]
\!\!&\!\!
-(\bar A_{12} A_{13}+B_{12} \bar B_{13}+\bar B_{21} B_{31})\, \omega^{13}\!\wedge \omega^{\bar{1}\bar{2}}\ -( |A_{13}|^2 +|B_{13}|^2 +|B_{31}|^2 )\,\omega^{13}\!\wedge \omega^{\bar{1}\bar{3}}\\[4pt]
= \!\!&\!\! - \big[ \omega^{1}\!\wedge ( A_{12} \,\omega^{2} + A_{13} \,\omega^{3}) \big] \wedge \big[ \omega^{\bar{1}}\!\wedge
( \bar A_{12} \,\omega^{\bar{2}} + \bar A_{13} \,\omega^{\bar{3}})  \big]\\[4pt]
\!\!&\!\! - \big[ \omega^{1}\!\wedge ( \bar B_{12} \,\omega^{2} + \bar B_{13} \,\omega^{3}) \big] \wedge \big[ \omega^{\bar{1}}\!\wedge
( B_{12} \,\omega^{\bar{2}} + B_{13} \,\omega^{\bar{3}})  \big]\\[4pt]
\!\!&\!\! - \big[ \omega^{1}\!\wedge (  B_{21} \,\omega^{2} + B_{31} \,\omega^{3}) \big] \wedge \big[ \omega^{\bar{1}}\!\wedge
( \bar B_{21} \,\omega^{\bar{2}} + \bar B_{31} \,\omega^{\bar{3}})  \big],
\end{array}
$$
so all the coefficients $A_{12}, A_{13},B_{12}, B_{13},B_{21},B_{31}$ must be zero  for the existence of an AK metric. Hence, we get $d\omega^4=0$ in \eqref{caso-ii}, i.e. a product of the SKT nilmanifold 
associated to $(\frh_8,J)$ and a torus.
\end{proof}

\begin{remark-question}\label{tesis} 
{\rm 
We notice that the results in \cite[Corollaries 5.1.9 and 5.1.10]{Latorre-thesis} must be corrected according to the previous theorem, since there does not exist any 3-step nilmanifold in eight dimensions admitting AK structures. 
Hence, a natural question arises:
\emph{is any AK nilmanifold at most 2-step?} 
}
\end{remark-question}

We observe that not all the complex nilmanifolds given in Theorem~\ref{general-structure-of-aK-dim8} and defined by the
equations~\eqref{caso-i} admit AK metrics. For instance, the choice of all the coefficients equal to zero except for $B_{11}=B_{22}=B_{33}=1$ defines an abelian complex structure satisfying the AK obstruction, since $\partial\db(\omega^{4\bar{4}})= 
2(\omega^{12}\!\wedge \omega^{\bar{1}\bar{2}} + \omega^{13}\!\wedge \omega^{\bar{1}\bar{3}}+ \omega^{23}\!\wedge \omega^{\bar{2}\bar{3}})$. 
In Corollary~\ref{cor-taus} below we provide a simple condition, depending on three real numbers $(\tau_{12},\tau_{13},\tau_{23})$ associated to the equations~\eqref{caso-i}, that ensures the existence of AK metrics. 
We next consider higher-dimensional  extensions of~\eqref{caso-i} as follows:

\begin{definition}\label{condition-no-AK-NO}
For every $m\geq 3$, let $X=(M,J)$ be a $2(m+1)$-dimensional nilmanifold endowed with an invariant complex structure $J$ defined by the complex equations
\begin{equation}\label{caso-i-extended}
d\omega^1 =  \cdots  =  d\omega^m  =  0,\quad	\quad 
d\omega^{m+1} = \sum_{1\leq j < k\leq m} A_{jk}\,\omega^{jk} + \sum_{1\leq j, k\leq m} B_{jk}\,\omega^{j\bar{k}},  
\end{equation}
with $A_{jk},B_{jk}\in\C$. 
Moreover, for every $1\leq u < v \leq m$, we define the real numbers
$$
\tau_{uv} = |A_{uv}|^2+|B_{uv}|^2+|B_{vu}|^2 -2\,\Real (B_{uu}\bar B_{vv}), 
\quad \mbox{ and } \quad
{\mathcal T} = \sum_{1\leq u < v \leq m} \tau_{uv}.
$$
\end{definition}

Notice that 
\begin{equation}\label{aK-invariant}
{\mathcal T} 
=\sum_{1\leq j < k\leq m} |A_{jk}|^2 + \sum_{1\leq j, k\leq m} |B_{jk}|^2 
- \Big| \sum_{1\leq j\leq m} B_{jj} \Big|^2.
\end{equation}

The following result generalizes the constructions in \cite[Theorem 2.3]{Chiose-Rasdeaconu} and \cite[Theorem 2.7]{FT}.

\begin{proposition}\label{taus-of-aK-dim-m}
Let $X=(M,J)$ be a complex nilmanifold defined by~\eqref{caso-i-extended}, and let $F$ be the Hermitian metric given by
\begin{equation}\label{F-n-dim-astheno-K}
F =  \frac{i}{2}\, \big( x_{11}\, \omega^{1\bar{1}} + \cdots +  x_{mm}\, \omega^{m\,\overline{m}} +\omega^{m+1\,\overline{m+1}} \big), 
\end{equation}
with $x_{11}, \ldots, x_{mm} \in \R^+$.  Then, $F$ is AK if and only if 
$\displaystyle\sum_{1\leq u<v\leq m} \frac{\tau_{uv}}{x_{uu} \, x_{vv}}=0$. 
In particular, the canonical metric (as well as any $F$ with $x_{11}=\cdots=x_{mm}$) is AK if and only if  
${\mathcal T}= 0$. 
\end{proposition}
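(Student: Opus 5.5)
We compute $\partial\db F^{n-2}$ directly, with $n=m+1$. Write $\Omega_j=\frac{i}{2}\omega^{j\bar j}$ for $1\leq j\leq m+1$, so that $F=\sum_{j\leq m}x_{jj}\Omega_j+\Omega_{m+1}$. Since the $\Omega_j$ are commuting $2$-forms with $\Omega_j^2=0$, we get $F^{n-2}=(m-1)!\,\sigma_{m-1}$, where $\sigma_{m-1}$ is the $(m-1)$-th elementary symmetric polynomial in the $x_{jj}\Omega_j$ and $\Omega_{m+1}$. Because $\omega^1,\dots,\omega^m$ are closed, every $\Omega_j$ with $j\leq m$ is $d$-closed. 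Hence $\partial\db$ kills every term of $\sigma_{m-1}$ that does not contain $\Omega_{m+1}$. Each remaining term has the form $\big(\prod_{j\notin\{u,v\}}x_{jj}\Omega_j\big)\wedge\Omega_{m+1}$, indexed by a pair $u<v$ in $\{1,\dots,m\}$. The product in front is closed, so only $\partial\db\Omega_{m+1}$ matters. This gives
\[
\partial\db F^{n-2}=(m-1)!\sum_{u<v}\Big(\prod_{j\neq u,v}x_{jj}\Big)\,\Big(\bigwedge_{j\neq u,v}\Omega_j\Big)\wedge\partial\db\Omega_{m+1}.
\]

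Next we compute $\partial\db\omega^{m+1\,\overline{m+1}}$. Since $d\omega^{m+1}$ is closed and $\partial\omega^{m+1}$, $\db\omega^{m+1}$ have pure type, each is separately closed. Therefore $\partial\db(\omega^{m+1}\wedge\omega^{\overline{m+1}})=-\db\omega^{m+1}\wedge\partial\omega^{\overline{m+1}}+\partial\omega^{m+1}\wedge\db\omega^{\overline{m+1}}$, up to a sign convention we will fix carefully. Here $\partial\omega^{m+1}=\sum A_{jk}\omega^{jk}$, $\db\omega^{m+1}=\sum B_{jk}\omega^{j\bar k}$, and conjugating gives the corresponding pieces of $d\omega^{\overline{m+1}}$. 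Wedging with $\bigwedge_{j\neq u,v}\Omega_j$ leaves only the coefficient of $\omega^{uv}\wedge\omega^{\bar u\bar v}$, since all other indices are already saturated. We will compute this coefficient and show it equals $-\tau_{uv}$ times a fixed constant, independent of $u,v$.

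The three contributions are as follows. From $A\wedge\bar A$ we get $|A_{uv}|^2$. From $B\wedge\bar B$, the cross terms $B_{uv}\omega^{u\bar v}\wedge\overline{B_{uv}\omega^{u\bar v}}$ and the analogous $vu$ term give $|B_{uv}|^2+|B_{vu}|^2$. The diagonal products $B_{uu}\omega^{u\bar u}\wedge\bar B_{vv}\omega^{\bar v v}$ and their conjugates give $-2\Real(B_{uu}\bar B_{vv})$, with the opposite sign because reordering to $\omega^{uv\bar u\bar v}$ introduces an extra transposition. The terms $\omega^{u\bar u}\wedge\omega^{\bar u u}$ vanish. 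The main obstacle is this sign bookkeeping: the $\partial\omega\wedge\overline{\partial\omega}$ and $\db\omega\wedge\overline{\db\omega}$ terms must come with the same sign, and the diagonal terms with the opposite sign. We can check this against the computation of $\partial\db(\omega^{4\bar4})$ in the proof of Theorem~\ref{general-structure-of-aK-dim8} and the example with $B_{11}=B_{22}=B_{33}=1$, where the constant is $-1$ per pair.

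Once this is established, each summand is a nonzero multiple of $c\,\tau_{uv}\prod_{j\neq u,v}x_{jj}$ times the volume-type form $\omega^{uv\bar u\bar v}\wedge\bigwedge_{j\neq u,v}\omega^{j\bar j}$. After normalizing, all these forms equal the single top form $\bigwedge_{j\leq m}\omega^{j\bar j}$, up to a constant that is the same for every pair. Hence
\[
\partial\db F^{n-2}=C\Big(\prod_{j\leq m}x_{jj}\Big)\sum_{u<v}\frac{\tau_{uv}}{x_{uu}x_{vv}}\,\bigwedge_{j\leq m}\omega^{j\bar j}
\]
with $C\neq0$, which proves the equivalence. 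Setting all $x_{jj}$ equal makes the sum a positive multiple of $\mathcal T$, which gives the last claim.
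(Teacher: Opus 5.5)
Your proposal is correct and follows essentially the same route as the paper. You expand $F^{m-1}$, note that only the terms containing $\omega^{m+1\,\overline{m+1}}$ survive $\partial\db$, and read off the $(u,v)$-coefficient of $\db\omega^{m+1}\wedge\partial\omega^{\overline{m+1}}-\partial\omega^{m+1}\wedge\db\omega^{\overline{m+1}}$, which is $\tau_{uv}$ times a pair-independent constant.

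There are two slips, neither of which affects the conclusion:
\begin{itemize}
\item Your displayed formula for $\partial\db(\omega^{m+1}\wedge\omega^{\overline{m+1}})$ has the overall sign reversed. The correct expression is the one written above, and it agrees with the paper's example $B_{11}=B_{22}=B_{33}=1$. Only the relative signs matter for the equivalence, and you have those right.
\item The closedness of $\partial\omega^{m+1}$ and $\db\omega^{m+1}$ does not follow from $d^2=0$ plus type considerations in general. It holds here simply because both are combinations of wedge products of the closed forms $\omega^j,\omega^{\bar k}$ with $j,k\leq m$.
\end{itemize}
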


\begin{proof}
By the definition of the metric $F$ it follows
$$
\begin{array}{ccl}
F^{m-1}  \!\!&\!\!=\!\!&\!\!   (m\!-\!1)!\, (\frac{i}{2})^{m-1} 
\Big(\sum_{1\leq u\leq m} x_{11}\cdots \widehat{x_{uu}}\cdots x_{mm}\,  \omega^{1\bar{1}\cdots \widehat{u\bar{u}}\cdots m\overline{m}} \\[4pt]
\!\!&\!\! \!\!&\!\!  
\phantom{(m\!-\!1)!\, (\frac{i}{2})^{m-1} \Big(} 
+ \sum_{1\leq u<v\leq m} x_{11}\cdots \widehat{x_{uu}}\cdots \widehat{x_{vv}}\cdots x_{mm} \, \omega^{1\bar{1}\cdots \widehat{u\bar{u}}\cdots \widehat{v\bar{v}}
\cdots m\overline{m}\, m+1\overline{m+1}} 
\Big).
\end{array}
$$
From the complex structure equations \eqref{caso-i-extended} we get
$$
\begin{array}{ccl}
\partial\db F^{m-1}  \!\!&\!\!=\!\!&\!\!   (m\!-\!1)!\, (\frac{i}{2})^{m-1} \!\!
\displaystyle\sum_{1\leq u<v\leq m} \! r_{uv} \,\, \omega^{1\bar{1}\cdots \widehat{u\bar{u}}\cdots \widehat{v\bar{v}}
\cdots m\overline{m}} \wedge\big(\bar\partial\omega^{m+1}\!\wedge\partial\omega^{\overline{m+1}} -
   \partial\omega^{m+1}\!\wedge\bar\partial\omega^{\overline{m+1}} \big),
\end{array}
$$
where $r_{uv}=\frac{x_{11}\cdots x_{mm}}{x_{uu}x_{vv}}$.
Using again \eqref{caso-i-extended}, we have
$$
\bar\partial\omega^{m+1}\!\wedge\partial\omega^{\overline{m+1}} 
= \displaystyle\sum_{1\leq j,k,r,s \leq m} \!\! B_{jk} \overline{B}_{rs} \,\, \omega^{j\bar{k} \bar{r} s}
, 
\quad\quad 
\partial\omega^{m+1}\!\wedge\bar\partial\omega^{\overline{m+1}}= \sum_{
\begin{smallmatrix}
1\leq a<b\leq m\\
1\leq c<d\leq m
\end{smallmatrix}
} \!\! A_{ab}\,\bar{A}_{cd}\, \omega^{ab\bar{c}\bar{d}}.
$$
To compute $\partial\db F^{m-1}$, the possibilities are 
$(j,k,r,s)=(u,u,v,v)$, $(u,v,u,v)$, $(v,u,v,u)$ or $(v,v,u,u)$ for the first summand and 
$(a,b,c,d)=(u,v,u,v)$ for the second one. Therefore,
$$
\begin{array}{ccl}
\partial\db F^{m-1}  \!\!&\!\!=\!\!&\!\!   (m\!-\!1)!\, (\frac{i}{2})^{m-1} 
\Big( \displaystyle\sum_{1\leq u<v\leq m} r_{uv} \, ( - B_{uu} \overline{B}_{vv} + B_{uv} \overline{B}_{uv} + B_{vu} \overline{B}_{vu} \\[4pt]
\!\!&\!\! \!\!&\!\! \phantom{ (m\!-\!1)!\, (\frac{i}{2})^{m-1} 
\Big( \displaystyle\sum_{1\leq u<v\leq m} r_{uv} \, ( - B_{uu} \overline{B}_{vv}  }   - B_{vv} \overline{B}_{uu} + A_{uv} \overline{A}_{uv} ) \Big) 
 \, \omega^{1\bar{1}\cdots m\overline{m}}.
\end{array}
$$
Hence, the Hermitian metric $F$ is AK if and only if 
$$
\displaystyle\sum_{1\leq u<v\leq m} r_{uv} \, ( |A_{uv}|^2 - 2\, \Real( B_{uu} \overline{B}_{vv} ) + |B_{uv}|^2 + |B_{vu}|^2 )=0,
$$
that is, if and only if $\displaystyle\sum_{1\leq u<v\leq m} \frac{\tau_{uv}}{x_{uu}\,x_{vv}} =\displaystyle\sum_{1\leq u<v\leq m} \frac{r_{uv} \, \tau_{uv}}{x_{11}\cdots x_{mm}} =0$.
\end{proof}

When the real dimension of the nilmanifold is $8$, one has the following:

\begin{corollary}\label{cor-taus}
In the conditions of Proposition~\ref{taus-of-aK-dim-m}, let $m=3$ and consider ${\mathcal C}=\{(x,y,z)\in \R^3 \mid x,y,z \leq 0 \ \mbox{  or  }\  x,y,z \geq 0\}$ and ${\mathcal B}=(\R^3\backslash{\mathcal C})\cup \{(0,0,0)\}$. 
If $(\tau_{12},\tau_{13},\tau_{23}) \in {\mathcal B}$, then there exist Hermitian metrics $F$ that are AK. 
\end{corollary}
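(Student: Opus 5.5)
Everything reduces to Proposition~\ref{taus-of-aK-dim-m} with $m=3$. For the diagonal metric \eqref{F-n-dim-astheno-K} with $x_{11},x_{22},x_{33}>0$, the AK condition reads
\[
\frac{\tau_{12}}{x_{11}x_{22}}+\frac{\tau_{13}}{x_{11}x_{33}}+\frac{\tau_{23}}{x_{22}x_{33}}=0 .
\]
Multiplying by $x_{11}x_{22}x_{33}>0$, this is equivalent to
\[
\tau_{12}\,x_{33}+\tau_{13}\,x_{22}+\tau_{23}\,x_{11}=0 .
\]
So it suffices to show that whenever $(\tau_{12},\tau_{13},\tau_{23})\in{\mathcal B}$, this linear equation has a solution with $x_{11},x_{22},x_{33}$ all strictly positive. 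The resulting $F$ is then an AK Hermitian metric.

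If $(\tau_{12},\tau_{13},\tau_{23})=(0,0,0)$, the equation holds identically. Any positive choice works, for instance the canonical metric, consistent with ${\mathcal T}=0$ in Proposition~\ref{taus-of-aK-dim-m}.

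Otherwise the triple lies in $\R^3\setminus{\mathcal C}$. By the definition of ${\mathcal C}$, it then has one strictly positive and one strictly negative component. I will produce positive weights explicitly. Rename the coefficients as $(a,b,c)=(\tau_{23},\tau_{13},\tau_{12})$, paired with the unknowns $(x_{11},x_{22},x_{33})$. Suppose, say, $a>0>b$; the other sign patterns follow by permuting indices and/or changing all signs. Set $x_{33}=1$. The equation becomes $a\,x_{11}+b\,x_{22}=-c$, which I solve according to the sign of $c$:
\begin{itemize}
\item if $c\le 0$, take $x_{22}=1$ and $x_{11}=(-c-b)/a>0$, since $-c\ge0$ and $-b>0$;
\item if $c>0$, take $x_{11}=1$ and $x_{22}=(a+c)/(-b)>0$.
\end{itemize}
In both cases all three weights are positive, so $F$ is a genuine positive-definite Hermitian metric.

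The only point needing care is this sign bookkeeping, which is exactly where the hypothesis is used. If all $\tau$'s are of one sign and not all zero, every positive choice makes the left-hand side strictly nonzero, so no diagonal metric of this form can be AK. Hence the set ${\mathcal B}$ is precisely the region where this construction succeeds. Beyond that, no obstacle is expected: the argument is an elementary positivity statement for a single linear form in three positive variables.
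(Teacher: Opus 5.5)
Your proof is correct and follows the paper's route. Both arguments use Proposition~\ref{taus-of-aK-dim-m} with $m=3$ to reduce the question to finding $x_{11},x_{22},x_{33}>0$ with $x_{33}\tau_{12}+x_{22}\tau_{13}+x_{11}\tau_{23}=0$. Where the paper calls the existence of a positive orthogonal vector ``clear'', you give explicit positive weights via the case split on the sign of the third coefficient, and your converse remark on the closed octants (excluding the origin) is also correct.
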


\begin{proof}
For $m=3$, the metric $F$ given by~\eqref{F-n-dim-astheno-K} is AK 
iff $x_{33}\tau_{12} + x_{22}\tau_{13} + x_{11}\tau_{23}=0$. Equivalently,
$
(\tau_{12},\tau_{13},\tau_{23})\cdot (x_{33},x_{22},x_{11}) = 0,
$
i.e. the vector $(\tau_{12},\tau_{13},\tau_{23})$ is orthogonal to $(x_{33},x_{22},x_{11})$ with respect to the standard Euclidean product in $\R^3$. Since one needs $(x_{33},x_{22},x_{11}) \in (\R^+)^3$, it is clear that there are solutions whenever $(\tau_{12},\tau_{13},\tau_{23}) \in {\mathcal B}$.
\end{proof}

\begin{remark}\label{rem} 
{\rm 
As a consequence of Proposition~\ref{taus-of-aK-dim-m}, for every complex nilmanifold $X=(M,J)$ defined by~\eqref{caso-i-extended} such that $(\frg,J)$ satisfies the AK obstruction (which implies that no AK metric exists on $X$), 
one has ${\mathcal T}\neq 0$. 
Notice that the converse of this assertion does not hold, since there exist AK metrics on nilmanifolds $X=(M,J)$ defined by complex equations of the form~\eqref{caso-i-extended} for which ${\mathcal T}\neq0$.
For instance, in \cite[Remark 2.6]{LU-cr} it is showed that the complex structures given by the equations 
$$
d\omega^1=d\omega^2=d\omega^{3}=0,\quad
d\omega^{4}= 
\omega^{1\bar{1}}+\omega^{2\bar{2}}-2\,\omega^{3\bar{3}},
$$
admit AK metrics (and also balanced metrics). In this case one has 
${\mathcal T}>0$. 

An interesting question is if the non-existence of AK metric on $X=(M,J)$ implies that $(\frg,J)$ satisfies the AK obstruction.
}
\end{remark}

\subsection{An application to geometric Bott-Chern formality}

Recall that  a Hermitian metric~$g$ on a compact complex manifold~$X$ 
is said to be \emph{geometrically-Bott-Chern-formal} if the wedge product of any pair of 
forms which are harmonic with respect to the associated Bott-Chern Laplacian $\Delta_{BC}$ is again harmonic, i.e. 
$({\mathcal H}^{\bullet,\bullet}_{\Delta_{BC}}(X,g),\wedge)$ is an algebra.
The complex manifold $X$ is called \emph{geometrically-Bott-Chern-formal} is there exists a geometrically-Bott-Chern-formal metric~$g$ on $X$. 
(For further details see, for instance, \cite{AT, ST-MathZ, ST-JGA}.)

In \cite{ST-MathZ, ST-JGA} Sferruzza and Tomassini investigate the interplay
between SKT metrics and geometrically-Bott-Chern-formal metrics on nilmanifolds endowed
with an invariant complex structure. 
In \cite{ST-MathZ}, a complex structure $J$ defined by the equations \eqref{caso-i-extended} is said to be \emph{``of special type''}. 
The authors prove that, for any nilmanifold endowed with a complex structure of special type, 
the existence of an SKT metric implies the existence of a geometrically-Bott-Chern-formal metric 
(see~\cite[Theorem 7.4]{ST-MathZ}). 
Note that in complex dimension 3 any $J$ admitting an SKT metric must be of special type, 
but this is not true in any higher dimensions. 

As a consequence 
of  Theorem~\ref{general-structure-of-aK-dim8}, we obtain the following general result in complex dimension~4.

\begin{proposition}\label{Extension-ST}
Let $X=(M, J)$ be an 8-dimensional nilmanifold endowed with an invariant complex structure. 
If an SKT metric and an AK metric coexist on $X$, then $X$ is geometrically-Bott-Chern-formal. 
\end{proposition}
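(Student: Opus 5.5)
The plan is to reduce to the result of Sferruzza and Tomassini, \cite[Theorem 7.4]{ST-MathZ}: on a nilmanifold with a complex structure of special type, the existence of an SKT metric implies the existence of a geometrically-Bott-Chern-formal metric. It therefore suffices to show that the complex structure $J$ is of special type, i.e. that it admits a coframe satisfying equations of the form \eqref{caso-i-extended} with $m=3$.

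\textbf{Step 1: $J$ is of special type.} Suppose $X=(M,J)$ carries an AK metric (invariant or not). Theorem~\ref{general-structure-of-aK-dim8} then gives a $(1,0)$-coframe $\{\omega^k\}_{k=1}^4$ satisfying \eqref{caso-i}. These are exactly the equations \eqref{caso-i-extended} with $m=3$, so $J$ is of special type in the sense of \cite{ST-MathZ}. Note that this step uses only the AK metric.

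\textbf{Step 2: apply \cite[Theorem 7.4]{ST-MathZ}.} We also have an SKT metric on $X$. If that theorem is stated for invariant SKT metrics, we first symmetrize. By the symmetrization process \cite{Belgun}, together with \cite{Ug}, the existence of an SKT metric on $X$ yields an invariant SKT metric on $(\frg,J)$. Applying \cite[Theorem 7.4]{ST-MathZ} to $(\frg,J)$ with this invariant SKT metric then produces a geometrically-Bott-Chern-formal metric on $X$.

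\textbf{Main obstacle: hypothesis matching.} The structural work is already done in Theorem~\ref{general-structure-of-aK-dim8}, so the remaining difficulty is checking that the hypotheses of \cite[Theorem 7.4]{ST-MathZ} match ours. Two points need checking. First, the notion of ``special type'' there may require a specific normalization of the coframe. If so, a linear change of the closed forms $\omega^1,\omega^2,\omega^3$, or rescaling $\omega^4$, preserves the shape \eqref{caso-i}. Second, the SKT metric there may be required to be invariant, which is handled by the symmetrization in Step 2.

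If the cited theorem instead needs an explicit SKT condition on the coefficients, we use \cite{ST-MathZ}'s own characterization of SKT for these equations. In the worst case we verify SKT directly for a diagonal metric, as in the computation of Proposition~\ref{taus-of-aK-dim-m}.
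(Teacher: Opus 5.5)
Your proposal is correct and matches the paper's proof. Theorem~\ref{general-structure-of-aK-dim8} forces $J$ to be of special type, symmetrizing the SKT metric gives an invariant SKT metric, and \cite[Theorem 7.4]{ST-MathZ} then says this invariant SKT metric is itself geometrically-Bott-Chern-formal; your hedging about normalizations is unnecessary, since \eqref{caso-i} is literally the special-type form with $m=3$.
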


\begin{proof}
By Theorem~\ref{general-structure-of-aK-dim8}, the existence of an AK metric on $X$ implies that the complex structure $J$ must be of special type. Now, given any SKT metric on $X$, the symmetrization process produces an \emph{invariant} SKT metric $\tilde{g}$ on $X$. Therefore, we are in the conditions of \cite[Theorem 7.4]{ST-MathZ}, where it is proved that such an invariant $\tilde{g}$ is geometrically-Bott-Chern-formal because $J$ is of special type. 
\end{proof}

It is worthy to note that there are examples of SKT metrics that are also AK, but the hypothesis in the proposition above is weaker since the metrics can be different. 
However, for any nilmanifold equipped with $J$ of special type, if an SKT metric exists, then all the \emph{left-invariant} Hermitian metrics are SKT and AK (see \cite[Lemma 3.2 and Theorem 3.3]{LLT}).

Examples of complex 4-dimensional SKT nilmanifolds with nilpotent complex structures
admitting no geometrically-Bott-Chern-formal metrics are constructed in \cite[Proposition 4.8]{ST-JGA}. One can easily check that these examples are not AK, accordingly to Proposition~\ref{Extension-ST}; in fact, their nilpotent complex structures are not of special type.

\smallskip

Moreover, for nilmanifolds endowed with complex structures of special type, one has that any \emph{invariant} geometrically-Bott-Chern-formal metric is SKT~ \cite[Proposition 4.2]{ST-JGA}.
Using this result and Theorem~\ref{general-structure-of-aK-dim8} we obtain the following:

\begin{proposition}\label{Extension-ST-bis}
Let $X$ be an AK nilmanifold of complex dimension $4$. Then,
there exists an invariant geometrically-Bott-Chern-formal metric on $X$ if and only if $X$ is also SKT. 
\end{proposition}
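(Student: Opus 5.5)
The argument combines Theorem~\ref{general-structure-of-aK-dim8} with the two results of Sferruzza and Tomassini recalled just above.

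First, since $X$ is an AK nilmanifold of complex dimension $4$, Theorem~\ref{general-structure-of-aK-dim8} gives a $(1,0)$-coframe satisfying \eqref{caso-i}. Hence $J$ is of special type in the sense of \cite{ST-MathZ}, i.e. it is given by \eqref{caso-i-extended} with $m=3$. This holds regardless of whether the AK metric is invariant, since the theorem covers arbitrary AK metrics.

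For the ``only if'' direction, suppose $X$ has an invariant geometrically-Bott-Chern-formal metric $g$. Because $J$ is of special type, \cite[Proposition 4.2]{ST-JGA} says every invariant geometrically-Bott-Chern-formal metric is SKT. So $g$ is SKT and $X$ is SKT.

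For the ``if'' direction, suppose $X$ carries some SKT metric, not necessarily invariant. Symmetrization \cite{Belgun}, which preserves the SKT condition by \cite{Ug}, produces an invariant SKT metric $\tilde g$. Since $J$ is of special type, \cite[Theorem 7.4]{ST-MathZ} applies to $\tilde g$ and shows it is geometrically-Bott-Chern-formal. This is the same reasoning as in the proof of Proposition~\ref{Extension-ST}.

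The only delicate point is checking that the hypotheses of the cited results match the setting here. Both \cite[Proposition 4.2]{ST-JGA} and \cite[Theorem 7.4]{ST-MathZ} are stated for invariant metrics and complex structures of special type. Invariance of the geometrically-Bott-Chern-formal metric is part of our hypothesis, and for the SKT metric it is supplied by symmetrization, so there is no real obstacle.
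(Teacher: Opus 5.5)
Your proof is correct and is essentially the paper's argument. Theorem~\ref{general-structure-of-aK-dim8} shows $J$ is of special type. The ``only if'' direction then follows from \cite[Proposition 4.2]{ST-JGA}, and the ``if'' direction is the symmetrization plus \cite[Theorem 7.4]{ST-MathZ} reasoning from Proposition~\ref{Extension-ST}, which in fact yields an invariant geometrically-Bott-Chern-formal metric.
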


Propositions~\ref{Extension-ST} and~\ref{Extension-ST-bis} suggest that the AK condition might play an important role in relation to geometric-Bott-Chern-formality (see also \cite{CT-arxiv}). In this setting, the following natural question arises: 
\emph{Do Proposition~\ref{Extension-ST} and/or Proposition~\ref{Extension-ST-bis} hold 
in any dimension?}

\section{An obstruction to the existence of strongly Gauduchon metrics}\label{sec:no-sG}

\noindent 
In this section we are interested in the relation of AK structures with other special Hermitian metrics, specifically, strongly Gauduchon (sG) and balanced metrics. 
Recall that by \cite{Belgun,FG,COUV} the well-known symmetrization process can be applied to sG and balanced metrics, in such a way 
that the existence of these metrics on $X$ is equivalent to its existence on $(\frg,J)$. Nonetheless, the following result will also
be useful to distinguish those complex structures on nilmanifolds that do not admit any sG (or balanced) metric.

\begin{proposition}\label{condition-for-non-sG} {\bf{\emph{(sG obstruction)}}}
Let $X$ be a compact complex manifold of complex dimension $n\geq 2$.
Suppose that there is a $\db$-closed $(0,1)$-form $\alpha$ such that
\begin{equation}\label{sG-condition}
\partial\alpha= \sum_{k} c_k\,\eta^k\wedge \bar{\eta}^k,
\end{equation}
with $\eta^k$ being $(1,0)$-covectors and with $c_k=a_k+i b_k \neq 0$ being complex constants such that either all their non-zero real parts $a_k$'s or all their non-zero imaginary parts $b_k$'s have the same sign. Then $X$ does not admit any sG metric.
\end{proposition}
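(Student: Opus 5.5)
We argue by contradiction: assume $F$ is an sG metric on $X$, so $\partial F^{n-1}=\db\beta$ for some $(n,n-2)$-form $\beta$. The idea is to pair $\partial\alpha$ with $F^{n-1}$ and integrate over $X$, obtaining zero by Stokes' theorem on one hand and a nonzero number by positivity on the other.

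\textbf{Vanishing of the integral.} Since $\alpha$ is a $(0,1)$-form, we have
$$
d(\alpha\wedge F^{n-1})=\partial\alpha\wedge F^{n-1}+\db\alpha\wedge F^{n-1}-\alpha\wedge\partial F^{n-1}-\alpha\wedge\db F^{n-1}.
$$
Only the top-degree $(n,n)$ pieces matter. Because $\db\alpha=0$, and because $\alpha\wedge\db F^{n-1}$ has bidegree $(n-1,n+1)$ and hence vanishes, Stokes gives
$$
\int_X\partial\alpha\wedge F^{n-1}=\int_X\alpha\wedge\partial F^{n-1}=\int_X\alpha\wedge\db\beta .
$$
Since $d(\alpha\wedge\beta)$ has $(n,n)$-part $\db(\alpha\wedge\beta)=\db\alpha\wedge\beta-\alpha\wedge\db\beta=-\alpha\wedge\db\beta$, a second application of Stokes yields $\int_X\alpha\wedge\db\beta=0$. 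Therefore
$$
\int_X\partial\alpha\wedge F^{n-1}=0 .
$$
A sign slip in these identities would be harmless, since only the vanishing is used.

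\textbf{Nonvanishing of the integral.} Insert \eqref{sG-condition}. For each $k$ the form $i\,\eta^k\wedge\bar\eta^k$ is a semipositive real $(1,1)$-form, so
$$
i\,\eta^k\wedge\bar\eta^k\wedge F^{n-1}=(n-1)!\,|\eta^k|_F^2\,dV_F ,
$$
which is nonnegative and strictly positive wherever $\eta^k\neq0$. This yields
$$
\int_X\partial\alpha\wedge F^{n-1}=-i\sum_k c_k\,(n-1)!\int_X|\eta^k|^2_F\,dV_F .
$$
Put $I_k=(n-1)!\int_X|\eta^k|_F^2\,dV_F$. We interpret the $\eta^k$ as covectors that are nonzero on an open set (for nilmanifolds they are invariant and nowhere zero), so $I_k>0$ for every $k$. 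Taking real and imaginary parts, the vanishing of the integral gives
$$
\sum_k b_kI_k=0,\qquad \sum_k a_kI_k=0 .
$$

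\textbf{Contradiction, and the main obstacle.} Suppose all nonzero $a_k$ share a sign. Since $I_k>0$, the relation $\sum_k a_kI_k=0$ forces every $a_k=0$; then $c_k=ib_k\neq0$ gives $b_k\neq0$ for all $k$, and $\sum_k b_kI_k=0$ can only hold if the $b_k$ have mixed signs. This case is therefore not immediate, and I regard it as the delicate point. I would settle it by reading the hypothesis as saying that the sign condition holds for the component that is not identically zero (all $c_k$ purely imaginary then forces a common sign of the $b_k$), which gives a contradiction in either case. The symmetric case, where all nonzero $b_k$ share a sign, is handled identically.
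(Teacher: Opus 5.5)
Your argument is correct and is essentially the paper's proof: Stokes applied to $\alpha\wedge F^{n-1}$ (the paper writes it as $\partial(F^{n-1}\wedge\alpha)$), together with $\db(\alpha\wedge\beta)=-\alpha\wedge\db\beta$, kills $\int_X\partial\alpha\wedge F^{n-1}$, and positivity of $i\,\eta^k\wedge\bar\eta^k\wedge F^{n-1}$ then forces $\sum_k a_kI_k=\sum_k b_kI_k=0$. The degenerate case you flag is genuine, and the paper's one-line ``gives a contradiction'' silently adopts your reading: the chosen family of real or imaginary parts must not vanish identically, and the $\eta^k$ must be nonzero. Under the vacuous reading the statement fails, e.g.\ for $d\omega^1=d\omega^2=0$, $d\omega^3=i\,\omega^{1\bar1}-i\,\omega^{2\bar2}$ and $\alpha=\omega^{\bar3}$, where the canonical metric is balanced.
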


\begin{proof}
Suppose $F$ is sG, i.e. $\partial F^{n-1}=\db\gamma$ for some form $\gamma$ of bidegree $(n,n-2)$. Then we get  
$$
\begin{array}{rl}
\partial (F^{n-1}\wedge \alpha) \!\!&\!\! = \partial F^{n-1}\wedge\alpha + F^{n-1}\wedge\partial\alpha \\[4pt]
\!\!&\!\! =
\db\gamma \wedge\alpha + F^{n-1}\wedge\partial\alpha \\[4pt] 
\!\!&\!\!=
\db(\gamma \wedge\alpha) + \sum_{k} (a_k+i b_k)\, F^{n-1}\!\wedge \eta^k\wedge \bar{\eta}^k,
\end{array}
$$
where the last equality follows from the $\db$-closedness of the form $\alpha$. By Stokes theorem 
$$
0 = \int_X \partial (F^{n-1}\wedge\alpha) = \sum_{k} a_k\, \int_X F^{n-1}\!\wedge \eta^k\wedge \bar{\eta}^k 
+i \left( \sum_{k} b_k\, \int_X F^{n-1}\!\wedge \eta^k\wedge \bar{\eta}^k \right),$$
because $\int_X \db(\gamma \wedge\alpha)=0$. 
Now, the condition for $c_k=a_k+i b_k$ in the statement gives a contradiction.
\end{proof}

The following result shows that the sG obstruction on a complex nilmanifold is transferred to any of its  extensions.  

\begin{proposition}\label{extensions-of-non-sG} 
Let $X=(M,J)$ be a complex nilmanifold of complex dimension $n\geq 3$ with underlying Lie algebra $\frg$. Suppose that $(\frg,J)$ is an extension of a pair $(\tilde{\frg},\tilde{J})$ on which there exists a $(0,1)$-form 
satisfying the conditions in Proposition~\ref{condition-for-non-sG}. 
Then, the nilmanifold $X=(M,J)$ does not admit any sG metric.
\end{proposition}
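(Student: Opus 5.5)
The plan is to mirror the proof of Proposition~\ref{extensions-of-non-AK}: pull back the obstructing form along the projection and then apply Proposition~\ref{condition-for-non-sG} on the nilmanifold $X$.

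\emph{Setting up the projection.} Let $\frb$ be a proper $J$-invariant ideal such that $(\frg,J)$ is a $\frb$-extension of $(\tilde\frg,\tilde J)$. Let $f\colon\frg_\frb\to\tilde\frg$ be the equivalence and set $\pi=f\circ\pi_\frb$. Then $\pi$ is a surjective Lie algebra homomorphism with $\pi\circ J=\tilde J\circ\pi$. Consequently $\pi^*\colon\Lambda^*\tilde\frg^*_\C\to\Lambda^*\frg^*_\C$ has the following properties:
\begin{itemize}
\item it commutes with $d$, so $d\,\pi^*=\pi^*\tilde d$;
\item it is multiplicative with respect to the wedge product;
\item it commutes with conjugation;
\item it maps $(p,q)$-forms to $(p,q)$-forms.
\end{itemize}
Splitting $d\,\pi^*=\pi^*\tilde d$ into bidegrees gives $\partial\pi^*=\pi^*\tilde\partial$ and $\db\pi^*=\pi^*\tilde\db$.

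\emph{Pulling back the obstruction.} Let $\tilde\alpha$ be the $\tilde\db$-closed $(0,1)$-form on $\tilde\frg$ with
\[
\tilde\partial\tilde\alpha=\sum_k c_k\,\tilde\eta^k\wedge\overline{\tilde\eta^k},
\]
where the $\tilde\eta^k$ are $(1,0)$-covectors and the constants $c_k$ satisfy the sign condition of Proposition~\ref{condition-for-non-sG}. Set $\alpha=\pi^*\tilde\alpha$ and $\eta^k=\pi^*\tilde\eta^k$. Then $\alpha$ is an invariant $(0,1)$-form on $\frg$ with $\db\alpha=\pi^*\tilde\db\tilde\alpha=0$, and
\[
\partial\alpha=\sum_k c_k\,\eta^k\wedge\bar\eta^k,
\]
with the same constants $c_k$.

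\emph{Nondegeneracy.} The one point that needs care is that the pulled-back terms do not vanish or cancel. Since $\pi$ is surjective, $\pi^*$ is injective on $\Lambda^*\tilde\frg^*_\C$. Hence each $\eta^k$ is a nonzero $(1,0)$-covector whenever $\tilde\eta^k$ is, and no new cancellation is introduced: the identity above is literally the image of the one on $\tilde\frg$. Strictly, the hypothesis of Proposition~\ref{condition-for-non-sG} does not even need nonvanishing, because its proof only uses the shape of $\partial\alpha$ together with the signs of the $c_k$. Still, injectivity ensures that the resulting integrals are not all zero.

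\emph{Applying the obstruction on $X$.} Left-invariant forms on $G$ descend to $M=\nilm$, via the injection of the Lie algebra complex $(\Lambda^*\frg^*,d)$ into the de Rham complex $(\Omega^*(X),d)$ of the nilmanifold, compatibly with $J$. So $\alpha$ and the $\eta^k$ define global forms on the compact complex manifold $X$ satisfying the hypotheses of Proposition~\ref{condition-for-non-sG}. That proposition then shows that $X$ admits no sG metric, invariant or not.

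I do not expect a real obstacle. The step that needs the most care is the bookkeeping that $\pi^*$ preserves bidegree and intertwines $\partial$ and $\db$ separately. This follows from $\pi\circ J=\tilde J\circ\pi$ together with the integrability of both $J$ and $\tilde J$.
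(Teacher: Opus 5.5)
Your proposal is correct and matches the paper's approach: the paper's proof just says it is analogous to Proposition~\ref{extensions-of-non-AK}, namely pulling back the obstructing $(0,1)$-form by $\pi^*$ with $\pi=f\circ\pi_\frb$ (which preserves bidegree and commutes with $\partial$, $\db$ and wedge products) and then applying Proposition~\ref{condition-for-non-sG} on $X$ through the inclusion of invariant forms. Your extra remark that injectivity of $\pi^*$ keeps the $\eta^k$ nonzero, so the integrals in that obstruction do not degenerate, is a harmless and slightly clarifying addition.
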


\begin{proof}
The proof is similar to that of Proposition~\ref{extensions-of-non-AK}.
\end{proof}

Let us denote by $\frk\frt$ the (real) 4-dimensional Lie algebra endowed with the complex structure  defined by the equations 
\begin{equation}\label{kt-complex-ecus}
d\omega^1=0,\quad\quad d\omega^2=\omega^{1\bar{1}}.
\end{equation}
The real Lie algebra underlying $\frk\frt$ is $(0,0,0,12)$, and this nilpotent Lie algebra only admits one complex structure up to equivalence.  We will refer to $\frk\frt$ as \emph{the Kodaira-Thurston algebra}.

\smallskip

The following result provides a simple condition that can be applied to many complex nilmanifolds
to discard the existence of an sG metric on them, 
even when the complex structure $J$ is of non-nilpotent type (see Subsection~\ref{subsec-non-nilp} below).

\begin{corollary}\label{cor-extensions-of-non-sG} 
Let $X=(M,J)$ be a complex nilmanifold of complex dimension $n\geq 3$ with underlying Lie algebra $\frg$. If $(\frg,J)$ is an extension of the Kodaira-Thurston algebra, then $X$ does not admit any sG metric.
\end{corollary}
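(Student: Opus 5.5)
By Proposition~\ref{extensions-of-non-sG}, it suffices to exhibit on the Kodaira-Thurston algebra $\frk\frt$ a $(0,1)$-form satisfying the hypotheses of Proposition~\ref{condition-for-non-sG}. The extension property does the rest. Pullback by the projection $\pi\colon\frg\to\frk\frt$ commutes with $d$, with the bidegree splitting and with wedge products. Hence the pulled-back form is still $\db$-closed and still satisfies \eqref{sG-condition} with the same constants $c_k$. Proposition~\ref{condition-for-non-sG} then applies on $X$, since $X$ is a compact complex manifold of dimension $n\geq 3\geq 2$.

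On $\frk\frt$ we have the equations \eqref{kt-complex-ecus}, $d\omega^1=0$ and $d\omega^2=\omega^{1\bar1}$. The natural candidate is $\alpha=\omega^{\bar2}$. Conjugating gives $d\omega^{\bar2}=\overline{\omega^{1\bar1}}=\omega^{\bar1 1}=-\omega^{1\bar1}$. This is purely of type $(1,1)$, so
$$\db\alpha=(d\omega^{\bar 2})^{0,2}=0,\qquad \partial\alpha=-\,\omega^{1}\wedge\omega^{\bar1}.$$
Thus $\alpha$ is $\db$-closed and $\partial\alpha=c_1\,\eta^1\wedge\bar\eta^1$ with $\eta^1=\omega^1$ and $c_1=-1$. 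There is a single term, so the sign condition on the real parts holds trivially. This is the sG obstruction on $(\frk\frt,\tilde J)$.

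Finally, let $\frb$ be an ideal realizing $(\frg,J)$ as a $\frb$-extension of $\frk\frt$, and set $\pi=f\circ\pi_\frb$ as in the proof of Proposition~\ref{extensions-of-non-AK}. Then $\pi^*\alpha$ is an invariant $(0,1)$-form on $\frg$, and hence on $X$. It is $\db$-closed and satisfies $\partial(\pi^*\alpha)=-\pi^*\omega^1\wedge\overline{\pi^*\omega^1}$, where $\pi^*\omega^1$ is a $(1,0)$-form. Stokes' argument in Proposition~\ref{condition-for-non-sG} then gives a contradiction for any sG metric $F$, because
$$\int_X F^{n-1}\wedge i\,\pi^*\omega^{1}\wedge\overline{\pi^*\omega^1}>0,$$
as $\pi^*\omega^1\neq 0$ and $F$ is positive.

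There is essentially no obstacle in this argument. The only point needing care is the conjugation sign and the check that $d\omega^{\bar2}$ has no $(0,2)$-part; both follow directly from the equations.
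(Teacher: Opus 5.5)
Your proposal is correct and matches the paper's proof: the paper also takes the $\db$-closed form $\alpha=\omega^{\bar 2}$ on $\frk\frt$, observes that $\partial\alpha=-\omega^{1\bar1}$ satisfies \eqref{sG-condition}, and concludes via Proposition~\ref{extensions-of-non-sG}. Your explicit check of the pullback and of the positivity of $\int_X F^{n-1}\wedge i\,\pi^*\omega^1\wedge\overline{\pi^*\omega^1}$ simply unpacks what Propositions~\ref{extensions-of-non-sG} and~\ref{condition-for-non-sG} already contain.
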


\begin{proof}
It follows from \eqref{kt-complex-ecus} that the $\db$-closed $(0,1)$-form $\alpha=\omega^{\bar{2}}$ satisfies \eqref{sG-condition}, so the result is a direct consequence of Proposition~\ref{extensions-of-non-sG}. 
\end{proof}

\smallskip 

In the rest of this section we will study 
the existence of AK and sG metrics  on large families of complex nilmanifolds. 

\smallskip 

We first recall the well-known result, noticed in \cite{MT}, that 
a compact complex parallelizable manifold cannot be AK unless it is a complex torus. 
In fact, let $X$ be a compact complex parallelizable manifold of complex dimension $n$, i.e. there exist $n$ global holomorphic $1$-forms on $X$ which are linearly independent  at every point.  
Then, by \cite{Wang} 
$X$ is isomorphic to the quotient of a simply connected \emph{complex} Lie group by a lattice, and 
by \cite{JY} every holomorphic $1$-form on a compact AK manifold is closed. 
On the other hand, any left-invariant Hermitian metric on an unimodular complex Lie group is balanced by \cite{AG}. 
Therefore, any compact complex parallelizable manifold, which is not a complex torus, is an sG (indeed, balanced) manifold not admitting any AK metric.

\smallskip 

Next, we study some interesting classes of complex nilmanifolds satisfying the obstruction given in Proposition~\ref{condition-for-non-sG} 
to the existence of sG metrics. 
Specifically, in Subsection~\ref{subsec-yesAK-nonSG} 
we provide nilmanifolds that are AK but not sG, whereas the class considered in Subsection~\ref{subsec-max-nilp} is neither AK nor sG. 
Subsection~\ref{subsec-almost-abelian} is devoted to complex
nilmanifolds whose underlying Lie algebra is almost abelian, 
and 
Subsection~\ref{subsec-non-nilp} to the class of nilmanifolds endowed with non-nilpotent complex structures in eight dimensions.

\subsection{A class of AK nilmanifolds not admitting sG metrics}\label{subsec-yesAK-nonSG} 
For every integer $n\geq 3$, 
let $X$ be a complex nilmanifold defined by a $(1,0)$-coframe 
$\{\omega^k\}_{k=1}^n$ 
satisfying the equations  
\begin{equation}\label{abel-dim-n}
d\omega^1 = \cdots  =  d\omega^{n-1}  =  0, \quad\quad  
d\omega^n = \sum_{k=1}^{n-1} B_{kk}\,\omega^{k\bar{k}},
\end{equation}
where $B_{kk}\in \C$. 

Let us consider the Hermitian metric $F=\frac{i}{2}\,\sum_{k=1}^{n} \omega^{k\bar{k}}$. 
As a consequence of Proposition~\ref{taus-of-aK-dim-m}, one has that 
$F$ is an AK metric 
if and only if $\mathcal T=0$, namely,
$$
\sum_{k=1}^{n-1} |B_{kk}|^2 = \left|\sum_{k=1}^{n-1} B_{kk}\right|^2.
$$
If we choose, for instance, $B_{11}=\cdots=B_{n-2\,n-2}=1$, then the latter condition is equivalent to $\Real B_{n-1\,n-1}=-\frac{n-3}{2}$. Furthermore, taking $\Imag B_{n-1\,n-1}\ne 0$, we get that the nilmanifold satisfies the sG obstruction. In conclusion, we have proved the following

\begin{proposition}\label{abel-AK-noSG-dim-n}
Given any integer $n\geq 3$, let $X=(M,J)$ be a complex $n$-dimensional nilmanifold defined by \eqref{abel-dim-n} with $B_{11}=\cdots=B_{n-2\,n-2}=1$ and  $B_{n-1\,n-1}=-\frac{n-3}{2} + i\, b$, where $b\in\R^*$. Then, $X$ is 
an AK nilmanifold that does not admit any sG (or balanced) metric.
\end{proposition}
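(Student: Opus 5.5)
The plan has two parts: the AK property comes from Proposition~\ref{taus-of-aK-dim-m}, and the non-existence of sG metrics comes from the sG obstruction of Proposition~\ref{condition-for-non-sG}. The equations \eqref{abel-dim-n} are a special case of \eqref{caso-i-extended} with $m=n-1$, $A_{jk}=0$ and $B_{jk}=0$ for $j\neq k$. For $n\geq 4$ we have $m\geq 3$ and the proposition applies directly. For $n=3$ the same computation of $\partial\db F^{m-1}$ still goes through, since it only uses the structure equations; in that case the AK condition is the SKT condition, handled by the classical \cite{FPS} criterion.

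For the AK part, I take the canonical metric $F=\frac{i}{2}\sum_{k=1}^n\omega^{k\bar k}$, which is AK if and only if $\mathcal T=0$. By \eqref{aK-invariant}, $\mathcal T=0$ means $\sum_k |B_{kk}|^2=|\sum_k B_{kk}|^2$. With $B_{kk}=1$ for $k\leq n-2$ and $B_{n-1\,n-1}=c$, the left side is $(n-2)+|c|^2$ and the right side is $|n-2+c|^2=(n-2)^2+2(n-2)\Real c+|c|^2$. So the condition becomes $(n-2)=(n-2)^2+2(n-2)\Real c$, i.e. $\Real c=-\frac{n-3}{2}$, which is exactly our choice. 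Hence $F$ is AK, with no restriction on $b$.

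For the sG part, I would take $\alpha=\omega^{\bar n}$. Conjugating $d\omega^n$ gives
\[
d\omega^{\bar n}=\sum_k \bar B_{kk}\,\omega^{\bar k k}=-\sum_k \bar B_{kk}\,\omega^{k\bar k},
\]
which is purely of type $(1,1)$. Therefore $\db\alpha=0$, and $\partial\alpha=\sum_k(-\bar B_{kk})\,\omega^k\wedge\overline{\omega^k}$, so \eqref{sG-condition} holds with $\eta^k=\omega^k$ and $c_k=-\bar B_{kk}$. The coefficients are $c_k=-1$ for $k\leq n-2$ and $c_{n-1}=\frac{n-3}{2}+ib$. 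The only nonzero imaginary part among them is $b\neq 0$, so the non-zero imaginary parts trivially share one sign. Proposition~\ref{condition-for-non-sG} then rules out sG metrics, and hence balanced metrics too, on the compact nilmanifold $X$.

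The main point to check is the sign hypothesis. The real parts do not all share a sign when $n>3$, which is why the proof must go through the imaginary-part option of the hypothesis; this is what forces $b\neq0$. I also need to confirm the rationality of the structure constants so that a lattice exists and $X$ is compact. Taking $b$ rational suffices, or one can use a real basis with rational constants after rescaling; I expect this to be the only genuinely delicate bookkeeping.
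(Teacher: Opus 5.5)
Your proposal is correct and follows the paper's argument. The canonical metric is AK because $\mathcal T=0$ (Proposition~\ref{taus-of-aK-dim-m}), which forces $\Real B_{n-1\,n-1}=-\frac{n-3}{2}$, and sG metrics are excluded by the sG obstruction of Proposition~\ref{condition-for-non-sG}, applied through the nonzero imaginary part $b$. Your explicit choice $\alpha=\omega^{\bar n}$, your check that the computation also covers $n=3$, and your lattice remark only make explicit what the paper leaves implicit; for the lattice, note that the underlying real algebra is $(0^{2n-2},12,\,34+\cdots+2n\!-\!3\,2n\!-\!2)$ for every $b\neq0$, so no rationality restriction on $b$ is needed.
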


Observe that the complex structure $J$ given in Proposition~\ref{abel-AK-noSG-dim-n} is abelian. 
We also note that the Lie algebra underlying $X$ is isomorphic to 
$(0^{2n-2},12, \,34+\cdots+ 2n\!-\!3\, 2n\!-\!2)$.

\subsection{Nilmanifolds with maximal nilpotent complex structures}\label{subsec-max-nilp} 

In \cite{GaoZZ}, Gao, Zhao and Zheng introduce the notion of \emph{maximal nilpotent} complex structures $J$. These are nilpotent complex structures for which the length of the ascending $J$-compatible series $\{\fra_{k}(J)\}_{k}$ 
given by  
\eqref{sucesion-a} is maximal, that is, $t=n$. Here $t$ denotes the smallest integer such that $\fra_{t-1}(J)\neq \fra_{t}(J)=\frg$, and $2n$ is the (real) dimension of $\frg$. Notice that $t$ is denoted as $\nu(J)$ in \cite{GaoZZ}. In the paper, they study the structure of maximal nilpotent complex structures and, in particular, solve a question in \cite{CFGU-dolbeault}.

In the following result we prove that maximal nilpotent complex structures do not admit AK or sG metrics. More generally:

\begin{proposition}\label{max-nilpotent}
Let $X=(M,J)$ be a complex nilmanifold of complex dimension $n\geq 3$, and let $\frg$ be its underlying Lie algebra. 
If the ascending $J$-compatible series $\{\fra_{k}(J)\}_{k=0}^t$ satisfies 
$$
\dim \fra_{t-3}(J)=2n-6 \ < \ \dim \fra_{t-2}(J)=2n-4 \ < \ \dim \fra_{t-1}(J)=2n-2 \ < \ \dim \fra_{t}(J)=2n,
$$
then $X$ does not admit neither AK nor sG (or balanced) metrics. 
\end{proposition}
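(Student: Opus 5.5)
Throughout, set $\frb=\fra_{t-3}(J)$. By definition of the ascending series this is a $J$-invariant ideal of $\frg$. By hypothesis it has real codimension $6$, so the quotient $(\frg_\frb,J_\frb)$ is a $6$-dimensional nilpotent Lie algebra with complex structure. When $t=3$ one has $\frb=\{0\}$ and $n=3$, and we work with $(\frg,J)$ itself. By Proposition~\ref{extensions-of-non-AK} (or directly when $n=3$) it then suffices to show that $(\frg_\frb,J_\frb)$ satisfies the AK obstruction. By Proposition~\ref{extensions-of-non-sG} it likewise suffices to show that $(\frg_\frb,J_\frb)$ satisfies the sG obstruction of Proposition~\ref{condition-for-non-sG}.

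The first step is to see that the induced series on the quotient is $\fra_k(J_\frb)=\fra_{t-3+k}(J)/\frb$. This holds because taking brackets modulo $\fra_{k-1}$ is compatible with the projection: the condition defining $\fra_k$ upstairs becomes the defining condition downstairs. So $J_\frb$ is a nilpotent complex structure on a $6$-dimensional algebra whose ascending series has length $3$, with each step of complex dimension one. In particular $\fra_1(J_\frb)$ has real dimension $2$, and the quotient $\frg_\frb/\fra_1(J_\frb)\cong\frg/\fra_{t-2}(J)$ has an ascending series of length $2$ in real dimension $4$. That $4$-dimensional quotient is not abelian, so it must be the Kodaira--Thurston algebra $\frk\frt$ with equations \eqref{kt-complex-ecus}.

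This already settles the sG part. The pair $(\frg,J)$ is an extension of $\frk\frt$, via the ideal $\fra_{t-2}(J)$, and Corollary~\ref{cor-extensions-of-non-sG} gives that $X$ has no sG metric, hence no balanced metric.

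For the AK part, the quotient $(\frg_\frb,J_\frb)$ is a nilpotent complex structure in complex dimension $3$ that is a central extension of $\frk\frt$ and has $\dim\fra_1=2$. So it is not in (Eq-i) or (Eq-ii), since those have $\fra_2=\frg$ (and (Eq-i) with $\rho=0$ is abelian). It is therefore of type (Eq-iii), written with $d\omega^2=\omega^{1\bar1}$. I would then verify that the extra conditions force $(\rho,B,c)\neq(0,0,0)$, which holds automatically in (Eq-iii). Otherwise $d\omega^3=0$, the length of the series would drop, and $\dim\fra_1$ would be $4$. The computation in the proof of Proposition~\ref{classif-SKT} gives $(dd^c(i\omega^{3\bar3}))^{2,2}=2(\rho+|B|^2+c^2)\,\omega^{12}\wedge\omega^{\bar1\bar2}\neq0$, which is the AK obstruction. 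Proposition~\ref{extensions-of-non-AK} then lifts this to $\frg$ for $n\geq4$, and for $n=3$ the obstruction on $(\frg,J)$ itself rules out SKT, that is, AK, metrics.

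The main obstacle is the identification step: matching the quotient with the normal form (Eq-iii) and checking that the series lengths are respected. The point to confirm is that nilpotent structures in (Eq-iv) or with degenerate parameters cannot occur. For (Eq-iv), either it is non-nilpotent, or the obstruction already holds there as well by Proposition~\ref{classif-SKT}, so in any case the AK obstruction is satisfied. To be safe, I would simply observe that the SKT list in Proposition~\ref{classif-SKT} consists of $\frh_1$ and of structures with $\fra_2=\frg$ (types (Eq-ii)), none of which has series length $3$. So the quotient lies outside that list and satisfies the AK obstruction, by the equivalence proved there.
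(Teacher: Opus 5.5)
Your proof is correct and follows the paper's argument. For the sG part, $\fra_{t-2}(J)$ exhibits $(\frg,J)$ as an extension of $\frk\frt$. For the AK part, $\fra_{t-3}(J)$ gives a $6$-dimensional quotient with series dimensions $2<4<6$, on which $\alpha=i\,\omega^{3\bar3}$ realizes the AK obstruction, lifted to $\frg$ by Proposition~\ref{extensions-of-non-AK} (or used directly when $n=3$). The only difference is how the quotient is identified: the paper writes its structure equations straight from the series, $d\omega^3=A\,\omega^{12}+B\,\omega^{1\bar1}+C\,\omega^{1\bar2}+D\,\omega^{2\bar1}$ with $(A,C,D)\neq(0,0,0)$, whereas you place it in family (Eq-iii), or outside the SKT list of Proposition~\ref{classif-SKT}.
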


\begin{proof}
Let us consider the proper $J$-invariant ideal $\frb=\fra_{t-2}(J)$. 
Then, the pair 
$(\frg,J)$ is an extension of $(\frg_\frb,J_\frb)$. Notice that the ascending $J_\frb$-compatible series of the 4-dimensional pair $(\frg_\frb,J_\frb)$ satisfies 
$$
\dim \fra_{1}(J_\frb) = 2 \ < \ \dim \fra_{2}(J_\frb) = 4 = \dim \frg_\frb,
$$
so it admits a basis of $(1,0)$-forms 
$\{\omega^1_\frb,\omega^2_\frb\}$ 
satisfying 
$d\omega^1_\frb=0$ and $d\omega^2_\frb=\omega^{1\bar{1}}_\frb$. 
In other words, $(\frg,J)$ is an extension of the Kodaira-Thurston algebra.
Therefore, by Corollary~\ref{cor-extensions-of-non-sG}, the complex nilmanifold $X$ does not admit any sG metric.

Let us now suppose that $n\geq 4$ and consider the proper $J$-invariant ideal $\frb=\fra_{t-3}(J)$. 
Then, the pair $(\frg,J)$ 
can also be seen as 
an extension of the 6-dimensional 
nilpotent Lie algebra with complex structure 
$(\frg_\frb,J_\frb)$, for which the ascending $J_\frb$-compatible series satisfies 
$$
\dim \fra_{1}(J_\frb) = 2 \ < \ \dim \fra_{2}(J_\frb) = 4 \ < \ \dim \fra_{3}(J_\frb) = 6 = \dim \frg_\frb.
$$
Therefore, $(\frg_\frb,J_\frb)$ admits a basis of $(1,0)$-forms 
$\{\omega^1_\frb,\omega^2_\frb,\omega^3_\frb\}$ 
 satisfying $$
d\omega^1_\frb=0, \quad 
d\omega^2_\frb=\omega^{1\bar{1}}_\frb,\quad 
d\omega^3_\frb=A\, \omega^{12}_\frb+ B\, \omega^{1\bar{1}}_\frb+ C\,\omega^{1\bar{2}}_\frb+ D\, \omega^{2\bar{1}}_\frb,
$$ 
for some $A,B,C,D\in \C$ with $(A,C,D)\neq(0,0,0)$. 
Thus, for the real $2$-form $\alpha=i\,\omega^{3\bar{3}}_\frb$ we get 
$$
(dd^c\alpha)^{2,2}=2i\partial\db(i\omega^{3\bar{3}}_\frb)=-2\partial\db(\omega^{3\bar{3}}_\frb)=
2(|A|^2+|C|^2+|D|^2)\,\omega^{12}_\frb\!\wedge \omega^{\bar{1}\bar{2}}_\frb \neq 0,
$$
so the AK obstruction~\eqref{2-condition} is satisfied on $(\frg_\frb,J_\frb)$.  Hence, Proposition~\ref{extensions-of-non-AK} implies that 
the complex nilmanifold $X$ does not admit any AK metric.

Finally, we notice that the remaining case $n=3$ corresponds, by the hypothesis of the proposition, to a 6-dimensional pair $(\frg,J)$ with $t=3$ and ascending $J$-compatible series with 
$\dim \fra_{1}(J) = 2$, $\dim \fra_{2}(J) = 4$, and $\fra_{3}(J) =\frg$. Hence, a similar argument as above allows us to conclude that $X$ does not admit any AK (SKT) metric.
\end{proof}

\begin{remark}\label{nonSKT}
We note here that the complex nilmanifolds in the conditions of Proposition~\ref{max-nilpotent} do not admit any SKT metric. 
The result has been proved above for $n=3$, so let us suppose that $n\geq 4$. 
On the one hand, Arroyo and Nicolini showed in \cite[Theorem 1.2]{AN} that any complex nilmanifold
admitting an SKT metric is either a torus or 2-step nilpotent. On the other hand, by \cite[Theorem 3]{GaoZZ},
any complex structure on a 2-step nilpotent Lie algebra has $2 \leq t \leq 3$. 
Now, simply observe that the condition on the ascending $J$-compatible series $\{\fra_{k}(J)\}_{k}$ in Proposition~\ref{max-nilpotent} implies that $t\geq 4$; in particular, the nilmanifold is $s$-step with 
$s\geq 3$.
\end{remark}

\begin{corollary}\label{max-nilpotent-cor1}
Let $X=(M,J)$ be a complex nilmanifold of complex dimension $n\geq 3$. If $J$ is maximal nilpotent, then $X$ does not admit neither AK nor sG (or balanced) metrics. 
\end{corollary}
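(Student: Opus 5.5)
The plan is to derive the corollary directly from Proposition~\ref{max-nilpotent}, by checking that a maximal nilpotent complex structure satisfies the dimension hypothesis on the ascending $J$-compatible series. By definition, $J$ maximal nilpotent means $t=n$, where $t$ is the smallest index with $\fra_t(J)=\frg$ and $\dim\frg=2n$.

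The key observation is that the series $\fra_0(J)=\{0\}\subsetneq\fra_1(J)\subsetneq\cdots\subsetneq\fra_t(J)=\frg$ is strictly increasing up to $t$, and every term is $J$-invariant, hence even-dimensional. Strictness is standard: if $\fra_k(J)=\fra_{k+1}(J)$ for some $k<t$, then the recursive definition \eqref{sucesion-a} forces $\fra_{k+2}(J)=\fra_{k+1}(J)$, and by induction the series stabilizes before reaching $\frg$, contradicting the choice of $t$.

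Consequently each step increases the dimension by at least $2$. Having $n$ strict steps from $0$ to $2n$ then forces every step to increase the dimension by exactly $2$, so $\dim\fra_k(J)=2k$ for all $0\le k\le n$. Since $n\ge 3$, the indices $t-3=n-3\ge 0$, $t-2$, $t-1$ are all valid, and we get
$$\dim \fra_{t-3}(J)=2n-6<\dim \fra_{t-2}(J)=2n-4<\dim \fra_{t-1}(J)=2n-2<\dim\fra_t(J)=2n.$$
This is exactly the hypothesis of Proposition~\ref{max-nilpotent}, and that proposition gives the conclusion: no AK and no sG metrics. Since every balanced metric is sG, balanced metrics are excluded as well.

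The only step needing care is the strict monotonicity of the series. It follows from the definition, because $\fra_{k+1}(J)$ depends only on $\fra_k(J)$. Note that in the case $n=3$ we have $\fra_{t-3}(J)=\fra_0(J)=\{0\}$, which still satisfies the hypothesis, and Proposition~\ref{max-nilpotent} already treats $n=3$ separately.
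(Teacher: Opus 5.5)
Your proposal is correct and follows the paper's proof: both show that $t=n$ forces $\dim \fra_k(J)=2k$ for $0\leq k\leq n$, and then apply Proposition~\ref{max-nilpotent}. You add the justification the paper leaves implicit, namely that the series increases strictly up to $t$ through even-dimensional $J$-invariant ideals. That justification is sound.
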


\begin{proof}
By definition, $J$ is maximal nilpotent if $t=n$, so one necessarily has that 
$\dim \fra_{k}(J)=2k$ for $0\leq k\leq n$. Thus, for $0\leq l\leq 3$, we get $\dim \fra_{t-l}(J)=2t-2l=2n-2l$ and Proposition~\ref{max-nilpotent} can be applied.
\end{proof}

\begin{corollary}\label{max-nilpotent-cor2}
Let $X=(M,J)$ be a complex nilmanifold of complex dimension $n\geq 3$, and let $\frg$ be its underlying Lie algebra. 
If $J$ is nilpotent and the ascending central series $\{\frg_{k}\}_{k}$ of the Lie algebra satisfies 
$\dim \frg_{k}=2k$ for $0\leq k\leq n$, then $X$ does not admit neither AK nor sG (or balanced) metrics. 
\end{corollary}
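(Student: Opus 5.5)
The plan is to reduce Corollary~\ref{max-nilpotent-cor2} to Corollary~\ref{max-nilpotent-cor1}. We show that, under the hypotheses, the ascending $J$-compatible series coincides with the ascending central series. Then $t=n$, so $J$ is maximal nilpotent.

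First, since $J$ is nilpotent, $\fra_t(J)=\frg$ for some $t$. Comparing definitions by induction on $k$, every element $X\in\fra_k(J)$ satisfies $[X,\frg]\subseteq\fra_{k-1}(J)\subseteq\frg_{k-1}$. Hence $\fra_k(J)\subseteq\frg_k$ for all $k$, and in particular $\dim\fra_k(J)\le 2k$. Moreover, each $\fra_k(J)$ is a $J$-invariant ideal, so it has even dimension.

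Next we show that the inclusions $\fra_{k-1}(J)\subsetneq\fra_k(J)$ are strict as long as $\fra_{k-1}(J)\neq\frg$. This holds for nilpotent $J$: if $\fra_k=\fra_{k-1}$, the series would stabilize and never reach $\frg$. Each strict step therefore raises the dimension by at least $2$. Combined with $\dim\fra_k(J)\le\dim\frg_k=2k$, an induction on $k$ gives $\dim\fra_k(J)=2k$, and therefore $\fra_k(J)=\frg_k$ for $0\le k\le n$.

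It follows that the smallest $t$ with $\fra_t(J)=\frg$ is $t=n$. So $J$ is maximal nilpotent, and Corollary~\ref{max-nilpotent-cor1} gives the conclusion. Equivalently, one could apply Proposition~\ref{max-nilpotent} directly to the dimensions $2n-6<2n-4<2n-2<2n$.

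The only delicate point is the strictness of the inclusions in the $J$-compatible series. This uses the fact that $\fra_k(J)$ is defined recursively from $\fra_{k-1}(J)$, so a repetition $\fra_k=\fra_{k-1}$ forces stabilization, which is incompatible with nilpotency of $J$ (we also need $t\ge1$, and $\frg\neq 0$). The rest is dimension counting.
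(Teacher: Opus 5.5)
Your proposal is correct and follows essentially the same argument as the paper: the inclusion $\fra_k(J)\subseteq\frg_k$ gives $\dim\fra_k(J)\le 2k$, and strict growth of the $J$-invariant (hence even-dimensional) series gives $\dim\fra_k(J)\ge 2k$ for $k\le t$. Together with $\dim\frg=2n$ this forces $t=n$, so Corollary~\ref{max-nilpotent-cor1} applies.
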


\begin{proof}
By the definition of the ascending $J$-compatible series $\{\fra_{k}(J)\}_{k}$ one always has that 
$\fra_{k}(J) \subseteq \frg_k=\{X\in\frg \mid [X,\frg]\subseteq \frg_{k-1}\}$ for every $k\geq 0$, 
so $\dim \fra_{k}(J)\leq\dim \frg_{k}=2k$. 
Moreover, each $\fra_{k}(J)$ is $J$-invariant and, since $J$ is nilpotent, $\{\fra_{k}(J)\}_{k=0}^t$ is strictly increasing. This implies that 
$2k\leq \dim \fra_{k}(J)$ for $0\leq k\leq t$. Therefore, $\fra_{k}(J)=\frg_k$ for every $k\geq 0$ and, in particular, $t=n$. Therefore, $J$ is maximal nilpotent and one can apply Corollary~\ref{max-nilpotent-cor1}.
\end{proof}

\subsection{Almost abelian complex nilmanifolds}\label{subsec-almost-abelian}  

We now focus our attention on the class of complex
nilmanifolds whose underlying Lie algebra is almost abelian, that is, it admits
an abelian ideal of codimension one.
Notice that by~\cite[Proposition 3.10]{AABRW} the complex structure is unique up to equivalence.

Let $(\frg,J)$ be a nilpotent almost abelian Lie algebra with complex
structure. Let $2n$ be the real dimension of $\frg$. It is worthy to note that the nilpotency step of $\frg$ is at most $n$. 
By~\cite[Corollary~3.12]{AABRW}, there exist integer numbers $k_0\geq 0$ and 
$k_i>0$ $(i=1,\ldots,r)$, with   
$k_0 + k_1 + \cdots+ k_r=n-1$, so that the complex structure equations of $(\frg,J)$ are given as follows:

\vskip.2cm

\noindent $\bullet$ 
If $k_0=0$, there is a basis of $(1,0)$-forms 
$\{ \alpha,\beta_1^1,\ldots,\beta^1_{k_1},\ldots,\beta_1^r,\ldots,\beta^r_{k_r} \}$ 
satisfying 
\begin{equation}\label{structure-equs-non-abelian-k0-0}
d\alpha=0,\qquad 
d\beta_i^l =
\left\{\begin{aligned}
& 0, & \textit{ for }  i=1,\\[-2pt]
& (\alpha+\overline\alpha)\wedge \beta_{i-1}^l, & \textit{ for } i\neq 1,
\end{aligned}\right.
\quad \mbox{ {\it where} } 1\leq l \leq r.
\end{equation}

\vskip.2cm

\noindent $\bullet$ 
If $k_0\geq 1$, there exists a basis of $(1,0)$-forms 
$\{ \alpha,\beta_1^0,\ldots,\beta^0_{k_0},\ldots,\beta_1^r,\ldots,\beta^r_{k_r} \}$ 
such that 
\begin{equation}\label{structure-equs-non-abelian-k0-no-0}
d\alpha=0,\qquad 
d\beta_i^l =
\left\{
\begin{aligned}
& \alpha\wedge\overline\alpha, & \textit{ for } i=1  \textit{ and } l=0,\\
& 0, & \textit{ for } i=1 \textit{ and }  0<l\leq r,\\[-2pt]
& (\alpha+\overline\alpha)\wedge \beta_{i-1}^l, & \textit{ for }  i\neq 1 \textit{ and }0\leq l \leq r.
\end{aligned}
\right.
\end{equation}

\vskip.2cm

\begin{proposition}\label{almost-ab-prop}
Let $X=(M^{2n},J)$ be an almost abelian complex nilmanifold, not a torus.  
\begin{itemize}
\item[\textit{(i)}] $X$ has an sG (or balanced) metric if and only if $k_0=0$. 
\item[\textit{(ii)}] $X$ has an AK metric if and only if $k_0=k_1=\cdots=k_r=1$ (i.e. its underlying Lie algebra is isomorphic to $\frk\frt\times \fra$, where $\frk\frt$ is the Kodaira-Thurston algebra and $\fra$ is abelian).
\end{itemize}
\end{proposition}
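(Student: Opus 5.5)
Both parts rest on the explicit structure equations \eqref{structure-equs-non-abelian-k0-0}--\eqref{structure-equs-non-abelian-k0-no-0} and on the preservation of obstructions under $\frb$-extensions (Propositions~\ref{extensions-of-non-AK} and~\ref{extensions-of-non-sG}).

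\emph{Part (i).} If $k_0\geq 1$, consider the $J$-invariant ideal $\frb$ dual to all the $\beta$'s except $\beta^0_1$. More precisely, it is spanned by the real parts of the dual vectors $z$ associated with those $(1,0)$-forms. It is an ideal because the subspace $\langle \alpha,\beta^0_1\rangle$ together with its conjugates is closed under $d$: indeed $d\alpha=0$ and $d\beta^0_1=\alpha\wedge\bar\alpha$. The quotient $(\frg_\frb,J_\frb)$ is then the Kodaira-Thurston algebra \eqref{kt-complex-ecus}, and Corollary~\ref{cor-extensions-of-non-sG} excludes sG metrics. If $n=2$, $X$ is itself Kodaira-Thurston and Proposition~\ref{condition-for-non-sG} applies with $\alpha=\bar\omega^2$.

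Conversely, if $k_0=0$ I will write down an explicit balanced metric. Set $\beta$'s such that $d\beta^l_i=(\alpha+\bar\alpha)\wedge\beta^l_{i-1}$. I plan to take a diagonal metric $F=\frac i2(\alpha\wedge\bar\alpha+\sum_{l,i} x^l_i\,\beta^l_i\wedge\bar\beta^l_i)$ and check $dF^{n-1}=0$. Each term of $F^{n-1}$ omits exactly one pair $\gamma\wedge\bar\gamma$. Differentiating, $d(\beta^l_i\wedge\bar\beta^l_i)$ contains $\alpha$ or $\bar\alpha$ wedged with mixed $\beta^l_i\bar\beta^l_{i-1}$-type terms. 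After wedging with the remaining diagonal pairs, such a term can survive only if the omitted pair allows it; however, the mixed index $(i,i-1)$ paired with the diagonal structure always produces a repeated factor or an off-diagonal combination that is not top degree. Thus every term is either zero or has the form $\alpha\wedge\bar\alpha\wedge(\text{mixed})$, which vanishes. Since the coefficients are real, I expect $dF^{n-1}=0$ identically, so even the canonical metric is balanced. The key check is that $d\beta^l_i\wedge\bar\beta^l_i$ is a $\beta^l_{i-1}$-containing $3$-form, and wedging it with $\bigwedge_{(m,j)\neq(l,i),(\text{omitted})}\beta^m_j\bar\beta^m_j$ vanishes unless $\beta^l_{i-1}\bar\beta^l_{i-1}$ is omitted, in which case a $\bar\beta^l_{i-1}$ is still missing and the degree count fails. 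If the computation turns out to be subtle, I would instead invoke the known balanced criterion for almost abelian algebras.

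\emph{Part (ii).} Suppose some $k_l\geq 2$ with $l\geq 1$, or $k_0\geq 2$. Then I will quotient by the ideal dual to everything except $\alpha,\beta^l_1,\beta^l_2$ to obtain a $6$-dimensional pair. For $l\geq1$ its equations are $d\omega^1=d\omega^2=0$ and $d\omega^3=\omega^{12}+\omega^{\bar1 2}$, i.e. (Eq-ii) or (Eq-iii) type with nonzero coefficients; there $\alpha=i\omega^{3\bar3}$ gives a positive multiple of $\omega^{12}\wedge\omega^{\bar1\bar2}$, so the AK obstruction holds. For $k_0\geq2$ we get $d\omega^2=\omega^{1\bar1}$ and $d\omega^3=(\omega^1+\bar\omega^1)\wedge\omega^2$, which is (Eq-iii) with $\rho=1$, where the obstruction was computed in the proof of Proposition~\ref{classif-SKT}. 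Proposition~\ref{extensions-of-non-AK} then forbids AK metrics; for $n=3$ the same computation applies directly. Conversely, if all $k_i=1$, the equations are $d\beta^0_1=\alpha\bar\alpha$ and all other differentials vanish, so $\frg=\frk\frt\times\fra$. This fits \eqref{caso-i-extended} with only $B_{11}\neq0$, hence $\mathcal T=0$ and Proposition~\ref{taus-of-aK-dim-m} gives an AK metric. If $k_0=0$ and all $k_l=1$, then $X$ is a torus, which is excluded. The main obstacle is the balanced computation in (i).
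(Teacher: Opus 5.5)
Your proposal is correct and is essentially the paper's proof. In (i) the paper likewise uses that $k_0\geq 1$ makes $(\frg,J)$ an extension of $\frk\frt$, and that the canonical diagonal metric is balanced when $k_0=0$; in (ii) your passage to the $6$-dimensional quotient with $(1,0)$-coframe $\alpha,\beta^l_1,\beta^l_2$ is just a repackaging of the paper's direct computation $\big(dd^c(i\,\beta_2^l\wedge\overline{\beta}_2^l)\big)^{2,2}=4\,\alpha\wedge\beta_1^l\wedge\overline{\alpha}\wedge\overline{\beta}_1^l$ on $\frg$, followed by the same canonical AK metric on $\frk\frt\times\fra$.

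Two small fixes. First, $\frb$ must be spanned by both the real and imaginary parts of the dual vectors to be $J$-invariant. Second, in your balanced check the term in which $\beta^l_{i-1}\wedge\overline{\beta}^l_{i-1}$ is the omitted pair vanishes because the remaining product still contains $\alpha\wedge\overline{\alpha}$, which kills the factor $\alpha+\overline{\alpha}$ in $d\beta^l_i$; it is not a degree count, and the reality of the coefficients plays no role, since every diagonal metric is balanced.
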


\begin{proof}
For the proof of (i), we first notice that if $k_0\geq 1$ then \eqref{structure-equs-non-abelian-k0-no-0} implies that the Lie algebra of $X$ is an extension of $\frk\frt$, so it  follows from Corollary~\ref{cor-extensions-of-non-sG} that $X$ does not admit any sG metric. For $k_0=0$, we directly get from \eqref{structure-equs-non-abelian-k0-0} that the Hermitian metric 
$$
F=\frac{i}{2}\,\alpha\wedge\overline\alpha + \frac{i}{2}\,\sum_{l=1}^r \sum_{j=1}^{k_l} \beta_j^l\wedge\overline\beta_j^l 
$$
satisfies $d F^{n-1}=0$, so it is balanced and thus, sG.

For the proof of (ii), let us first suppose
that there is 
some integer number $k_j$, with $1\leq j\leq r$, such that $k_j\geq 2$. 
Then, by \eqref{structure-equs-non-abelian-k0-0}--\eqref{structure-equs-non-abelian-k0-no-0} we have $d\beta^l_2= (\alpha+\overline\alpha)\wedge \beta^l_{1}$. Since $d\alpha=0$, we observe that the real 2-form 
$i\,\beta_2^l\wedge\overline\beta_2^l$ satisfies 
$$
\big( dd^c (i \beta_2^l\wedge\overline\beta_2^l) \big)^{2,2}
=-2\partial\db(\beta_2^l\wedge\overline\beta_2^l) 
=4\, \alpha\wedge\beta_1^l\wedge\overline\alpha\wedge\overline\beta_1^l, 
$$
so the AK obstruction \eqref{2-condition} holds. 
Now, if $k_0\geq 2$ then \eqref{structure-equs-non-abelian-k0-no-0} implies that $d\alpha=0$, $d\beta^0_1=\alpha\wedge\overline\alpha$, and $d\beta^0_2= (\alpha+\overline\alpha)\wedge \beta^0_{1}$, so a direct calculation shows that the real 2-form $i\,\beta_2^0\wedge\overline\beta_2^0$ satisfies the AK obstruction. 
Consequently, one needs $k_j\leq 1$ for every $j=0,\ldots,r$ to avoid the AK obstruction.
Since $k_0\geq 0$, $k_i>0$ for $1\leq i\leq r$, and $k_0+\ldots+k_r=n-1$,
one indeed has $k_1=\ldots=k_r=1$ and $k_0=n-1-r\in\{0,1\}$. 
At the sight of~\eqref{structure-equs-non-abelian-k0-0}--\eqref{structure-equs-non-abelian-k0-no-0} 
and considering that $X$ is not a torus, so $\frg$ is not abelian, the only possibility 
is having $k_0=1$ and thus $r=n-2$. 
This means that $\frg\simeq \frk\frt\times \fra$, and then  
the metric
$$
F=\frac{i}{2}\big( \alpha\wedge\overline\alpha + \beta_1^0\wedge\overline\beta_1^0 + \beta_1^1\wedge\overline\beta_1^1 +\cdots +\beta_1^r\wedge\overline\beta_1^r \big)
$$
is astheno-K\"ahler, i.e. $\partial\db F^{n-2}=0$. 
\end{proof}

\subsection{Nilmanifolds with non-nilpotent complex structures}\label{subsec-non-nilp}  

Now, we turn our attention to the class of non-nilpotent complex structures in real dimension 8. 
We already know from Section~\ref{sec:estructurageneral} that there are no AK metrics on those complex nilmanifolds having this type of complex structures.
In this section, we study 
sG metrics and identify those non-nilpotent complex structures not admitting these metrics. The existence of sG metrics 
in the remaining non-nilpotent complex structures is later analyzed in Section~\ref{sec:Finvariante}.

Recall that the complex structure equations for SnN complex structures appear as 
Families I and II in Proposition~\ref{clasifJ-SnN}.

\begin{proposition}\label{SnN-no-sG}
For any SnN complex structure with $(\varepsilon,\nu)\ne(0,0)$ in Family I or $\nu\ne0$
in Family II,  the corresponding complex nilmanifolds $X$  
do not admit any sG metric.
The same holds for all the extensions of $X$.
\end{proposition}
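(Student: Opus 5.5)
The plan is to apply the sG obstruction of Proposition~\ref{condition-for-non-sG} directly on the Lie algebra $(\frg,J)$, using the structure equations of Proposition~\ref{clasifJ-SnN}. Since the obstruction uses only invariant forms, it holds on $X$. The statement about extensions then follows immediately from Proposition~\ref{extensions-of-non-sG}. So everything reduces to exhibiting, in each case, a $\db$-closed $(0,1)$-form $\alpha$ with $\partial\alpha=\sum_k c_k\,\eta^k\wedge\bar\eta^k$ satisfying the sign condition. The candidates will be conjugates of suitable $(1,0)$-forms $\omega^j$ whose differential $d\omega^j$ has no $(2,0)$-part. For such $\omega^j$ we have $d\omega^{\bar j}=\overline{d\omega^j}\in\Lambda^{1,1}$, hence $\db\omega^{\bar j}=0$ and $\partial\omega^{\bar j}=\overline{d\omega^j}$.

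First consider Family~I with $\varepsilon\neq0$. Take $\alpha=\omega^{\bar2}$. Since $d\omega^2=\varepsilon\,\omega^{1\bar1}$, we get $\partial\alpha=\varepsilon\,\omega^{\bar1 1}=-\varepsilon\,\omega^{1\bar1}$. This is a single term $c\,\eta\wedge\bar\eta$ with $c\neq0$, so the sign condition is automatic. Equivalently, the pair is an extension of $\frk\frt$ via the ideal spanned by the duals of $\omega^3,\omega^4$, and one can invoke Corollary~\ref{cor-extensions-of-non-sG}.

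Next consider Family~I with $\varepsilon=0,\ \nu\neq0$, and Family~II with $\nu\neq0$. Take $\alpha=\omega^{\bar4}$. In both families $d\omega^4$ is purely of type $(1,1)$, so $\alpha$ is $\db$-closed. Conjugating gives
\[
\partial\omega^{\bar4}= i\nu\,\omega^{1\bar1}+(\text{real multiples of }\omega^{2\bar2})+(\text{terms } i(\omega^{k\bar j}-\omega^{j\bar k})).
\]
The key observation is the polarization identity
\[
i(\omega^{k\bar j}-\omega^{j\bar k})=(\omega^j+i\omega^k)\wedge\overline{(\omega^j+i\omega^k)}-\omega^{j\bar j}-\omega^{k\bar k}.
\]
It shows that each such cross term is a combination of forms $\eta\wedge\bar\eta$ with \emph{real} coefficients. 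Hence $\partial\alpha$ can be written as $\sum_k c_k\,\eta^k\wedge\bar\eta^k$ in which every coefficient is real except the one of $\omega^{1\bar1}$. That coefficient equals $i\nu$ plus a real number, so it has imaginary part $\nu\neq0$, and it is the only coefficient with nonzero imaginary part. The second alternative of Proposition~\ref{condition-for-non-sG}, that all nonzero imaginary parts have the same sign, therefore holds. In Family~I the cross term is $i\delta(\omega^{1\bar3}-\omega^{3\bar1})$. In Family~II the cross terms are $ib(\omega^{1\bar2}-\omega^{2\bar1})$ and $i(\omega^{1\bar3}-\omega^{3\bar1})$. All of these have real coefficient multiples after polarization, and $\omega^{1\bar1}$ may appear several times without harm.

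The main point requiring care is this last case. Here $\partial\alpha$ is not visibly of the form $\sum c_k\eta^k\wedge\bar\eta^k$, and the real parts have mixed signs. So one must use the flexibility in Proposition~\ref{condition-for-non-sG} that only the imaginary parts need a common sign, and check signs and conjugation conventions in the polarization carefully. Finally, since every extension of $X$ is an extension of a pair satisfying the sG obstruction, Proposition~\ref{extensions-of-non-sG} gives the last assertion.
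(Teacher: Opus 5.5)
Your proposal is correct and follows the paper's proof. The case $\varepsilon\neq 0$ in Family I uses $\omega^{\bar 2}$, i.e. the Kodaira--Thurston extension of Corollary~\ref{cor-extensions-of-non-sG}. The cases $\nu\neq0$ use $\alpha=\omega^{\bar 4}$, and your polarization identity is a general form of the paper's explicit decomposition with $c_1=1+i\nu$, $c_2=-b$, $c_3=1$, $c_4=-1$; there only the imaginary parts have a common sign. Extensions are then handled by Proposition~\ref{extensions-of-non-sG}.
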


\begin{proof}
Let $(\frg,J)$ be an $8$-dimensional real nilpotent Lie algebra with SnN complex structure 
defined by the equations in the Family I. 
Note that the parameters $\varepsilon,\nu$ in these
equations satisfy $\varepsilon,\nu\in\{0,1\}$.
Let $\{z_k=x_k+i\,y_k\}_{k=1}^4$ 
denote the dual basis of $\{\omega^k\}_{k=1}^4$. 
Then, $\frb=\langle x_3,y_3,x_4,y_4 \rangle$ is a proper $J$-invariant (non-central) ideal in $\frg$. If $\varepsilon=1$, we observe that 
$(\frg,J)$ is a $\frb$-extension of the Kodaira-Thurston algebra so, by Corollary~\ref{cor-extensions-of-non-sG}, the corresponding complex nilmanifold does not admit any sG metric. 
Furthermore, the $(0,1)$-form $\omega^{\bar{4}}$  
satisfies $\db\omega^{\bar{4}}=0$ and we can write the $(1,1)$-form $\partial\omega^{\bar{4}}$
as follows:
$$
\begin{array}{rl}
\partial\omega^{\bar{4}} \!\!&\!\!  = i \nu\, \omega^{1\bar{1}} - b\, \omega^{2\bar{2}} - i \delta (\omega^{1\bar{3}}  - \omega^{3\bar{1}} ) \\[4pt] 
\!\!&\!\!  =
(1+i \nu)\, \omega^{1\bar{1}} - b\, \omega^{2\bar{2}} + \omega^{3\bar{3}} 
- (i \omega^{1}+ \delta \omega^{3})\wedge (-i \omega^{\bar{1}}+ \delta \omega^{\bar{3}}).
\end{array}
$$
Hence, we get an equality as in \eqref{sG-condition} with coefficients $c_1=1+i \nu$, $c_2=-b$, $c_3=1$ and $c_4=-1$. Clearly, if $\nu=1$ then the $(0,1)$-form $\alpha=\omega^{\bar{4}}$  
satisfies the sG obstruction of Proposition~\ref{condition-for-non-sG}, so there are no sG metrics when $\nu=1$.

Now, let $(\frg,J)$ be defined by the equations in the Family II. We consider the $\db$-closed $(0,1)$-form $\omega^{\bar{4}}$ and, similarly to the previous case, we write the $(1,1)$-form $\partial\omega^{\bar{4}}$ as 
$$
\begin{array}{rl}
\partial\omega^{\bar{4}} \!\!&\!\!  = i \nu\, \omega^{1\bar{1}} +\mu\, \omega^{2\bar{2}} 
- i b (\omega^{1\bar{2}}  - \omega^{2\bar{1}} )
- i  (\omega^{1\bar{3}}  - \omega^{3\bar{1}} ) \\[4pt]
\!\!&\!\!  =
(2 \!+\! i \nu)\, \omega^{1\bar{1}} + ( \mu \!+\! b^2)\, \omega^{2\bar{2}} + \omega^{3\bar{3}} 
- (i \omega^{1}+ b\, \omega^{2})\wedge (\overline{i \omega^{1}+ b\, \omega^{2}}) 
- (i \omega^{1}+  \omega^{3})\wedge (\overline{i \omega^{1}+  \omega^{3}}).
\end{array}
$$
Therefore, there are no sG metrics when $\nu=1$.

Finally, the non-existence of sG metrics on any extension follows from Proposition~\ref{extensions-of-non-sG}.
\end{proof}

In eight dimensions there are also non-nilpotent complex structures $J$ that are quasi-nilpotent, which  
by Definition~\ref{def-tipos-Js}, are known as WnN complex structures. 
By \cite[Theorem~3.1]{LU-racsam}, every $(\frg,J)$ with $J$ of WnN type admits a basis of $(1,0)$-forms 
$\{\omega^k\}_{k=1}^4$ 
in terms of which the complex structure equations of $(\frg,J)$ express as 
\begin{equation}\label{structure-equs-WnN}
\left\{
\begin{split}
d\omega^1 &= 0,\\[-4pt]
d\omega^2 &= \omega^{13} + \omega^{1\bar3},\\[-4pt]
d\omega^3 &= i\,\varepsilon\,\omega^{1\bar1} + i\,\delta\,\omega^{1\bar2} 
	- i\,\delta\,\omega^{2\bar1},\\[-4pt]
d\omega^4 &= a\,\omega^{12}+B\,\omega^{1\bar 1} 
	+\nu\,\big(\omega^{23}+2\,\delta\,\varepsilon\,\omega^{1\bar 3}+ \omega^{2\bar 3}\big),
\end{split}
\right.
\end{equation}
for some tuple $(\varepsilon,\delta,\nu,a,B)$ where $\varepsilon,\nu\in\{0,1\}$, $\delta=\pm 1$, $a\in\R$ and $B\in \C$. 
Now, if $\{z_k=x_k+i\,y_k\}_{k=1}^4$ denotes the dual
basis of $\{\omega^k\}_{k=1}^4$, then $\frb=\langle x_4,y_4 \rangle$ is a proper $J$-invariant central ideal in $\frg$. So, $(\frg,J)$ is a central extension of $\frh_{19}^-$ when $\varepsilon=0$ and a central extension of $\frh_{26}^+$ when $\varepsilon=1$.

Let us recall that $\frh_{19}^-$ and $\frh_{26}^+$ have exactly two complex structures each, up to equivalence, 
defined by the first three equations in \eqref{structure-equs-WnN} with $\delta=\pm 1$ and either $\varepsilon=0$ or $\varepsilon=1$, respectively. We next prove that $\frh_{26}^+$ endowed with any complex structure $J$, as well as any higher-dimensional extension of $(\frh_{26}^+,J)$, admits no sG metrics.

\begin{proposition}\label{WnN-no-sG}
Let $J$ be any complex structure on the Lie algebra $\frh_{26}^+$. Then, the corresponding complex nilmanifolds $X$ (or any extension of them) do not admit any sG metric. In particular, there are no sG metrics when $\varepsilon\ne0$
in the equations~\eqref{structure-equs-WnN}.
\end{proposition}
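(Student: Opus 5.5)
The plan is to exhibit an explicit $(0,1)$-form on $(\frh_{26}^+,J)$ that satisfies the sG obstruction of Proposition~\ref{condition-for-non-sG}, and then transfer it to extensions.

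By the recollection just before the statement, any complex structure $J$ on $\frh_{26}^+$ is, up to equivalence, given by the first three equations in \eqref{structure-equs-WnN} with $\varepsilon=1$ and $\delta=\pm1$:
$$
d\omega^1=0,\quad d\omega^2=\omega^{13}+\omega^{1\bar3},\quad d\omega^3=i\,\omega^{1\bar1}+i\,\delta\,(\omega^{1\bar2}-\omega^{2\bar1}).
$$
I take $\alpha=\omega^{\bar3}$. Conjugating $d\omega^3$ gives $d\omega^{\bar3}=i\,\omega^{1\bar1}+i\,\delta\,(\omega^{2\bar1}-\omega^{1\bar2})$. This is purely of bidegree $(1,1)$, so $\db\omega^{\bar3}=0$ and
$$
\partial\omega^{\bar3}=i\,\omega^{1\bar1}+i\,\delta\,(\omega^{2\bar1}-\omega^{1\bar2}).
$$

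Next I rewrite this in the form \eqref{sG-condition}, completing the square as in the proof of Proposition~\ref{SnN-no-sG}. Set $\eta=\omega^1+i\,\delta\,\omega^2$. Then
$$
\eta\wedge\bar\eta=\omega^{1\bar1}+i\,\delta\,(\omega^{2\bar1}-\omega^{1\bar2})+\omega^{2\bar2},
$$
using $\delta^2=1$. Hence
$$
\partial\omega^{\bar3}=(-1+i)\,\omega^{1\bar1}-\omega^{2\bar2}+\eta\wedge\bar\eta .
$$
The coefficients are $c_1=-1+i$, $c_2=-1$ and $c_3=1$, all nonzero. Their real parts have mixed signs, but the only nonzero imaginary part is $b_1=1$. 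So the non-zero imaginary parts trivially share the same sign, which is the hypothesis of Proposition~\ref{condition-for-non-sG}.

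Therefore, applying Proposition~\ref{condition-for-non-sG} with $n=3$, the nilmanifold $X$ carries no sG metric; the form can be pulled back from the Lie algebra to $X$. For any extension $(\frg,J')$ of $(\frh_{26}^+,J)$, Proposition~\ref{extensions-of-non-sG} applies directly. The pullback $\pi^*\omega^{\bar3}$ is still $\db$-closed, and $\partial\pi^*\omega^{\bar3}$ has the same decomposition with $(1,0)$-forms $\pi^*\omega^1$, $\pi^*\omega^2$, $\pi^*\eta$ and the same constants. Finally, when $\varepsilon=1$ in \eqref{structure-equs-WnN}, the pair $(\frg,J)$ is a central extension of $\frh_{26}^+$ via $\frb=\langle x_4,y_4\rangle$, as noted above, so the last assertion follows.

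The only real obstacle is finding the right completion of the square that makes one of the two sign conditions hold. This is settled by the choice $\eta=\omega^1+i\delta\omega^2$, which works uniformly for both values of $\delta$.
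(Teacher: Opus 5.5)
Your proof is correct and follows the paper's argument. The paper also takes $\alpha=\omega^{\bar3}$ and completes the square, using $i\omega^1+\delta\omega^2$ with coefficient $-1$ where you use $\omega^1+i\delta\omega^2$ with coefficient $+1$. It likewise relies on the single nonzero imaginary part, coming from the $\omega^{1\bar1}$ coefficient, and then passes to extensions via Proposition~\ref{extensions-of-non-sG}.
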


\begin{proof}
The $(0,1)$-form $\omega^{\bar{3}}$  
satisfies $\db\omega^{\bar{3}}=0$ and the $(1,1)$-form $\partial\omega^{\bar{3}}$
can be expressed as
$$
\partial\omega^{\bar{3}}= i \varepsilon\, \omega^{1\bar{1}} - i \delta (\omega^{1\bar{2}}  - \omega^{2\bar{1}} )
=
(1+i \varepsilon)\, \omega^{1\bar{1}} + \omega^{2\bar{2}} 
- (i \omega^{1}+ \delta \omega^{2})\wedge (-i \omega^{\bar{1}}+ \delta \omega^{\bar{2}}).
$$
Thus, we get an equality as in \eqref{sG-condition} with coefficients $c_1=1+i \varepsilon$, $c_2=1$, and $c_3=-1$. Clearly, if $\varepsilon=1$,  the $(0,1)$-form $\alpha=\omega^{\bar{3}}$  
satisfies the sG obstruction of Proposition~\ref{condition-for-non-sG}. 
Finally, Proposition~\ref{extensions-of-non-sG} implies the non-existence of sG metrics on extensions. 
\end{proof}

In Section~\ref{sec:Finvariante} we complete the classification of non-nilpotent complex structures in dimension~8 that admit sG (or balanced) metrics. Nonetheless, the 
results we have obtained at this point are enough to derive a (topological) obstruction for the existence of sG metrics on complex nilmanifolds of complex dimension less than or equal to 4. 
As far as we know, the only known generic obstruction to the existence of sG metrics is  the following intrinsic
characterization 
in terms of non-existence of certain currents, due to Popovici \cite[Proposition 4.3]{Popovici-invent}: a compact complex manifold
X carries an sG metric if and only if there is no non-zero current $T$ of
type $(1, 1)$ on $X$ such that $T \geq 0$ and $T$ is $d$-exact on $X$.

\smallskip

The following result suggests a possible restriction on the first Betti number of sG nilmanifolds~$(M,J)$. Another restriction would be given in terms of the nilpotency step of $M$.

\begin{corollary}\label{Betti-for-sG-dim8}
Let $M$ be a $2n$-dimensional nilmanifold with $n\geq 2$ 
endowed with an invariant complex structure $J$.
Suppose that $(M,J)$ carries an sG metric. 
\begin{itemize}
\item[$\bullet$] If $J$ is nilpotent, then $M$ is $s$-step with $s\leq n-1$; moreover, its first Betti number satisfies $b_{1}(M)\geq 4$.
\item[$\bullet$] If $J$ is WnN and $n\leq 5$, then $M$ is $s$-step with $s\leq n$ and $b_{1}(M)\geq 3$.
\item[$\bullet$] If $J$ is SnN and $n\leq 4$, then $M$ is $s$-step with $s\leq n$ and $b_{1}(M)\geq 3$.
\end{itemize}
\end{corollary}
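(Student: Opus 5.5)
The plan is to treat the three bullets separately, in each case using the sG obstruction transferred along $\frb$-extensions (Proposition~\ref{extensions-of-non-sG}, Corollary~\ref{cor-extensions-of-non-sG}). I will also use the standard bound $\dim \frg/[\frg,\frg]\ge$ the number of independent closed directions.

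\emph{Nilpotent $J$.} Take the ascending $J$-compatible series $\{\fra_k(J)\}_{k=0}^t$. Since $J$ is nilpotent, $\fra_t(J)=\frg$ and each step raises the dimension by at least $2$. Set $\frb=\fra_{t-1}(J)$ when it has dimension $2n-2$, or more generally pass to the quotient $\frg/\fra_{t-2}(J)$. The key claim is that an sG metric forces $\dim(\frg/\fra_{t-1}(J))\ge 4$.

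To prove the claim, I will argue as in Proposition~\ref{max-nilpotent}. If $\fra_{t-1}(J)$ had codimension $2$, look at the quotient $\frg/\fra_{t-2}(J)$, where the last step has dimension $2$. This quotient carries a closed $(1,0)$-form $\omega^1$, and some $\omega^2$ has $d\omega^2$ nonzero and built only from $\omega^1,\bar\omega^1$, so $d\omega^2=c\,\omega^{1\bar1}$. When $c$ is real, or more generally $c\neq 0$, this is the Kodaira--Thurston algebra after rescaling, and $\alpha=\omega^{\bar2}$ satisfies \eqref{sG-condition}. The claim then contradicts Corollary~\ref{cor-extensions-of-non-sG}.

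Consequences of the claim. First, $\frg/\fra_{t-1}(J)$ is abelian of dimension $\ge4$, and the corresponding $(1,0)$-forms are closed, so $b_1\ge 4$. Second, since $\dim\fra_k(J)\ge 2k$ and the top step has size $\ge4$, we get $t\le n-1$. The nilpotency step is at most $t$, because $\fra_k(J)\subseteq\frg_k$, which gives $s\le n-1$.

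\emph{WnN with $n\le5$ and SnN with $n\le4$.} Here I will use the explicit structure. For SnN in real dimension 8, Proposition~\ref{SnN-no-sG} leaves only Family I with $\varepsilon=\nu=0$ and Family II with $\nu=0$. From the equations I will read off the step, which is $\le4$, and $b_1$. The forms $\omega^1,\bar\omega^1$ are closed, and one extra real combination is also closed: the real part of $\omega^4$ in Family I with $\nu=0$, and a suitable real form in Family II. This gives $b_1\ge3$. Lower $n$ is handled directly, since SnN structures do not occur for $n\le3$.

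For WnN, I will use the fact that WnN appears only from dimension 8 on, with \eqref{structure-equs-WnN} in dimension 8 and Proposition~\ref{WnN-no-sG} forcing $\varepsilon=0$. For $n=5$, I write $(\frg,J)$ as a central extension of an 8-dimensional non-nilpotent structure, which then must be WnN or SnN without the obstruction. I then count $b_1$ and the step through the extension, each central step adding at most one to the step.

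The main obstacle is the WnN case with $n=5$. It needs a classification-free argument that the 8-dimensional quotient keeps $b_1\ge3$ and step $\le4$. I would try to show this by taking $\frb=\fra_1(J)$ and arguing inductively that the quotient is again non-nilpotent, hence of one of the types already classified.
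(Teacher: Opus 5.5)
Your nilpotent case is correct, and it takes a different and cleaner route than the paper. The paper uses the basis of \cite[Theorem 12]{CFGU-dolbeault} to view $(\frg,J)$ as an extension of a $4$-dimensional pair, excludes $\frk\frt$ to get $b_1\geq 4$, and then gives a rather sketchy induction for $s\leq n-1$. You instead show that sG forces $\dim\frg/\fra_{t-1}(J)\geq 4$. From this you read off $b_1\geq 4$, using $[\frg,\frg]\subseteq\fra_{t-1}(J)$, and $s\leq t\leq n-1$, using $\dim\fra_k(J)\geq 2k$ and $\fra_k(J)\subseteq\frg_k$. Two small precisions: $\frg/\fra_{t-2}(J)$ need not be $4$-dimensional, but $\alpha=\omega^{\bar 2}$ satisfies \eqref{sG-condition} on it directly; and for $n=2$ you should invoke Proposition~\ref{condition-for-non-sG} rather than Corollary~\ref{cor-extensions-of-non-sG}.

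The genuine gap is in the SnN bullet. Your assertion that SnN structures do not occur for $n\leq 3$ is false: they are absent only for $n=2$. In complex dimension $3$ every non-nilpotent complex structure is SnN. These are the family (Eq-iv) in the proof of Proposition~\ref{classif-SKT}, living on $\frh_{19}^-$ ($\varepsilon=0$) and on $\frh_{26}^+$ ($\varepsilon=1$). So the case $n=3$ of the third bullet is simply not covered and must be checked:
\begin{itemize}
\item on $\frh_{26}^+$ the $\db$-closed form $\alpha=\omega^{\bar3}$ gives the sG obstruction (Proposition~\ref{WnN-no-sG});
\item $\frh_{19}^-=(0^3,12,23,14-35)$ has $b_1=3$ and is $3$-step.
\end{itemize}

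The same misconception weakens your WnN argument. The $8$-dimensional WnN structures \eqref{structure-equs-WnN} are central extensions of exactly these $6$-dimensional SnN pairs. Proposition~\ref{WnN-no-sG} leaves $\varepsilon=0$, i.e.\ central extensions of $\frh_{19}^-$, and that is where $b_1\geq 3$ and $s\leq 4$ come from for $n=4$. Likewise, if for $n=5$ you quotient by $\frb=\fra_1(J)$ and $\dim\fra_1(J)=4$, you land on a $6$-dimensional SnN pair, a case your proposal declares nonexistent.

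Once this is repaired, the step you flag as the main obstacle is routine; no classification-free argument is needed.
\begin{itemize}
\item For a $J$-invariant $\frb\subseteq\fra_1(J)$, induction gives $\pi_\frb^{-1}(\fra_k(J_\frb))\subseteq\fra_{k+1}(J)$, so $J_\frb$ is non-nilpotent whenever $J$ is.
\item The sG obstructions pass to extensions (Propositions~\ref{extensions-of-non-sG}, \ref{SnN-no-sG}, \ref{WnN-no-sG}). Hence the quotient is $\frh_{19}^-$, a WnN structure with $\varepsilon=0$, Family~I with $(\varepsilon,\nu)=(0,0)$, or Family~II with $\nu=0$.
\item All of these have $b_1\geq 3$ and step $\leq 4$, and a central extension does not decrease $b_1$ and raises the step by at most one.
\end{itemize}
This is what the paper obtains by citing \cite{LU-racsam}.
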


\begin{proof}
Recall that the de Rham cohomology $H^*_{\mathrm{dR}}(M;\mathbb R)$ of the nilmanifold $M$ is isomorphic to the cohomology of its underlying Lie algebra $\frg$ \cite{Nomizu}. 

Let $J$ be a nilpotent complex structure on a $2n$-dimensional nilmanifold $M$, with $n\geq 2$.
We know from \cite[Proposition 15]{CFGU-dolbeault} that $b_{1}(M)\geq 3$. 
Moreover, 
by \cite[Theorem 12]{CFGU-dolbeault}, there is a basis $\{\omega^k\}_{k=1}^n$ of 
$(1,0)$-forms such that the structure equations 
of $(\frg,J)$ are 
$$d\omega^k=\sum_{i<j<k}A_{ij}\omega^{ij}+\sum_{r,s<k}B_{rs}\omega^{r\bar s},
\text{ \ \ for } 1\leq k\leq n.
$$
If $\{z_k=x_k+i\,y_k\}_{k=1}^n$ denotes the dual basis of $\{\omega^k\}_{k=1}^n$,
then $\frb=\langle x_3,y_3,\ldots, x_n,y_n \rangle$ is a proper $J$-invariant (possibly non-central) ideal in $\frg$
and $(\frg_{\frb},J_{\frb})$ is a $4$-dimensional real nilpotent Lie algebra with complex structure. 
Since $(M,J)$ has an sG metric, it follows from Corollary~\ref{cor-extensions-of-non-sG} that $(\frg,J)$ cannot be an extension of the Kodaira-Thurston algebra. Thus, it is an extension of a complex 2-torus, 
which implies that $b_1(M)\geq 4$. Concerning the nilpotency step $s$, one has by 
\cite[Proposition 10 (iii)]{CFGU-dolbeault} that $s\leq n$ for any nilpotent complex structure $J$. 
We next show that this upper bound can be improved  
to $s\leq n-1$ when $(\frg,J)$ is an extension of a complex 2-torus and $J$ is nilpotent. 
Indeed, let us suppose that $\frg$ is not abelian and $n\geq 3$ (otherwise the result is trivial).
In this case the ascending $J$-compatible series $\{\fra_{k}(J)\}_{k}$, 
given by  
\eqref{sucesion-a}, satisfies $\fra_{t-1}(J)\neq \fra_{t}(J)=\frg$ for some $t\geq 2$. 
For $n=3$, if $(\frg,J)$ is an extension of a complex 2-torus and $J$ is nilpotent, then 
$t= 2$ and 
$\frg/\fra_{1}(J)$ is abelian, so necessarily $s= 2$. For any complex dimension $n\geq 4$, since $\frg/\fra_{t-1}(J)$ 
inherits a nilpotent complex structure and it is an extension of a complex 2-torus, by induction we get 
that $s\leq n-1$.  

We now turn our attention to SnN complex structures when the complex dimension of the nilmanifold is $n\leq 4$.
We first recall that this type of complex structures do not exist in complex dimension 2. 
When $n=3$, sG metrics only exist for $\frg=\frh_{19}^-$, which is $3$-step  
and has $b_1=3$. In complex dimension 4, we can apply Proposition~\ref{SnN-no-sG} and so $J$ is a complex structure in Family I with $(\varepsilon,\nu)=(0,0)$, 
or in Family II with $\nu=0$. In the first case, $s=3$ and $b_1=5$, whereas in the second case one has $s=4$ and $b_1=3$. 
In conclusion, for SnN complex structures admitting sG metrics, up to complex dimension $n\leq 4$, we get that $b_{1}\geq 3$ and $s\leq n$.

Finally, it remains to study the case when the complex structure $J$ is WnN and $n\leq 5$. 
Recall that this type of complex structures only exists in complex dimension $\geq 4$~\cite{LUV1}. 
In complex dimension 4, Proposition~\ref{WnN-no-sG} implies that the Lie algebra must be an extension of $\frh_{19}^-$, so $b_{1}\geq 3$ and $s\leq 4$. 
In complex dimension 5, when $J$ is WnN then we have an extension of SnN complex structures in dimension 3 or 4~\cite{LU-racsam}. So, by the results already proved we get that $s\leq 5$ and $b_{1}(M)\geq 3$. 
\end{proof}

\begin{remark}
There exist $8$-dimensional nilmanifolds $M$ endowed with non-nilpotent complex structures that are $5$-step or whose first Betti number is $b_{1}(M)=2$. Note that Corollary~\ref{Betti-for-sG-dim8} provides a necessary but not sufficient condition: the nilmanifolds endowed with a WnN complex structure defined by \eqref{structure-equs-WnN} with $\varepsilon=1$, $a=\nu=0$ and any $B\in\C$, are $4$-step and have first Betti number equal to $4$, but they are not sG by Proposition~\ref{WnN-no-sG}.
\end{remark}

\begin{question}\label{obstruccion-sG} 
{\rm 
The following question seems natural in this setting: 
\emph{is the first Betti number of any sG nilmanifold greater than or equal to 3\,?} 
We notice that, by the argument in the proof of Corollary~\ref{Betti-for-sG-dim8}, the answer to this 
question reduces to study the class of SnN structures in complex dimension $n\geq 5$.
}
\end{question}

\section{Invariant Hermitian metrics on nilmanifolds}\label{sec:Finvariante}

\noindent 
In this section we study the properties of those Hermitian metrics on complex nilmanifolds $X=(M,J)$ 
that come from left-invariant ones on the corresponding Lie groups. 
This allows us to restrict our attention to the Lie algebra level.

Let $\frg$ be a $2n$-dimensional nilpotent Lie algebra endowed with a complex structure $J$. 
Our first result is that one can use the extension structure of $(\frg,J)$ to find an appropriate 
basis of $(1,0)$-forms 
in terms of which any Hermitian metric $F$ on $(\frg,J)$ can be expressed in a convenient form. 
To do so, observe that there always exists a proper $J$-invariant ideal $\frb$ in $\frg$ 
such that the pair $(\frg,J)$ is a $\frb$-extension of another pair $(\tilde{\frg},\tilde{J})$
(see Remark~\ref{existe-b}). We next show any basis of $(1,0)$-forms of $(\tilde{\frg},\tilde{J})$ can be extended to a basis of $(1,0)$-forms of $(\frg,J)$ in such a way that the following two properties hold: first, any $J$-Hermitian metric $F$ on $(\frg,J)$ is a ``diagonal extension'' of a Hermitian metric $\tilde{F}$ on $(\tilde{\frg},\tilde{J})$; second, the first $\tilde{n}$ complex structure equations in $(\frg,J)$ are precisely those of $(\tilde{\frg},\tilde{J})$. This will be very useful in the case of a quasi-nilpotent 
complex structure $J$, as $(\frg,J)$ is then a central extension of a pair $(\tilde{\frg},\tilde{J})$ and the characterization given in Proposition~\ref{no} can be applied.

\begin{lemma}\label{reduc-met}
Let $(\frg,J)$ and $(\tilde{\frg},\tilde{J})$ be two nilpotent Lie algebras with complex structures.
Suppose $(\frg,J)$ is an extension of $(\tilde{\frg},\tilde{J})$ with natural projection $\pi\colon \frg \to \tilde{\frg}$ 
and consider a Hermitian metric $F$ on $(\frg,J)$. 
Given any basis of $(1,0)$-forms $\big\{\tilde{\omega}^{k}\big\}_{k=1}^{\tilde{n}}$ for $(\tilde{\frg},\tilde{J})$, there is a basis of $(1,0)$-forms $\big\{\omega^{k}\big\}_{k=1}^{n}$ for $(\frg,J)$ with $\omega^{k}=\pi^*\tilde{\omega}^{k}$, for $1\leq k\leq \tilde{n}$,
such that 
$$
F=\pi^*\tilde{F}+D,
$$
where $\tilde{F}$ is a Hermitian metric on~$(\tilde{\frg},\tilde{J})$ and 
\begin{equation}\label{D-diagonal}
D=\frac{i}{2}\left( \omega^{\tilde{n}+1}\!\wedge\overline{\omega}^{\tilde{n}+1} +\cdots+\omega^{n}\!\wedge\overline{\omega}^{n} \right).
\end{equation}
Furthermore, if $J$ is quasi-nilpotent then 
the extended basis of $(1,0)$-forms $\big\{\omega^{k}\big\}_{k=1}^{n}$ can be chosen to additionally
satisfy the condition~\eqref{caracterizacion}.
\end{lemma}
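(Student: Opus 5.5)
The plan is linear algebra on $\frg^{1,0}$ with the Hermitian form induced by $F$. Let $\frb=\ker\pi$, a $J$-invariant ideal of real dimension $2b=2(n-\tilde n)$. Since $\pi\circ J=\tilde J\circ\pi$, the map $\pi^*$ sends $(1,0)$-forms on $(\tilde\frg,\tilde J)$ to $(1,0)$-forms on $(\frg,J)$. It is injective, so $V:=\pi^*(\tilde\frg^{1,0})$ is an $\tilde n$-dimensional subspace of $\frg^{1,0}$. Concretely, $V$ consists of the $(1,0)$-forms that annihilate $\frb$. We set $\omega^k=\pi^*\tilde\omega^k$ for $1\le k\le\tilde n$.

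\textbf{Step 1: the metric on $\frg^{1,0}$.} The metric $F$ induces a positive definite Hermitian inner product $h$ on $\frg^{1,0}$ (dual to $g$). Writing $F=\frac i2\sum_{j,k}h_{j\bar k}\,\omega^{j}\wedge\overline\omega^{k}$ in any basis corresponds to this inner product through the dual Gram matrix.

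\textbf{Step 2: the complement $\frb^{\perp}$.} Let $\frb^{\perp_g}$ be the $g$-orthogonal complement of $\frb$ in $\frg$. It is $J$-invariant because $g$ is $J$-Hermitian and $\frb$ is $J$-invariant. We have $\frg=\frb^{\perp_g}\oplus\frb$, and both $\frb^{\perp_g}$ and $\frb$ are $J$-invariant and $g$-orthogonal, so $F=F|_{\frb^{\perp_g}}+F|_{\frb}$ with no cross terms.

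\textbf{Step 3: identify each summand.} Choose the $(1,0)$-forms $\omega^{\tilde n+1},\dots,\omega^n$ to vanish on $\frb^{\perp_g}$ and to restrict on $\frb$ to a $g$-unitary $(1,0)$-coframe of $(\frb,J|_\frb,g|_\frb)$. Together with $\omega^1,\dots,\omega^{\tilde n}$, which vanish on $\frb$ and give a coframe of $\frb^{\perp_g}$, they form a basis of $\frg^{1,0}$. Then $F|_{\frb}$ is exactly $D$ as in \eqref{D-diagonal}. The restriction $\pi|_{\frb^{\perp_g}}\colon\frb^{\perp_g}\to\tilde\frg$ is a $J$-linear isomorphism, so $\tilde F:=\big((\pi|_{\frb^{\perp_g}})^{-1}\big)^*F|_{\frb^{\perp_g}}$ is a positive real $(1,1)$-form on $\tilde\frg$, that is, a Hermitian metric. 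Since both sides vanish when contracted with $\frb$, we get $\pi^*\tilde F=F|_{\frb^{\perp_g}}$. This proves $F=\pi^*\tilde F+D$.

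\textbf{Step 4: the quasi-nilpotent case.} Take $\frb$ central, as in Proposition~\ref{no}. I expect this to be the only delicate point, though it is mild. We must check that \eqref{caracterizacion} still holds for the modified forms $\omega^{\tilde n+j}$. For $k\le\tilde n$, $d\omega^k=\pi^*\tilde d\tilde\omega^k$ lies in the algebra generated by $\omega^1,\dots,\omega^{\tilde n}$ and their conjugates. For the new forms, a central $\frb$ gives $d\omega(x,y)=-\omega([x,y])$, and $[x,y]$ vanishes whenever $x$ or $y$ lies in $\frb$. Hence $d\omega$ annihilates $\frb$ in each slot, for every $1$-form $\omega$. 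So $d\omega^{\tilde n+j}$ lies in $\Lambda^2$ of the annihilator of $\frb$, which is spanned by $\omega^1,\dots,\omega^{\tilde n}$ and their conjugates. Integrability of $J$ gives the $(2,0)+(1,1)$ type. Thus \eqref{caracterizacion} holds for any choice of complement, and in particular for the orthogonal one.
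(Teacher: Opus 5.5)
Your proof is correct. It is the invariant, one-step version of the paper's coordinate argument.

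The paper first completes $\{\pi^*\tilde\omega^k\}$ to an arbitrary basis $\{\eta^k\}$. For $b=1$ it applies the explicit change \eqref{cambio-base}, $\omega^n=\sqrt{2x_{n\bar n}}\,\big(\eta^n-i\sum_{k<n}\frac{x_{k\bar n}}{x_{n\bar n}}\eta^k\big)$, which completes the square in the last variable. For $b\geq 2$ it iterates this step $b$ times, and it asserts positivity of $\tilde F$ from that of $F$, which is a Schur-complement fact. The new $\omega^n$ is exactly the $(1,0)$-form vanishing on the $g$-orthogonal complement of $\frb$, since $F$ is block-diagonal in the new basis. So your Steps 2--3 produce the same splitting, up to a unitary change among $\omega^{\tilde n+1},\dots,\omega^n$, but for all $b$ at once.

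What your route buys:
\begin{itemize}
\item It avoids the iteration.
\item Positivity of $\tilde F$ becomes obvious, since $\tilde F$ is just $F|_{\frb^{\perp_g}}$ transported along the $J$-linear isomorphism $\frb^{\perp_g}\cong\tilde\frg$.
\end{itemize}

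What the paper's route buys is explicit formulas: the coefficients of $\tilde F$ and, more importantly, the concrete change of basis \eqref{cambio-base}. That formula is reused later to rewrite structure equations in the adapted coframe, for instance \eqref{structure-equs-WnN-biss}, and the rescaling of the coefficients by $\sqrt{2x''_{44}}$ in Theorem~\ref{classif-aK-dim8-invariante-balanced}. With your construction you would have to compute these separately.

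For the quasi-nilpotent clause, the two arguments also differ. The paper argues that its changes of basis preserve \eqref{caracterizacion}, starting from a completion that already satisfies it. You observe instead that when $\frb$ is central, \eqref{caracterizacion} holds automatically for every completion, because every $d\omega$ descends to $\frg/\frb$. This is cleaner and matches the argument in Proposition~\ref{no}.

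Both you and the paper read this clause for a central extension, and that reading is forced. By the argument in Proposition~\ref{no}, \eqref{caracterizacion} with $\omega^k=\pi^*\tilde\omega^k$ for $k\leq\tilde n$ makes the dual vectors $z_{\tilde n+1},\dots,z_n$ central. These vectors and their conjugates span $(\ker\pi)^{\C}$, so $\ker\pi$ must be central.
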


\begin{proof}
Let $\frb\subset \frg$ be a proper $J$-invariant ideal of real dimension $2b$. Consider the quotient Lie algebra $\tilde{\frg}\cong \frg/\frb$ and denote by $2\tilde{n}$ its real dimension. Recall that if $\tilde{J}$ is the complex structure on $\tilde{\frg}$ induced by $J$ then $\pi^*\circ \tilde{J}=J\circ\pi^*$, where $\pi\colon \frg \to \tilde{\frg}$ denotes the natural projection. 
Let $\{\tilde{\omega}^{1},\ldots,\tilde{\omega}^{\tilde{n}}\}$ be a basis of $(1,0)$-forms 
for $(\tilde{\frg},\tilde{J})$.  The differential of $\tilde{\omega}^j$ satisfies $\tilde{d} \tilde{\omega}^j\in \bigwedge^{(2,0)+(1,1)}_{\C} \langle \tilde{\omega}^{1},\ldots,\tilde{\omega}^{\tilde{n}},\overline{\tilde{\omega}^{1}},\ldots,
\overline{\tilde{\omega}^{\tilde{n}}} \rangle$, 
for any $1\leq j\leq \tilde{n}$. Now, we define $\omega^j=\pi^* \tilde{\omega}^j$, $1\leq j\leq \tilde{n}$, and complete $\{\omega^{j}\}_{1}^{\tilde{n}}$ to a basis 
$\big\{ \eta^1=\omega^{1},\ldots,\eta^{\tilde{n}}=\omega^{\tilde{n}},\eta^{\tilde{n}+1},\ldots,\eta^{n=\tilde{n}+b} \big\}$ of $(1,0)$-forms on $(\frg,J)$.  

The Hermitian metric $F$ on $(\frg,J)$ can be written in this basis as
$$
F\ = \ \sum_{k=1}^n i\,x_{k\bar k}\,\eta^{k\bar k}
   \ +\sum_{1\leq k<l\leq n}\big( x_{k\bar l}\,\eta^{k\bar l}-\bar x_{k\bar l}\,\eta^{l\bar k} \big),
$$
where $x_{k\bar k}\in\mathbb R^{>0}$ and $x_{k\bar l}\in\mathbb C$, for $1\leq k< l\leq n$, are coefficients 
satisfying the appropriate conditions to ensure the positive definiteness of the metric. 

For simplicity, let us first suppose that $b=1$, i.e. $\tilde{n}=n-1$. Take the $(1,0)$-basis given by
\begin{equation}\label{cambio-base}
\omega^k=\eta^k,\ k=1,\ldots,n-1,\qquad 
\omega^n=\sqrt{2\,x_{n\bar n}}\left(\eta^n-i\,\sum_{k=1}^{n-1}\frac{x_{k\bar n}}{x_{n\bar n}}\,\eta^k\right).
\end{equation}
Now, in terms of the new basis $\big\{ \omega^{1},\ldots,\omega^{\tilde{n}},\omega^n \big\}$ 
of $(1,0)$-forms on $(\frg,J)$, the Hermitian metric $F$ expresses as $F=\pi^*\tilde{F}+D$, with 
$$
D= \frac{i}{2}\,\omega^{n\bar n}, 
\quad \mbox{ and }\quad 
\tilde{F} = \sum_{k=1}^{n-1} i\,\tilde{x}_{k\bar k}\,\tilde{\omega}^{k\bar k} 
+\!\sum_{1\leq k<l\leq n-1}\!\!\big(\tilde{x}_{k\bar l}\, \tilde{\omega}^{k\bar l}-\overline{\tilde{x}}_{k\bar l}\, \tilde{\omega}^{l\bar k}\big), 
$$
where for every $k,l$ one has
$$
\tilde{x}_{k\bar k}=\frac{x_{k\bar k}\,x_{n\bar n}-|x_{k\bar n}|^2}{x_{n\bar n}}, \qquad
    \tilde{x}_{k\bar l}=\frac{x_{k\bar l}\,x_{n\bar n}-i\,x_{k\bar n}\,\bar x_{l\bar n}}{x_{n\bar n}}.
$$
Observe that the positive definiteness of $F$ implies that
$\tilde{F}$ is a Hermitian metric on $(\tilde{\frg},\tilde{J})$. 

The proof for $b\geq 2$ is similar, but taking $b$ ($=n-\tilde{n}$) successive changes of basis as above to write the metric as $F=\pi^*\tilde{F}+D$. 
In greater detail, in the first step we consider \eqref{cambio-base} to express $F=F_{n-1}+D_{n}$, where $F_{n-1}\in \bigwedge^{1,1}_{\R} \langle {\omega}^{1},\ldots,{\omega}^{{n-1}},\overline{{\omega}^{1}},\ldots,
\overline{{\omega}^{{n-1}}} \rangle$ is a metric in the first $n-1$ terms of the basis,  and $D_{n}= \frac{i}{2}\,\omega^{n\bar n}$. In the second step we express similarly $F_{n-1}=F_{n-2}+D_{n-1}$, where $D_{n-1}= \frac{i}{2}\,\omega^{n-1\overline{n-1}}$. Finally, in the $b$-th step, one can write the metric $F$ as $F=F_{n-b}+D$, where $D=D_{\tilde{n}+1}+\cdots+D_{n}$. Notice that $F_{n-b}=\pi^*\tilde{F}$, where $\tilde{F}$ is a Hermitian metric on $(\tilde{\frg},\tilde{J})$.

\smallskip

Notice that the first $\tilde{n}$ forms in the basis (i.e. $\omega^1,\ldots,\omega^{\tilde{n}}$) remain unchanged in the process, so they satisfy the same complex structure equations as the 
basis of $(1,0)$-forms $\big\{\tilde{\omega}^{k}\big\}_{k=1}^{\tilde{n}}$ of $(\tilde{\frg},\tilde{J})$. 
Furthermore, the process preserves the condition~\eqref{caracterizacion} when the complex structure $J$ is quasi-nilpotent. 
\end{proof}

Thanks to this lemma, we can write any Hermitian metric on an extension $(\frg,J)$ of 
$(\tilde{\frg},\tilde{J})$ 
as $F=\pi^*\tilde{F}+D$. This allows to study the AK or the balanced condition in terms of the lower dimensional Lie algebra $(\tilde{\frg},\tilde{J})$ and of the complex structure equations that characterize the extension to $(\frg,J)$.  
This point of view turns to be especially useful for quasi-nilpotent complex structures:

\begin{proposition}\label{reducc-cond-met-prop}
Let $\frg$ be a nilpotent Lie algebra of real dimension $2n\geq 4$ with a quasi-nilpotent complex structure $J$.
Let $F$ be a Hermitian metric on $(\frg,J)$. 
By Proposition~\ref{no}, 
$(\frg,J)$ is a central extension of a $2\tilde{n}$-dimensional nilpotent Lie algebra $\tilde{\frg}$ with complex structure $\tilde{J}$.
By Lemma~\ref{reduc-met}, the metric $F$ can be written as $F=\pi^*\tilde{F}+D$, where $\tilde{F}$ is a Hermitian metric on~$(\tilde{\frg},\tilde{J})$ and 
$D$ is given by \eqref{D-diagonal}. Set $b=n-\tilde{n}$. Then:
\begin{itemize}
\item[\textit{(i)}] $F$ is balanced on $(\frg,J)$ if and only if $\tilde{F}$ is balanced on $(\tilde{\frg},\tilde{J})$, 
and 
\begin{equation}\label{balanced-en-extension}
\begin{array}{rl}
&\left[ (b-1) \binom{n-1}{b-1} \, \pi^*\tilde{F}^{n-b}\wedge D^{b-2} 
+ b\, \binom{n-1}{b} \, \pi^*\tilde{F}^{n-1-b}\wedge D^{b-1} \right]  \wedge d D=0.
\end{array}
\end{equation}
\item[\textit{(ii)}] $F$ is astheno-K\"ahler on $(\frg,J)$ if and only if $\tilde{F}$ is astheno-K\"ahler on $(\tilde{\frg},\tilde{J})$, 
and
\begin{equation}\label{AK-en-extension}
\begin{array}{rl}
0= \!\!\!& \, \Big[ b\binom{n-2}{b} \pi^*(\db\tilde{F}^{n-2-b})\wedge D^{b-1} + (b-1)\binom{n-2}{b-1} \pi^*(\db\tilde{F}^{n-1-b})\wedge D^{b-2} \Big]\wedge\partial D \\[7pt]
\!\!\!&\!\!\! - \Big[ b\binom{n-2}{b} \pi^*(\partial\tilde{F}^{n-2-b})\wedge D^{b-1} + (b-1)\binom{n-2}{b-1} \pi^*(\partial\tilde{F}^{n-1-b})\wedge D^{b-2} \Big]\wedge\db D \\[7pt]
\!\!\!&\!\!\! - \Big[ b(b-1)\binom{n-2}{b} \pi^*\tilde{F}^{n-2-b}\wedge D^{b-2} + (b-1)(b-2)\binom{n-2}{b-1} \pi^*(\tilde{F}^{n-1-b})\wedge D^{b-3} \\[4pt]
&
\quad + (b-2)(b-3)\binom{n-2}{b-2} \pi^*(\tilde{F}^{n-b})\wedge D^{b-4}  \Big]\wedge\partial D \wedge\db D \\[7pt]
\!\!\!&\!\!\! - \Big[ b\binom{n-2}{b} \pi^*\tilde{F}^{n-2-b}\wedge D^{b-1} + (b-1)\binom{n-2}{b-1} \pi^*(\tilde{F}^{n-1-b})\wedge D^{b-2} \\[4pt]
&
\quad + (b-2)\binom{n-2}{b-2} \pi^*(\tilde{F}^{n-b})\wedge D^{b-3}  \Big]\wedge\partial\db D.
\end{array}
\end{equation}
\end{itemize}
In \eqref{balanced-en-extension} and \eqref{AK-en-extension} 
we use the conventions
$\tilde{F}^{0}=1=D^{0}$ and $\tilde{F}^{\lambda}=0=D^{\lambda}$ if $\lambda<0$.  
\end{proposition}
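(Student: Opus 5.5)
The plan is to expand the powers of $F=\pi^*\tilde F+D$ binomially and then split everything by the ``number of vertical indices'', that is, by the degree in the forms $\omega^{\tilde n+1},\ldots,\omega^n$ and their conjugates. We work in the basis given by Lemma~\ref{reduc-met}, chosen so that \eqref{caracterizacion} holds. In this basis $\pi^*\tilde F$ and all its derivatives $\pi^*(\partial\tilde F^k)$, $\pi^*(\db\tilde F^k)$ are horizontal, meaning they involve only $\omega^1,\ldots,\omega^{\tilde n}$ and their conjugates. Since $\pi^*$ commutes with $d$, $\partial$ and $\db$, the same holds for their derivatives. On the other hand, \eqref{caracterizacion} gives that $d\omega^{j}$ is horizontal for all $j$. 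Hence $\partial D$, $\db D$ and $\partial\db D$ each contain exactly one fewer vertical pair than $D$; more precisely, every term of $\partial D$ has one vertical $\overline\omega$ and no vertical $\omega$. Since $D^{j}$ is a multiple of sums of products of $j$ distinct vertical pairs $\omega^{k\bar k}$, we get $D^j=0$ for $j>b$, and $D^b$ is a nonzero multiple of the vertical volume form.

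For (i), we write $F^{n-1}=\sum_j\binom{n-1}{j}\pi^*\tilde F^{n-1-j}\wedge D^{j}$. Only the terms with $j=b,b-1$ survive, because $\tilde F^{k}=0$ when $k>\tilde n$. Differentiating gives
$$dF^{n-1}=\binom{n-1}{b}\pi^*(d\tilde F^{n-1-b})\wedge D^b+\big[\text{terms with }dD\big].$$
The first summand is the only one that contains the full vertical volume form. The remaining summands, which come from differentiating $D^b$ and $D^{b-1}$, have strictly lower vertical degree. Therefore the two pieces vanish independently. The first gives $d\tilde F^{\tilde n-1}=0$, that is, $\tilde F$ is balanced. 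The second, after using $d(D^j)=jD^{j-1}\wedge dD$ (valid since $D$ is a $2$-form), gives exactly \eqref{balanced-en-extension}. The term $\binom{n-1}{b-1}\pi^*(d\tilde F^{n-b})\wedge D^{b-1}$ drops out because $\tilde F^{n-b}=\tilde F^{\tilde n}$ is a top form on $\tilde{\frg}$, so $d\tilde F^{\tilde n}=0$.

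For (ii), we proceed the same way with $F^{n-2}=\sum_j\binom{n-2}{j}\pi^*\tilde F^{n-2-j}\wedge D^j$, where now $j\in\{b-2,b-1,b\}$, and apply $\partial\db$ via the Leibniz rule. This produces four types of terms:
\begin{itemize}
\item[(a)] $\pi^*(\partial\db\tilde F^{n-2-j})\wedge D^j$;
\item[(b)] cross terms $\pi^*(\db\tilde F^{\cdot})\wedge\partial D^j$ and $\pi^*(\partial\tilde F^{\cdot})\wedge\db D^j$, with the appropriate signs;
\item[(c)] terms $\pi^*\tilde F^{\cdot}\wedge j(j-1)D^{j-2}\wedge\partial D\wedge\db D$;
\item[(d)] terms $\pi^*\tilde F^{\cdot}\wedge jD^{j-1}\wedge\partial\db D$.
\end{itemize}
The only term of type (a) with full vertical degree is the $j=b$ one, which gives $\partial\db\tilde F^{\tilde n-2}=0$, the AK condition for $\tilde F$. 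The $j=b-1,b-2$ terms of type (a) vanish for degree reasons: $\tilde F^{\tilde n-1}$ and $\tilde F^{\tilde n}$ have horizontal degree too large to admit $\partial\db$, since $\partial\db$ of a $(\tilde n-1,\tilde n-1)$-form is of type $(\tilde n,\tilde n)$, and indeed $\partial\db$ of $\tilde F^{\tilde n-1}$ vanishes because the Lie algebra is nilpotent and hence unimodular, so top-degree exact forms vanish. Collecting the remaining terms by type reproduces \eqref{AK-en-extension}, with the stated conventions for negative exponents.

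The main obstacle is the separation argument. We must check that the full-vertical-degree part of $\partial\db F^{n-2}$ is exactly the (a) term with $j=b$, and that no term of types (b)--(d) can contribute a full vertical volume. This is where \eqref{caracterizacion} is essential. It guarantees that each of $\partial D$, $\db D$ and $\partial\db D$ loses vertical degree, so that every term of types (b)--(d) misses at least one vertical $\omega^k$ or $\overline\omega^k$. Once this is established, the equivalence splits into the two stated conditions. The remaining work is bookkeeping: tracking the binomial coefficients and the signs coming from commuting odd forms, for instance $\partial\db(\alpha\wedge\beta)$ for even $\alpha,\beta$.
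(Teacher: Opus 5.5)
Your proposal is correct and is essentially the paper's proof: expand $F^{n-1}$ and $F^{n-2}$ binomially, isolate the $\pi^*(\cdot)\wedge D^b$ term using \eqref{caracterizacion} (your vertical-degree count is an explicit version of the paper's ``linearly independent'' remark), and discard the terms involving $\tilde F^{\tilde n}$ and $\partial\db\tilde F^{\tilde n-1}$ because these are top-degree, resp.\ top-degree exact, forms on the nilpotent $\tilde{\frg}$. One harmless slip: $\partial D$ also contains the terms $-\frac{i}{2}\,\omega^k\wedge\partial\overline{\omega}^k$, where $\partial\overline{\omega}^k=\overline{\db\omega^k}$ is generally nonzero, and $\partial\db D$ typically has a component of vertical degree $0$; so the correct statement is only that $\partial D$, $\db D$ and $\partial\db D$ have vertical degree at most one, which is exactly what your separation argument uses.
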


\begin{proof}
For any $0\leq k\leq n-1$, we have $F^{n-k}= \sum_{j=0}^{n-k} \binom{n-k}{j}\,\pi^*\tilde{F}^{n-k-j}\wedge D^j$. 
Since $\tilde F$ is a metric on $(\tilde{\frg},\tilde{J})$ and $\dim\tilde{\frg}=2\tilde{n}$,
we observe that 
$\tilde{F}^{n-k-j}=0$ whenever $n-k-j>\tilde{n}$. 
Moreover, from~\eqref{D-diagonal} one has 
$D^j=0$ for every $j>b=n-\tilde{n}$. Hence,
\begin{equation}\label{potencia-met-gen}
F^{n-k}= \sum_{j=\max\{0,b-k\}}^{b} \binom{n-k}{j}\, \pi^*\tilde{F}^{n-k-j}\wedge D^j.
\end{equation}

For the proof of (i), we need to consider $k=1$, so \eqref{potencia-met-gen} reduces to
$$
\begin{array}{rl}
&F^{n-1}= \binom{n-1}{b-1}\, \pi^*\tilde{F}^{n-b}\wedge D^{b-1} + \binom{n-1}{b}\, \pi^*\tilde{F}^{n-1-b}\wedge D^b.
\end{array}
$$
Taking the differential we get 
$$
\begin{array}{rcl}
dF^{n-1}  \! \! \! &=&\! \! \!  \binom{n-1}{b-1}\, \pi^*(d\tilde{F}^{n-b})\wedge D^{b-1}  + \binom{n-1}{b}\, \pi^*(d\tilde{F}^{n-1-b})\wedge D^b \\[4pt]
&&\! \! \! + \binom{n-1}{b-1}\, \pi^*\tilde{F}^{n-b}\wedge d D^{b-1} + \binom{n-1}{b}\, \pi^*\tilde{F}^{n-1-b}\wedge dD^b.
\end{array}
$$
Note that $d\tilde{F}^{n-b}=0$ as $n-b=\tilde{n}$, which makes $\tilde{F}^{n-b}$ a volume form on~$(\tilde{\frg},\tilde{J})$. Thus, $F$ is balanced, i.e. $dF^{n-1}=0$, if and only if 
\begin{equation}\label{reduccion-condicion-balanced}
\begin{array}{ll}
&\binom{n-1}{b}\,\pi^*(d\tilde{F}^{n-1-b})\wedge D^b + \binom{n-1}{b-1}\, \pi^*\tilde{F}^{n-b}\wedge d D^{b-1} + \binom{n-1}{b}\, \pi^*\tilde{F}^{n-1-b}\wedge dD^b =0.
\end{array}
\end{equation}
Now, it follows from \eqref{D-diagonal} that $D^b=(\frac{i}{2})^b\, b!\,  \omega^{\tilde{n}+1}\!\wedge\overline{\omega}^{\tilde{n}+1}\wedge\cdots\wedge\omega^{n}\!\wedge\overline{\omega}^{n}$. 
Since $J$ is quasi-nilpotent, by \eqref{caracterizacion} in Proposition~\ref{no} we get that the first summand 
in the left-hand side of~\eqref{reduccion-condicion-balanced} is linearly independent with the other two. This means that the equality~\eqref{reduccion-condicion-balanced} holds 
if and only if $d\tilde{F}^{n-1-b}=0$ (i.e. $\tilde{F}$ is balanced on $(\tilde{\frg},\tilde{J})$)  
and $\binom{n-1}{b-1}\, \pi^*\tilde{F}^{n-b}\wedge d D^{b-1} + \binom{n-1}{b}\, \pi^*\tilde{F}^{n-1-b}\wedge dD^b=0$. The latter condition is precisely~\eqref{balanced-en-extension}. This proves (i).

In order to prove (ii), we take $k=2$ in \eqref{potencia-met-gen} and calculate $\partial\db F^{n-2}$ to get 
$$
\begin{array}{rl}
\partial\db F^{n-2}= \sum_{j=\max\{0,b-2\}}^{b} \binom{n-2}{j} \Big[ \!\!\!&\!\!\! \pi^*(\partial\db\tilde{F}^{n-2-j})\wedge D^j - \pi^*(\db\tilde{F}^{n-2-j})\wedge \partial D^j \\[4pt]
&+ \pi^*(\partial\tilde{F}^{n-2-j})\wedge \db D^j + \pi^*\tilde{F}^{n-2-j}\wedge \partial\db D^j \Big].
\end{array}
$$
When $j=b$ the term $\pi^*(\partial\db\tilde{F}^{n-2-b})\wedge D^b$ appears, and a similar argument to that given in the first part of this proof allows to prove that $\partial\db F^{n-2}=0$ implies $\partial\db\tilde{F}^{n-2-b}=0$. 
Since $n-2-b=\tilde{n}-2$, this means that $\tilde{F}$ is an astheno-K\"ahler metric on $(\tilde{\frg},\tilde{J})$.  
We also notice that $\partial\tilde{F}^{n-b}=\db\tilde{F}^{n-b}=\partial\db\tilde{F}^{n-b}=0$, because $\tilde{F}^{n-b}$ has top degree on $\tilde{\frg}$. Moreover, $\partial\db\tilde{F}^{n-1-b}=0$ as the only exact form of top degree on the nilpotent Lie algebra $\tilde{\frg}$ is the zero one. 
Therefore, the terms $\pi^*(\partial\db\tilde{F}^{n-1-b})\wedge D^{b-1}$ and $\pi^*(\partial\db\tilde{F}^{n-b})\wedge D^{b-2}$ in the expression of $\partial\db F^{n-2}$ are zero. 
Expanding $\partial\db F^{n-2}$ one arrives to the condition~\eqref{AK-en-extension}.
\end{proof}

The following result is a direct consequence of Proposition~\ref{reducc-cond-met-prop} and provides obstructions to the existence of balanced or AK metrics in the setting of quasi-nilpotent complex structures.

\begin{corollary}\label{reducc-cond-met-cor1}
Let $J$ be a quasi-nilpotent complex structure on a nilpotent Lie algebra $\frg$, and let~$\frb$ be any 
proper $J$-invariant subspace in the center of $\frg$. If $(\frg,J)$ is balanced (resp. astheno-K\"ahler), then $(\frg_\frb,J_\frb)$ is also balanced (resp. astheno-K\"ahler). 
In particular, if there exists 
some $\frb$ such that $(\frg_\frb,J_\frb)$ is not balanced (resp. astheno-K\"ahler), then $(\frg,J)$ does not admit any balanced (resp. astheno-K\"ahler) metric.
\end{corollary}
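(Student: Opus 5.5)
The plan is to reduce the statement directly to Proposition~\ref{reducc-cond-met-prop}. Let $\frb$ be a proper $J$-invariant subspace of the center $\mathcal Z(\frg)$. Since $\frb$ is central, it is automatically an ideal. Hence $(\frg,J)$ is a central $\frb$-extension of $(\frg_\frb,J_\frb)$, and $J_\frb$ is the complex structure induced via \eqref{induced-Jb}.

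To get a basis compatible with \eqref{caracterizacion}, I would retrace the argument (ii)$\Rightarrow$(iii) in the proof of Proposition~\ref{no} for this particular $\frb$. Take any $(1,0)$-basis of $(\frg_\frb,J_\frb)$, pull it back by $\pi$, and complete it. The added covectors are dual to the vectors spanning $\frb_{\C}$, which are central, so \eqref{caracterizacion} holds.

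Next, suppose $F$ is a balanced (resp.\ AK) Hermitian metric on $(\frg,J)$. Apply Lemma~\ref{reduc-met} to this central extension. It writes $F=\pi^*\tilde F+D$, with $\tilde F$ a Hermitian metric on $(\frg_\frb,J_\frb)$ and $D$ as in \eqref{D-diagonal}. The lemma's last assertion guarantees that the basis still satisfies \eqref{caracterizacion}: the successive changes of basis \eqref{cambio-base} only modify the last $b$ forms by adding combinations of the pulled-back ones, so their differentials remain in the subalgebra generated by $\omega^1,\dots,\omega^{\tilde n}$ and their conjugates.

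We are now exactly in the setting of Proposition~\ref{reducc-cond-met-prop}, with $\tilde\frg=\frg_\frb$. Part (i) says that $F$ balanced forces $\tilde F$ balanced; part (ii) says that $F$ AK forces $\tilde F$ AK. Hence $(\frg_\frb,J_\frb)$ carries a balanced (resp.\ AK) metric. The ``in particular'' statement is just the contrapositive.

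The only point needing care is that Proposition~\ref{reducc-cond-met-prop} is phrased for some central extension given by Proposition~\ref{no}, whereas here $\frb$ is arbitrary. Its proof, however, uses only centrality of $\frb$ (through \eqref{caracterizacion}) and the decomposition of Lemma~\ref{reduc-met}, both of which hold for any central $J$-invariant $\frb$. I would state this explicitly.

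I would also check the degenerate small cases, e.g.\ $\dim\frg_\frb=2$. There the balanced or AK condition on the quotient is vacuous or trivially satisfied (the conventions $\tilde F^{\lambda}=0$ for $\lambda<0$ cover this), so the implication holds trivially.
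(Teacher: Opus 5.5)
Your proposal is correct and is essentially the paper's argument: the corollary is stated there as a direct consequence of Proposition~\ref{reducc-cond-met-prop}, applied via Lemma~\ref{reduc-met} to the central extension determined by the given $\frb$. Your observation that only the centrality of $\frb$ is used is what justifies taking $\frb$ arbitrary. One small wording fix: for $b\geq 2$ the successive changes \eqref{cambio-base} also mix in the other added forms $\eta^{\tilde n+1},\dots,\eta^{n-1}$, not only pulled-back ones, but every $d\eta^j$ already satisfies \eqref{caracterizacion}, so the new basis does too.
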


The case where the proper $J$-invariant central ideal $\frb$ has real dimension $2$ 
will be particularly useful.

\begin{corollary}\label{reducc-cond-met-cor2}
Let $J$ be a quasi-nilpotent complex structure on a nilpotent Lie algebra $\frg$ of real dimension $2n\geq 4$. Let $\frb$ be a $2$-dimensional $J$-invariant subspace in the center of $\frg$. 
By Lemma~\ref{reduc-met}, any Hermitian metric $F$ on $(\frg,J)$ can be written as $F=\pi^*\tilde{F}+D$,
where $\tilde{F}$ is a Hermitian metric on~$(\frg_\frb,J_\frb)$ and 
$D=\frac i 2\,\omega^{n\bar n}$. Furthermore:
\begin{itemize}
\item[\textit{(i)}] $F$ is balanced if and only if $\tilde{F}$ is balanced on~$(\frg_\frb,J_\frb)$ 
and $\tilde{F}^{n-2}\wedge d\omega^n=0$;
\item[\textit{(ii)}] $F$ is astheno-K\"ahler if and only if $\tilde{F}$ is astheno-K\"ahler on~$(\frg_\frb,J_\frb)$ 
and
$$
\begin{array}{rl}
&\tilde{F}^{n-3}\wedge\partial\bar\partial\omega^n+ \partial \tilde{F}^{n-3}\wedge\bar\partial\omega^n - \db\tilde{F}^{n-3}\wedge\partial\omega^n = 0,\\[4pt]
&\tilde{F}^{n-3}\wedge\big(\bar\partial\omega^n\wedge\partial\omega^{\bar n} -
   \partial\omega^n\wedge\db\omega^{\bar n}\big) = 0.
\end{array}
$$
\end{itemize}
Here, we use the conventions
$\tilde{F}^{0}=1$ and $\tilde{F}^{\lambda}=0$ if $\lambda<0$.
\end{corollary}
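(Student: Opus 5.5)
This is the specialization $b=1$ of Proposition~\ref{reducc-cond-met-prop}. When $\dim\frb=2$ we have $\tilde n=n-1$ and $D=\frac i2\,\omega^{n\bar n}$, so $D^2=0$. The plan is to substitute $b=1$ into \eqref{balanced-en-extension} and \eqref{AK-en-extension}, using the stated conventions $D^0=1$ and $D^\lambda=0$ for $\lambda<0$, and then simplify the resulting expressions using the structure equations.

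\emph{Balanced part.} With $b=1$, the first term of \eqref{balanced-en-extension} carries the factor $(b-1)=0$ and drops out. What remains is $\binom{n-1}{1}\,\pi^*\tilde F^{n-2}\wedge dD=0$. By Proposition~\ref{no} we have $d\omega^j\in\bigwedge\langle\omega^1,\dots,\omega^{n-1},\bar\omega^1,\dots,\bar\omega^{n-1}\rangle$, so
$$dD=\tfrac i2\big(d\omega^n\wedge\omega^{\bar n}-\omega^n\wedge d\omega^{\bar n}\big).$$
Wedging with $\pi^*\tilde F^{n-2}$, the two summands lie in independent components, the one with $\omega^{\bar n}$ and the one with $\omega^n$. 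They are complex conjugates of each other up to sign, since $\tilde F$ is real. Hence the condition is equivalent to $\tilde F^{n-2}\wedge d\omega^n=0$, which gives (i).

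\emph{AK part.} Again set $b=1$. Every term with a factor $(b-1)$, $(b-2)$ or $(b-3)$ vanishes, and $\binom{n-2}{b-2}=0$. The condition reduces to
$$\pi^*(\db\tilde F^{n-3})\wedge\partial D-\pi^*(\partial\tilde F^{n-3})\wedge\db D-\pi^*\tilde F^{n-3}\wedge\partial\db D=0,$$
up to the overall factor $(n-2)$. Now use $\partial\omega^n,\db\omega^n$ in the span of the first $n-1$ forms to expand:
\begin{itemize}
\item $\partial D=\frac i2(\partial\omega^n\wedge\omega^{\bar n}-\omega^n\wedge\partial\omega^{\bar n})$, and similarly for $\db D$;
\item $\partial\db D=\frac i2\big(\partial\db\omega^n\wedge\omega^{\bar n}+\db\omega^n\wedge\partial\omega^{\bar n}-\partial\omega^n\wedge\db\omega^{\bar n}+\omega^n\wedge\partial\db\omega^{\bar n}\big)$, keeping track of signs for $1$-forms.
\end{itemize}
Then sort the total expression by its dependence on $\omega^n,\omega^{\bar n}$ into three groups: the $\omega^{\bar n}$-component, the $\omega^n$-component, and the component free of both. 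These are linearly independent, so each group must vanish separately.

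The $\omega^{\bar n}$-group gives
$$\tilde F^{n-3}\wedge\partial\db\omega^n+\partial\tilde F^{n-3}\wedge\db\omega^n-\db\tilde F^{n-3}\wedge\partial\omega^n=0,$$
after moving $\omega^{\bar n}$ to the right, with sign bookkeeping for the odd-degree forms $\partial\tilde F^{n-3}$ and $\db\tilde F^{n-3}$. The $\omega^n$-group is the complex conjugate of this equation, because $\tilde F$ is real, so it adds nothing new. The $(\omega^n,\omega^{\bar n})$-free group is $\tilde F^{n-3}\wedge(\db\omega^n\wedge\partial\omega^{\bar n}-\partial\omega^n\wedge\db\omega^{\bar n})=0$. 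This gives (ii). Throughout, $\pi^*$ is suppressed by identifying $\tilde F$ with its pullback, as in the statement.

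\emph{Main obstacle.} The only delicate point is the sign bookkeeping when commuting $\omega^{\bar n}$ past odd forms, and checking that the conjugate group really coincides with the first equation rather than giving an independent one. I would verify this by conjugating the $\omega^{\bar n}$-coefficient directly, using $\overline{\partial\alpha}=\db\bar\alpha$.
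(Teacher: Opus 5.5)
Your proposal is correct and follows the paper's own proof. Like the paper, you specialize Proposition~\ref{reducc-cond-met-prop} to $b=1$, expand $dD$, $\partial D$, $\db D$ and $\partial\db D$ for $D=\frac i2\omega^{n\bar n}$, and use quasi-nilpotency to separate the $\omega^{\bar n}$-, $\omega^{n}$- and $(\omega^n,\omega^{\bar n})$-free components, with the $\omega^n$-component being the conjugate of the $\omega^{\bar n}$-one.

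Two small wording points. At $b=1$ the factors $(b-2)$ and $(b-3)$ are not zero, so those terms drop out only because $\binom{n-2}{-1}=0$ and $D^{\lambda}=0$ for $\lambda<0$, which you do invoke. Also, you cannot divide by the factor $n-2$ when $n=2$, but that case is trivial since $\tilde F^{-1}=0$.
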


\begin{proof}
Since the real dimension of $\frb$ is $2$, we observe that the
real dimension of $\tilde{\frg}$ is $2\tilde{n}=2(n-1)$.
We then apply Proposition~\ref{reducc-cond-met-prop} for $b=1$, 
taking into account that $D=\frac{i}{2}\omega^{n\bar{n}}$. 

Firstly, we note that 
\eqref{balanced-en-extension} reduces to 
$$
(n-1)\frac{i}{2}\big[ (\pi^*\tilde{F}^{n-2} \wedge d \omega^{n})\wedge \omega^{\bar{n}} -  (\pi^*\tilde{F}^{n-2} \wedge d\omega^{\bar{n}}) \wedge\omega^{n} \big]=0.
$$
This is equivalent to $\pi^*\tilde{F}^{n-2}\wedge d\omega^n=0$, because $d\omega^n$ is a 2-form living on~$(\frg_\frb,J_\frb)$, i.e. $d\omega^n\in \bigwedge^2_{\C} \frg_\frb^*$, due to the fact that $J$ is quasi-nilpotent. 
This fact also allows us to remove $\pi^*$ from the equation, thus proving (i).

Similarly, \eqref{AK-en-extension} reduces to 
$$
\begin{array}{rcl}
0 &=&\!\!\!   \left( \pi^*(\db\tilde{F}^{n-3})\wedge \partial\omega^{\bar n} -
    \pi^*(\partial\tilde{F}^{n-3})\wedge\db\omega^{\bar n}- \pi^*\tilde{F}^{n-3}\wedge\partial\db\omega^{\bar n}\right)\wedge\omega^n  \\[4pt]
 \!\!\! &&\!\!\! + \left(\pi^*(\partial\tilde{F}^{n-3})\wedge \db\omega^n -
    \pi^*(\db\tilde{F}^{n-3})\wedge\partial\omega^n + \pi^*\tilde{F}^{n-3}\wedge\partial\db\omega^n\right)\wedge\omega^{\bar n} \\[5pt]
\!\!\! &&\!\!\!  +  \  \pi^*\tilde{F}^{n-3}\wedge(\db\omega^n\wedge\partial\omega^{\bar n} -
   \partial\omega^n\wedge\db\omega^{\bar n}).
\end{array}
$$
Since $J$ is quasi-nilpotent, the previous condition is equivalent to
$$
\begin{array}{rl}
&\pi^*\tilde{F}^{n-3}\wedge\partial\bar\partial\omega^n+ \pi^*(\partial \tilde{F}^{n-3})\wedge\bar\partial\omega^n - \pi^*(\db\tilde{F}^{n-3})\wedge\partial\omega^n = 0,\\[4pt]
&\pi^*\tilde{F}^{n-3}\wedge\big(\bar\partial\omega^n\wedge\partial\omega^{\bar n} -
   \partial\omega^n\wedge\db\omega^{\bar n}\big) = 0.
\end{array}
$$
Removing $\pi^*$ we get (ii).
\end{proof}

\subsection{Balanced and sG nilmanifolds with non-nilpotent complex structures}\label{subsec-clasif-dim8-sG-balanced-non-nilp} 

As a first application of the results above we classify the WnN complex structures in complex dimension~4 that admit balanced or sG metrics. 
Together with previous classifications for SnN complex structures, in Theorem~\ref{main-theorem-2} we present a complete study of the existence of balanced or sG metrics on nilmanifolds of real dimension 8 endowed with non-nilpotent complex structures.

Let $(\frg,J)$ be a nilpotent Lie algebra with $\dim_{\R}\frg=8$ and $J$ of WnN type. Recall that $J$ is defined by the structure equations \eqref{structure-equs-WnN}.
It follows from Proposition~\ref{WnN-no-sG} that $\varepsilon=0$ is a necessary condition for $J$ to admit sG metrics. Therefore, $(\frg,J)$ must be a central extension of $\frh_{19}^-$, 
i.e. $(\frg,J)$ is parametrized by 
\begin{equation}\label{structure-equs-WnN-bis}
d\omega^1 =0,\ \ 
d\omega^2 = \omega^{13} \!+ \omega^{1\bar3},\ \ 
d\omega^3 = i\,\delta (\omega^{1\bar2} \!- \omega^{2\bar1}),\ \ 
d\omega^4 = a\,\omega^{12} \!+ B\,\omega^{1\bar 1} 
	\!+ \nu (\omega^{23} \!+ \omega^{2\bar 3}),
\end{equation}
for some tuple $(\nu,a,B)$ where $\nu\in\{0,1\}$, $a\in\R$ and $B\in \C$. 
The possibilities for the tuple are given in \cite{LU-racsam}. Specifically, we will make use of the following result.

\begin{proposition}\label{probado-racsam}   {\rm  (\cite[ Theorem 3.1, Table 1 for $\varepsilon=0$, and Theorem 4.1]{LU-racsam}) }
Let $(\frg,J)$ be an $8$-dimensional nilpotent Lie algebra with WnN complex structure 
defined by \eqref{structure-equs-WnN-bis}. 
Up to equivalence, the tuple $(\nu, a, B)$ takes one the following values and $\frg$ is then isomorphic to the indicated Lie algebra:
\begin{equation*}
\begin{split}
&(0,0,0) \text{ on } \mathfrak f_1= \frh^-_{19}\times\mathbb R^2= (0^3,\,12,\,23,\,14-35,\,0,\,0), \\	
&(0,0,1) \text{ on }  \mathfrak f_3 = (0^3,\,12,\,13,\,23,\,15+26,\,0),\\
&(0,1,B) \text{ with $B\in\{0,1\}$ on }  \mathfrak f_4^0 = (0^3,\,13,\,14,\,23,\,26,\,16+24),\\
&(1,0,B) \text{ with $B\in\{0,1\}$ on }  
	\mathfrak f_5^B = (0^3,\,13,\,23,\,34,\,\frac{B}{2}\!\cdot\! 12+35,\,14+25), \\
&(1,1,B) \text{ with either $B\in\R^{\geq 0}$\! or $B\in{\mathbb H}^+$ on }  \mathfrak f_7 = (0^3,\,13,\,23,\,14+25,\, 2\!\cdot\! 14+34,\, 15+24+35).
\end{split}
\end{equation*}
Here, ${\mathbb H}^+ := \{ B\in\C \mid \Imag B>0 \}$ denotes the upper half-plane in the complex space.
\end{proposition}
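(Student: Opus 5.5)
This statement is cited from \cite{LU-racsam}, so the plan is to reconstruct its classification argument directly from the structure equations \eqref{structure-equs-WnN-bis}. First I normalize the parameters using changes of $(1,0)$-basis that preserve the shape of \eqref{structure-equs-WnN-bis}. Since $\frb=\langle x_4,y_4\rangle$ is central and $(\frg_\frb,J_\frb)\cong(\frh_{19}^-,J)$ is fixed, the admissible changes are
\[
\omega^4\mapsto \lambda\,\omega^4+\mu_1\omega^1+\mu_2\omega^2+\mu_3\omega^3,\qquad \lambda\in\C^*,
\]
composed with automorphisms of $(\frh_{19}^-,J)$ preserving its normal form. Such automorphisms include rescalings $\omega^1\mapsto t\omega^1$, $\omega^2\mapsto t|t|^{?}\omega^2$, etc., which have to be determined from the first three equations. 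The step $\omega^4\mapsto\omega^4-\mu\,\omega^2$ shifts $d\omega^4$ by $\mu(\omega^{13}+\omega^{1\bar 3})$, and subtracting $\mu\omega^3$ shifts it by $i\delta\mu(\omega^{1\bar2}-\omega^{2\bar1})$. These moves are what eliminate the extra terms, and in particular the $2\delta\varepsilon\omega^{1\bar3}$ term once $\varepsilon=0$.

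Next I split by $\nu\in\{0,1\}$. For $\nu=0$ I rescale to normalize $(a,B)$. If $a\neq 0$, I scale it to $1$ and check that the phase of $B$ (or $B$ itself) can be killed except for a discrete residue, giving $B\in\{0,1\}$. If $a=0$, then $B\in\{0,1\}$. For $\nu=1$ the scaling freedom is smaller, because $\lambda$ is tied to the coefficients of $\omega^{23}+\omega^{2\bar3}$. I therefore expect only $a\in\{0,1\}$ to be normalizable: when $a=0$ one gets $B\in\{0,1\}$, and when $a=1$ a genuine one-parameter family survives, modulo complex conjugation (via $\delta\mapsto-\delta$ symmetries). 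This yields $B\in\R^{\ge0}$ or $\Imag B>0$.

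Then I identify the underlying real Lie algebras. For each case I write $\omega^k=e^{2k-1}+ie^{2k}$ (or a suitable real basis) and compute the real structure constants. I match each case to the listed $\mathfrak f$'s by changing real basis and checking invariants: $b_1$, dimensions of the lower central series, and the step. I also check that cases assigned to different algebras are not isomorphic, and that distinct parameter values are inequivalent as complex structures, for example via invariants of the induced Hermitian form on $\frb^*$-components.

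The main obstacle is the $\nu=1$, $a=1$ case: proving that $B$ is a true modulus, with exactly the stated fundamental domain. My first approach is to compute the full stabilizer of the normal form, then show that the quantity $B$ transforms at most by conjugation under it.
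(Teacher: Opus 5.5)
The paper gives no proof of this statement; it quotes the classification from the reference it cites, so your sketch has to stand on its own. Your reduction framework is sound. Any equivalence preserves the canonical ideal $\fra_1(J)=\langle x_4,y_4\rangle$, so you only need shifts of $\omega^4$ by $\omega^1,\omega^2,\omega^3$ combined with automorphisms of $(\frh_{19}^-,J)$. Your outcomes for $\nu=0$ and for $(\nu,a)=(1,0)$ are also correct.

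The gap is in the decisive case $(\nu,a)=(1,1)$. There you propose to cut $B$ down by complex conjugation ``via $\delta\mapsto-\delta$'', and that is not an equivalence of pairs.
\begin{itemize}
\item Conjugating the structure equations replaces $J$ by $-J$ and sends $(\delta,a,B)$ to $(-\delta,a,\bar B)$.
\item Every equivalence maps $\fra_1(J)$ onto $\fra_1(J')$, so it induces an equivalence of the quotients $(\frh_{19}^-,J_\delta)$. These quotients are inequivalent for $\delta=1$ and $\delta=-1$, so $\delta$ is an invariant and cannot be flipped.
\item Even if $B\sim\bar B$ were allowed, its fundamental domain would be $\{\Imag B\geq 0\}$, which contains the negative real axis. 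The stated set $\R^{\geq0}\cup\mathbb H^+$ is exactly a fundamental domain for $B\sim -B$.
\end{itemize}

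The correct residual symmetry comes out of the stabilizer computation you left undone. Diagonal changes preserving the first three equations are forced to be:
\begin{itemize}
\item $\omega^1\mapsto\epsilon\,\omega^1$ with $|\epsilon|=1$, not an arbitrary rescaling;
\item $\omega^3\mapsto s\,\omega^3$ with $s\in\R^*$;
\item $\omega^2\mapsto\epsilon s\,\omega^2$.
\end{itemize}
For $\nu=1$ this also forces $\omega^4\mapsto\epsilon s^2\,\omega^4$, and the action is $(a,B)\mapsto(s\bar\epsilon\,a,\ \epsilon s^2B)$. Keeping $a=1$ leaves only $\epsilon=s=\pm1$, i.e. $B\mapsto\pm B$. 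This is realized, for instance, by $(\omega^1,\omega^2,\omega^3,\omega^4)\mapsto(-\omega^1,\omega^2,-\omega^3,-\omega^4)$, which sends $(\nu,a,B)$ to $(\nu,a,-B)$ with $\delta$ unchanged.

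This settles existence of the normal forms. The substantive part of the cited result is still unproved in your plan: that the full stabilizer, including its unipotent part, moves $B$ only by $\pm1$, so distinct values in $\R^{\geq0}\cup\mathbb H^+$ are inequivalent. Your stated target, invariance of $B$ up to conjugation, is the wrong one.

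Finally, the invariants you propose for identifying the real algebras cannot separate $\mathfrak f_5^0$ from $\mathfrak f_5^1$. Both have $b_1=3$, $\dim[\frg,\frg]=5$ and $\dim[\frg,[\frg,\frg]]=3$, and both are $3$-step. Distinguishing the cases $(1,0,0)$ and $(1,0,1)$ therefore needs a finer argument.
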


In the following result we classify the WnN complex structures in eight dimensions admitting sG or balanced metrics. 

\begin{proposition}\label{clasif-WnN-sG}
An $8$-dimensional nilpotent Lie algebra $\frg$ endowed with a WnN complex structure $J$ admits
an sG metric if and only if $(\frg,J)$ is equivalent to a pair defined by~\eqref{structure-equs-WnN-bis} with $(\nu,a,B)$ taking the values  
$$
(0,0,0),\ (0,1,B\in\{0,1\}),\ (1,0,B\in\{0,1\}),\ (1,1,B\in\R^{\geq0}), \mbox{ or }  (1,1,B\in{\mathbb H}^+).
$$
In particular, every $J$ on the nilpotent Lie algebras 
$\mathfrak f_1$, $\mathfrak f_4^0$, $\mathfrak f_5^0$, $\mathfrak f_5^1$ and $\mathfrak f_{7}$ 
admits sG metrics. Moreover, all the cases above admit
balanced metrics with the exception of $(\nu,a,B)=(0,1,1)$,
which corresponds to a complex structure 
on the Lie algebra $\mathfrak f_4^0$. 
\end{proposition}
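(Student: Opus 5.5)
By Proposition~\ref{WnN-no-sG}, any WnN pair admitting an sG metric has $\varepsilon=0$. So it is one of the pairs \eqref{structure-equs-WnN-bis}, and by Proposition~\ref{probado-racsam} the possible tuples $(\nu,a,B)$ are those listed there. Compared with the list in the statement, the only case to exclude is $(0,0,1)$ on $\mathfrak f_3$. For the remaining tuples we must construct metrics: balanced ones in all cases except $(0,1,1)$, and an sG (non-balanced) one for $(0,1,1)$. We must also show that $(0,1,1)$ carries no balanced metric. By symmetrization \cite{Belgun,FG,COUV} it suffices to work with invariant metrics on $(\frg,J)$.

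\emph{Excluding $(0,0,1)$.} Here $d\omega^4=\omega^{1\bar1}$. I would apply the sG obstruction (Proposition~\ref{condition-for-non-sG}) with $\alpha=\omega^{\bar4}$. Then $\db\omega^{\bar 4}=0$ and $\partial\omega^{\bar4}=\overline{d\omega^4}$ restricted to type $(1,1)$, which gives $-\omega^{1\bar1}$. This is a single term $c\,\eta\wedge\bar\eta$ with $c=-1$ and $\eta=\omega^1$, so the obstruction applies directly and no sG metric exists.

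\emph{Existence.} Since $J$ is quasi-nilpotent and $\frb=\langle x_4,y_4\rangle$ is central, I would use Corollary~\ref{reducc-cond-met-cor2}(i). A metric $F=\pi^*\tilde F+\frac i2\omega^{4\bar4}$ is balanced if and only if $\tilde F$ is balanced on $\frh_{19}^-$ (with the complex structure $\delta$) and $\tilde F^{2}\wedge d\omega^4=0$. The next step is to parametrize the balanced metrics $\tilde F$ on $(\frh_{19}^-,J_\delta)$, which are known to exist and are classified in \cite{COUV,LUV-DGA}. Writing a general $\tilde F=\frac i2\sum r_{k}^2\omega^{k\bar k}+\dots$, I would impose $d\tilde F^2=0$, and then compute the $6$-form condition $\tilde F^2\wedge\big(a\,\omega^{12}+B\,\omega^{1\bar1}+\nu(\omega^{23}+\omega^{2\bar3})\big)=0$. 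Only the $(2,2)$-part of $\tilde F^2$ pairs with the $(1,1)$-part of $d\omega^4$, so the condition reduces to a single $(3,3)$ scalar equation. Schematically it reads $B\cdot(\text{cofactor of }\omega^{1\bar1})+\nu\cdot(\text{cofactor of }\omega^{2\bar3})=0$; the $(2,0)$ terms drop out, since $\tilde F^2$ is $(2,2)$ and a $(4,2)$ form vanishes in complex dimension $3$. For $B=0$, diagonal balanced metrics should make this vanish. For $\nu=1$ and arbitrary $B$, the off-diagonal coefficient $x_{2\bar3}$ of $\tilde F$ can be tuned to cancel the $B$-term, provided it remains compatible with $\tilde F$ being balanced and positive definite. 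This gives balanced metrics for $(0,0,0)$, $(0,1,0)$, $(1,0,B)$ and $(1,1,B)$.

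\emph{The case $(0,1,1)$.} Here $d\omega^4=\omega^{12}+\omega^{1\bar1}$, and the balanced condition forces the coefficient of $\omega^{2\bar2}\wedge\omega^{3\bar3}$ in $\tilde F^2$ to vanish. That coefficient is a principal $2\times2$ minor of a positive definite Hermitian matrix, hence positive, so no balanced metric exists. For the sG property I would exhibit an explicit invariant metric, for instance $F=\frac i2\sum\omega^{k\bar k}$ plus a suitable off-diagonal term. The task is then to check that $\partial F^3$ is $\db$-exact in the $(4,2)$-forms of $\frg$; equivalently, since it is $\partial\db$-related, that the $(4,3)$-component of $dF^3$ vanishes. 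This explicit sG verification, together with confirming that the off-diagonal tuning in the $\nu=1$ cases preserves positivity and the balanced condition on $\frh_{19}^-$, is the step I expect to be the main computational obstacle.
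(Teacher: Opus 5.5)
Your route is essentially the paper's, and most steps are sound. Your choice $\alpha=\omega^{\bar4}$ for $(0,0,1)$ is exactly what the Kodaira--Thurston argument amounts to. The positivity of the $(2,3)$ principal minor is exactly the paper's reason why $(0,1,1)$ carries no balanced metric.

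\textbf{The gap: the sG step for $(0,1,1)$.} Your proposed ``equivalent'' criterion fails. Since $F^3$ has type $(3,3)$, the $(4,3)$-component of $dF^3$ is $\partial F^3$ itself, so its vanishing is the balanced condition. You have just shown that condition is impossible for $(0,1,1)$, so following that formulation you would wrongly conclude this case is not sG. You must check $\partial F^3\in\db\big(\Lambda^{4,2}\frg^*_{\C}\big)$ directly, and what is missing is where the exactness comes from. No off-diagonal term is needed. For $2F=i\sum_k\omega^{k\bar k}$, the only contribution is through $\partial\omega^{\bar4}=-\omega^{1\bar1}$, giving $\partial F^3=-\frac{3i}{4}\,\omega^{1234\bar1\bar2\bar3}\neq0$. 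Meanwhile $\db\omega^{\bar4}=a\,\omega^{\bar1\bar2}$, and with $\db\omega^{1234}=\db\omega^{\bar3}=0$ this yields $\db\omega^{1234\bar3\bar4}=-a\,\omega^{1234\bar1\bar2\bar3}$. So $\gamma=\frac{3i}{4}\omega^{1234\bar3\bar4}$ works precisely because $a=1$. This is the structural difference from $(0,0,1)$, where $a=0$ and the sG obstruction applies instead.

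\textbf{Two smaller points.} First, the non-existence of balanced metrics on $(0,1,1)$ concerns all metrics. A general metric has the form $\pi^*\tilde F+\frac i2\omega'^{4\bar4}$ only after the change of basis \eqref{cambio-base}. That change replaces $d\omega^4$ by $c\,d\omega^4+\lambda_2\,d\omega^2+\lambda_3\,d\omega^3$ with $c\neq0$. You should say why the extra terms drop out: $\tilde F^2\wedge d\omega^k=d(\tilde F^2\wedge\omega^k)=0$ once $\tilde F$ is balanced, since exact top-degree forms vanish on a nilpotent Lie algebra.

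Second, your tuning for $\nu=1$, $B\neq0$ does work, and more simply than the paper's choice. In \eqref{h19fundform} with $u=z=0$, every such $F$ is balanced on $\frh_{19}^-$ for arbitrary $r,s,t,v$. The remaining condition $F^2\wedge d\omega^4=0$ becomes $(s^2t^2-|v|^2)B=i\,r^2\bar v$. It is solved by $v=i\,r^{-2}(s^2t^2-\rho^2)\bar B$, where $\rho>0$ is the positive root of $|B|\rho^2+r^2\rho-s^2t^2|B|=0$. This root automatically satisfies $\rho<st$, so positivity holds.
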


\begin{proof}
By Proposition~\ref{WnN-no-sG} it suffices to investigate those WnN complex structures 
defined by~\eqref{structure-equs-WnN-bis}; these correspond to the case when 
$(\frg,J)$ is a central extension of $\frh_{19}^-$. Note that the possible values for 
the tuple $(\nu,a,B)$ in~\eqref{structure-equs-WnN-bis}
are listed in Proposition~\ref{probado-racsam}. Moreover, recall that $\delta=\pm1$.

Let $F'$ be any Hermitian metric on $(\frg,J)$. 
As in the proof of Lemma~\ref{reduc-met}, we first apply \eqref{cambio-base} 
to get $F'=\pi^* F+\frac i 2\,\omega'^{4\bar 4}$, where $F$ is a Hermitian metric on 
$\frh_{19}^-$ that we will here write as
\begin{equation}\label{h19fundform}
2F=i(r^2\,\omega^{1\bar1} + s^2\,\omega^{2\bar2} +
t^2\,\omega^{3\bar3}) + u\,\omega^{1\bar2} -\bar u\,\omega^{2\bar1}
+ v\,\omega^{2\bar3} -\bar v\,\omega^{3\bar2}+ z\,\omega^{1\bar3}
-\bar z\,\omega^{3\bar1}.
\end{equation}
Since $F$ is positive definite, we have $r^2,\,s^2,\,t^2 \in \R^+$ and
$u,\,v,\,z\in \mathbb{C}$ satisfying $r^2s^2>|u|^2$,
$s^2t^2>|v|^2,\,r^2t^2>|z|^2$, and $r^2s^2t^2 + 2\,\Real(i\bar u\bar
vz)>t^2|u|^2 + r^2|v|^2 + s^2|z|^2$.

It is important to notice that \eqref{cambio-base} applied to our setting expresses as 
$$
\omega'^{\,k}=\omega^k,\ 1\leq k\leq 3,\qquad 
\omega'^{\,4} = \sqrt{2x'_{4\bar 4}}\, \omega^4 - i\,\sum_{k=1}^{3}\frac{ \sqrt{2} \, x'_{k\bar 4} }{\sqrt{x'_{4\bar 4}}}\,\omega^k,
$$
so the complex structure equations~\eqref{structure-equs-WnN-bis} with respect to the new basis 
$\{\omega'^{\,k}\}_{k=1}^4$ become
\begin{equation}\label{structure-equs-WnN-biss}
\left\{
\begin{split}
d\omega'^{\,1} &= 0,\\[-4pt]
d\omega'^{\,2} &= \omega'^{\,13} + \omega'^{\,1\bar3},\\[-4pt]
d\omega'^{\,3} &= i\,\delta\,(\omega'^{\,1\bar2} - \omega'^{\,2\bar1}),\\[-4pt]
d\omega'^{\,4} &= \sqrt{2x'_{4\bar 4}} \Big( a\,\omega'^{\,12}+B\,\omega'^{\,1\bar 1} 
	+\nu\,\big(\omega'^{\,23}+ \omega'^{\,2\bar 3}\big) \Big) \\
	&\ \  - i\,\frac{ \sqrt{2} \, x'_{2\bar 4} }{\sqrt{x'_{4\bar 4}}}\, \Big(\omega'^{\,13} + \omega'^{\,1\bar3}\Big) 
  - i\,\frac{ \sqrt{2} \, x'_{3\bar 4} }{\sqrt{x'_{4\bar 4}}}\, \Big(i\,\delta\,(\omega'^{\,1\bar2} - \omega'^{\,2\bar1})\Big) .
\end{split}
\right.
\end{equation}

According to Corollary~\ref{reducc-cond-met-cor2}, $F'$ is balanced if and only if 
$F$ satisfies $dF^2=0$ and $d\omega'^{\,4}\wedge F^2=0$.
A direct calculation using \eqref{h19fundform} and \eqref{structure-equs-WnN-biss} shows that these two conditions are equivalent to
\begin{equation}\label{aniquila}
u+\bar u=0, \quad\quad s^2 z + i uv =0,\quad\quad (s^2t^2-|v|^2)\,B +(u \bar{z}-ir^2\bar{v})\,\nu = 0.
\end{equation}
When $B=0$, the Hermitian metric $2F'=i(\omega'^{\,1\bar1} + \omega'^{\,2\bar2} + \omega'^{\,3\bar3}+ \omega'^{\,4\bar4})$ is balanced for any $\nu\in\{0,1\}$.

Suppose now that $B\ne 0$. If $\nu=0$ then there does not exist any balanced metric, because $s^2t^2-|v|^2>0$ and the third equation in \eqref{aniquila} is never satisfied. 
According to Proposition~\ref{probado-racsam}, this excludes 
the existence of balanced metrics for $(\nu,a,B)= (0,0,1)$ and $(0,1,1)$.

If one has $B\ne 0$ and $\nu=1$, then one can check that there is a space of balanced metrics. 
In particular, the choice 
$$
r=\sqrt{3}, \quad s=1, \quad u=i \sqrt{2}, \quad v= \frac{i(\sqrt{1+4t^2|B|^2}-1)}{2B}, \quad z=\sqrt{2}\,v,
$$
with any non-zero $t\in\R^*$, satisfies \eqref{aniquila} and defines a one-parameter family of balanced metrics. 

\smallskip

Therefore, we have proved the existence of balanced metrics when the tuple $(\nu,a,B)$ takes the following  values: 
$
(0,0,0),\ (0,1,0), \ (1,0,B\in\{0,1\}),\ (1,1,B\in\R^{\geq0}),\ (1,1,B\in{\mathbb H}^+).
$
Since balanced implies sG, it remains to check the existence of (non-balanced) sG metrics in the cases $(\nu,a,B)= (0,0,1)$ and $(0,1,1)$. 
The first case is not sG, since it is clear from \eqref{structure-equs-WnN-bis} that it is a (non-central) extension of the Kodaira-Thurston algebra (see Corollary~\ref{cor-extensions-of-non-sG}). 
Let us then study what happens for $(\nu,a,B)= (0,1,1)$.

Take $(\nu,a,B)= (0,1,1)$ in \eqref{structure-equs-WnN-bis} and consider the Hermitian metric $2F=i(\omega^{1\bar1} + \omega^{2\bar2} + \omega^{3\bar3}+ \omega^{4\bar4})$. 
One has
$$
\partial F^3= 
-\frac{3i}{4} \partial (\omega^{1\bar1 2\bar2 3\bar3} + \omega^{1\bar1 2\bar2 4\bar4} +\omega^{1\bar1 3\bar3 4\bar4} + 
\omega^{2\bar2 3\bar3 4\bar4})= -\frac{3i}{4}\omega^{1234\bar1\bar2\bar3}.
$$
Since $\db \omega^{1234\bar3\bar4} = -\omega^{1234\bar1\bar2\bar3}$, we conclude that $\partial F^3= \db\gamma$, 
for $\gamma=\frac{3i}{4}\omega^{1234\bar3\bar4}$, so the metric $F$ is sG. 

Finally, the nilpotent Lie algebras underlying the different complex structures admitting sG or balanced metrics 
come from Proposition~\ref{probado-racsam}.
\end{proof}

Now, we consider SnN complex structures in real dimension 8. Recall that their complex structure
equations are given in Proposition~\ref{clasifJ-SnN}, in terms of two families.
Moreover, by Proposition~\ref{SnN-no-sG}, the study of sG metrics reduces to investigate the Family I with $(\varepsilon,\nu)=(0,0)$ and the Family~II with $\nu=0$.  
The following classification proved in \cite{LUV2} will be needed. 

\begin{proposition}\label{prop-snn}  {\rm (\cite[Theorems 1.1 and 3.3 for Family I, $\varepsilon=\nu=0$, and Family II, $\nu=0$]{LUV2} } 

\noindent Let $(\frg,J)$ be an $8$-dimensional nilpotent Lie algebra with SnN complex structure. 

\smallskip
$\bullet$ \ If $J$ is defined by Family I with $\varepsilon=\nu=0$ then, up to equivalence, the pair $(a, b)$ takes
one of the following values and $\frg$ is isomorphic to the Lie algebra given in each case:
$$(1,0) \text{ on } \mathfrak g_1^0,
\qquad
(1,1) \text{ on } \mathfrak g_1^1,
\qquad 
(0,1) \text{ on } \mathfrak g_7,
$$

where 
\begin{equation*}
\begin{split}
&\mathfrak \frg_{1}^{\gamma} = (0^5,\, 13+15+24,\, 14-23+25,\, 16+27+\gamma\!\cdot\! 34), \ \ \gamma\in\{0,1\},\\
&\mathfrak g_7 = (0^5,\,15,\,25,\,16+27+34).
\end{split}
\end{equation*}

$\bullet$ \ If $J$ is defined by Family II with $\nu=0$ then, up to equivalence, the tuple $(\varepsilon,\mu, a, b)$ 
takes one of the following values and $\frg$ is isomorphic to the Lie algebra described in each case:
$$\begin{array}{ll}
(0, 1, a, 0) \text{ with $a\in\{0,1\}$ on } \mathfrak g_9^a, 
&(1, 0, a, 0) \text{ with $a\in\{0,1\}$ on } \mathfrak g_{10}^0,
\\[3pt]
(1, 0, a, b) \text{ with $a\in\{0,1\}$} \text{ and $b\in\R^*$ on } \mathfrak g_{10}^1, 
&(1, 1, 0, 0) \text{ on } \mathfrak g_{11}^{0,0},
\\[-4pt]
(1, 1, a, 0) \text{ with $a\in\R^*$ on } \mathfrak g_{11}^{1,0}, 
&(1, 1, a, b) \text{ with $a\in\R$ and $b\in\R^*$ on } \mathfrak g_{11}^{\frac{2\sqrt{3}|a|}{|b|},1},
\end{array}$$

where 
\begin{equation*}
\begin{split}
\mathfrak \frg_9^{\gamma} =&\, (0^3,\, 13,\, 23,\, 35,\, \gamma\!\cdot\! 12-34,\, 16+27+45),\\
\mathfrak \frg_{10}^{\gamma} =&\, (0^3,\, 13,\, 23,\, 14+25,\, 15+24,\, 16+ \gamma\!\cdot\! 25 +27),\\
\mathfrak \frg_{11}^{\alpha, \beta} =&\, \big(0^3,\, 13,\, 23,\, 14+25-35,\, \alpha \!\cdot\! 12+15+24+34,\, 16+27-45-\beta\!\cdot\!(2\!\cdot\! 25 + 35)\big),
\end{split}
\end{equation*}

with $\gamma\in\{0,1\}$, and  
$(\alpha, \beta)=(0,0)$, $(1,0)$, or $(\alpha, 1)$ for $\alpha\in[0,+\infty)$.
\end{proposition}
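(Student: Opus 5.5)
This statement is a classification taken from \cite{LUV2}, so the plan is to reconstruct it from the normal forms of Proposition~\ref{clasifJ-SnN}. There are two tasks. First, for each admissible parameter tuple with $\varepsilon=\nu=0$ in Family I, or $\nu=0$ in Family II, compute the underlying real Lie algebra. Second, reduce the tuples up to equivalence of pairs $(\frg,J)$.

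The real Lie algebra is obtained by writing $\omega^k=e^{2k-1}+i\,e^{2k}$, or a suitably rescaled real coframe. We then expand $d\omega^k$ into real and imaginary parts and rename the basis so that the structure equations take the normal forms $\frg_1^\gamma$, $\frg_7$, $\frg_9^\gamma$, $\frg_{10}^\gamma$, $\frg_{11}^{\alpha,\beta}$.

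For Family I with $\varepsilon=\nu=0$, the list in Proposition~\ref{clasifJ-SnN} already leaves only $(a,b)\in\{(0,1),(1,0),(1,1)\}$. In each case $d\omega^2=0$, so the first five real differentials vanish, and it only remains to identify the brackets. The dichotomy $\gamma\in\{0,1\}$ for $a=1$ comes from whether the $b\,\omega^{2\bar2}$ term is present. It contributes a term like $e^{34}$ in the last differential, which cannot be removed; this can be checked through an invariant such as $b_2$, or through the dimension of the derived algebra.

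For Family II with $\nu=0$, the admissible tuples $(\varepsilon,\mu,a,b)$ are $(1,1,a,b)$, $(1,0,0,b)$, $(1,0,1,b)$, $(0,1,0,0)$ and $(0,1,1,0)$. Here the first three real forms are closed and the nilpotency step is $4$. The Lie algebra isomorphism type depends only on coarse data:
\begin{itemize}
\item $\mu=0$ versus $\mu=1$;
\item whether $b=0$;
\item whether $a=0$;
\item for $\mu=1$ and $b\neq0$, the continuous invariant $\alpha=2\sqrt3|a|/|b|$.
\end{itemize}
To obtain these, perform real changes of basis adapted to the descending central series: normalize the coefficients in $de^6,de^7,de^8$ and absorb the terms that can be absorbed by adding lower basis elements. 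Finally, rescale so that the remaining parameter becomes the ratio $|a|/|b|$, with the factor $2\sqrt3$ appearing from the normalization.

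The main obstacle is the non-isomorphism part. We must show that the various $\frg_{11}^{\alpha,\beta}$, and $\frg_{10}^0$ versus $\frg_{10}^1$, are pairwise non-isomorphic, and that different tuples give inequivalent complex structures. For the latter I would invoke the uniqueness clause of Proposition~\ref{clasifJ-SnN} (the ``one and only one''). For the Lie algebras I would first compare cheap invariants: the dimensions of the terms of the lower and upper central series, and the Betti numbers $b_2,b_3$. For the continuous family I would look for a rational invariant, for instance the ratio of determinants of natural bilinear maps $\frg/\frg^1\times\frg/\frg^1\to\frg^1/\frg^2$, and check that it recovers $\alpha$. If this becomes unwieldy, I would fall back on matching against the known classification of $8$-dimensional nilpotent Lie algebras cited in \cite{LUV2}.
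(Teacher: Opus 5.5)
\paragraph{Overall.} The paper gives no proof of this proposition; it imports it from \cite{LUV2}, where it rests on explicit real changes of basis carried out case by case. Your starting point, expanding $\omega^k=e^{2k-1}+i\,e^{2k}$ and then normalizing along the central series, is the right one. But the proposal stops where the content begins. The statement is a list of isomorphisms, and you exhibit none of them, so there is a genuine gap.

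\paragraph{What is missing or wrong in the identifications.} Concretely, three things must be shown:
\begin{itemize}
\item For $\mu=0$, the parameters $a$ and $b\neq0$ can be normalized away. Thus $(1,0,a,0)$ gives $\frg_{10}^0$ and $(1,0,a,b)$ gives $\frg_{10}^1$, for both values of $a$.
\item Every $a\neq0$ in $(1,1,a,0)$ normalizes to $\alpha=1$.
\item Above all, there is a basis change taking $(1,1,a,b)$ with $b\neq0$ to $\frg_{11}^{\alpha,1}$, where $\alpha=2\sqrt3\,|a|/|b|$. Saying ``the factor $2\sqrt3$ appearing from the normalization'' is not an argument for the one nontrivial identification in the list.
\end{itemize}
Your summary of what the isomorphism type depends on is also false as stated, because it omits $\varepsilon$. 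The tuples $(0,1,0,0)$ and $(1,1,0,0)$ have the same $\mu$, $a$ and $b$, yet they give $\frg_9^0$ and $\frg_{11}^{0,0}$. These are not isomorphic: the $x\in\frg/\frg^1$ with $[x,\frg^1]\subseteq\frg^3$ form a $2$-dimensional subspace in $\frg_9^0$, but only $\{0\}$ in $\frg_{11}^{0,0}$.

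A smaller slip: in Family~I the fifth closed real form is the real part of $\omega^4$. It is closed because $\nu=0$ makes $d\omega^4$ purely imaginary, not because $d\omega^2=0$.

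\paragraph{The ``main obstacle'' is not needed, and your tools would not handle it.} Pairwise non-isomorphism of the listed algebras is not asserted by the statement. The invariants you propose would fail on it anyway:
\begin{itemize}
\item $\frg_1^0$ and $\frg_1^1$ both have $b_1=5$, hence $\dim[\frg,\frg]=3$, and both have $b_2=8$. They are central extensions of the same algebra $(0^5,13+15+24,14-23+25)$, by $e^{16}+e^{27}$ and by $e^{16}+e^{27}+e^{34}$ respectively. Adding $e^{34}$ changes neither $Z^2$ nor $\dim B^2$.
\item The bracket $\Lambda^2(\frg/\frg^1)\to\frg^1/\frg^2$ is identical for every algebra $\frg_9^\gamma,\frg_{10}^\gamma,\frg_{11}^{\alpha,\beta}$: $e_1\wedge e_3\mapsto\pm e_4$, $e_2\wedge e_3\mapsto\pm e_5$, $e_1\wedge e_2\mapsto 0$. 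This is because $\alpha$ enters only through the $e^{12}$-term of $de^7$, and $e_7\in\frg^2$. No determinant built from this map can detect $\alpha$.
\item There is no complete classification of real $8$-dimensional nilpotent Lie algebras to fall back on.
\end{itemize}
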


The SnN complex structures above admitting balanced metrics are classified in \cite[Proposition~3.2]{LUV-Fro}.
It remains to identify which of them carry a (non-balanced) sG metric.

\begin{proposition}\label{clasif-SnN-sG}
An $8$-dimensional nilpotent Lie algebra $\frg$ endowed with an SnN complex structure $J$ admits
an sG metric if and only if $(\frg,J)$ is equivalent to a pair determined by 
\begin{itemize}
\item Family I with $\varepsilon=\nu=0$, or
\item Family II with $\nu=0$.
\end{itemize}
In fact, every $J$ on the nilpotent Lie algebras 
$\frg_1^{\gamma}$, $\frg_7$, $\frg_9^{\gamma}$, $\frg_{10}^{\gamma}$, and $\frg_{11}^{\alpha,\beta}$
admits sG metrics. Furthermore, there are balanced metrics in all the cases above
except for those $(\frg,J)$ given by Family~II with $\nu=0$ and $(\varepsilon,\mu, a, b)=(1, 0, 1, b\in\R)$;
these correspond to complex structures
on $\mathfrak g_{10}^0$ for $(\varepsilon,\mu, a, b)=(1, 0, 1, 0)$
and $\mathfrak g_{10}^1$ for $(\varepsilon,\mu, a, b)=(1, 0, 1, b\in\R^*)$. 
\end{proposition}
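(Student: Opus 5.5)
The "only if" direction is already settled by Proposition~\ref{SnN-no-sG}. Any SnN structure with $(\varepsilon,\nu)\neq(0,0)$ in Family~I, or with $\nu\neq0$ in Family~II, admits no sG metric. So an sG SnN pair must lie in Family~I with $\varepsilon=\nu=0$ or in Family~II with $\nu=0$. By Proposition~\ref{prop-snn}, these are exactly the complex structures on $\frg_1^{\gamma}$, $\frg_7$, $\frg_9^{\gamma}$, $\frg_{10}^{\gamma}$ and $\frg_{11}^{\alpha,\beta}$. It remains to show that every such pair is sG, and to identify which of them are balanced.

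For the balanced part I would quote the classification in \cite[Proposition~3.2]{LUV-Fro}. It states that balanced metrics exist in all of the above cases except Family~II with $\nu=0$ and $(\varepsilon,\mu,a,b)=(1,0,1,b)$. By Proposition~\ref{prop-snn}, these exceptional structures live on $\frg_{10}^0$ (for $b=0$) and on $\frg_{10}^1$ (for $b\neq0$). Since balanced implies sG, every other case is sG automatically.

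The remaining task is to exhibit an sG metric for Family~II with $(\varepsilon,\mu,\nu,a,b)=(1,0,0,1,b)$, $b\in\R$. Here the structure equations are
\begin{equation*}
d\omega^1=0,\quad d\omega^2=\omega^{14}+\omega^{1\bar4},\quad
d\omega^3=\omega^{1\bar1}+\omega^{12}+\omega^{1\bar2}-\omega^{2\bar1},\quad
d\omega^4=i\,b(\omega^{1\bar2}-\omega^{2\bar1})+i(\omega^{1\bar3}-\omega^{3\bar1}).
\end{equation*}
I would imitate the argument for $(\nu,a,B)=(0,1,1)$ in Proposition~\ref{clasif-WnN-sG}:
\begin{itemize}
\item take the diagonal metric $2F=i\sum_k\omega^{k\bar k}$;
\item compute $\partial F^3$, a combination of $(4,3)$-forms $\omega^{1234}\wedge\omega^{\bar j\bar k\bar l}$;
\item look for $\gamma\in\Lambda^{4,2}$ with $\db\gamma=\partial F^3$.
\end{itemize}
On a nilpotent Lie algebra, $\db$ from $\Lambda^{4,2}$ to $\Lambda^{4,3}$ is essentially the $\db$ on $(0,\bullet)$-forms wedged with $\omega^{1234}$. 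Its image is therefore spanned by the $\db\omega^{\bar j\bar k}$, and I will compute these explicitly.

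The main obstacle is that the obstruction to solving $\partial F^3=\db\gamma$ is the component of $\partial F^3$ along $\Lambda^{4,3}$ modulo $\db$-exact forms. This component is dual to $\db$-closed $(0,1)$-forms and is exactly the phenomenon behind Proposition~\ref{condition-for-non-sG}. If the diagonal metric fails, I would use a metric with free off-diagonal coefficients, as in \eqref{h19fundform}. I would then tune those coefficients so that the pairing of $\partial F^3$ with each $\db$-closed $(0,1)$-form vanishes. The candidate forms are $\omega^{\bar1}$ and suitable combinations involving $\omega^{\bar4}$; here $\nu=0$ is what keeps the sG obstruction from forcing a sign. Once those pairings vanish, $\partial F^3$ is $\db$-exact and $F$ is sG. This would yield sG but non-balanced metrics on $\frg_{10}^0$ and $\frg_{10}^1$, completing the proof.
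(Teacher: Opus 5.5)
Your proposal follows the paper's proof. The ``only if'' direction comes from Proposition~\ref{SnN-no-sG}, the balanced cases are quoted from \cite[Proposition~3.2]{LUV-Fro}, and the remaining structures $(\varepsilon,\mu,\nu,a,b)=(1,0,0,1,b)$ are handled with the diagonal metric $2F=i\sum_k\omega^{k\bar k}$.

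Your fallback to off-diagonal metrics is never needed. The diagonal metric already gives $\partial F^3=\frac{3i}{4}\,\omega^{1234\bar1\bar2\bar4}=\db\big(\frac{3i}{4}\,\omega^{1234\bar3\bar4}\big)$ for every $b\in\R$. In your duality language: the $\db$-closed $(0,1)$-forms are spanned by $\omega^{\bar1}$ and $\omega^{\bar4}$, with $\partial\omega^{\bar1}=0$ and $\partial\omega^{\bar4}=-i\,b(\omega^{1\bar2}-\omega^{2\bar1})-i(\omega^{1\bar3}-\omega^{3\bar1})$. Since $\partial\omega^{\bar4}$ has no diagonal terms, both pairings with $F^3$ vanish.
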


\begin{proof}
By \cite[Proposition 3.2]{LUV-Fro}, every $J$ equivalent to a complex structure in Family~I with $\varepsilon=\nu=0$ admits balanced metrics. Moreover, if $J$ is equivalent to a complex structure in Family II with $\nu=0$, then it admits a balanced metric if and only if $(\varepsilon,\mu, a, b)\ne (1, 0, 1, b\in\R)$. 
Therefore, by Proposition~\ref{SnN-no-sG}, to
prove our result it suffices to show that there are sG metrics
on those $(\frg,J)$ defined by Family II with $\nu=0$ and $(\varepsilon,\mu, a, b)= (1, 0, 1, b\in\R)$.

Let $F$ be the Hermitian metric given by $2F=i(\omega^{1\bar1} + \omega^{2\bar2} + \omega^{3\bar3}+ \omega^{4\bar4})$. 
A direct calculation using the structure equations of Family II with 
$\nu=0$ and $(\varepsilon,\mu, a, b)= (1, 0, 1, b\in\R)$
given in Proposition~\ref{clasifJ-SnN}
shows  
$$
\partial F^3= 
-\frac{3i}{4} \partial (\omega^{1\bar1 2\bar2 3\bar3} + \omega^{1\bar1 2\bar2 4\bar4} +\omega^{1\bar1 3\bar3 4\bar4} + 
\omega^{2\bar2 3\bar3 4\bar4})= \frac{3i}{4}\omega^{1234\bar1\bar2\bar4},
$$
independently of the value of $b$.
Since $\db \omega^{1234\bar3\bar4} = \omega^{1234\bar1\bar2\bar4}$, we conclude that $\partial F^3= \db\gamma$ 
for $\gamma= \frac{3i}{4}\omega^{1234\bar3\bar4}$, so the metric $F$ is sG. 

Finally, the nilpotent Lie algebras underlying the different complex structures admitting sG or balanced metrics are derived from Proposition~\ref{prop-snn}.
\end{proof}

By the well-known symmetrization process, the existence of sG or balanced metrics on a complex nilmanifold $X=(M,J)$ is reduced to their existence on $(\frg,J)$. 
Hence, from Propositions~\ref{clasif-WnN-sG} and~\ref{clasif-SnN-sG} we get:

\begin{theorem}\label{main-theorem-2} 
Let $X=(M,J)$ be a complex 
nilmanifold of complex dimension $4$ 
endowed with a non-nilpotent complex structure $J$. If $X$ admits sG or balanced metrics, then its underlying nilpotent Lie algebra $\frg$ is isomorphic to 
$\mathfrak f_1$, $\mathfrak f_4^0$, $\mathfrak f_5^{\gamma}$, $\mathfrak f_{7}$ (when $J$ is WnN) or to $\frg_1^{\gamma}$, $\frg_7$, $\frg_9^{\gamma}$, $\frg_{10}^{\gamma}$, $\frg_{11}^{\alpha,\beta}$ (when $J$ is SnN), where $\gamma\in\{0,1\}$, and $(\alpha, \beta)=(0,0)$, $(1,0)$, or $(\alpha, 1)$ with $\alpha\in[0,+\infty)$.

\vskip.1cm

\noindent Furthermore, in these cases we have:

\begin{itemize}
\item Every non-nilpotent complex structure $J$ admits sG metrics; 
\item All the non-nilpotent complex structures, except for those equivalent to $(\nu,a,B)=(0,1,1)$ on~$\mathfrak f_4^0$,  $(\varepsilon,\mu, a, b)=(1, 0, 1, 0)$ on $\frg_{10}^{0}$ 
 or $(1, 0, 1, b\in\R^*)$ on $\frg_{10}^{1}$, admit balanced metrics. 
 \end{itemize}
\end{theorem}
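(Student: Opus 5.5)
The plan is to assemble Theorem~\ref{main-theorem-2} from the two classification results just proved, once the problem has been moved to the Lie algebra. First, by the symmetrization process of \cite{Belgun,FG,COUV}, $X=(M,J)$ carries an sG (resp. balanced) metric if and only if $(\frg,J)$ carries an invariant one. So only pairs $(\frg,J)$ with $\dim_\R\frg=8$ and $J$ non-nilpotent need to be considered. By Definition~\ref{def-tipos-Js}, such a $J$ is either WnN or SnN, and I will treat these two cases separately.

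\emph{WnN case.} By \cite[Theorem~3.1]{LU-racsam}, $J$ is given by \eqref{structure-equs-WnN}, and Proposition~\ref{WnN-no-sG} excludes $\varepsilon=1$. We are then reduced to \eqref{structure-equs-WnN-bis}, with the tuples $(\nu,a,B)$ listed in Proposition~\ref{probado-racsam}. Proposition~\ref{clasif-WnN-sG} shows the following:
\begin{itemize}
\item sG metrics exist exactly for $(0,0,0)$, $(0,1,B)$, $(1,0,B)$ and $(1,1,B)$;
\item the only case $(0,0,1)$ on $\mathfrak f_3$ is excluded;
\item balanced metrics exist in all these cases except $(0,1,1)$ on $\mathfrak f_4^0$.
\end{itemize}
Reading off the Lie algebras from Proposition~\ref{probado-racsam} gives $\mathfrak f_1,\mathfrak f_4^0,\mathfrak f_5^\gamma,\mathfrak f_7$.

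\emph{SnN case.} By Proposition~\ref{clasifJ-SnN}, $J$ belongs to Family~I or Family~II. Proposition~\ref{SnN-no-sG} reduces the search to Family~I with $\varepsilon=\nu=0$ and to Family~II with $\nu=0$. Proposition~\ref{clasif-SnN-sG} then shows that every such $J$ is sG, and that balanced metrics exist except for $(1,0,1,b)$, which lies on $\frg_{10}^0$ when $b=0$ and on $\frg_{10}^1$ when $b\neq0$. Proposition~\ref{prop-snn} identifies the underlying algebras as $\frg_1^\gamma,\frg_7,\frg_9^\gamma,\frg_{10}^\gamma,\frg_{11}^{\alpha,\beta}$.

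Combining the two cases gives both the list of Lie algebras and the two bullet statements. The first bullet should be read as \emph{every non-nilpotent $J$ on the listed algebras admits sG metrics}. To justify this, I need that no excluded complex structure lives on a listed algebra. This is the step I expect to be the main obstacle. For example, $(0,0,1)$ lives on $\mathfrak f_3$, which does not appear in the list. Likewise, the Family~I structures with $(\varepsilon,\nu)\neq(0,0)$ and the WnN structures with $\varepsilon=1$ must not sit on any of the listed algebras. The plan is to check this through the Lie algebra identifications in \cite{LUV2,LU-racsam}, which determine the underlying $\frg$ for each parameter value. Once that is done, the proof is pure bookkeeping on the earlier propositions.
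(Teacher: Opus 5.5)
Your proposal is correct and is essentially the paper's own proof. Symmetrization reduces the question to invariant metrics on $(\frg,J)$, and the theorem is then read off from Propositions~\ref{clasif-WnN-sG} and~\ref{clasif-SnN-sG}, with the underlying Lie algebras taken from Propositions~\ref{probado-racsam} and~\ref{prop-snn}. The bookkeeping point you isolate for the first bullet is what the paper places in the ``in particular, every $J$ on \ldots\ admits sG metrics'' clauses of those two propositions, relying on the classifications in \cite{LU-racsam,LUV2}. On the WnN side you can also see it directly: the structures with $\varepsilon=1$ live on central extensions of $\frh_{26}^+$, which are at least $4$-step, whereas $\mathfrak f_1,\mathfrak f_4^0,\mathfrak f_5^\gamma,\mathfrak f_7$ are $3$-step.
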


\subsection{Nilmanifolds with invariant AK and balanced metrics in complex dimension 4}\label{subsec-dim8-AK-y-sG} 

Here we give a characterization of the 4-dimensional complex nilmanifolds that admit an invariant AK metric (possibly also being sG) and a balanced metric.

\begin{proposition}\label{classif-aK-dim8-invariante}
Let $M$ be an $8$-dimensional nilmanifold endowed with an invariant complex structure $J$. If $X=(M,J)$ admits an \textbf{invariant} astheno-K\"ahler metric $F'$, 
then 
there exists an invariant $(1,0)$-coframe $\{\omega^k\}_{k=1}^4$ on $X$ such that the metric writes in canonical form  
\begin{equation}\label{metrica-canonica}
F'=\frac{i}{2}\,(\omega^{1\bar{1}}+\omega^{2\bar{2}}+\omega^{3\bar{3}}+\omega^{4\bar{4}})
\end{equation}
and the complex structure equations follow~\eqref{caso-i}. 
\end{proposition}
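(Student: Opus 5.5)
By Theorem~\ref{general-structure-of-aK-dim8}, the existence of the AK metric $F'$ already gives a $(1,0)$-coframe $\{\eta^k\}_{k=1}^4$ satisfying~\eqref{caso-i}. That is, $\eta^1,\eta^2,\eta^3$ are closed and $d\eta^4$ lies in $\bigwedge^{(2,0)+(1,1)}\langle\eta^1,\eta^2,\eta^3,\bar\eta^1,\bar\eta^2,\bar\eta^3\rangle$. In particular $J$ is quasi-nilpotent, and $\frb=\langle x_4,y_4\rangle$, spanned by the real and imaginary parts of the dual vector $z_4$, is a $2$-dimensional $J$-invariant central ideal. The quotient $(\frg_\frb,J_\frb)$ is the abelian Lie algebra $\C^3$ with coframe $\tilde\eta^1,\tilde\eta^2,\tilde\eta^3$ and all differentials zero. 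The plan is to change coframe so that $F'$ becomes diagonal while \eqref{caso-i} is preserved.

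First apply Lemma~\ref{reduc-met} to this central extension, using the basis $\{\tilde\eta^k\}$ of the quotient. The lemma yields a new coframe $\{\omega^1,\omega^2,\omega^3,\omega^4\}$ with $\omega^k=\eta^k$ for $k\le 3$ and $\omega^4=\sqrt{2x_{4\bar4}}\big(\eta^4-i\sum_{k\le3}\frac{x_{k\bar4}}{x_{4\bar4}}\eta^k\big)$, as in \eqref{cambio-base}. In this coframe $F'=\pi^*\tilde F+\frac i2\omega^{4\bar4}$, where $\tilde F$ is a Hermitian metric on $\C^3$. Because $\omega^4$ differs from a nonzero multiple of $\eta^4$ only by a combination of the closed forms $\eta^1,\eta^2,\eta^3$, we have $d\omega^4=\sqrt{2x_{4\bar4}}\,d\eta^4$. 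This keeps the form of \eqref{caso-i}, with the coefficients $A_{jk},B_{jk}$ rescaled by $\sqrt{2x_{4\bar 4}}$.

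Next, diagonalize $\tilde F$ on the abelian quotient. Since $\tilde F$ is a positive definite Hermitian form on $\langle\omega^1,\omega^2,\omega^3\rangle$, Gram--Schmidt gives a $GL(3,\C)$ change $\omega'^j=\sum_{k\le3}P_{jk}\omega^k$ with $\tilde F=\frac i2\sum_{j\le3}\omega'^{j\bar j}$. Every $\omega'^j$ is still closed, since its components are closed. Now rewrite $d\omega^4$ in the new basis: a bilinear expression in $\omega^k,\bar\omega^k$ ($k\le 3$) is again of type $(2,0)+(1,1)$ in $\omega'^k,\bar\omega'^k$, so \eqref{caso-i} holds with new complex coefficients $A'_{jk},B'_{jk}$. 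Setting $\omega'^4=\omega^4$ then produces the canonical form \eqref{metrica-canonica}.

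The AK property of $F'$ is never actually used beyond invoking Theorem~\ref{general-structure-of-aK-dim8}, which needs only that some AK metric exists. So the main thing to verify carefully is the compatibility of the two changes of basis with \eqref{caso-i}. Specifically, one must check that the unitary-izing change in the first three forms never involves $\omega^4$, which holds because Lemma~\ref{reduc-met} has already split off $D$. One must also check that the correction to $\omega^4$ involves only closed forms, so that no terms in $\omega^4$ or $\bar\omega^4$ appear in $d\omega^4$. Both follow from the explicit formulas, and I expect no further obstacle.
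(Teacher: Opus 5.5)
Your proof is correct and follows the paper's argument. Theorem~\ref{general-structure-of-aK-dim8} gives a coframe satisfying \eqref{caso-i}, and Lemma~\ref{reduc-met}, applied to the central extension of the abelian $\C^3$ (this is where the invariance of $F'$ enters), splits off $\frac{i}{2}\omega^{4\bar 4}$ while only rescaling $d\omega^4$; your explicit $GL(3,\C)$ normalization of $\tilde F$ on the abelian quotient spells out the last step, which the paper leaves implicit when it says the metric can be put in canonical form.
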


\begin{proof}
By Theorem~\ref{general-structure-of-aK-dim8}, the existence of
an AK metric on $X$ allows us to consider equations of the form~\eqref{caso-i} in terms of some basis of invariant $(1,0)$-forms. 
In particular, we observe that the underlying Lie algebra $\frg$ with complex structure $J$ of 
the complex nilmanifold $X$
is an extension of a complex $3$-dimensional torus. 
Since the AK metric $F'$ is invariant, one can apply Lemma~\ref{reduc-met} to write the metric in canonical form in terms of another invariant $(1,0)$-basis. 
Since the process described in the proof of Lemma~\ref{reduc-met} does not change the form of the complex structure equations, the new equations still follow~\eqref{caso-i}. 
\end{proof}

The following theorem gives a characterization of the 4-dimensional complex nilmanifolds admitting invariant AK metrics. Recall the definition of ${\mathcal T}$ given in Definition~\ref{condition-no-AK-NO}.

\begin{theorem}\label{classif-aK-dim8-invariante-theorem}
An $8$-dimensional nilmanifold endowed with an invariant complex structure $X=(M,J)$ admits
an \textbf{invariant} astheno-K\"ahler metric \textbf{if and only if} there exists an invariant $(1,0)$-coframe
$\{\omega^k\}_{k=1}^4$ 
on $X$ satisfying~\eqref{caso-i} 
with ${\mathcal T} =0$.
\end{theorem}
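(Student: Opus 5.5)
The proof splits into the two implications. Both rest on Proposition~\ref{classif-aK-dim8-invariante} and Proposition~\ref{taus-of-aK-dim-m} with $m=3$.

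For the sufficiency, suppose there is an invariant $(1,0)$-coframe $\{\omega^k\}_{k=1}^4$ satisfying~\eqref{caso-i} with ${\mathcal T}=0$. Take the invariant metric $F=\frac{i}{2}(\omega^{1\bar1}+\omega^{2\bar2}+\omega^{3\bar3}+\omega^{4\bar4})$, which is~\eqref{F-n-dim-astheno-K} with $x_{11}=x_{22}=x_{33}=1$. By Proposition~\ref{taus-of-aK-dim-m}, $F$ is AK if and only if ${\mathcal T}=\tau_{12}+\tau_{13}+\tau_{23}=0$. This holds by hypothesis, so $F$ is an invariant AK metric.

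For the necessity, let $F'$ be an invariant AK metric on $X$. Proposition~\ref{classif-aK-dim8-invariante} gives an invariant coframe $\{\omega^k\}_{k=1}^4$ in which the structure equations have the form~\eqref{caso-i} and $F'$ takes the canonical form~\eqref{metrica-canonica}.

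\begin{itemize}
\item The key point to check is that the reduction in Lemma~\ref{reduc-met} keeps all four diagonal coefficients equal to $1$. In the proof of that lemma, the last step splits off $D=\frac{i}{2}\omega^{4\bar4}$. What remains is a Hermitian metric on the abelian quotient $\C^3$.
\item Because $d\omega^1=d\omega^2=d\omega^3=0$, any $GL(3,\C)$ change of the first three forms keeps them closed. So a unitary-type change (Gram--Schmidt) diagonalizes the quotient metric to the identity.
\item Such a change only re-expresses $d\omega^4$ in the new basis. Hence the equations remain of the form~\eqref{caso-i}, now with modified coefficients $A_{jk},B_{jk}$.
\end{itemize}

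With $F'$ in this form, Proposition~\ref{taus-of-aK-dim-m} with $x_{11}=x_{22}=x_{33}=1$ turns the AK condition into $\tau_{12}+\tau_{13}+\tau_{23}={\mathcal T}=0$ for these new coefficients. This is the claim.

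The main obstacle is the bookkeeping in the second step: confirming that the coframe from Proposition~\ref{classif-aK-dim8-invariante} simultaneously has the form~\eqref{caso-i} and makes $F'$ exactly the identity metric, rather than only diagonal with arbitrary $x_{uu}$. If only a diagonal form were available, I would rescale $\omega^u\mapsto\sqrt{x_{uu}}\,\omega^u$ for $u\le 3$. This keeps the form~\eqref{caso-i}, and by the weighted criterion of Proposition~\ref{taus-of-aK-dim-m} the AK condition becomes ${\mathcal T}=0$ in the rescaled coefficients.
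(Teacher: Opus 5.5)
Your proposal is correct and is essentially the paper's proof: both directions come from Proposition~\ref{classif-aK-dim8-invariante} together with Proposition~\ref{taus-of-aK-dim-m} for $m=3$, which says the canonical metric is AK exactly when ${\mathcal T}=0$. Your extra bookkeeping is sound. Re-choosing the closed forms $\omega^1,\omega^2,\omega^3$ so that the quotient metric on the abelian quotient is the identity, or rescaling them, preserves the form~\eqref{caso-i}, and this is exactly the detail left implicit in the paper's appeal to Lemma~\ref{reduc-met}.
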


\begin{proof}
The result comes from Proposition~\ref{classif-aK-dim8-invariante}
taking into account that, 
by Proposition~\ref{taus-of-aK-dim-m} for $m=3$,  the canonical metric \eqref{metrica-canonica} is AK if and only if ${\mathcal T} =0$.
\end{proof}

The following theorem provides the structure of the 4-dimensional complex nilmanifolds admitting invariant metrics that are both AK and (non-balanced) sG. 

\begin{theorem}\label{classif-aK-dim8-invariante-sG}
Let $M$ be an $8$-dimensional nilmanifold endowed with an invariant complex structure $J$. Then, $X=(M,J)$ admits an \textbf{invariant} metric that is both astheno-K\"ahler and (non-balanced) sG \textbf{if and only if} there exists an invariant $(1,0)$-coframe $\{\omega^k\}_{k=1}^4$ on $X$ satisfying~\eqref{caso-i} 
with ${\mathcal T} =0$ and $(A_{12},A_{13},A_{23})\neq (0,0,0)$.
In particular, the complex structure $J$ is nilpotent but not abelian.
\end{theorem}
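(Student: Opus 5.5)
By Proposition~\ref{classif-aK-dim8-invariante}, an invariant AK metric $F'$ lets us choose an invariant $(1,0)$-coframe satisfying~\eqref{caso-i} in which $F'$ is the canonical metric~\eqref{metrica-canonica}. By Theorem~\ref{classif-aK-dim8-invariante-theorem}, $F'$ being AK is then the condition ${\mathcal T}=0$. So the task reduces to finding when the canonical metric on a structure~\eqref{caso-i} is sG but not balanced.

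The plan is to compute $\partial F'^3$ directly. Writing $F'=\frac i2\sum_k\omega^{k\bar k}$, the form $F'^3$ is a sum of the four products $\omega^{j\bar j k\bar k l\bar l}$. Since $d\omega^1=d\omega^2=d\omega^3=0$, only the terms containing $\omega^{4\bar4}$ contribute. These give $\partial F'^3$ proportional to a combination of $\omega^{j\bar j k\bar k}\wedge(\partial\omega^4\wedge\omega^{\bar 4}-\omega^4\wedge\partial\omega^{\bar4})$ with $j<k\le 3$.

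For $\{j,k,l\}=\{1,2,3\}$, the piece $\omega^{j\bar jk\bar k}\wedge\partial\omega^4$ picks out $A_{jl}$-type coefficients. Multiplying by $\omega^{\bar4}$ yields a $(4,3)$-form $\omega^{1234}\wedge\omega^{\bar j\bar k\bar 4}$ with coefficient a nonzero constant times $A_{\cdot\cdot}$. The piece $\omega^{j\bar jk\bar k}\wedge\partial\omega^{\bar4}$ instead picks out $\overline{B}_{ll}$, giving terms with $\omega^{4}\wedge\omega^{\bar1\bar2\bar3}\wedge\omega^{123}$.

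Hence $\partial F'^3=c\,\big(\sum_l \overline{B}_{ll}\big)\,\omega^{1234\bar1\bar2\bar3}+\sum \pm c'A_{jk}\,\omega^{1234\bar j\bar k\bar4}$. In particular, $F'$ is balanced iff all $A_{jk}=0$ and $\sum_l B_{ll}=0$. This also uses the fact that $\db F'^3$ is the conjugate of $\partial F'^3$.

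Next, characterise when $\partial F'^3$ is $\db$-exact in the invariant complex, which suffices by the symmetrization results of~\cite{COUV}. The $(4,2)$-forms are spanned by $\omega^{1234}\wedge\omega^{\bar a\bar b}$, and $\db$ acts only through $\db\omega^4=\sum B_{jk}\omega^{j\bar k}$ and $\db\omega^{\bar4}=\sum A_{jk}\omega^{\bar j\bar k}$ conjugated appropriately. The relevant computation is:
\begin{itemize}
\item the forms $\omega^{1234\bar a\bar4}$ are mapped by $\db$ into combinations of $\omega^{1234\bar a\bar j\bar k}$ with coefficients $\overline{A}_{jk}$, which gives $\omega^{1234\bar1\bar2\bar3}$ whenever some $A_{jk}\neq0$;
\item the terms $\omega^{1234\bar j\bar k\bar4}$ are obtained as $\db(\omega^{1234\bar j\bar k})$-type images only if they lie in the image.
\end{itemize}

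The main obstacle is the exact bookkeeping. I expect the $A$-terms in $\partial F'^3$ to be automatically exact via suitable $\omega^{1234\bar j\bar k}$ (mirroring the computations in Propositions~\ref{clasif-WnN-sG} and~\ref{clasif-SnN-sG}). I also expect the $\omega^{1234\bar1\bar2\bar3}$ term to be exact precisely when either $\sum B_{ll}=0$ or some $A_{jk}\ne 0$. This should rely on the Jacobi-free structure: $d\omega^4$ is automatically closed since $\omega^{1},\omega^2,\omega^3$ are closed.

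The converse direction goes as follows. If an invariant AK sG metric exists but $A=0$, then $J$ is abelian. For abelian $J$ the image of $\db$ on $(4,2)$-forms cannot reach $\omega^{1234\bar1\bar2\bar3}$, because $\db\omega^{\bar4}=0$. So sG forces $\sum B_{ll}=0$, hence balanced, which is excluded. Therefore $(A_{12},A_{13},A_{23})\neq0$, and so $J$ is non-abelian; it is nilpotent since~\eqref{caso-i} is of nilpotent form.

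Conversely, given ${\mathcal T}=0$ and $A\ne0$, the canonical metric is AK by Proposition~\ref{taus-of-aK-dim-m}. It is sG by the exactness above and non-balanced since $A\neq0$.
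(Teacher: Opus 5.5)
Your reduction to the canonical metric and your argument for the direction ``invariant AK and non-balanced sG $\Rightarrow$ $(A_{12},A_{13},A_{23})\neq(0,0,0)$'' are fine. However, your formula for $\partial F'^3$ is wrong, and the converse direction depends on it.

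The terms $\omega^{j\bar jk\bar k}\wedge\partial\omega^4\wedge\omega^{\bar4}$ with $j<k\le 3$ vanish identically. Indeed, $\partial\omega^4=\sum A_{ab}\,\omega^{ab}$ involves only $\omega^1,\omega^2,\omega^3$, so $\omega^{jk}\wedge\partial\omega^4=0$. Moreover no $\omega^4$ is present, so nothing like $\omega^{1234}\wedge\omega^{\bar j\bar k\bar4}$ can arise. Only the $\bar B_{ll}$-terms survive, giving $\partial F'^3=-\frac{3i}{4}(\bar B_{11}+\bar B_{22}+\bar B_{33})\,\omega^{1234\bar1\bar2\bar3}$. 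Hence $F'$ is balanced iff $B_{11}+B_{22}+B_{33}=0$, with no condition on the $A_{jk}$.

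Your expectation that the $A$-terms are ``automatically exact'' could never hold anyway. One has $\db\omega^{1234\bar j\bar k}=0$ for $j<k\le3$, and $\db\omega^{1234\bar a\bar4}=\pm\bar A_{bc}\,\omega^{1234\bar1\bar2\bar3}$ for $\{a,b,c\}=\{1,2,3\}$. So $\db(\Lambda^{4,2})\subseteq\langle\omega^{1234\bar1\bar2\bar3}\rangle$. Had terms $A_{jk}\,\omega^{1234\bar j\bar k\bar4}$ really appeared in $\partial F'^3$, the canonical metric would fail to be sG whenever $A\neq0$, contradicting the statement you are proving.

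Consequently, ``non-balanced since $A\neq0$'' in your converse is unjustified. The missing step is to use the AK condition itself. By \eqref{aK-invariant}, ${\mathcal T}=\sum_{j<k}|A_{jk}|^2+\sum_{j,k}|B_{jk}|^2-|B_{11}+B_{22}+B_{33}|^2$. So ${\mathcal T}=0$ together with $(A_{12},A_{13},A_{23})\neq(0,0,0)$ forces $B_{11}+B_{22}+B_{33}\neq0$, i.e.\ $F'$ is not balanced. Equivalently, a canonical metric that is both AK and balanced forces all structure constants to vanish.

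With the corrected formula, the sG property is then exactly your first bullet. Since $\partial F'^3$ is a multiple of $\omega^{1234\bar1\bar2\bar3}$, it lies in $\db(\Lambda^{4,2})$ as soon as some $A_{jk}\neq0$. This is the paper's argument.
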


\begin{proof}
Suppose first that $X$ admits an invariant AK metric $F'$. 
By Theorem~\ref{classif-aK-dim8-invariante-theorem} this is equivalent to the existence of an invariant $(1,0)$-coframe $\{\omega^k\}_{k=1}^4$ satisfying~\eqref{caso-i} with ${\mathcal T} =0$. More precisely, 
by Proposition~\ref{classif-aK-dim8-invariante} the metric $F'$ can be written in the canonical form~\eqref{metrica-canonica} in such a coframe. So, we need to study the sG condition for such metric, which is equivalent to the existence of a $(4,2)$-form $\gamma$ satisfying $\partial F'^{\,3}=\db \gamma$.

Now, a direct calculation using~\eqref{caso-i} gives 
$$
\begin{array}{rcl}
\partial F'^{\,3} &\!\!=\!\!& (\frac{i}{2})^3 \,\partial(\omega^{1\bar{1}}+\omega^{2\bar{2}}+\omega^{3\bar{3}}+\omega^{4\bar{4}})^3 \\[3pt]
&\!\!=\!\!&
 3!\, (\frac{i}{2})^3 (\omega^{1\bar{1}2\bar{2}}+\omega^{1\bar{1}3\bar{3}}+\omega^{2\bar{2}3\bar{3}})\wedge \partial\omega^{4\bar{4}}\\[3pt]
&\!\!=\!\!& \frac{- 3\, i}{4} (\bar{B}_{11}+\bar{B}_{22}+\bar{B}_{33})\, \omega^{1234\bar{1}\bar{2}\bar{3}}.
\end{array}
$$
Notice that $\partial F'^{\,3}=0$, i.e. $F'$ is balanced, if and only if 
$B_{11}+B_{22}+B_{33}=0$. Due to the condition $\mathcal T=0$, one then has that
$F'$ is both balanced and AK if and only if all the structure constants in \eqref{caso-i} vanish, namely, 
$X$ is a torus. Otherwise, $\partial F'^{\,3}\neq 0$
and from~\eqref{caso-i} one computes $\db \omega^{1234{\bar r}{\bar s}} =0$ for $1\leq r < s\leq 3$, and 
$$
\db \omega^{1234{\bar 1}{\bar 4}} = -\bar{A}_{23}\, \omega^{1234{\bar 1}{\bar 2}{\bar 3}},\quad
\db \omega^{1234{\bar 2}{\bar 4}} = \bar{A}_{13}\, \omega^{1234{\bar 1}{\bar 2}{\bar 3}},\quad
\db \omega^{1234{\bar 3}{\bar 4}} = -\bar{A}_{12}\, \omega^{1234{\bar 1}{\bar 2}{\bar 3}}.
$$
Therefore, 
$\db \big(\bigwedge ^{4,2} \frg^*_{\C} \big) = \langle \omega^{1234{\bar 1}{\bar 2}{\bar 3}} \rangle$
if and only if $(A_{12},A_{13},A_{23})\neq (0,0,0)$. 
Consequently, the AK metric $F'$ is also (non-balanced) sG if and only if $(A_{12},A_{13},A_{23})\neq (0,0,0)$.

The converse is straightforward. 
\end{proof}

In the following theorem we describe the structure of the 4-dimensional complex nilmanifolds admitting at the same time an invariant AK metric and a balanced metric.

\begin{theorem}\label{classif-aK-dim8-invariante-balanced}
Let $M$ be an $8$-dimensional nilmanifold endowed with an invariant complex structure $J$. 
If  $X=(M,J)$ admits an \textbf{invariant} astheno-K\"ahler metric $F'$ and a balanced metric~$F''$, then 
there exists an invariant $(1,0)$-coframe $\{\omega^k\}_{k=1}^4$ on $X$ satisfying~\eqref{caso-i} 
with ${\mathcal T} =0$ 
such that the canonical metric 
$
F_0=\frac{i}{2}\,(\omega^{1\bar{1}}+\omega^{2\bar{2}}+\omega^{3\bar{3}}+\omega^{4\bar{4}})
$
is AK and the symmetrization $\tilde{F''}$ of the balanced metric $F''$ writes as 
$
 \tilde{F''}= F + \frac{i}{2}\,\omega^{4\bar 4},
$
 where
 $$
F = i\,(x_{11}\,\omega^{1\bar 1}+x_{22}\,\omega^{2\bar 2}+x_{33}\,\omega^{3\bar 3})  
+x_{1 2}\,\omega^{1\bar 2}-\bar x_{1 2}\,\omega^{2\bar 1}
+x_{1 3}\,\omega^{1\bar 3}-\bar x_{1 3}\,\omega^{3\bar 1}
+x_{2 3}\,\omega^{2\bar 3}-\bar x_{2 3}\,\omega^{3\bar 2},
$$ 
with
\begin{equation}\label{balanced-condition}
\begin{array}{rcl}
 && B_{11}(x_{22}x_{33}-|x_{23}|^2) +
B_{22}(x_{11}x_{33}-|x_{13}|^2) +
B_{33}(x_{11}x_{22}-|x_{12}|^2) \\[3pt]
&&
+B_{12}(-i\,{\bar x}_{12}x_{33} + {\bar x}_{13}x_{23}) +
B_{21}(i\,x_{12}x_{33} + x_{13} {\bar x}_{23}) \\[3pt]
&&
+B_{13}(-i\,{\bar x}_{13}x_{22} - {\bar x}_{12}{\bar x}_{23}) +
B_{31}(i\,x_{13}x_{22} - x_{12}x_{23}) \\[3pt]
&&
+B_{23}(-i\,x_{11}{\bar x}_{23}+x_{12}{\bar x}_{13}) +
B_{32}(i\, x_{11}x_{23}+{\bar x}_{12}x_{13}) = 0.
\end{array} 
\end{equation}

The converse also holds in the following sense: if there exists an invariant $(1,0)$-coframe $\{\omega^k\}_{k=1}^4$ satisfying~\eqref{caso-i} with ${\mathcal T} =0$, then the canonical metric $F'$ is AK and the metric $F''= F + \frac{i}{2}\,\omega^{4\bar 4}$, with $F$ as above and satisfying \eqref{balanced-condition}, is balanced.
\end{theorem}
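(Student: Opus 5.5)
The plan is to combine Proposition~\ref{classif-aK-dim8-invariante} with the reduction of the balanced condition in Corollary~\ref{reducc-cond-met-cor2}. First, since $F'$ is an invariant AK metric, Proposition~\ref{classif-aK-dim8-invariante} yields an invariant $(1,0)$-coframe $\{\omega^k\}_{k=1}^4$ satisfying \eqref{caso-i} in which $F'$ is the canonical metric $F_0$. By Theorem~\ref{classif-aK-dim8-invariante-theorem}, or directly by Proposition~\ref{taus-of-aK-dim-m} with $m=3$ and $x_{11}=x_{22}=x_{33}=1$, we then have ${\mathcal T}=0$.

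Second, we symmetrize the balanced metric $F''$ to get an invariant balanced metric $\tilde{F''}$ on $(\frg,J)$, as in \cite{FG}. The equations \eqref{caso-i} show that $J$ is quasi-nilpotent and that $\frb=\langle x_4,y_4\rangle$ is a $J$-invariant central ideal, with quotient $(\frg_\frb,J_\frb)$ the complex $3$-torus. We apply Lemma~\ref{reduc-met} to $\tilde{F''}$ with this $\frb$, keeping $\omega^1,\omega^2,\omega^3$ fixed and changing only $\omega^4$ by \eqref{cambio-base}. The issue is that this changes the coframe in which $F_0$ was canonical. Still, the new $\omega'^{4}=c\,\omega^4+\sum_{k\le3} e_k\omega^k$ satisfies $d\omega'^4=c\,d\omega^4$ because $d\omega^k=0$ for $k\le 3$. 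So the structure equations keep the form \eqref{caso-i}, with all $A$'s and $B$'s multiplied by $c$, and ${\mathcal T}$ becomes $|c|^2{\mathcal T}=0$. Hence the canonical metric in the new coframe is still AK by Proposition~\ref{taus-of-aK-dim-m}. This step, making a single coframe work simultaneously for the AK canonical form and the diagonal splitting of $\tilde{F''}$, is the main subtlety, and the homogeneity of ${\mathcal T}$ in $(A,B)$ resolves it. In the new coframe, $\tilde{F''}=\pi^*F+\frac i2\omega^{4\bar4}$ with $F$ a Hermitian metric on the torus, written as in the statement.

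Third, we characterize balancedness. By Corollary~\ref{reducc-cond-met-cor2}(i) with $n=4$, $\tilde{F''}$ is balanced if and only if $F$ is balanced on the torus, which is automatic since all $d\omega^k=0$ there, and $F^2\wedge d\omega^4=0$. The $(2,0)$ part $\sum A_{jk}\omega^{jk}$ wedged with the $(2,2)$-form $F^2$ gives a $(4,2)$-form on a $3$-dimensional complex space, which vanishes. The condition therefore reduces to $F^2\wedge\sum B_{jk}\omega^{j\bar k}=0$, a multiple of $\omega^{123\bar1\bar2\bar3}$. Computing $F^2$ gives coefficients of $\omega^{j\bar k}$ equal, up to sign and a factor of $i$, to the cofactors of the Hermitian matrix of $F$. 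Collecting the $B_{jk}$ terms yields exactly \eqref{balanced-condition}. The only care needed is with signs in reordering wedge products such as $\omega^{1\bar2}\wedge\omega^{2\bar1}\wedge\omega^{3\bar3}$.

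For the converse, given a coframe satisfying \eqref{caso-i} with ${\mathcal T}=0$, Proposition~\ref{taus-of-aK-dim-m} shows that the canonical metric is AK. For any positive $F$ satisfying \eqref{balanced-condition}, the same computation read backwards, together with Corollary~\ref{reducc-cond-met-cor2}(i), shows that $F+\frac i2\omega^{4\bar4}$ is balanced.
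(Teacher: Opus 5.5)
Your proposal is correct and follows the paper's proof essentially step by step. You put $F'$ in canonical form via Proposition~\ref{classif-aK-dim8-invariante}, then symmetrize $F''$ and change coframe via \eqref{cambio-base}. This rescales $d\omega^4$ by $\sqrt{2x''_{44}}$, so ${\mathcal T}=0$ persists and the new canonical metric is AK by Proposition~\ref{taus-of-aK-dim-m}. Finally you use the $b=1$ case of Proposition~\ref{reducc-cond-met-prop}(i) to reduce balancedness to $F^2\wedge d\omega^4=0$, i.e.\ to \eqref{balanced-condition}. Your cofactor description of $F^2\wedge\omega^{j\bar k}$ reproduces \eqref{balanced-condition} up to an overall sign, and your explicit homogeneity argument for ${\mathcal T}$ spells out what the paper only asserts.
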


\begin{proof}
By Proposition~\ref{classif-aK-dim8-invariante} and Theorem~\ref{classif-aK-dim8-invariante-theorem}, the existence of an invariant AK metric $F'$ on $X$ implies the existence of an invariant $(1,0)$-coframe $\{\eta^k\}_{k=1}^4$ on $X$ satisfying 
$$d\eta^1 =  d\eta^2  =  d\eta^3  =  0,\quad	\quad 
d\eta^4 = \sum_{1\leq j < k\leq 3} A_{jk}\,\eta^{jk} + \sum_{1\leq j, k\leq 3} B_{jk}\,\eta^{j\bar{k}},$$
with ${\mathcal T} =0$ in which $F'$ is written in canonical form, namely, 
$F'=\frac{i}{2}\,(\eta^{1\bar{1}}+\eta^{2\bar{2}}+\eta^{3\bar{3}}+\eta^{4\bar{4}})$. 

Let $F''$ be a balanced metric on $X$.
After symmetrization, $\tilde{F''}$ is an invariant balanced metric, so it expresses in the $(1,0)$-coframe 
$\{\eta^k\}_{k=1}^4$ as
$$
\tilde{F''} = i \sum_{k=1}^4 x''_{kk}\,\eta^{k\bar k} +\sum_{1\leq k<l\leq 4}\big( x''_{k l}\,\eta^{k\bar l}-\bar x''_{k l}\,\eta^{l\bar k} \big),
$$
where all the $x''_{jk}$'s are constant. 
If we now define a new $(1,0)$-coframe $\{\omega^k\}_{k=1}^4$ following~\eqref{cambio-base}, 
the balanced metric becomes
$\tilde{F''}=F+ \frac{i}{2}\,\omega^{4\bar 4}$, with $F$ as given in the statement. Moreover, the complex structure equations still have the same form as above, now in terms of $\omega$'s and with complex coefficients in 
$d\omega^4$ given by $\sqrt{2\,x''_{44}} A_{jk}$ and $\sqrt{2\,x''_{44}} B_{jk}$ instead of $A_{jk}$ and $B_{jk}$, respectively.
Even though the metric $F'$ might no longer be canonical in the new 
coframe $\{\omega^k\}_{k=1}^4$, we note that the condition ${\mathcal T} =0$ is still satisfied. 
By Proposition~\ref{taus-of-aK-dim-m} for $m=3$, this is equivalent to the canonical metric 
$F_0$ in this new coframe being astheno-K\"ahler. 

It remains to check that a metric of the form $\tilde{F''}= F + \frac{i}{2}\,\omega^{4\bar 4}$ is balanced if and only if \eqref{balanced-condition} holds. 
By equations~\eqref{caso-i} and Proposition~\ref{reducc-cond-met-prop}~(i) for $n=4$, the balanced condition $d \tilde{F''}^{\,3}=0$ is equivalent to 
$F^{2}\wedge d\omega^{4}=0$. This is exactly \eqref{balanced-condition}.

\smallskip

Finally, we note that the converse is straightforward. 
\end{proof}

In the example below we construct many solutions satisfying the conditions in Theorem~\ref{classif-aK-dim8-invariante-balanced}, i.e. compact complex manifolds $X$ with $\dim_{\C}X=4$ admitting an AK metric (being also sG) and another balanced metric. The underlying complex structures can be of abelian or nilpotent (non-abelian) type.

\begin{example}\label{ex1}
We start with complex equations of the form \eqref{caso-i}, and we consider a diagonal metric 
$F''=i\,x_{11}\,\omega^{1\bar 1}+i\,x_{22}\,\omega^{2\bar 2}+i\,x_{33}\,\omega^{3\bar 3}+ \frac{i}{2}\,\omega^{4\bar 4}$, 
with $x_{11},x_{22},x_{33}\in \R^+$. By \eqref{balanced-condition}, the balanced condition for $F''$ becomes 
$x_{22} x_{33} B_{11} + x_{11} x_{33} B_{22} + x_{11} x_{22} B_{33}=0$, 
so we have
$$
B_{33}= - x\, B_{11} - y\, B_{22},
$$
where we are denoting $x=\frac{x_{33}}{x_{11}}$ and $y=\frac{x_{33}}{x_{22}}$ for simplicity. 
Now, 
the Hermitian metric $2F'=i (\omega^{1\bar 1}+\omega^{2\bar 2}+\omega^{3\bar 3}+\omega^{4\bar 4})$ is AK if and only if ${\mathcal T} =0$, which is equivalent to
$$
-2x\, |B_{11}|^2 - 2y\, |B_{22}|^2 + 2 (1-x-y) \Real\!(B_{11} \overline{B_{22}}) 
= \sum_{1\leq j < k\leq 3} |A_{jk}|^2 + \sum_{1\leq j\ne k\leq 3} |B_{jk}|^2. 
$$
Hence, one can construct many $4$-dimensional complex nilmanifolds on which AK and balanced metrics coexist (see Remark~\ref{sec6-exist} for concrete examples). 
Note that by Theorem~\ref{classif-aK-dim8-invariante-sG} 
the AK metric $F'$ is also sG when $\sum_{1\leq j < k\leq 3} |A_{jk}|^2 \ne 0$.
\end{example}

\subsection{AK nilmanifolds with balanced metrics in every dimension}\label{subsec-dim-general-AK-y-sG} 

In this section we extend the previous results to the construction, in every complex dimension $n\geq 4$, of complex nilmanifolds admitting both an AK metric (possibly sG) and another metric that is balanced.  

\begin{proposition}\label{existenciaAK-balanced-n-dim}
For each $m\geq 3$, let $X$ be a complex nilmanifold defined by 
\begin{equation}\label{ecus-n-dim}
\left\{\begin{array}{cl}
\!\!&\!\!\!\! d\omega^1 = \cdots=d\omega^{m}=0,\\[4pt]
\!\!&\!\!\!\! d\omega^{m+1} = \sum_{1\leq j<k\leq m} A_{jk}\, \omega^{jk} 
+ \sum_{k=1}^{m-1} \omega^{k\bar{k}} + b\, \omega^{m\,\overline{m}},
\end{array}\right.
\end{equation}
where $b\in [1-\frac{m}{2},0)$ and the coefficients $A_{jk}\in\C$ satisfy the condition
\begin{equation}\label{condicion-n-dim}
\sum_{1\leq j<k\leq m} |A_{jk}|^2 = (m-1)(2 b+m-2).
\end{equation}
Then, $X$ is a compact complex manifold with $\dim_{\C} X=m+1$ 
admitting an AK metric $F'$ (which is in addition sG iff $b\neq 1-\frac{m}{2}$) and a balanced metric $F''$. 

More concretely, the following metrics satisfy the aforementioned conditions:
\begin{equation}\label{F'-n-dim}
F'= \frac{i}{2}\, (\omega^{1\bar{1}} +\cdots +\omega^{m\,\overline{m}} + \omega^{m+1\,\overline{m+1}}),
\end{equation}
and
\begin{equation}\label{F''-n-dim}
F''= i\, x_{11}\, \omega^{1\bar{1}} + \cdots + i\, x_{mm}\, \omega^{m\,\overline{m}} + \frac{i}{2}\,\omega^{m+1\,\overline{m+1}}, 
\end{equation}
with $x_{11}, \ldots, x_{mm} >0$ such that $\frac{x_{mm}}{x_{11}}+ \cdots+\frac{x_{mm}}{x_{m-1\,m-1}} =-b$.

Moreover, the first Betti number $b_1(X)$ of $X$ satisfies $2m\leq b_1(X) \leq 2m+1$, and the equality $b_1(X) = 2m+1$ holds if and only if the complex structure is abelian. 
\end{proposition}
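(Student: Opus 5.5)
The plan is to reduce each claim to results already proved for complex structures of the form \eqref{caso-i-extended}. Write \eqref{ecus-n-dim} as a special case of \eqref{caso-i-extended} with $B_{kk}=1$ for $1\leq k\leq m-1$, $B_{mm}=b$ and $B_{jk}=0$ for $j\neq k$. First we check that \eqref{ecus-n-dim} defines a nilmanifold. Since $d\omega^{m+1}$ only involves closed forms, $d^2=0$ holds automatically. The real structure constants become rational once we choose the $A_{jk}$ in $\Q[i]$ and $b\in\Q$ subject to \eqref{condicion-n-dim}, which is always possible; Malcev's criterion then gives a lattice. Alternatively, one may simply restrict to rational data in order to exhibit examples.

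For the AK claim we apply Proposition~\ref{taus-of-aK-dim-m} with all $x_{uu}=1$. By \eqref{aK-invariant}, the canonical metric $F'$ is AK if and only if
\[
{\mathcal T}=\sum_{j<k}|A_{jk}|^2+(m-1)+b^2-(m-1+b)^2=0 .
\]
Expanding, this reads $\sum|A_{jk}|^2=(m-1)(m-2)+2b(m-1)=(m-1)(2b+m-2)$, which is exactly \eqref{condicion-n-dim}. The hypothesis $b\geq 1-\frac m2$ is needed so that the right-hand side is nonnegative.

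For the sG claim we mimic the computation in Theorem~\ref{classif-aK-dim8-invariante-sG}. In general dimension, $\partial F'^{\,m}$ is a multiple of $\big(\sum_k \bar B_{kk}\big)\,\omega^{1\cdots m+1\,\bar1\cdots\bar m}$, so $\partial F'^{\,m}=c\,(m-1+b)\,\omega^{1\cdots m+1\,\bar1\cdots\bar m}$ with $c\neq0$. Since $b<0$ and $m\geq3$, we have $m-1+b\neq0$, so $F'$ is never balanced. Moreover,
\[
\db\,\omega^{1\cdots m+1\,\overline{J}\,\overline{m+1}}=\pm\bar A_{jk}\,\omega^{1\cdots m+1\,\bar1\cdots\bar m},
\]
where $\overline{J}$ omits $\bar j,\bar k$; the forms without $\overline{m+1}$ are $\db$-closed. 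Hence $F'$ is sG if and only if some $A_{jk}\neq0$, that is, if and only if $(m-1)(2b+m-2)\neq0$, that is, $b\neq1-\frac m2$.

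For the balanced claim we use Corollary~\ref{reducc-cond-met-cor2}(i) with the central ideal dual to $\omega^{m+1}$. The diagonal metric $F=i\sum x_{kk}\omega^{k\bar k}$ on the torus is trivially balanced, and the remaining condition $F^{m-1}\wedge d\omega^{m+1}=0$ reduces to $\sum_k B_{kk}\prod_{l\neq k}x_{ll}=0$: the $(2,0)$-part and the off-diagonal terms wedge to zero against the diagonal $F^{m-1}$. Dividing by $\prod_l x_{ll}$ and multiplying by $x_{mm}$ gives $\sum_{k<m}\frac{x_{mm}}{x_{kk}}+b=0$. This is solvable with positive $x$'s precisely because $b<0$.

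For the Betti number: $\omega^1,\dots,\omega^m$ and their conjugates are closed, giving $2m$ independent real closed $1$-forms. No combination involving $\omega^{m+1}$ is closed except possibly a real one, since $d\omega^{m+1}\neq0$: its $(1,1)$ part is nonzero. So $b_1\leq 2m+1$, with the extra class existing if and only if $d(\lambda\omega^{m+1}+\bar\lambda\omega^{\overline{m+1}})=0$ for some $\lambda\neq0$. That equation forces $\lambda A_{jk}=0$ for the $(2,0)$ part, so all $A_{jk}=0$, which is abelianity. Conversely, with all $A_{jk}=0$ the $(1,1)$ part has real coefficients, since $B_{kk}=1$ and $b\in\R$, so $\lambda=i$ works because $\omega^{k\bar k}$ is imaginary. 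We expect the main obstacle to be bookkeeping signs in the general-dimension $\partial F'^{\,m}$ and $\db$ computations for the sG part.
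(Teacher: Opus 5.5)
Your proposal is correct and follows the paper's proof closely. The sG part is the same as the paper's: you write $\partial F'^{\,m}$ as a nonzero multiple of $(m-1+b)\,\omega^{1\cdots m+1\,\bar1\cdots\bar m}$, then compute $\db\big(\Lambda^{m+1,m-1}\big)$ through the coefficients $\bar A_{jk}$. The balanced part is also the paper's argument, via Corollary~\ref{reducc-cond-met-cor2}(i).

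For the AK part the paper recomputes $\tilde F^{m-2}\wedge\big(\db\omega^{m+1}\wedge\partial\omega^{\overline{m+1}}-\partial\omega^{m+1}\wedge\db\omega^{\overline{m+1}}\big)$ using Corollary~\ref{reducc-cond-met-cor2}(ii). You instead read the condition directly off Proposition~\ref{taus-of-aK-dim-m} and \eqref{aK-invariant}. This is shorter and gives the same identity ${\mathcal T}=\sum|A_{jk}|^2-(m-1)(2b+m-2)$.

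The paper's proof does not argue the Betti number claim at all, so your dimension count adds something. That count is sound: at most one real direction in $\langle\Real\omega^{m+1},\Imag\omega^{m+1}\rangle$ can be closed, and the $(2,0)$-part forces all $A_{jk}=0$.

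There are two slips to fix.

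\begin{itemize}
\item $m-1+b\neq0$ does not follow from $b<0$ and $m\geq3$; for instance $b=1-m$ is negative and gives $m-1+b=0$. The correct reason is the hypothesis $b\geq 1-\frac m2$, which gives $m-1+b\geq\frac m2>0$.
\item In the Betti number part, $\lambda=i$ does not work. When all $A_{jk}=0$, the form $d\omega^{m+1}=\sum_k B_{kk}\,\omega^{k\bar k}$ with real $B_{kk}$ is imaginary, so
\[
d\big(\lambda\,\omega^{m+1}+\bar\lambda\,\omega^{\overline{m+1}}\big)=(\lambda-\bar\lambda)\,d\omega^{m+1},
\]
which vanishes exactly when $\lambda$ is real. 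Taking $\lambda=i$ gives $2i\sum_k B_{kk}\,\omega^{k\bar k}\neq0$. Take $\lambda=1$ instead: $\omega^{m+1}+\omega^{\overline{m+1}}$ is closed, which gives the extra class.
\end{itemize}

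With these two corrections the argument is complete.
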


\begin{proof}
Note that, due to~\eqref{ecus-n-dim}, the Lie algebra $\frg$ with complex structure $J$ associated 
to the nilmanifold $X$ is a central extension of the $m$-dimensional complex abelian Lie algebra.
First, we study the AK condition for the Hermitian metric $F'$ given by \eqref{F'-n-dim}, i.e. $\partial\db F'^{\,m-1}=0$. From Corollary~\ref{reducc-cond-met-cor2}~(ii) for $n=m+1$ 
and $\tilde F=\frac{i}{2}\, (\omega^{1\bar{1}} +\cdots +\omega^{m\,\overline{m}})$ 
together with the equations~\eqref{ecus-n-dim}, 
we observe that the AK condition  
is equivalent to the expression 
$$
\tilde{F}^{m-2}\wedge\big(\bar\partial\omega^{m+1}\wedge\partial\omega^{\overline{m+1}} -
   \partial\omega^{m+1}\wedge\bar\partial\omega^{\overline{m+1}}\big) = 0.
$$
On the one hand, we have
\begin{equation*}
\begin{split}
\bar\partial\omega^{m+1}\wedge\partial\omega^{\overline{m+1}} 
&= -\left(\sum_{k=1}^{m-1} \omega^{k\bar{k}} + b\, \omega^{m\overline{m}}\right)
	\wedge
	\left(\sum_{r=1}^{m-1} \omega^{r\bar{r}} + b\, \omega^{m\overline{m}}\right)\\
&=
-2\!\!\sum_{1\leq k<r\leq m-1} \omega^{k\bar{k}r\bar{r}} - 2b\, \sum_{k=1}^{m-1} \omega^{k\bar{k}m\overline{m}}
\end{split}
\end{equation*}
and
$$
\partial\omega^{m+1}\wedge\bar\partial\omega^{\overline{m+1}}= 
	\left(\sum_{1\leq j<k\leq m} A_{jk}\, \omega^{jk}\right)
	\wedge
	\left(\sum_{1\leq r<s\leq m} \bar{A}_{rs}\, \omega^{\bar{r}\bar{s}}\right)
=
\sum_{
\begin{smallmatrix}
1\leq j<k\leq m\\
1\leq r<s\leq m
\end{smallmatrix}
} A_{jk}\,\bar{A}_{rs} \omega^{jk\bar{r}\bar{s}}.
$$
On the other hand,
$$
\tilde F^{m-2} = \left(\frac{i}{2}\right)^{m-2}\!\! 
\big(\omega^{1\bar{1}} +\cdots +\omega^{m\overline{m}}\big)^{m-2} 
= (m-2)!\, \left(\frac{i}{2}\right)^{m-2} \!\!\! 
\sum_{1\leq u<v\leq m} \omega^{1\bar{1}\cdots \widehat{u\bar{u}}\cdots \widehat{v\bar{v}}\cdots m\overline{m}}.
$$
Therefore,
$$
\begin{array}{ccl}
\tilde F^{m-2}\wedge\bar\partial\omega^{m+1}\wedge\partial\omega^{\overline{m+1}} 
   \!\!&\!\!=\!\!&\!\!  (m-2)!\, (\frac{i}{2})^{m-2}\, \big(-2\, \frac{(m-1)(m-2)}{2} -2b(m-1) \big)\,  \omega^{1\bar{1}\cdots m\overline{m}},
\end{array}
$$
and
$$
\begin{array}{ccl}
\tilde F^{m-2}\wedge
   \partial\omega^{m+1}\wedge\bar\partial\omega^{\overline{m+1}}
\!\!&\!\!=\!\!&\!\! (m-2)!\, (\frac{i}{2})^{m-2}\, 
\big(- \sum_{1\leq j<k\leq m} |A_{jk}|^2 \big)\,  \omega^{1\bar{1}\cdots m\overline{m}},
\end{array}
$$
so we get
$$
\begin{array}{ccl}
0 \!\!&\!\!=\!\!&\!\!  \tilde F^{m-2}\wedge\big(\bar\partial\omega^{m+1}\wedge\partial\omega^{\overline{m+1}} -
   \partial\omega^{m+1}\wedge\bar\partial\omega^{\overline{m+1}}\big) \\[4pt] 
   \!\!&\!\!=\!\!&\!\! (m-2)!\, (\frac{i}{2})^{m-2}\, 
\big( \sum_{1\leq j<k\leq m} |A_{jk}|^2 -(m-1)(2b+m-2) \big)\,  \omega^{1\bar{1}\cdots m\overline{m}},
\end{array}
$$
which is precisely \eqref{condicion-n-dim}.
Notice that this condition is equivalent to 
$(b+m-1)^2= \sum_{1\leq j<k\leq m} |A_{jk}|^2 + b^2+m-1$.

In order to see when this AK metric is also sG, we next compute $\partial F'^{\,m}$. 
Since $F'= \tilde F + \frac{i}{2}\, \omega^{m+1\,\overline{m+1}}$, 
and taking into account that \eqref{ecus-n-dim} implies 
$\partial \tilde F^{m-1}=0=\partial \tilde F^{m}$, we get
$$
\partial F'^{\,m}=\frac{i\,m}{2}\,
\Big(\tilde F^{m-1} \wedge 
\partial\omega^{m+1}\wedge
\omega^{\overline{m+1}}
-\tilde F^{m-1}\wedge 
\partial\omega^{\overline{m+1}}\wedge
\omega^{m+1}\Big).
$$ 
It is easy to check that $\tilde F^{m-1}\wedge \partial\omega^{m+1}=0$, hence 
$$
\begin{array}{ccl}
\partial F'^{\,m} \!\!&\!\!=\!\!&\!\! -\frac{i\,m}{2}\,
 \tilde F^{m-1}\wedge \partial\omega^{\overline{m+1}}\wedge\omega^{m+1}
 =   \frac{i\,m}{2}\, 
 \tilde F^{m-1}\wedge \big(\sum_{k=1}^{m-1} \omega^{k\bar{k}} + b\, \omega^{m\overline{m}} \big)\wedge \omega^{m+1} \\[4pt]
 \!\!&\!\!=\!\!&\!\!  
 m!\, (\frac{i}{2})^{m} 
\big(\sum_{u=1}^{m} \omega^{1\bar{1}\cdots \widehat{u\bar{u}}\cdots m\overline{m}}\big) \wedge \big(\sum_{k=1}^{m-1} \omega^{k\bar{k}} + b\, \omega^{m\overline{m}} \big)\wedge \omega^{m+1} \\[4pt]
 \!\!&\!\!=\!\!&\!\!  
 m!\, (\frac{i}{2})^{m}\, (m-1+b )\, \omega^{1\bar{1}\cdots m\overline{m}}\wedge \omega^{m+1}.
 \end{array}
$$
Notice that $b\in [1-\frac{m}{2},0)$ implies $b+m-1\neq0$, so the AK metric $F'$ cannot be balanced. 
Furthermore, for any $1\leq j<k\leq m$, the $(m+1,m-1)$-forms 
$\gamma_{jk}=\omega^{1\cdots m+1}\wedge \omega^{\bar{1}\cdots \widehat{\bar{j}}\cdots \widehat{\bar{k}}\cdots\overline{m}\,\overline{m+1}}$ satisfy
$$
\db \gamma_{jk}= (-1)^{j+k}\bar{A}_{jk}\, \omega^{1\cdots m+1}\wedge \omega^{\bar{1}\cdots\overline{m}}, 
$$
therefore, $F'$ is sG if and only if at least one of the coefficients $A_{jk}$ is nonzero. By \eqref{condicion-n-dim}, this is equivalent to $b\neq 1-\frac{m}{2}$. 

\smallskip

Finally, we study the balanced condition for a Hermitian metric $F''$ of the form \eqref{F''-n-dim}. Applying 
Corollary~\ref{reducc-cond-met-cor2}~(i) for $n=m+1$ and using the equations~\eqref{ecus-n-dim}, the balanced condition $d F''^{\,m}=0$ is equivalent to 
$$
F^{m-1}\wedge d\omega^{m+1}=0,
$$ 
where $F= i\, x_{11}\, \omega^{1\bar{1}} + \cdots + i\, x_{mm}\, \omega^{m\,\overline{m}}$. Then, we have 
$$
\begin{array}{ccl}
0 \!\!&\!\!=\!\!&\!\! 
F^{m-1}\wedge d\omega^{m+1}\\[1pt] 
\!\!&\!\!=\!\!&\!\! (m\!-\!1)!\, i^{m-1}\! 
\left(\sum\limits_{u=1}^{m} x_{11}\cdots \widehat{x_{uu}}\cdots x_{mm}\,  \omega^{1\bar{1}\cdots \widehat{u\bar{u}}\cdots m\overline{m}}\right) 
\!\wedge\! \left(\sum\limits_{1\leq j<k\leq m} A_{jk}\, \omega^{jk} 
+ \sum\limits_{k=1}^{m-1} \omega^{k\bar{k}} + b\, \omega^{m\,\overline{m}}\right) \\[6pt]
\!\!&\!\!=\!\!&\!\!  (m\!-\!1)!\, i^{m-1}\! 
\left(  \Big(\sum\limits_{u=1}^{m-1} x_{11}\cdots \widehat{x_{uu}}\cdots x_{m-1\,m-1}\Big) x_{mm} 
	+ b\, x_{11}\cdots x_{m-1\,m-1}  \right)\,  \omega^{1\bar{1} \cdots  m\overline{m}} .
\end{array}
$$
Therefore, $F''$ is balanced if and only if 
$$
\Big(\sum_{u=1}^{m-1} x_{11}\cdots \widehat{x_{uu}}\cdots x_{m-1\,m-1}\Big) x_{mm}= - b\, x_{11}\cdots x_{m-1\,m-1},
$$
which means that we have to choose the metric coefficients for $F''$ satisfying the condition 
$$
\frac{x_{mm}}{x_{11}}+ \cdots+\frac{x_{mm}}{x_{m-1\,m-1}} =-b. 
$$ 
\end{proof}

\begin{remark}\label{existence-over-Q}
There exist complex nilmanifolds defined by \eqref{ecus-n-dim} and satisfying \eqref{condicion-n-dim}. Indeed, if we take all the coefficients $A_{jk}\in\Q[i]$ so that the rational number  $q=\frac{1}{m-1}\sum_{1\leq j<k\leq m} |A_{jk}|^2$ belongs to the interval $[0,m-2)$, then the number $b$ given by $2 b+m-2=q$ is also rational.
\end{remark}

\section{On the Fr\"olicher spectral sequence of compact AK manifolds with balanced metrics}\label{FSS}

\noindent
In this section we construct compact AK nilmanifolds with balanced metrics and with Fr\"olicher spectral sequence not degenerating at the second, or even at the third, page. As far as we know, these seem to be the first compact complex manifolds with such properties.

Let $X$ be a compact complex manifold with $\dim_{\C}X=n$. The Fr\"olicher spectral sequence (FSS) of $X$ is 
the spectral sequence associated to  
the double complex $(\Omega^{*,*}(X), \partial,\bar\partial)$ \cite{Fro}. 
It consists of a collection of complexes 
$$
\cdots\stackrel{d_r}{\longrightarrow} E_r^{p-r,\,q+r-1}(X)\stackrel{d_r}{\longrightarrow}E_r^{p,\,q}(X)\stackrel{d_r}{\longrightarrow}E_r^{p+r,\,q-r+1}(X)\stackrel{d_r}{\longrightarrow}\cdots
$$ 
for every  $r\geq 1$, where 
the {\it $1$-st page} is given by the Dolbeault cohomology of $X$, i.e. $E_1^{p,\,q}(X)=H^{p,\,q}_{\bar\partial}(X)$, and the differentials $d_1$ are of type $(1,0)$ and defined by $d_1([\alpha]) = [\partial\alpha]$, for every Dolbeault class $[\alpha]\in H^{p,\,q}_{\bar\partial}(X)$. For any other $r\geq 2$, the differentials $d_r$ are of type $(r,\,-r+1)$ and are also induced by $\partial$, but acting on a $(p+r-1,q-r+1)$-form associated to every class in $E_r^{p,\,q}(X)$. (See the description below for $p=0$ and $q=n-2$.) One has that $d_r\circ d_r=0$ and the {\it $(r+1)$-th page} is given by the quotient space of the kernel of $d_r$ over the image of the incoming $d_r$. In \cite{PSU} a Hodge theory is developed 
by means of the construction of elliptic pseudo-differential operators, 
associated with any given
Hermitian metric on $X$, so that their kernels are isomorphic to the spaces $E_r^{p,q}(X)$ in every bidegree $(p, q)$.

The FSS {\it degenerates at the $r$-th page} if the differentials $d_s=0$, for all $s\geq r$. 
Equivalently, $E_r^{p,\,q}(X) = E_{r+l}^{p,\,q}(X)$ for every $l\geq 1$ and any $(p,q)$, that is, $E_r(X)=E_\infty(X)$.  Note that such an~$r$ always exists for any
compact complex manifold $X$. Hence, the FSS provides a link between the complex structure of $X$ and its differential structure,  
due to the existence of the isomorphisms $H^k_{dR}(X,\,\C)\cong\bigoplus\limits_{p+q=k}E_\infty^{p,\,q}(X)$, for the de Rham cohomology of $X$.

\smallskip

Next we focus on nilmanifolds endowed with an invariant complex structure. 
The first examples with FSS satisfying $E_2\ne E_{\infty}$ were constructed by Cordero, Fern\'andez and Gray in \cite{CFG-CR}. Moreover, an example in complex dimension $6$ with $E_3(X)\ne E_\infty(X)$ was found  in \cite{CFG-illinois}. 
In the following result we review these examples  
and study the existence of balanced or AK
metrics on them.

\begin{proposition}\label{Prop-CFG}
The complex nilmanifolds constructed by Cordero, Fern\'andez and Gray in \cite{CFG-CR} and \cite{CFG-illinois} are balanced, but they do not admit any AK metric. 
\end{proposition}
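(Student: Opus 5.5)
We will treat the two assertions separately, using explicit complex structure equations for the Cordero--Fern\'andez--Gray examples. These nilmanifolds are built from the complex Heisenberg-type Lie algebras. We recall from \cite{CFG-CR, CFG-illinois} a $(1,0)$-coframe $\{\omega^k\}$ in which all the differentials $d\omega^k$ are of pure type $(2,0)$. Typically $d\omega^1=d\omega^2=0$, $d\omega^3=\omega^{12}$, $d\omega^4=\omega^{13}$, with analogous higher-step equations in complex dimension $6$. The complex structures are thus complex parallelizable, i.e.\ the nilmanifolds are quotients of complex nilpotent Lie groups by lattices. If the equations in the original papers also contain $(1,1)$-terms, we will instead work directly with them as in the steps below.

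\emph{Balanced part.} In the complex parallelizable case we simply invoke the argument recalled before Subsection~\ref{subsec-yesAK-nonSG}. By \cite{AG}, any left-invariant Hermitian metric on a unimodular complex Lie group is balanced, and nilpotent groups are unimodular. Concretely, for $F=\frac i2\sum_k\omega^{k\bar k}$ one has $\partial F^{n-1}$ involving only terms $d\omega^k\wedge\omega^{\bar k}$ wedged with $(n-2,n-2)$-parts. Its coefficient is a trace of the structure constants, which vanishes by nilpotency. If the structure equations are not purely holomorphic, we will instead exhibit a diagonal metric and check $dF^{n-1}=0$ by hand, or use Corollary~\ref{reducc-cond-met-cor2}(i) inductively along central extensions.

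\emph{Non-AK part.} For complex parallelizable non-tori this follows from \cite{JY}: on a compact AK manifold every holomorphic $1$-form is closed. Here $\omega^3$ is holomorphic with $d\omega^3=\omega^{12}\ne0$, giving a contradiction valid for arbitrary, not necessarily invariant, AK metrics. As an alternative, more in the spirit of the paper, we will exhibit the AK obstruction directly. The quotient by the ideal dual to the later coframe elements is a $6$-dimensional pair of type (Eq-i) with $\rho=1$, namely the Iwasawa algebra. By Proposition~\ref{classif-SKT} this pair satisfies the AK obstruction, via $\alpha=i\omega^{3\bar3}$ with $(dd^c\alpha)^{2,2}=2\omega^{12}\wedge\omega^{\bar1\bar2}$. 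Proposition~\ref{extensions-of-non-AK} then transfers the obstruction to the whole $(\frg,J)$, since $n\ge4$.

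The main obstacle is bookkeeping: we must correctly recall the structure equations of each example in \cite{CFG-CR,CFG-illinois} and verify that each one has a quotient isomorphic to the Iwasawa pair, or is complex parallelizable. If some example has mixed-type equations, we will look for a quotient to which Theorem~\ref{general-structure-of-aK-dim8} or Proposition~\ref{classif-SKT} applies, for instance to show it is not at most $2$-step.
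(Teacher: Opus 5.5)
\textbf{The gap.} Your argument rests on a false premise: the Cordero--Fern\'andez--Gray nilmanifolds are not complex parallelizable. Here are the actual structure equations.
\begin{itemize}
\item The example of \cite[Section 3]{CFG-CR}: $d\omega^1=d\omega^2=0$, $d\omega^3=-\omega^{12}$, $d\omega^4=\omega^{23}+\omega^{2\bar1}$.
\item The first example of \cite{CFG-illinois}, in complex dimension $4$: $d\omega^1=d\omega^2=0$, $d\omega^3=\omega^{12}+\omega^{1\bar2}$, $d\omega^4=-\omega^{2\bar1}$.
\item The second example of \cite{CFG-illinois}, in complex dimension $6$: $\omega^1,\omega^2,\omega^3$ closed, $d\omega^4=\omega^{12}+\omega^{1\bar2}$, $d\omega^5=-\omega^{2\bar1}$, $d\omega^6=\omega^{14}+\omega^{1\bar3}$.
\end{itemize}
In each case $\bar\partial$ is nonzero on some invariant $(1,0)$-form. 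So no coframe has purely $(2,0)$ differentials, and your balanced argument via \cite{AG} does not apply.

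The non-AK argument breaks down as well, except in one case. For the example of \cite{CFG-CR} your routes happen to work: $\omega^3$ is holomorphic but not closed, so \cite{JY} applies. Also, the quotient killing the central $\omega^4$-direction is the Iwasawa pair, up to the sign of $\omega^3$. But for both examples of \cite{CFG-illinois}, the invariant $\bar\partial$-closed $(1,0)$-forms are spanned by $\omega^1,\omega^2$ (resp.\ $\omega^1,\omega^2,\omega^3$). These are all closed, so \cite{JY} gives no contradiction. Moreover, no quotient of either example is the Iwasawa pair: the $6$-dimensional quotients have mixed-type equations such as $d\omega^3=\omega^{12}+\omega^{1\bar2}$. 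Your fallback of showing the algebra is not at most $2$-step also fails for the $4$-dimensional example of \cite{CFG-illinois}, which is $2$-step. What remains are hedged, unexecuted contingency plans, so the proposal does not prove the statement.

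\textbf{How to close it.} You have to work with the actual equations.

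For the balanced part, note that in all three cases no $d\omega^j$ contains a term $\omega^{jk}$, $\omega^{j\bar k}$ or $\omega^{k\bar k}$. This forces $\partial F_\rho^{\,n-1}=0$, hence $dF_\rho^{\,n-1}=0$, for every diagonal metric $F_\rho=\frac i2\sum_k\rho_k\,\omega^{k\bar k}$. This is exactly the paper's claim.

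For the non-AK part, the two examples of complex dimension $4$ have only $\omega^1,\omega^2$ as closed $(1,0)$-forms. Hence $b_1(M)=4<6$, and they cannot be put in the form \eqref{caso-i}. Theorem~\ref{general-structure-of-aK-dim8} then excludes AK metrics, invariant or not; this is what the paper does.

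For the $6$-dimensional example, apply Proposition~\ref{extensions-of-non-AK} to a quotient satisfying the AK obstruction. The paper uses the quotient onto the previous $4$-dimensional pair, spanned by $\omega^1,\omega^2,\omega^4,\omega^5$. More directly, you can use the quotient spanned by $\omega^1,\omega^2,\omega^4$, where $\alpha=i\,\omega^{4\bar4}$ gives $(dd^c\alpha)^{2,2}=4\,\omega^{12}\wedge\omega^{\bar1\bar2}$.
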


\begin{proof}
Let us first consider the example in \cite[Section 3]{CFG-CR}, which is given by the 4-dimensional complex nilmanifold $X$ defined by the complex structure equations  
$$
d\omega^1=d\omega^2=0, \quad
d\omega^3=-\omega^{12}, \quad
d\omega^4=\omega^{23} + \omega^{2\bar{1}}.
$$
It is proved in \cite{CFG-CR} that $E_2(X)\ne E_\infty(X)$. 
Given  any $\rho=(\rho_1,\rho_2,\rho_3,\rho_{4})\in (\R^+)^{4}$, it is easy to check that the Hermitian metrics on $X$ defined by 
$F_{\rho}=\frac{i}{2} \sum_{k=1}^4 \rho_k \,\omega^{k\bar{k}}$ are balanced. 
The non-existence of AK metrics follows from Theorem~\ref{general-structure-of-aK-dim8}.

Now, we consider the example in \cite[Section 5]{CFG-illinois}, which consists of a complex nilmanifold $X$ with $\dim_{\C} X=4$ defined by 
$$
d\omega^1=d\omega^2=0, \quad
d\omega^3=\omega^{12}+\omega^{1\bar{2}}, \quad
d\omega^4=-\omega^{2\bar{1}}.
$$
In \cite{CFG-illinois} it is proved that $E_2(X)\ne E_\infty(X)$. As in the previous example, 
the Hermitian metrics  
$F_{\rho}=\frac{i}{2} \sum_{k=1}^4 \rho_k \,\omega^{k\bar{k}}$ are balanced on $X$. 
Again, by Theorem~\ref{general-structure-of-aK-dim8}, this 4-dimensional complex nilmanifold does not admit any AK metric. 
 
Finally, let us consider the example in \cite[Section 6]{CFG-illinois}. In this case $X$ is a complex nilmanifold with $\dim_{\C} X=6$ defined by the complex structure equations 
$$
d\omega^1=d\omega^2=d\omega^3=0, \quad
d\omega^4=\omega^{12}+\omega^{1\bar{2}}, \quad
d\omega^5=-\omega^{2\bar{1}}, \quad
d\omega^6=\omega^{14}+\omega^{1\bar{3}}.
$$
According to \cite{CFG-illinois}, the FSS of $X$ does not degenerate at the third page, i.e. $E_3(X)\ne E_\infty(X)$. 
The Hermitian metric 
$F_{\rho}=\frac{i}{2} \sum_{k=1}^6 \rho_k \,\omega^{k\bar{k}}$ is balanced on $X$, for every $\rho=(\rho_1,\ldots,\rho_{6})\in (\R^+)^{6}$. 
Notice that $X$ is an extension of the previous 4-dimensional example, so we can apply Proposition~\ref{extensions-of-non-AK} to conclude that no AK metric exists on $X$.
\end{proof} 

In~\cite{CFGU97},  3-dimensional complex nilmanifolds with FSS satisfying $E_2(X) \ne E_{\infty}(X)$ are found. They do not admit balanced or AK (SKT) metrics. Indeed, 
the FSS of any complex nilmanifold~$X$ with $\dim_{\C} X = 3$ is studied in \cite[Theorem 4.1]{COUV} and, as a consequence, one concludes that the existence of a balanced or an AK (SKT) metric on $X$ implies that $E_2(X) = E_{\infty}(X)$. 

By \cite[Theorem 3.4]{LUV-Fro}, 
there exist infinitely many 4-dimensional balanced  
nilmanifolds with FSS not degenerating at the second page and with different complex (hence, real or rational) homotopy types. Moreover, this result is extended in \cite[Theorem 3.8]{LUV-Fro} to non-degeneration at any arbitrary page. We now observe the following:

\begin{proposition}\label{Prop-LUV}
All the balanced nilmanifolds $X$ with $E_2(X) \ne E_{\infty}(X)$ constructed in \cite{LUV-Fro} do not admit AK metrics. 
\end{proposition}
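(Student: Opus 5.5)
The plan is to reduce everything to Theorem~\ref{general-structure-of-aK-dim8} together with the fact that the AK obstruction is inherited by $\frb$-extensions (Proposition~\ref{extensions-of-non-AK}). The examples in \cite{LUV-Fro} come in two groups. The first is the $4$-dimensional balanced nilmanifolds with $E_2\ne E_\infty$ from \cite[Theorem 3.4]{LUV-Fro}, built on SnN complex structures in Families I and II (Proposition~\ref{clasifJ-SnN}). The second is the higher-dimensional examples from \cite[Theorem 3.8]{LUV-Fro}, with non-degeneration at arbitrary pages, obtained by iterated extensions of the $4$-dimensional ones.

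\emph{Complex dimension $4$.} I would first check from the explicit structure equations in \cite{LUV-Fro} that the complex structures are non-nilpotent. The balanced ones with $E_2\ne E_\infty$ are taken from the SnN families with $(\varepsilon,\nu)=(0,0)$ in Family~I or $\nu=0$ in Family~II, as in Proposition~\ref{clasif-SnN-sG}. For such structures the proof of Theorem~\ref{general-structure-of-aK-dim8} already exhibits a real $2$-form $\alpha$ whose $(dd^c\alpha)^{2,2}$ is a positive multiple of $\omega^{12}\wedge\omega^{\bar1\bar2}$ (or a positive sum of such terms). Hence the AK obstruction holds, and by Lemma~\ref{lemaST} no AK metric exists, whether invariant or not. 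If some of the examples are instead quasi-nilpotent, they are not of the special form \eqref{caso-i}: they are not $2$-step, or they have $b_1<6$. Theorem~\ref{general-structure-of-aK-dim8} then excludes AK metrics directly. So the key step is simply to read off $b_1$, or the nilpotency step, of the Lie algebras used in \cite{LUV-Fro} and compare with $b_1\geq6$ and $2$-step.

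\emph{Higher dimensions.} For the examples of \cite[Theorem 3.8]{LUV-Fro}, I would identify a proper $J$-invariant ideal $\frb$ (spanned by the duals of the added $(1,0)$-forms) such that the quotient $(\frg_\frb,J_\frb)$ is one of the $4$-dimensional examples above, or more generally any $4$-dimensional pair satisfying the AK obstruction. Since the construction adds new forms $\omega^5,\dots,\omega^n$ whose differentials involve only earlier forms, the span of their duals is an ideal and the quotient is the starting pair. Proposition~\ref{extensions-of-non-AK} then transfers the AK obstruction to the extension, and Lemma~\ref{lemaST} again rules out any AK metric.

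\emph{Expected obstacle.} The main difficulty is bookkeeping. I need to confirm, family by family, that every $4$-dimensional pair used in \cite{LUV-Fro} (or the quotient pair in the iterated construction) really satisfies the AK obstruction. If some quotient turns out to be of the form \eqref{caso-i}, I would instead pass to a $6$-dimensional quotient by a central $2$-dimensional ideal and check that it is not in the SKT list of Proposition~\ref{classif-SKT}, exactly as in the proof of Theorem~\ref{general-structure-of-aK-dim8}.
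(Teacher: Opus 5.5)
Your proposal is correct and follows the paper's proof: in complex dimension $4$ the examples of \cite{LUV-Fro} carry SnN complex structures, so the AK obstruction exhibited in the proof of Theorem~\ref{general-structure-of-aK-dim8} rules out AK metrics. In higher dimension the examples are products of such a $4$-dimensional SnN nilmanifold with another complex nilmanifold, hence $\frb$-extensions of it, and Proposition~\ref{extensions-of-non-AK} applies (so your hedges about possibly quasi-nilpotent cases are not needed).
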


\begin{proof} 
In the case $\dim_{\C} X=4$, the complex structure on $X$ is of SnN type, so  
the result comes directly from Theorem~\ref{general-structure-of-aK-dim8}.

In the case $\dim_{\C} X\geq 5$, the complex nilmanifolds $X$ are constructed as products of 4-dimensional complex nilmanifolds endowed with an SnN structure and another appropriate complex nilmanifold. Hence, 
we can apply Proposition~\ref{extensions-of-non-AK} to conclude that no AK metric exists on them.
\end{proof}

For every $n\geq 2$, Bigalke and Rollenske constructed in \cite{BR} a (2-step) complex nilmanifold $X^{4n-2}$ with $\dim_{\C} X=4n-2$ and FSS satisfying $d_n\ne 0$. 
The nilmanifolds $X^{4n-2}$ are balanced by \cite[Theorem 3.3]{ST}, but they do not admit AK metrics as 
recently proved  in \cite[Theorem 3.1]{CT-arxiv}. In the following result we give a 
different proof of these facts following the lines of \cite{LUV-Fro}.

\begin{proposition}\label{Prop-FSS-Bigalke-Rollenske} {\rm (\cite[Theorem 3.3]{ST} and \cite[Theorem 3.1]{CT-arxiv})} 
The complex nilmanifolds $X^{4n-2}$ constructed by Bigalke and Rollenske in \cite{BR} are balanced, but they do not admit any AK metric. 
\end{proposition}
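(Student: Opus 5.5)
We need to recall the Bigalke–Rollenske construction. Their nilmanifold $X^{4n-2}$ is 2-step. I will write the complex structure equations in a form that makes the two claims checkable: a $(1,0)$-coframe split into a block of closed forms and a block of forms whose differentials are $(2,0)+(1,1)$-combinations of the closed ones, with the $(1,1)$-part built from the pairing $\omega^{i}\wedge\overline{\omega}^{j}$ between consecutive blocks. This is what makes $d_n\neq0$ in the Frölicher spectral sequence.

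Balanced part. Since the structure is 2-step, the complex structure is a central extension of a complex torus, i.e. of the form \eqref{caso-i-extended} but with several central directions. I would take the diagonal metric $F=\frac{i}{2}\sum_k \omega^{k\bar k}$ on the full coframe, or a diagonal one with suitable weights. By Proposition~\ref{reducc-cond-met-prop}(i), with the base torus trivially balanced, the balanced condition reduces to \eqref{balanced-en-extension}. For a diagonal metric this amounts to requiring that every contraction of $d\omega^{c}$ against the base metric vanishes, i.e. each central differential $d\omega^{c}$ is trace-free with respect to the base metric. In the Bigalke–Rollenske equations the $(1,1)$-parts only involve off-diagonal terms $\omega^{i\bar j}$ with $i\neq j$, so these traces vanish. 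I expect this to be a direct check, and it is exactly what \cite{LUV-Fro} does for its examples.

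Non-AK part. The natural plan is to use Proposition~\ref{extensions-of-non-AK}. Choose a $J$-invariant ideal $\frb$ (spanned by the duals of all central generators except one, plus suitable closed directions) so that the quotient $(\frg_\frb,J_\frb)$ is a complex 3-dimensional pair satisfying the AK obstruction. Concretely, I would look for a quotient with equations $d\omega^1=d\omega^2=0$, $d\omega^3=\omega^{1\bar2}$, or $\omega^{12}+\omega^{1\bar 2}$. This is of type (Eq-ii) with $\rho+\lambda^2-2\Real D\neq0$, so the proof of Proposition~\ref{classif-SKT} shows it satisfies the obstruction via $\alpha=i\omega^{3\bar3}$. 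The quotient is well defined because killing central directions is always allowed, and killing closed $(1,0)$ directions $\omega^{k}$ for $k\geq3$ is allowed as long as the retained central form does not involve them. Alternatively, I would exhibit $\alpha=i\omega^{c\bar c}$ directly on $\frg$ and compute
\[
-2\partial\db(\omega^{c\bar c})=2\big(\partial\omega^c\wedge\db\omega^{\bar c}-\db\omega^c\wedge\partial\omega^{\bar c}\big),
\]
and then check that it is a same-sign sum of terms $\psi\wedge\bar\psi$ with $\psi$ decomposable. This needs the $(1,1)$-part of $d\omega^c$ to be a single decomposable term, or a sum of terms with disjoint indices.

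Main obstacle. The hard step is the bookkeeping of the explicit Bigalke–Rollenske equations for general $n$. One has to find a central form $\omega^c$ whose differential has a shape like $\omega^{1\bar2}$ (possibly plus a $(2,0)$ term). For such a form, $(dd^c\,i\omega^{c\bar c})^{2,2}$ is a positive multiple of $\omega^{12}\wedge\omega^{\bar1\bar2}$, with no cancellation between the $(2,0)$ and $(1,1)$ contributions, since both produce $|\cdot|^2$ terms of the same sign as in (Eq-iii). If every central differential mixes several $(1,1)$ terms whose cross terms do not factor, I would instead use the quotient trick: quotient by all but one central direction, and by the closed directions not appearing in it, to land in the $6$-dimensional list of Proposition~\ref{classif-SKT}. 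Then check that the resulting $(\rho,\lambda,D)$ lies outside the SKT list.
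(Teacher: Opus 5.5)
Your strategy is the paper's: a diagonal metric for the balanced part, and $\alpha=i\,\tau^{c}\wedge\overline{\tau^{c}}$ for a suitable central $(1,0)$-form $\tau^c$ to trigger the AK obstruction. As written, however, it is a plan rather than a proof. You never write down the structure equations of $X^{4n-2}$, so the step you call the main obstacle is left open, and that step is the whole content of the argument.

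What closes it is the coframe \eqref{ecusBR-2} from \cite{LUV-Fro}:
$d\tau^1=\cdots=d\tau^{3n-2}=0$, $d\tau^{3n-1+k}=\tau^{1+k}\wedge\tau^{n+1+k}+\tau^{n+k}\wedge\overline{\tau^{2n+k}}$ for $0\le k\le n-2$, and $d\tau^{4n-2}=\tau^{2n}\wedge\overline{\tau^{n}}$.
With these equations both claims are immediate and no quotient argument is needed:
\begin{itemize}
\item All $(1,1)$-terms are off-diagonal, so every $F_\rho=\frac{i}{2}\sum_k\rho_k\,\tau^k\wedge\overline{\tau^k}$ is balanced.
\item $d\tau^{4n-2}$ is a single decomposable $(1,1)$-term with no $(2,0)$-part. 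Hence for $\alpha=i\,\tau^{4n-2}\wedge\overline{\tau^{4n-2}}$ the form $(dd^c\alpha)^{2,2}$ is a nonzero multiple of $\tau^{n}\wedge\tau^{2n}\wedge\overline{\tau^{n}}\wedge\overline{\tau^{2n}}$.
\end{itemize}
This is exactly the paper's proof. Your own remark that the $(2,0)$- and $(1,1)$-contributions do not interfere also shows that each $\tau^{3n-1+k}$ works. It gives $2\,\psi\wedge\bar\psi+2\,\phi\wedge\bar\phi$ with $\psi=\tau^{1+k}\wedge\tau^{n+1+k}$ and $\phi=\tau^{n+k}\wedge\tau^{2n+k}$.

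Two general claims in your proposal are false and should be dropped.
\begin{itemize}
\item Being $2$-step does not by itself make $J$ a central extension of a complex torus. For example, $\frh_7=(0,0,0,12,13,23)$ is $2$-step and carries the complex structure $d\omega^1=0$, $d\omega^2=\omega^{1\bar1}$, $d\omega^3=\omega^{12}+\omega^{1\bar2}$. Its center is $3$-dimensional and $b_1=3$, so no complex structure on it has that form. For $X^{4n-2}$ the property must be read off from \eqref{ecusBR-2}, not deduced from the nilpotency step.
\item A $(1,1)$-part consisting of terms with disjoint indices does not suffice for $\alpha=i\,\omega^{c\bar c}$. If $d\omega^c=\omega^{1\bar2}+\omega^{3\bar4}$ with $\omega^1,\ldots,\omega^4$ closed, then
\[
-2\partial\db(\omega^{c\bar c})=2\big(\omega^{12}\wedge\omega^{\bar1\bar2}+\omega^{34}\wedge\omega^{\bar3\bar4}-\omega^{14}\wedge\omega^{\bar2\bar3}-\omega^{23}\wedge\omega^{\bar1\bar4}\big).
\]
Its coefficient matrix on $\Lambda^{2,0}$ is indefinite, so it is not a same-sign sum of terms $\psi\wedge\bar\psi$.
\end{itemize}
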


\begin{proof} 
Let us consider 
the complex structure equations  
in \cite{LUV-Fro} that define the Bigalke-Rollenske nilmanifolds. 
These are given by 
a $(1,0)$-coframe $\{\tau^j\}_{j=1}^{4n-2}$ satisfying 
\begin{equation}\label{ecusBR-2}
\left\{\begin{array}{l}
d\tau^1=\cdots=d\tau^{3n-2}=0, \\[2pt]
d\tau^{3n-1}=\tau^1\wedge \tau^{n+1} \quad\ \,  + \tau^{n}\wedge \overline{\tau^{2n}}, \\[-2pt]
\quad \vdots \qquad\qquad\ \  \vdots \qquad\qquad\qquad\ \ \vdots  \\[-1pt]
d\tau^{4n-3}=\tau^{n-1}\wedge \tau^{2n-1} + \tau^{2n-2}\wedge \overline{\tau^{3n-2}}, \\[2pt]
d\tau^{4n-2}= \qquad\qquad\qquad\quad   \tau^{2n}\wedge \overline{\tau^{n}}.
\end{array}\right.
\end{equation}
We observe that 
the real 2-form $\alpha=i\,\tau^{4n-2}\wedge \overline{\tau^{4n-2}}$  
satisfies 
$$
(dd^c\alpha)^{2,2}
=-2\partial\db(\tau^{4n-2}\wedge \overline{\tau^{4n-2}})=
-(\tau^{n}\wedge \tau^{2n})\!\wedge (\overline{\tau^{n}}\wedge \overline{\tau^{2n}}),$$
so the AK obstruction \eqref{2-condition} holds 
and $X^{4n-2}$ does not admit any AK metric. 
On the other hand, for  every $\rho=(\rho_1,\ldots,\rho_{4n-2})\in (\R^+)^{4n-2}$, the Hermitian metric $F_{\rho}=\frac{i}{2} \sum_{k=1}^{4n-2} \rho_k \,\tau^{k} \wedge\overline{\tau^{k}}$ is balanced.
\end{proof}

Note that the
Bigalke-Rollenske nilmanifolds $X^{4n-2}$ do not admit any SKT metric \cite[Proposition~3.5]{ST}. 
In \cite[Appendix A]{KS}, Kasuya and Stelzig found an extension of the equations \eqref{ecusBR-2} so that the resulting $(6n-4)$-dimensional complex nilmanifold ${\tilde X}^{6n-4}$ $(n\geq 2)$ is SKT and still satisfies $d_n\ne 0$. 
In the following result we prove that these nilmanifolds do not admit any balanced, sG or AK metrics.

\begin{proposition}\label{Prop-FSS-Kasuya-Stelzig}
The complex nilmanifolds ${\tilde X}^{6n-4}$ constructed by Kasuya and Stelzig in \cite{KS} do not admit sG or AK metrics. 
\end{proposition}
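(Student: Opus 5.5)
The plan is to exploit the extension structure. By construction in \cite[Appendix A]{KS}, the Kasuya--Stelzig nilmanifold ${\tilde X}^{6n-4}$ is built from the Bigalke--Rollenske equations \eqref{ecusBR-2} by adding $2n-2$ further $(1,0)$-forms, say $\sigma^1,\ldots,\sigma^{2n-2}$. Their differentials involve only the previous forms and are chosen so as to kill the obstruction to the SKT condition. For the AK statement, the first thing I will check is whether the original coframe $\{\tau^j\}_{j=1}^{4n-2}$ still satisfies \eqref{ecusBR-2} inside ${\tilde X}$. Equivalently, the span of the duals of the new forms should be a proper $J$-invariant ideal $\frb$ with quotient the Bigalke--Rollenske pair. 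If so, $(\tilde\frg,J)$ is a $\frb$-extension of the Lie algebra of $X^{4n-2}$. That pair satisfies the AK obstruction, as shown in the proof of Proposition~\ref{Prop-FSS-Bigalke-Rollenske} with $\alpha=i\,\tau^{4n-2}\wedge\overline{\tau^{4n-2}}$. Proposition~\ref{extensions-of-non-AK} then gives the non-existence of AK metrics at once.

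If instead the extension modifies the $d\tau^j$, I will compute $(dd^c\alpha)^{2,2}$ directly for a suitable $\alpha$. Natural candidates are $\alpha=i\,\sigma\wedge\bar\sigma$ for a last-step form $\sigma$, or again $\alpha=i\,\tau^{4n-2}\wedge\overline{\tau^{4n-2}}$. The aim is to write $(dd^c\alpha)^{2,2}$ as a same-sign sum of terms $\psi\wedge\bar\psi$ with $\psi$ decomposable $(2,0)$-forms, exactly as in the proof of Theorem~\ref{general-structure-of-aK-dim8}.

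For the sG part (which also covers balanced metrics), I will use Proposition~\ref{condition-for-non-sG}, together with Proposition~\ref{extensions-of-non-sG} when it applies to a quotient. The added equations of Kasuya--Stelzig, designed to make a metric SKT, should produce a $(0,1)$-form $\bar\sigma$ that is $\db$-closed. This holds when $d\sigma$ has no $(2,0)$-part involving $\sigma$-type terms in the relevant way, i.e.\ when $\db\bar\sigma=\overline{\partial\sigma}=0$; that in turn comes from $\partial\sigma$ vanishing or being killed. Its $\partial\bar\sigma$ should be of the form $\sum c_k\,\eta^k\wedge\bar\eta^k$ with all $c_k$ of one sign, for instance a term like $\tau^{k}\wedge\overline{\tau^{k}}$ analogous to $d\omega^2=\omega^{1\bar1}$ in the Kodaira--Thurston algebra. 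The best outcome would be to exhibit ${\tilde X}$ as an extension of $\frk\frt$, by quotienting by all forms except some $\tau^k$ and a $\sigma$ with $d\sigma=\tau^{k\bar k}$, and then invoke Corollary~\ref{cor-extensions-of-non-sG}. Otherwise I will rewrite $\partial\bar\sigma$ by completing squares, as in Propositions~\ref{SnN-no-sG} and \ref{WnN-no-sG}, to reach the required sign pattern.

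The main obstacle is this sG step. It depends on the precise form of the Kasuya--Stelzig extension equations, so the real work is choosing the right $(0,1)$-form and, if needed, completing squares so that the coefficients $c_k$ have coherent signs. The AK part should be essentially formal via Proposition~\ref{extensions-of-non-AK}.
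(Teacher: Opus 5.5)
Your plan is essentially the paper's proof. The forms $\theta^j,\eta^j$ added by Kasuya--Stelzig leave \eqref{ecusBR-2} unchanged, so the pair is a central $\frb$-extension of the Bigalke--Rollenske pair and Proposition~\ref{extensions-of-non-AK} rules out AK metrics; for sG the paper takes $\alpha=\overline{\theta^j}$, which is $\db$-closed because $d\theta^j$ is of type $(1,1)$ and has $\partial\overline{\theta^j}=-\tau^{j+n-1}\wedge\overline{\tau^{j+n-1}}-\tau^{j+2n-1}\wedge\overline{\tau^{j+2n-1}}$, so Proposition~\ref{condition-for-non-sG} applies directly without completing squares (the Kodaira--Thurston quotient you list as a best case does not occur here, but it is not needed).
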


\begin{proof}
The complex structure equations considered in \cite[Appendix A]{KS} to define ${\tilde X}^{6n-4}$ are an extension of 
\eqref{ecusBR-2}, by adding the $(1,0)$-forms $\theta^1,\ldots,\theta^{n-1}$ and $\eta^1,\ldots,\eta^{n-1}$ so that 
\begin{equation}\label{ecusBR-2-extendidas}
\left\{\begin{array}{l}
d\theta^j = \tau^{j+n-1}\wedge \overline{\tau^{j+n-1}} + \tau^{j+2n-1}\wedge \overline{\tau^{j+2n-1}}, \\[2pt]
d\eta^j = \tau^{j}\wedge \overline{\tau^{j}} + \tau^{j+n}\wedge \overline{\tau^{j+n}},
\end{array}\right.
\end{equation}
for $1\leq j\leq n-1$. 
Let $\{z_{k}=x_{k}+i\,y_{k} \}_{k=1}^{6n-4}$ be the dual basis of 
$\{ \tau^1,\ldots,\tau^{4n-2},\theta^1, \ldots, \theta^{n-1},$ $\eta^1,\ldots,\eta^{n-1} \}$. 
It follows from the equations \eqref{ecusBR-2}--\eqref{ecusBR-2-extendidas} that the pair $(\tilde{\frg}, \tilde{J})$ underlying ${\tilde X}^{6n-4}$ is a (central) $\frb$-extension of the pair $({\frg}, {J})$ underlying $X^{4n-2}$, where $\frb=\langle x_{k},y_{k}\mid 4n-1\leq k\leq 6n-4 \rangle$. 
Hence, Proposition~\ref{extensions-of-non-AK} implies that the nilmanifold ${\tilde X}^{6n-4}$ does not admit any AK metric.

We now consider the $\db$-closed $(0,1)$-form $\alpha=\overline{\theta^j}$, for some $1\leq j\leq n-1$. Then, 
$$
\partial\overline{\theta^j}= - \tau^{j+n-1}\wedge \overline{\tau^{j+n-1}} - \tau^{j+2n-1}\wedge \overline{\tau^{j+2n-1}},
$$
so the sG obstruction \eqref{sG-condition} is satisfied and ${\tilde X}^{6n-4}$ does not admit any sG metric. 
\end{proof}

By Propositions~\ref{Prop-CFG}, \ref{Prop-LUV}, \ref{Prop-FSS-Bigalke-Rollenske} and~\ref{Prop-FSS-Kasuya-Stelzig}, 
all known examples of complex nilmanifolds with $E_2\neq E_{\infty}$ 
do not admit AK metrics. Note that in Proposition~\ref{almost-ab-prop} we proved that certain almost abelian complex nilmanifolds  are AK, but their FSS degenerates at the first page by~\cite[Theorem 1.1 (iii)]{AABRW}.

In the following results we construct AK nilmanifolds of complex dimension 4 (resp. 5) admitting balanced metrics and with FSS not degenerating at the second (resp. third) page. For the proofs of these results we will use an special case from the description of the terms in the FSS given in~\cite{CFGU97} that we describe below.

Let $X$ be a compact complex manifold with $\dim_{\C} X =n$. 
Take $p=0$ and $q=n-2$. For every $r\geq 1$, the space $E^{0,n-2}_r(X)$ is isomorphic to the quotient $\C$-vector space
\begin{equation}\label{E=X/Y}
E^{0,n-2}_r(X)=\frac{{\mathcal X}^{0,n-2}_r(X)}{{\mathcal Y}^{0,n-2}_r(X)},
\end{equation}
where
${\mathcal Y}^{0,n-2}_r(X)=\db\big(\Omega^{0,n-3}(X)\big)$, 
and
\begin{equation}\label{Xpq}
\begin{array}{rl}
{\mathcal X}^{0,n-2}_r(X)= \{\alpha_{0,n-2} \in \Omega^{0,n-2}(X)  \mid\,  \!\!&\!\! \db\alpha_{0,n-2}=0, \mbox{ and there exist $r-1$ forms } \\[4pt]
& \! \alpha_{1,n-3},\ldots,\,\alpha_{r-2,n-r},\, \alpha_{r-1,n-r-1} \mbox{ satisfying }\\[5pt] 
  0=\partial\!\!\!&\!\!\!\alpha_{0,n-2}+\db\alpha_{1,n-3}
=\cdots
=\partial\alpha_{r-2,n-r}+\db\alpha_{r-1,n-r-1} \}.
\end{array}
\end{equation} 

Moreover, the differential $d_r\colon E_r^{0,n-2}(X)\longrightarrow E_r^{r,n-r-1}(X)$ is explicitly given by 
\begin{equation}\label{dr}
d_r\big( [\alpha_{0,n-2}]\big)= [\partial\alpha_{r-1,n-r-1}],
\end{equation}
for any $[\alpha_{0,n-2}]\in E_r^{0,n-2}(X)$. 

\begin{lemma}\label{lema-0-n-2}
Let $X$ be a compact complex manifold with $\dim_{\C} X =n$. Suppose that ${\mathcal X}^{0,n-2}_r(X)\ne {\mathcal X}^{0,n-2}_{r+1}(X)$. Then, $d_r\ne 0$ and the FSS of $X$ does not degenerate at the $r$-th page. 
\end{lemma}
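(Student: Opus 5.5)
Choose a form $\alpha_{0,n-2}\in{\mathcal X}^{0,n-2}_r(X)\setminus{\mathcal X}^{0,n-2}_{r+1}(X)$ and show that its class in $E_r^{0,n-2}(X)$ satisfies $d_r[\alpha_{0,n-2}]\ne 0$. I would first record the nesting ${\mathcal X}^{0,n-2}_{r+1}(X)\subseteq{\mathcal X}^{0,n-2}_r(X)$, which is immediate from \eqref{Xpq} by forgetting the last form of the chain. I would also note that ${\mathcal Y}^{0,n-2}_r(X)=\db(\Omega^{0,n-3}(X))$ does not depend on $r$ and lies in every ${\mathcal X}_s$: for $\db\beta$ one has $\partial\db\beta=\db(-\partial\beta)$, so the chain can be continued by $\alpha_{1,n-3}=-\partial\beta$ and all further terms zero.

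The key step is to describe the target $E_r^{r,n-r-1}(X)$ through its zig-zag description. A class $[\partial\alpha_{r-1,n-r-1}]$ vanishes in $E_r^{r,n-r-1}$ exactly when $\partial\alpha_{r-1,n-r-1}$ lies in the corresponding ${\mathcal Y}_r^{r,n-r-1}$. That space consists of forms $\partial\beta_{r-1,n-r-1}+\db\gamma_{r,n-r-2}$, where $\beta_{r-1,n-r-1}$ ends a zig-zag $\db\beta_{r-1,n-r-1}=-\partial\beta_{r-2,n-r}=\dots$ started at a $\partial$-closed form $\beta_{1,n-3}$ with $\db\beta_{1,n-3}$... arranged as in \cite{CFGU97}.

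Here bidegree $(0,\cdot)$ helps. Because the source sits in $p=0$, the incoming chains in bidegrees $(p,q)$ with $p<0$ are empty. It is cleaner, however, to argue directly. Suppose $d_r[\alpha_{0,n-2}]=0$, i.e.\ $\partial\alpha_{r-1,n-r-1}=\partial\beta_{r-1,n-r-1}+\db\gamma$, where $\beta_{r-1,n-r-1}$ begins a chain with $\db\beta_{1,n-3}$... I would then replace $\alpha_{j,n-2-j}$ by $\alpha_{j,n-2-j}-\beta_{j,n-2-j}$ for $1\le j\le r-1$. The key requirement is that the $\beta$-chain has vanishing starting term in bidegree $(0,n-2)$, which is the content of the description of ${\mathcal Y}_r$ at the lower boundary. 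This modification is still a valid chain for $\alpha_{0,n-2}$, and now $\partial(\alpha_{r-1,n-r-1}-\beta_{r-1,n-r-1})=\db\gamma$. Setting $\alpha_{r,n-r-2}=-\gamma$ extends the chain by one step, so $\alpha_{0,n-2}\in{\mathcal X}^{0,n-2}_{r+1}(X)$, a contradiction.

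The main obstacle is making this precise. One needs the exact form of ${\mathcal Y}_r^{r,n-r-1}$ from \cite{CFGU97}, namely the $\partial$-images of chain-ends whose initial term is $\db$-related, together with the well-definedness of $d_r$ and its independence of the chosen chain. I expect this to be routine bookkeeping from the zig-zag description. Once $d_r\ne0$ is established, $E_r\ne E_{r+1}$, so in particular $E_r\ne E_\infty$ and the FSS does not degenerate at the $r$-th page.
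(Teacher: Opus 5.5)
Your argument is correct, but it takes a different route from the paper's.

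\textbf{The paper's route.} The paper never examines the target $E_r^{r,n-r-1}(X)$. It uses the model $E_s^{0,n-2}={\mathcal X}_s^{0,n-2}/{\mathcal Y}$ for both $s=r$ and $s=r+1$, with the same ${\mathcal Y}=\db\Omega^{0,n-3}(X)$. It combines this with $E_{r+1}=H(E_r,d_r)$ and the vanishing of the incoming $d_r$, whose source $E_r^{-r,n+r-3}$ is zero. Hence $E_{r+1}^{0,n-2}$ is the kernel of the outgoing $d_r$. The strict inclusion ${\mathcal X}_{r+1}^{0,n-2}\subsetneq{\mathcal X}_r^{0,n-2}$ makes this kernel strictly smaller than $E_r^{0,n-2}$, so $d_r\neq 0$.

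\textbf{Your route.} You prove the relevant inclusion by hand. You write $\partial\alpha_{r-1,n-r-1}=\partial\beta_{r-1,n-r-1}+\db\beta_{r,n-r-2}$, where $\db\beta_{1,n-3}=0$ and $\partial\beta_{j,n-2-j}+\db\beta_{j+1,n-3-j}=0$ for $1\le j\le r-2$; this is the \cite{CFGU97} description of ${\mathcal Y}_r^{r,n-r-1}$. Subtracting the $\beta$-chain from the $\alpha$-chain and appending $-\beta_{r,n-r-2}$ gives a chain of length $r+1$ for $\alpha_{0,n-2}$, so $\alpha_{0,n-2}\in{\mathcal X}_{r+1}^{0,n-2}$.

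\textbf{What each buys.} Your route costs the explicit form of ${\mathcal Y}_r$ in the target bidegree, which the paper never needs to recall. In exchange, it exhibits a concrete class with $d_r[\alpha_{0,n-2}]\neq 0$. It also uses neither the special bidegree $(0,n-2)$ nor the vanishing of the incoming differential: the same subtraction shows ${\mathcal X}^{p,q}_r\neq{\mathcal X}^{p,q}_{r+1}\Rightarrow d_r\neq0$ for every $(p,q)$. Your remark that ${\mathcal Y}^{0,n-2}$ is independent of $r$ is not needed in your argument.

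\textbf{One slip to fix.} The initial form $\beta_{1,n-3}$ of the chain must be $\db$-closed, not $\partial$-closed. This is exactly what keeps the first link $\partial\alpha_{0,n-2}+\db(\alpha_{1,n-3}-\beta_{1,n-3})=0$ valid after the subtraction. Your later phrase ``vanishing starting term in bidegree $(0,n-2)$'' is the correct formulation of this condition.
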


\begin{proof}
It follows directly from \eqref{E=X/Y} and \eqref{Xpq}, taking into account that ${\mathcal Y}^{0,n-2}_r(X)$ does not depend on $r$, and from \eqref{dr} using the fact that the incoming differential $d_r\colon E_r^{-r,n+r-3}(X)\longrightarrow E_r^{0,n-2}(X)$ is identically zero because $E_r^{-r,n+r-3}(X)=\{0\}$ by bidegree reasons.
\end{proof}

Now, we are in the conditions to prove the following 

\begin{proposition}\label{Prop-FSS-dim4}
Let $X$ be a $4$-dimensional complex nilmanifold defined by
\begin{equation}\label{FSS-ecus-4dim}
d\omega^1 = d\omega^{2}=d\omega^{3}=0,\quad 
d\omega^4 = A_{12}\, \omega^{12} 
	+ B_{11}\,\omega^{1\bar{1}} + B_{22}\,\omega^{2\bar{2}} - (x\,B_{11}+y\,B_{22}) \omega^{3\bar{3}} ,
\end{equation}
where $x,y\in \R^+$,  $(B_{11},B_{22})\in\R^2\setminus\{(0,0)\}$ and 
$A_{12}\in\C\setminus\{0\}$ satisfy
\begin{equation}\label{cond-FSS-ecus-4dim}
x\,B_{11}+y\,B_{22}\ne 0, \quad
-2x\, B_{11}^2 - 2y\, B_{22}^2 + 2 (1-x-y) B_{11} B_{22} = |A_{12}|^2.
\end{equation}
Then, $X$ has an AK metric $F'$ (which is also sG) and a balanced metric $F''$, and its FSS does not degenerate at the second page ($d_2\ne 0$). 
\end{proposition}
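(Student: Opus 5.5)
The metric part reduces to Example~\ref{ex1}. The equations \eqref{FSS-ecus-4dim} have the form \eqref{caso-i} with $A_{13}=A_{23}=0$, no off-diagonal $B_{jk}$, and $B_{33}=-(xB_{11}+yB_{22})$.

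For the balanced metric, I would take $x_{11}=1$, $x_{22}=x/y$, $x_{33}=x$. Then $x_{33}/x_{11}=x$ and $x_{33}/x_{22}=y$, and the diagonal metric $F''$ of Example~\ref{ex1} satisfies \eqref{balanced-condition}, so it is balanced. For the AK metric I use the canonical metric $F'$. The second condition in \eqref{cond-FSS-ecus-4dim} is exactly the identity ${\mathcal T}=0$ written out in Example~\ref{ex1}, with $B_{11},B_{22}$ real and only $A_{12}$ nonzero. By Proposition~\ref{taus-of-aK-dim-m}, $F'$ is therefore AK. Since $A_{12}\neq0$, Theorem~\ref{classif-aK-dim8-invariante-sG} gives that $F'$ is also sG. Because $B_{11}+B_{22}+B_{33}=(1-x)B_{11}+(1-y)B_{22}$, non-balancedness is not needed; the statement only asks for sG.

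For $d_2\neq0$ I would use Lemma~\ref{lema-0-n-2} with $n=4$ and $r=2$. This requires a form $\alpha_{0,2}$ in ${\mathcal X}^{0,2}_2(X)$ that is not in ${\mathcal X}^{0,2}_3(X)$, and I work with invariant forms. The natural candidate is $\alpha_{0,2}=\omega^{\bar3\bar4}$, or a combination $\omega^{\bar j\bar4}$. Its $\db$ is $\omega^{\bar3}\wedge\overline{\partial\omega^4}$ up to sign, and this vanishes because $\overline{\partial\omega^4}=\bar A_{12}\omega^{\bar1\bar2}$ gives $\omega^{\bar3\bar1\bar2}$. That is not zero, so I would correct: set $\alpha_{0,2}=c_1\omega^{\bar1\bar4}+c_2\omega^{\bar2\bar4}+c_3\omega^{\bar3\bar4}+\beta$ with $\beta\in\Lambda^{0,2}\langle\bar\omega^1,\bar\omega^2,\bar\omega^3\rangle$. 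Here $\db\omega^{\bar j\bar4}$ involves $\bar A_{12}\omega^{\bar j\bar1\bar2}$, which is nonzero only for $j=3$, so $\omega^{\bar1\bar4}$ and $\omega^{\bar2\bar4}$ are $\db$-closed.

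Take $\alpha_{0,2}=\omega^{\bar1\bar4}$. Then $\partial\alpha_{0,2}=\pm\omega^{\bar1}\wedge\partial\omega^{\bar4}$, where $\partial\omega^{\bar4}=\overline{\db\omega^4}$ is a real combination of $\omega^{j\bar j}$ (conjugated). This gives
\[
\partial\alpha_{0,2}=\pm\omega^{\bar1}\wedge\bigl(B_{22}\omega^{\bar2 2}+B_{33}\omega^{\bar3 3}\bigr),
\]
and I need it to be $\db$-exact: it must equal $-\db\alpha_{1,1}$. Candidates $\alpha_{1,1}$ are built from $\omega^{2\bar4}$ and $\omega^{3\bar4}$, since $\db\omega^{k\bar4}$ contains $\omega^k\wedge\overline{\partial\omega^4}$ and $\overline{B}$-terms. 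I expect one has to take combinations involving $\omega^{\bar4}$ paired with $\omega^{2},\omega^{3}$, possibly replacing $\omega^{\bar1\bar4}$ by a suitable combination. Then $d_2[\alpha_{0,2}]=[\partial\alpha_{1,1}]$, and $\partial\omega^{k\bar4}$ produces $\omega^k\wedge\overline{\db\omega^4}$-type terms. To show $\alpha_{0,2}\notin{\mathcal X}_3$, I must check that $\partial\alpha_{1,1}$ is not $\db$-exact modulo the freedom in choosing $\alpha_{1,1}$, that is, modulo $\db$-closed $(1,1)$-forms. This uses $x\,B_{11}+y\,B_{22}\neq0$ (so $B_{33}\neq0$) and $A_{12}\neq0$.

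The main obstacle is this final step: determining the space of invariant $\db$-closed $(1,1)$-forms and $\db(\Lambda^{2,0})$, and showing that the resulting $(2,0)$-form $\partial\alpha_{1,1}$ (involving $A_{12}$, or terms like $B_{33}\,\omega^{34}$) is not in $\db(\Lambda^{2,0})$ plus $\partial$ of $\db$-closed $(1,1)$-forms. I would do this by explicit linear algebra in the invariant complex, justified by the Dolbeault computability of nilpotent complex structures of the form \eqref{caso-i}. I would first try $\alpha_{0,2}$ as a combination of $\omega^{\bar1\bar4},\omega^{\bar2\bar4}$ with $\alpha_{1,1}$ proportional to $\omega^{3\bar4}$ plus terms in $\omega^{j\bar k}$.
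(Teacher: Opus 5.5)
Your metric half is correct and matches the paper, which simply invokes Example~\ref{ex1}. The second condition in \eqref{cond-FSS-ecus-4dim} is indeed ${\mathcal T}=0$. Your choice $x_{11}=1$, $x_{22}=x/y$, $x_{33}=x$ makes $F''$ balanced, and $A_{12}\neq0$ gives sG via Theorem~\ref{classif-aK-dim8-invariante-sG}.

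The gap is the claim $d_2\neq0$. This is the real content of the proposition, and you leave it as ``the main obstacle''. You land on the right element $\alpha_{0,2}=\omega^{\bar1\bar4}$, but your guesses for $\alpha_{1,1}$ cannot work. For $k\leq3$ one has $\db\omega^{k\bar4}=-\bar A_{12}\,\omega^{k\bar1\bar2}$, which never produces the $\omega^{3\bar1\bar3}$-term of $\partial\omega^{\bar1\bar4}$. Worse, any $\omega^{3\bar4}$-component of $\alpha_{1,1}$ is forced to vanish, since $\omega^{3\bar1\bar2}$ arises from nothing else. Note also that $\partial\alpha_{1,1}$ is a $(2,1)$-form, not a $(2,0)$-form.

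The missing computation is short. The correct primitive is $\omega^{4\bar1}$:
\[
\partial\omega^{\bar1\bar4}=\db\omega^{4\bar1}=-B_{22}\,\omega^{2\bar1\bar2}+(xB_{11}+yB_{22})\,\omega^{3\bar1\bar3}.
\]
So $\omega^{\bar1\bar4}\in{\mathcal X}^{0,2}_2$ with $\alpha_{1,1}=-\omega^{4\bar1}$, and $\partial\alpha_{1,1}=-A_{12}\,\omega^{12\bar1}$.

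Since $\db\omega^{jk}=0$ for $j<k\leq3$, one has $\db\Lambda^{2,0}=\langle\db\omega^{14},\db\omega^{24},\db\omega^{34}\rangle$. The only generator containing $\omega^{12\bar1}$ is $\db\omega^{24}=B_{11}\,\omega^{12\bar1}+(xB_{11}+yB_{22})\,\omega^{23\bar3}$. Nothing else cancels its $\omega^{23\bar3}$-term, because $xB_{11}+yB_{22}\neq0$. Hence $\omega^{12\bar1}\notin\db\Lambda^{2,0}$.

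Your worry about the freedom in choosing $\alpha_{1,1}$ is legitimate; the paper passes over it silently. It is settled as follows. Using $A_{12}\neq0$, $B_{33}=-(xB_{11}+yB_{22})\neq0$ and $(B_{11},B_{22})\neq(0,0)$, the invariant $\db$-closed $(1,1)$-forms are exactly the span of the $\omega^{j\bar k}$ with $1\leq j,k\leq3$. All of these are $\partial$-closed. Therefore $\partial\alpha_{1,1}$ does not depend on the choice, $\omega^{\bar1\bar4}\in{\mathcal X}^{0,2}_2\setminus{\mathcal X}^{0,2}_3$, and Lemma~\ref{lema-0-n-2} gives $d_2\neq0$.
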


\begin{proof}
We first recall that, by~\cite[Proposition 2.1]{LUV-Fro}, the non-degeneration of the FSS of the complex nilmanifold $X$ 
can be detected on its underlying pair $(\frg,J)$. 
Applying Lemma~\ref{lema-0-n-2} with $n=4$ and $r=n-2=2$, it suffices to 
prove that ${\mathcal X}_2^{0,2}(\frg,J)\ne {\mathcal X}_3^{0,2}(\frg,J)$. 

From \eqref{FSS-ecus-4dim} we have
$\db \omega^{{\bar 1}{\bar 4}}=0$, and 
$$
\partial \omega^{{\bar 1}{\bar 4}}= - B_{22}\,\omega^{2{\bar 1}{\bar 2}} + (x\,B_{11}+y\,B_{22}) \, \omega^{3{\bar 1}{\bar 3}} = \db\omega^{4{\bar 1}},
$$
so $\omega^{{\bar 1}{\bar 4}} \in {\mathcal X}_2^{0,2}(\frg,J)$.  
By \eqref{Xpq}, to see that $\omega^{{\bar 1}{\bar 4}} \not\in {\mathcal X}_3^{0,2}(\frg,J)$, it is enough to prove that 
$\partial\omega^{4{\bar 1}}\not\in \db \Lambda^{2,0}(\frg^*)$.
Notice that $\partial\omega^{4{\bar 1}}= A_{12}\,\omega^{12{\bar 1}}$,
while
$$
\begin{array}{rl}
\db \Lambda^{2,0}(\frg^*) \!\!\!&\!\!\!= \langle \db\omega^{14},\db\omega^{24},\db\omega^{34} \rangle \\[4pt]
\!\!\!&\!\!\!= \langle B_{22}\,\omega^{12{\bar 2}} \!-\! (x\,B_{11}\!+y\,B_{22}) \omega^{13{\bar 3}},\ B_{11}\,\omega^{12{\bar 1}}\!+\!(x\,B_{11}\!+y\,B_{22}) \omega^{23{\bar 3}},\ B_{11}\,\omega^{13{\bar 1}}\!+\!B_{22}\,\omega^{23{\bar 2}} \rangle,
\end{array}
$$
so $\partial\omega^{4{\bar 1}}\not\in \db \Lambda^{2,0}(\frg^*)$ because of the first condition in \eqref{cond-FSS-ecus-4dim}. 

Finally, the existence of an AK metric (which is also sG) and a balanced metric on $X$ follows from \eqref{cond-FSS-ecus-4dim} and Example~\ref{ex1}.
\end{proof}

\begin{remark}\label{sec6-exist}
There exist complex nilmanifolds  
satisfying the conditions of Proposition~\ref{Prop-FSS-dim4}. 
To check this assumption,
it suffices to see that there are rational solutions 
to the equation in~\eqref{cond-FSS-ecus-4dim}. For instance, if we take $x=y=\frac18$ and $A_{12}\in \Q[i]\setminus\{0\}$, the equation reduces to find  $B_{11},B_{22}\in \Q$ satisfying $B_{11}^2+B_{22}^2-6\,B_{11}B_{22}+4|A_{12}|^2=0$. This equation is a hyperbola in the $(B_{11},B_{22})$-plane for any given $A_{12}\ne 0$. A particular rational solution is $A_{12}=B_{11}=B_{22}=1$.
\end{remark}

 Next we construct a family of 5-dimensional complex nilmanifolds with 
$E_3\ne E_\infty$.

\begin{proposition}\label{FSS-5dim}
Let $X$ be a 5-dimensional complex nilmanifold defined by 
\begin{equation}\label{ecus-5dim}
\begin{cases}
d\omega^1 = d\omega^{2}=d\omega^{3}=d\omega^{4}=0,\\[4pt]
d\omega^5 = A_{12}\, \omega^{12} + A_{34}\, \omega^{34} 
	+ \omega^{1\bar{1}} + \omega^{2\bar{2}} + \omega^{3\bar{3}} 
	+ b\, \omega^{4\bar{4}},
\end{cases}
\end{equation}
with $b\in\R$ and $A_{12},A_{34}\in \C\backslash\{0\}$ satisfying $b\, |A_{12}|^2-|A_{34}|^2\neq0$. Then, the FSS of $X$ does not degenerate at the third page ($d_3\ne 0$). 
\end{proposition}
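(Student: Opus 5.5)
The plan is to follow the proof of Proposition~\ref{Prop-FSS-dim4} one step further. By \cite[Proposition 2.1]{LUV-Fro}, non-degeneration of the FSS can be checked on the pair $(\frg,J)$. So, applying Lemma~\ref{lema-0-n-2} with $n=5$ and $r=3$, it suffices to find an element of ${\mathcal X}_3^{0,3}(\frg,J)$ that does not belong to ${\mathcal X}_4^{0,3}(\frg,J)$. Everything then becomes finite-dimensional linear algebra on $\Lambda^{*,*}\frg^*_{\C}$, computed from \eqref{ecus-5dim}.

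First I record the basic derivatives:
\[
\db\omega^5=\omega^{1\bar1}+\omega^{2\bar2}+\omega^{3\bar3}+b\,\omega^{4\bar4},\qquad \partial\omega^5=A_{12}\omega^{12}+A_{34}\omega^{34},
\]
\[
\partial\omega^{\bar5}=-\db\omega^5,\qquad \db\omega^{\bar5}=\bar A_{12}\omega^{\bar1\bar2}+\bar A_{34}\omega^{\bar3\bar4}.
\]
As candidate I take $\alpha_{0,3}=\omega^{\bar1\bar3\bar5}$, by analogy with $\omega^{\bar1\bar4}$ in dimension 4. One checks that $\db\alpha_{0,3}=0$, because each term of $\db\omega^{\bar5}$ contains $\bar1$ or $\bar3$.

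Next I build the chain. Up to sign, $\partial\alpha_{0,3}=\omega^{\bar1\bar3}\wedge\db\omega^5$, and the terms containing $\omega^{1\bar1}$ or $\omega^{3\bar3}$ drop out. The first candidate is $\alpha_{1,2}=\pm\omega^{5\bar1\bar3}$, since $\db(\omega^{5\bar1\bar3})=\db\omega^5\wedge\omega^{\bar1\bar3}$. Then $\partial\alpha_{1,2}=\pm(A_{12}\omega^{12}+A_{34}\omega^{34})\wedge\omega^{\bar1\bar3}$. I will solve $\partial\alpha_{1,2}+\db\alpha_{2,1}=0$ with $\alpha_{2,1}$ a combination of forms $\omega^{j5\bar k}$, for example $\omega^{25\bar3}$ and $\omega^{45\bar1}$ with coefficients involving $A_{12}$, $A_{34}$ and $b$. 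I would also allow correction terms in $\Lambda^{2,1}\langle\omega^1,\dots,\omega^4\rangle$ to absorb leftovers. This shows that $\alpha_{0,3}\in{\mathcal X}_3^{0,3}$.

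The main step is to show that $\alpha_{0,3}\notin{\mathcal X}_4^{0,3}$, that is, for no admissible chain does $\partial\alpha_{2,1}$ lie in $\db\Lambda^{3,0}(\frg^*)$. This is the delicate point, because the chain is not unique. One may replace $\alpha_{1,2}$ by $\alpha_{1,2}+\beta$ with $\db\beta=0$, and then $\alpha_{2,1}$ by $\alpha_{2,1}+\gamma$ with $\db\gamma=-\partial\beta$, together with $\db$-closed modifications. I will parametrize these ambiguities explicitly: compute $\ker\db$ on $\Lambda^{1,2}$ and the corresponding $\gamma$'s. I then reduce the claim to showing that a specific $(3,1)$-form, $\partial\alpha_{2,1}$, is not in the span of $\db\Lambda^{3,0}$ together with the $\partial\gamma$'s arising this way. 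I expect the coefficient of a distinguished monomial, such as $\omega^{124\bar1}$ or $\omega^{134\bar3}$, to be proportional to $b\,|A_{12}|^2-|A_{34}|^2$, while all forms in the obstructing span have vanishing coefficient there. This is what the hypothesis is designed for, and the non-vanishing of $A_{12}$ and $A_{34}$ should be needed to solve for $\alpha_{2,1}$.

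If $\omega^{\bar1\bar3\bar5}$ turns out to be inconvenient, I would try the symmetric choices $\omega^{\bar2\bar4\bar5}$ or $\omega^{\bar1\bar4\bar5}$ in the same way. Finally, \eqref{dr} gives $d_3[\alpha_{0,3}]=[\partial\alpha_{2,1}]\neq0$ in $E_3^{3,1}$, so $d_3\ne 0$ and $E_3(X)\ne E_\infty(X)$.
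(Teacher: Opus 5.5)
Your reduction to $(\frg,J)$, the use of Lemma~\ref{lema-0-n-2} with $n=5$, $r=3$, and the test form $\omega^{\bar1\bar3\bar5}$ are exactly the paper's. The gap is in the step meant to show $\omega^{\bar1\bar3\bar5}\in{\mathcal X}_3^{0,3}$: the chain you propose cannot be completed.

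Your $\alpha_{1,2}=\omega^{5\bar1\bar3}$ does satisfy $\partial\alpha_{0,3}+\db\alpha_{1,2}=0$, but it admits no $\alpha_{2,1}$ at all. We have $\partial\omega^{5\bar1\bar3}=A_{12}\,\omega^{12\bar1\bar3}+A_{34}\,\omega^{34\bar1\bar3}$. Both $\partial$ and $\db$ preserve two gradings: the nilpotent grading ($\omega^1,\dots,\omega^4$ and their conjugates of degree $1$, $\omega^5,\omega^{\bar5}$ of degree $2$), and the weights of the symmetry $(\omega^1,\omega^2,\omega^3,\omega^4,\omega^5)\mapsto(\lambda\omega^1,\lambda^{-1}\omega^2,\mu\omega^3,\mu^{-1}\omega^4,\omega^5)$ with $|\lambda|=|\mu|=1$. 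Hence the only $(2,1)$-forms that matter are $\omega^{25\bar3}$, $\omega^{45\bar1}$, $\omega^{24\bar5}$, and
\[
\db\omega^{25\bar3}=\omega^{12\bar1\bar3}+b\,\omega^{24\bar3\bar4},\quad
\db\omega^{45\bar1}=-\omega^{24\bar1\bar2}-\omega^{34\bar1\bar3},\quad
\db\omega^{24\bar5}=\bar A_{12}\,\omega^{24\bar1\bar2}+\bar A_{34}\,\omega^{24\bar3\bar4}.
\]
Matching coefficients on $\omega^{12\bar1\bar3},\omega^{34\bar1\bar3},\omega^{24\bar1\bar2},\omega^{24\bar3\bar4}$ forces $b|A_{12}|^2=|A_{34}|^2$, which is exactly what the hypothesis excludes. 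Your correction terms from $\Lambda^{2,1}\langle\omega^1,\dots,\omega^4\rangle$ are $\db$-closed, so they cannot absorb anything.

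The repair has to be made one level earlier. Replace $\omega^{5\bar1\bar3}$ by $\omega^{5\bar1\bar3}-\beta$, where $\beta=\omega^{5\bar1\bar3}+\frac{1}{\bar A_{12}}\omega^{2\bar3\bar5}-\frac{b}{\bar A_{34}}\omega^{4\bar1\bar5}$ is $\db$-closed. This amounts to taking the primitive built from $\db\omega^{\bar5}=\bar A_{12}\omega^{\bar1\bar2}+\bar A_{34}\omega^{\bar3\bar4}$:
\[
\alpha_{1,2}=-\tfrac{1}{\bar A_{12}}\,\omega^{2\bar3\bar5}+\tfrac{b}{\bar A_{34}}\,\omega^{4\bar1\bar5},\qquad
\alpha_{2,1}=\tfrac{1}{\bar A_{12}}\,\omega^{25\bar3}-\tfrac{b}{\bar A_{34}}\,\omega^{45\bar1}.
\]
This is the paper's chain. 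So $A_{12}A_{34}\neq0$ is already needed for $\alpha_{1,2}$, not only for $\alpha_{2,1}$. Be aware of the trap: your computation, carried out as written, fails because of the same quantity $b|A_{12}|^2-|A_{34}|^2$, and could be misread as $d_2[\alpha_{0,3}]\neq0$. In fact $d_2[\alpha_{0,3}]=0$: $\partial\omega^{5\bar1\bar3}$ is not $\db$-exact, but it differs from a $\db$-exact form by $\partial\beta$ with $\db\beta=0$.

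Your concern about non-uniqueness of the chain in the ${\mathcal X}_4$ step is legitimate; the paper only tests its particular $\gamma=\alpha_{2,1}$ against $\db\Lambda^{3,0}$. The same gradings dispose of it. In the relevant graded piece, the $\db$-closed $(1,2)$-forms are the multiples of $\beta$, which is inadmissible because $\partial\beta$ is not $\db$-exact (the computation above). Also, $\db$ is injective on $\langle\omega^{25\bar3},\omega^{45\bar1},\omega^{24\bar5}\rangle$. So the chain is unique there, and it suffices to show $\partial\gamma\notin\db\Lambda^{3,0}$.

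Adjust your expectation of how this is detected, though. We have $\partial\gamma=\frac{bA_{12}}{\bar A_{34}}\,\omega^{124\bar1}-\frac{A_{34}}{\bar A_{12}}\,\omega^{234\bar3}$. The relevant part of $\db\Lambda^{3,0}$ is spanned by $\db\omega^{245}=\omega^{124\bar1}-\omega^{234\bar3}$, which hits both monomials, so no single monomial with vanishing coefficient exists. What works is the functional ``coefficient of $\omega^{124\bar1}$ plus coefficient of $\omega^{234\bar3}$''. It kills $\db\omega^{245}$ and takes the value $(b|A_{12}|^2-|A_{34}|^2)/(\bar A_{12}\bar A_{34})$ on $\partial\gamma$; this is the paper's pair of incompatible equations for $t_{24}$. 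The monomial $\omega^{134\bar3}$ has the wrong weight and plays no role.
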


\begin{proof}
Using again~\cite[Proposition 2.1]{LUV-Fro},  
the non-degeneration of the FSS of $X$ can be  detected on its underlying pair $(\frg,J)$. 
We now apply Lemma~\ref{lema-0-n-2} with $n=5$ and $r=n-2=3$; more concretely, we next see that  $\omega^{\bar1\bar3\bar5}\in {\mathcal X}_3^{0,3}(\frg,J)$, but $\omega^{\bar1\bar3\bar5}\not\in {\mathcal X}_4^{0,3}(\frg,J)$. 

Using the equations~\eqref{ecus-5dim}, 
one can easily check that $\omega^{\bar1\bar3\bar5}$ is $\bar{\partial}$-closed. 
Moreover, the following relations are satisfied: 
\renewcommand{\arraystretch}{1.8}
$$\begin{array}{lllllll}
\partial\omega^{\bar1\bar3\bar5} 
	\!\!\!&\!\!+\!\!&\!\!\! 
	\bar{\partial}\left( -\frac{1}{\bar{A}_{12}}\omega^{2\bar3\bar5}+\frac{b}{\bar{A}_{34}}\,\omega^{4\bar1\bar5} \right) 
	\!\!\!&\!\!=\!\!&\!\!\! 0 & & 
	\\[5pt]
	\!\!\!&\!\! \!\!&\!\!\! \partial\left( -\frac{1}{\bar{A}_{12}}\omega^{2\bar3\bar5}+\frac{b}{\bar{A}_{34}}\,\omega^{4\bar1\bar5} \right) 
	\!\!\!&\!\!+\!\!&\!\!\!
	\bar{\partial}\left( \frac{1}{\bar{A}_{12}}\omega^{25\bar3}-\frac{b}{\bar{A}_{34}}\omega^{45\bar1}\right)
	\!\!\!&\!\!=\!\!&\!\!\! 0.
\end{array}$$
Therefore, one indeed has that $\omega^{\bar1\bar3\bar5}\in {\mathcal X}_3^{0,3}(\frg,J)$. 
To see that $\omega^{\bar1\bar3\bar5}\not\in {\mathcal X}_4^{0,3}(\frg,J)$, 
we need to prove that there is no $\tau\in \Lambda^{3,0}(\frg^*)$ such that $\partial\gamma +\bar{\partial}\tau=0$, where 
$$\gamma=\frac{1}{\bar{A}_{12}}\omega^{25\bar3}-\frac{b}{\bar{A}_{34}}\omega^{45\bar1}.$$
Let us observe that any non-$\bar{\partial}$-closed $\tau\in \Lambda^{3,0}(\frg^*)$ can be written as
$$\tau=\big(
	t_{12}\,\omega^{12} +
	t_{13}\,\omega^{13} +
	t_{14}\,\omega^{14} +
	t_{23}\,\omega^{23} +
	t_{24}\,\omega^{24} +
	t_{34}\,\omega^{34}
	\big)
	\wedge
	\omega^{5},
$$
with $t_{ij}\in\mathbb C$ for every $1\leq i<j\leq 4$. Then,
\begin{eqnarray*}
\partial\gamma+\bar{\partial}\tau 
	&\!\!=\!\!& 
	t_{23}\,\omega^{123\bar1} -
	t_{13}\,\omega^{123\bar2} +
	t_{12}\,\omega^{123\bar3} +
	\left(t_{24}+\frac{b\,A_{12}}{\bar{A}_{34}}\right)\,\omega^{124\bar1} \\
	&&-\,  
	t_{14}\,\omega^{124\bar2} +
	b\,t_{12}\,\omega^{124\bar4} +
	t_{34}\,\omega^{134\bar1} -
	t_{14}\,\omega^{134\bar3} +
	b\,t_{13}\,\omega^{134\bar4} \\
	&&+\, 
	t_{34}\,\omega^{234\bar2} -
	\left(t_{24}+\frac{A_{34}}{\bar{A}_{12}}\right)\,\omega^{234\bar3} +
	b\,t_{23}\,\omega^{234\bar4}.
\end{eqnarray*}
Since the previous expression must be equal to zero, in particular one has
$$t_{24}+\frac{b\,A_{12}}{\bar{A}_{34}}=0,
\qquad
t_{24}+\frac{A_{34}}{\bar{A}_{12}}=0.$$
If we solve $t_{24}$ from the second equality above and then replace its value in the first one,
we get $b\,|A_{12}|^2-|A_{34}|^2=0$. However, this is not possible by the choice of
parameters made in the statement of the proposition.
\end{proof}

As a consequence of Proposition~\ref{FSS-5dim} and Proposition~\ref{existenciaAK-balanced-n-dim} for $m=4$, 
we obtain the following:

\begin{theorem}\label{FSS-AK-balanced-5dim}
Let $X$ be a 5-dimensional complex nilmanifold defined by \eqref{ecus-5dim} 
with $A_{12}, A_{34}\in \C\setminus\{0\}$ and $b\in(-1,0)$ satisfying $|A_{12}|^2+|A_{34}|^2=6(b+1)$.
Then, $X$ admits an AK metric $F'$ (which is also sG) and a balanced metric $F''$, and the Fr\"olicher spectral sequence of $X$ has $d_3\neq0$. 
\end{theorem}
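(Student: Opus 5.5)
The plan is to combine Proposition~\ref{existenciaAK-balanced-n-dim} with $m=4$ and Proposition~\ref{FSS-5dim}. The only work is to check that the hypotheses of the theorem imply the hypotheses of both results.

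First, I match \eqref{ecus-5dim} with \eqref{ecus-n-dim} for $m=4$. In \eqref{ecus-n-dim} the form $d\omega^{5}$ is $\sum_{1\le j<k\le 4}A_{jk}\omega^{jk}+\omega^{1\bar1}+\omega^{2\bar2}+\omega^{3\bar3}+b\,\omega^{4\bar4}$. So \eqref{ecus-5dim} is the special case in which all $A_{jk}$ vanish except $A_{12}$ and $A_{34}$. The admissible range for $b$ in Proposition~\ref{existenciaAK-balanced-n-dim} is $[1-\tfrac m2,0)=[-1,0)$, and our $b\in(-1,0)$ lies in it. Condition \eqref{condicion-n-dim} reads $\sum|A_{jk}|^2=(m-1)(2b+m-2)=3(2b+2)=6(b+1)$, which is exactly the hypothesis $|A_{12}|^2+|A_{34}|^2=6(b+1)$. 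Hence the metric $F'$ of \eqref{F'-n-dim} is AK. It is also sG because $b\neq 1-\tfrac m2=-1$; equivalently, some $A_{jk}\neq0$. A balanced metric $F''$ of the form \eqref{F''-n-dim} exists as well. One can take, for instance, $x_{11}=x_{22}=x_{33}=1$ and $x_{44}=-b/3>0$, so that $x_{44}(1/x_{11}+1/x_{22}+1/x_{33})=-b$.

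Second, I verify the hypothesis of Proposition~\ref{FSS-5dim}, namely $b|A_{12}|^2-|A_{34}|^2\neq0$. Since $b<0$ and $A_{12}\neq0$, we have $b|A_{12}|^2<0\le|A_{34}|^2$, so the expression is strictly negative and in particular nonzero. Proposition~\ref{FSS-5dim} then gives $d_3\neq0$, so the FSS does not degenerate at the third page.

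Finally, I should note that such nilmanifolds actually exist, meaning there are rational structure constants satisfying the constraint; this follows as in Remark~\ref{existence-over-Q}. For the theorem as stated this is not needed. None of these steps is a real obstacle, since everything is direct substitution. The only point needing care is the sign argument in the second step, which uses $b<0$ together with $A_{12}\neq0$.
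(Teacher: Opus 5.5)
Your proposal is correct and is essentially the paper's own argument: specialize Proposition~\ref{existenciaAK-balanced-n-dim} to $m=4$ to get the AK (and sG, since $b\neq -1$) metric and the balanced metric, then invoke Proposition~\ref{FSS-5dim}, whose hypothesis $b|A_{12}|^2-|A_{34}|^2\neq 0$ follows automatically from $b<0$ and $A_{12}\neq 0$. Your explicit balanced choice $x_{11}=x_{22}=x_{33}=1$, $x_{44}=-b/3$ is a valid instance of the condition $\frac{x_{44}}{x_{11}}+\frac{x_{44}}{x_{22}}+\frac{x_{44}}{x_{33}}=-b$ in that proposition.
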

\begin{proof}
Simply note that the inequality $b\, |A_{12}|^2-|A_{34}|^2\neq0$ in Proposition~\ref{FSS-5dim} can be omitted due
to the choice of $b<0$ together with $A_{12}, A_{34}\in \C\setminus\{0\}$. Some particular (rational) values of the parameters
in~\eqref{ecus-5dim} that satisfy the conditions in the statement of the theorem are $A_{12}=1$, $A_{34}=2$ and $b=-\frac{1}{6}$.
\end{proof}

\begin{remark}\label{sec6-first-examples}
The nilmanifolds constructed in Theorem~\ref{FSS-AK-balanced-5dim} seem to be the first examples of compact AK manifolds with balanced metrics and with FSS not degenerating at the third page. 
This is clear in the class of nilmanifolds by our study above (Propositions~\ref{Prop-CFG}--\ref{Prop-FSS-Kasuya-Stelzig}). 
 On the other hand, 
we note that any even-dimensional compact semisimple Lie group of rank two endowed  with 
a Samelson 
complex structure admits an AK (non-SKT) metric,  
and 
its canonical SKT metric is not AK \cite[Proposition 4.1]{FGV}. Moreover, compact semisimple Lie groups do not admit any balanced metric compatible with  a Samelson 
complex structure \cite[Corollary 5.1]{FGV}. Furthermore, Pittie \cite{Pittie-bull} proved that 
all left-invariant complex structures on semisimple groups of rank two satisfy $E_2=E_{\infty}$. It is worthy to remind that Pittie found in \cite{Pittie-bull} a particular left-invariant complex structure on {\rm SO(9)} with FSS not degenerating at the second page, but with $E_3=E_{\infty}$. 
\end{remark}

\medskip

\section*{Acknowledgments}
\noindent 
The authors are very grateful to Ettore Lo Giudice for useful comments.
This work has been partially supported by grant PID2023-148446NB-I00, funded by MICIU/AEI/10.13039/501100011033, 
and by grant E22-23R ``Algebra y Geometr\'{\i}a'' (Gobierno de Arag\'on/FEDER).

\end{document}